\numberwithin{equation}{section}
\numberwithin{figure}{section}
\newcommand{\Z}{{\mathbb Z}}
\newcommand{\N}{{\mathbb N}}
\DeclareMathOperator{\ind}{ind}
\DeclareMathOperator{\id}{id}
\DeclareMathOperator{\diag}{diag}
\begin{document}

\title{Strong Gelfand subgroups of $F\wr S_n$}

\author{*Mahir Bilen Can, *Yiyang She, \textdagger Liron Speyer\\
\\\normalsize *Department of Mathematics, Tulane University,\\\normalsize
6823 St.~Charles Ave, New Orleans, LA, 70118, USA\\
\texttt{\normalsize mcan@tulane.edu}\\
\texttt{\normalsize yshe@tulane.edu}\\
\\\normalsize
\textdagger Okinawa Institute of Science and Technology Graduate University,\\\normalsize
1919-1 Tancha, Onna-son, Okinawa, Japan 904-0495\\
\texttt{\normalsize liron.speyer@oist.jp}}

\renewcommand\auth{Mahir Bilen Can, Yiyang She, Liron Speyer}

\runninghead{Strong Gelfand subgroups of $F\wr S_n$}
\msc{20C30, 20C15, 05E10}

\toptitle

\begin{abstract}

The multiplicity-free subgroups (strong Gelfand subgroups) of wreath products are investigated. 
Various useful reduction arguments are presented. 
In particular, we show that for every finite group $F$, the wreath product $F\wr S_\lambda$,
where $S_\lambda$ is a Young subgroup, is multiplicity-free if and only if $\lambda$ is a partition with at most two parts,
the second part being 0, 1, or 2. 
Furthermore, we classify all multiplicity-free subgroups of hyperoctahedral groups. 
Along the way, we derive various decomposition formulas for the induced representations from some special subgroups of hyperoctahedral groups.
\vspace{.5cm}

\noindent 
\textbf{Keywords: strong Gelfand pairs, wreath products, hyperoctahedral group, signed symmetric group, Stembridge subgroups} \\
\end{abstract}

\section{Introduction}

Let $K$ be a subgroup of a group $G$. 
The pair $(G,K)$ is said to be a {\em Gelfand pair} if the induced trivial representation $\ind^G_K \mathbf{1}$ 
is a multiplicity-free $G$ representation. 
More stringently, if $(G,K)$ has the property that 
\[
\text{$\ind^G_K V$ is multiplicity-free for every irreducible $K$ representation $V$}, 
\] 
then it is called a {\em strong Gelfand pair}. In this case, $K$ is called a {\em strong Gelfand subgroup}
(or a {\em multiplicity-free subgroup}).
Clearly, a strong Gelfand pair is a Gelfand pair, however, the converse need not be true. 
The problem of finding all multiplicity-free subgroups of an algebraic group goes back to Gelfand and Tsetlin's works~\cite{GT1,GT2}, where 
it was shown that $GL_{n-1}(\bbc)$ (resp.~$Spin_{n-1}(\bbc)$) is a multiplicity-free subgroup in $SL_n(\bbc)$ (resp.~in $Spin_n(\bbc)$).
$(SL_n(\bbc), GL_{n-1}(\bbc))$ and $(Spin_n, Spin_{n-1})$ are strong Gelfand pairs. 
It was shown by Kr\"amer in~\cite{Kramer} that for simple, simply connected (complex or real) algebraic groups, 
there are no additional pairs of strong Gelfand pairs.
If the ambient group $G$ is allowed to be a reductive group
and/or the underlying field of definitions is changed, then there are many more strong Gelfand pairs~\cite{Aizenbudetal, SunZhu,KobayashiMatsuki}.

For arbitrary finite groups, there is much less is known about the strong Gelfand pairs. 
In this paper we will be exclusively concerned with representations in characteristic 0. 
Building on Saxl's prior work~\cite{Saxl75,Saxl81}, 
the list of all Gelfand pairs of the form $(S_n,K)$, where $S_n$ is a symmetric group, 
is determined by Godsil and Meagher in~\cite{GodsilMeagher}.
Recently, it was shown by Anderson, Humphries, and Nicholson~\cite{AHN} that, for $n\geq 7$, the only 
strong Gelfand pairs of the form $(S_n,K)$ are given by 
\begin{enumerate}
\item $(S_n, S_n)$,
\item $(S_n,A_n)$,
\item $(S_n,S_1\times S_{n-1})$ (up to interchange of the factors), and 
\item $(S_n,S_2\times S_{n-2})$ (up to interchange of the factors).
\end{enumerate}
Also recently, in~\cite{Tout}, Tout proved that for a finite group $F$, the pair $(F\wr S_n,F\wr S_{n-1})$ 
is a Gelfand pair if and only if $F$ is an abelian group.

In this article, we consider the strong Gelfand pairs of the form $(F\wr S_n, K)$, where $F$ is a finite group. 
Although the strongest results of our paper are about the pairs with $F= \Z/2$, 
we prove some general theorems when $F$ is a finite (abelian) group.  
The main purpose of our article is two-fold.  
First, we give a formula for computing the multiplicities of the irreducible $F\wr S_n$ representations in the induced representations $\ind_{S_n}^{F\wr S_n} V$,
where $V$ is any irreducible representation of $S_n$.
Secondly, we will determine all strong Gelfand pairs $(F\wr S_n, H)$, where $F= \Z/2$.
Along the way, we will present various branching formulas for such pairs.

We are now ready to give a brief outline of our paper and summarize its main results. 
In Section~\ref{S:Preliminaries}, we collect some well-known results from the literature. 

The first novel results of our paper appear in Section~\ref{S:PassiveFactor}, where 1) we prove a key lemma that we use later for describing some branching rules in wreath products, 2) we describe the multiplicities of the irreducible representations in $\ind^{F\wr S_n}_{S_n} S^\lambda$ where $F$ is an abelian group, 
and $S^\lambda$ is a Specht module of $S_n$ labeled by the partition $\lambda$ of $n$.
Roughly speaking, in Theorem~\ref{T:firststep}, we show that the multiplicities are determined by the 
Littlewood--Richardson rule combined with the descriptions of the irreducible representations of wreath products. 
As a corollary, we show that the pair $(F\wr S_n, S_n)$ is not a strong Gelfand pair for $n\geq 6$. 
It is easy to find negative examples for the converse statement.
For example, it is easy to check that if $F= \Z/2$ and $n\in \{1,\dots, 5\}$, then $(F\wr S_n, S_n)$ is a strong Gelfand pair.

It is not difficult to see that for any $S_n$ representation $W$  there is an isomorphism of $F\wr S_n$ representations, 
$\ind_{S_n}^{F\wr S_n} W \cong \bbc[F^n] \otimes W$; see Remark~\ref{R:Jantzen} for some further comments and the reference.
In particular, if $W$ is the trivial representation, and $F$ is an abelian group, 
then $\ind_{S_n}^{F\wr S_n} W$ is a multiplicity-free representation of $F\wr S_n$. 
In other words, $(F\wr S_n, S_n)$ is a Gelfand pair if $F$ is abelian.
From this we find the fact that $(F\wr S_n, \diag(F)\times S_n)$ is a Gelfand pair. 
Now we have two questions about $(F\wr S_n, \diag(F)\times S_n)$ here: 
1) What happens if we take a nonabelian group $F$? 
2) Is $(F\wr S_n, \diag(F)\times S_n)$ a strong Gelfand pair?
The answers of both of these questions are rather intriguing although both of them are negative. 
In~\cite{BensonRatcliff}, Benson and Ratcliff find a range where $(F\wr S_n, \diag(F)\times S_n)$
with $F$ nonabelian fails to be a Gelfand pair. 
In this paper, we show that, for $F= \Z/2$ and $n\geq 6$, $(F\wr S_n, \diag(F)\times S_n)$ is not a strong Gelfand pair (see Lemma~\ref{L:Case4}). 
Our proof can easily be adopted to the arbitrary finite abelian group case.

In Section~\ref{S:Some}, we prove that, for an arbitrary finite group $F$, 
a pair of the form $(F\wr S_n, F\wr (S_{n-k}\times S_k))$ is a strong Gelfand pair
if and only if $k\leq 2$.
In fact, by assuming that $F$ is an abelian group, we prove a stronger statement in one direction: 
$(F\wr S_n, (F\wr S_{n-k})\times S_k)$ is a strong Gelfand pair if $k\leq 2$.

Let $F$ and $K$ be two finite groups, and let $\pi_G: F\wr G \to G$ denote the canonical projection homomorphism.
If $K$ is a subgroup of $F\wr G$, then we will denote by $\gamma_K$ the image of $K$ under $\pi_G$. 
The purpose of our Section~\ref{S:Reduction} is to prove the following important reduction result (Theorem~\ref{T:reduction}): 
If $(F\wr G, K)$ is a strong Gelfand pair, then so is $(G,\gamma_K)$. 
As a consequence of this result, we observe that (Corollary~\ref{C:onlyfour}), for $n\geq 7$, if $(F\wr S_n, K)$ is a strong Gelfand pair, 
then $\gamma_K \in \{S_n, A_n, S_{n-1}\times S_1,S_{n-2}\times S_2\}$.
Moreover, we show a partial converse of Theorem~\ref{T:reduction}: 
Let $n\geq 7$, and let $B$ be a subgroup of $S_n$. 
Then $(F\wr S_n, F\wr B)$ is a strong Gelfand pair if and only if $(S_n,B)$ is a strong Gelfand pair. 
This is our Proposition~\ref{P:Fiff}.

In Sections~\ref{S:Hyperoctahedral} and~\ref{S:lattertwo}, we classify the strong Gelfand subgroups of the hyperoctahedral group, 
$B_n:= F \wr S_n$, where $F= \Z/2$. 
The hyperoctahedral group is a type BC Weyl group, and it contains the type D Weyl group, denoted by $D_n$, 
as a normal subgroup of index 2.  
Our list of strong Gelfand subgroups of $B_n$ is a culmination of a number of propositions. 
In Section~\ref{S:Hyperoctahedral} we handle the groups $K\leqslant B_n$ with $\gamma_K  \in \{ S_n, A_n\}$, 
and in Section~\ref{S:lattertwo}, we handle the case of $K\leqslant B_n$ with $\gamma_K  \in \{ S_{n-1}\times S_1, S_{n-2}\times S_2\}$.
An essential ingredient for our classification is the linear character group $L_n:=\Hom (B_n, \bbc^*)$, which is isomorphic to 
$\Z/2\times \Z/2 = \{ \id, \varepsilon, \delta, \varepsilon\delta\}$. 
Here, $\varepsilon$ and $\delta$ are defined so that $\ker \varepsilon = \Z/2 \wr A_n$, 
where $A_n$ is the alternating subgroup of $S_n$, and $\ker \delta = D_n$. 
The kernel of $\varepsilon\delta$ will be denoted by $H_n$.

Let $\chi$ be a linear character of a group $A$, and let $B$ be another group such that $\chi(A) \leq B$. 
We will denote by $(A\times B)_{\chi}$ the following diagonal subgroup of $A\times B$: 
\[
(A\times B)_{\chi}:=\{(a,\chi(a)) \in A\times B :\ a\in A\}.
\]

\begin{table}[htp]{\small
\begin{center}
\begin{tabular}{| c | c |}
\hline 
$\gamma_K$ & strong Gelfand subgroups of $B_n$  \\  \hline   
\multirow{3}{*} {$S_n$} & $B_n$ \\ & $D_n$ \\ & $H_n$  \\ \hline 
\multirow{2}{*} {$A_n$} & $\Z/2 \wr A_n$ \\ & $\ker \varepsilon \cap \ker \delta$, where $n \not\equiv 2 \mod 4$ \\ \hline
\multirow{9}{*} {$S_{n-1}\times S_1$} & 
	$B_{n-1} \times B_1$ \\ & 
	$B_{n-1} \times \{\id\}$ \\ & 
	$D_{n-1}\times B_1$ \\ &
	$D_{n-1} \times \{\id\}$ if $n$ is odd \\ &
	$H_{n-1}\times B_1$ \\ & 
	$H_{n-1} \times \{\id\}$ if $n$ is odd \\ &
	$(B_{n-1}\times B_1)_\delta = \{ (a,\delta(a)):\ a \in B_{n-1}\}$, if $n$ is odd \\&
	$(B_{n-1}\times B_1)_{\varepsilon \delta} = \{ (a, (\varepsilon\delta)(a)):\ a \in B_{n-1}\}$, if $n$ is odd \\& 
	$(B_{n-1}\times B_1)_\varepsilon = \{ (a,  \varepsilon(a)):\ a\in B_{n-1}\}$  \\&
	$(D_{n-1}\times B_1)_{\varepsilon \delta} = \{ (a, (\varepsilon\delta)(a)):\ a\in D_{n-1}\}$, if $n$ is odd \\&
	$(H_{n-1}\times B_1)_{\delta} = \{ (a, \delta(a)):\ a\in H_{n-1}\}$, if $n$ is odd\\
	\hline
\multirow{14}{*} {$S_{n-2}\times S_2$} &  
$B_{n-2}\times B_2$ \\&
$B_{n-2}\times D_2$ \\&
$B_{n-2}\times \overline{S_2}$
\\&
$B_{n-2}\times H_2$ \\&
$D_{n-2}\times D_2$ if $n$ is odd \\&
$D_{n-2}\times B_2$ if $n$ is odd \\& 
$H_{n-2}\times D_2$ if $n$ is odd \\&
$H_{n-2}\times B_2$ if $n$ is odd \\&
$D_{n-2}\times H_2$ if $n$ is odd \\&
$H_{n-2}\times H_2$ if $n$ is odd \\&
three non-direct product index 2 subgroups of $B_{n-2}\times D_2$ \\&
two non-direct product index 2 subgroups of $B_{n-2}\times H_2$ if $n$ is odd \\&
six non-direct product index 2 subgroups of $B_{n-2}\times B_2$ if $n$ is odd
\\ \hline	
\end{tabular}
\end{center}
\caption{The strong Gelfand subgroups of hyperoctahedral groups.}
\label{F:thelist}}
\end{table}

We will denote the natural copy of $S_n$ in $B_n$,
that is, $\{ (0,\sigma) \in F^n \times S_n:\ \sigma \in S_n\}$ by $\overline{S_n}$.
There is a unique subgroup $Z$ of $B_2$ that is conjugate to $\overline{S_2}$ and $Z\neq \overline{S_2}$.  
We will denote this copy of $S_2$ in $B_2$ by $\overline{S_2}'$. 
In Table~\ref{F:thelist}, 
we list all strong Gelfand subgroups of the hyperoctahedral group, $B_n$, 
collating the results of Propositions~\ref{P:Summary1}, \ref{P:Summary2}, \ref{P:Summary3}, and \ref{P:Summary4}. 
The index $n$ in this table is assumed to be at least 6 for the cases of $\gamma_K \in \{ S_n, A_n\}$, 
at least 7 for the case of $\gamma_K = S_{n-1}\times S_1$, and at least 8 for the case of $\gamma_K = S_{n-2}\times S_2$. 
In fact, there are some additional strong Gelfand subgroups for $n\leq 7$; in Propositions~\ref{P:Summary1} and \ref{P:Summary2} we list them explicitly for $n\leq 5$ and $\gamma_K \in \{ S_n, A_n\}$. 
In proving \cref{P:Summary3} we found two further strong Gelfand subgroups, which are included therein.
Though Table~\ref{F:thelist} does not provide an exhaustive list for $n\leq 7$, we remark that all subgroups in the table are still strong Gelfand if $n\leq 7$.
The strong Gelfand subgroups of $B_2$ are given in \cref{L:sgsB2}; those of $B_3$ are given in \cref{P:smallcases}, in which we also give the number of strong Gelfand subgroups of $B_n$ for each $4\leq n\leq 7$.

\begin{ack}
The first author is partially supported by a grant from the Louisiana Board of Regents. 
The authors thank Roman Avdeev for useful communications about the multiplicity-free representations of reductive algebraic groups, and thank Sin\'ead Lyle for advice on GAP.
We are also grateful for the support and resources provided by the Scientific Computing Section of the Research Support Division at OIST.
Finally, we thank an anonymous referee for providing data that enabled us to fix mistakes in a previous version.
\end{ack}

\section{Preliminaries}\label{S:Preliminaries}

We begin with setting up our conventions.

Throughout our manuscript, we will assume without further mention that our groups are finite. 
By a representation of a group we always mean a finite-dimensional complex representation.
The group-algebra of a group $G$ will be denoted by $\bbc[G]$. 
If $H$ is a subgroup of $G$, then we will write $H\leqslant G$.
The boldface $\mathbf{1}$ will always denote the one-dimensional vector space which 
is the trivial representation of every group. 
When we want to emphasize the group $G$ that acts trivially on $\mathbf{1}$, we will write $\mathbf{1}_G$.

We present some combinatorial notation that we will use in the sequel. 
For a positive integer $n$, a {\em partition of $n$} is a non-increasing sequence of 
positive integers $\lambda := (\lambda_1,\dots, \lambda_r)$ such that $\sum_{i=1}^r \lambda_i = n$. 
In this case, we will write $\lambda \vdash n$. 
Also, we will use the notation $|\lambda|$ for denoting the sum $\sum_{i=1}^r \lambda_i$.

\subsection{Semidirect products} 

Let $G$ be a group, and let $N$ and $H$ be two subgroups in $G$ such that 
\begin{enumerate}
\item $N$ is normal in $G$;
\item $G = HN$; 
\item $H\cap N = \{\id\}$.
\end{enumerate}
In this case, we say that $G$ is the {\em semidirect product of $N$ and $H$}, and we write $G = N\rtimes H$. 
Let $F$ be another group, and let $X$ be a $G$-set. 
The set of all functions from $X$ to $F$ is denoted by $F^X$. 
Since $F$ is a group, this set has the structure of a group with respect to point-wise multiplication.  
As $G$ acts on $X$, it also acts on $F^X$, hence, we can consider the group structure on the direct product $F^X \times G$
defined by 
\begin{align}\label{A:*}
(f,g)* (f',g') = (f g\cdot f',gg') \ \text{ for $(f,g),(f',g')\in F^X\times G$}.
\end{align}
In the sequel, when we think that it will not lead to a confusion, we will skip the multiplication sign $*$ from our notation.
The group $F^X \times G$ whose multiplication is defined in (\ref{A:*}) will be called the {\em wreath product of $F$ and $G$ with respect to $X$}; it will be denoted by $F\wr G$.

Next, we setup some conventions and terminology. 

\begin{enumerate}

\item Let $\id_G$ denote the identity element of $G$. 
The subgroup $F^X\times \{ \id_G\}$, denoted by $\overline{F^X}$, is called the {\em base subgroup} of $F\wr G$. 
In some places in the text, when confusion is unlikely, 
we will write $\overline{F}$ instead of $\overline{F^X}$.

\item The diagonal subgroup of the base group is isomorphic to $F$. 
By abusing the notation, we will denote this copy of $F$ in $F\wr G$ either by $\diag(F)$ or by $F$, 
depending on the context. 
Note that if $F$ is an abelian group, then $\diag(F)$ is a central subgroup in $F\wr G$.

\item Let $\id_F$ denote the identity element of $F$. 
Then the group $\overline{G}:=\{\id_F\}\times G$ is a subgroup of $F\wr G$ as well.
The diagonal copy of $F$ in $F\wr G$ intersects $\overline{G}$ trivially, therefore, $F\overline{G}$ is a subgroup
of $F\wr G$; it is isomorphic to $F\times G$ since both of the subgroups $F$ and $\overline{G}$ are normal subgroups in $F\overline{G}$. 
Some authors refer to $\overline{G}$ as the {\em passive factor} of $F\wr G$. 

\item If $F$ is the trivial group, then $F\wr G \cong G$. If $G$ is the trivial group, then $F\wr G \cong F^X$. 

\item If $G$ is a subgroup of $S_n$, then $F\wr G$ is defined with respect to the set $X:=\{1,\dots, n\}$. 
In particular, we have $F\wr S_1 = F$. 
We set as a convention that $F\wr S_0 = \{\id\}$.

\end{enumerate}

We finish this subsection by reviewing some simple properties of the wreath products.
The proofs of these facts can be found in~\cite[Proposition 2.1.3]{CST14}.

Let $H$ be a subgroup of $G$. Then we have 
\begin{align}\label{A:basicquotient}
(F\wr G)/ (F\wr H) \cong G/H.
\end{align}
Let $G_1$ and $G_2$ be two finite groups, and for $i\in \{1,2\}$, let $X_i$ be a $G_i$-set. 
To form the wreath product $F \wr (G_1\times G_2)$, we use the set $X:=  X_1 \sqcup X_2$ 
with the obvious action of $G_1\times G_2$. 
In this case, it is easy to check that there is an isomorphism of groups, 
$F\wr (G_1\times G_2) \cong (F\wr G_1) \times (F\wr G_2)$.
Let $G$ be a group such that $G_1\times G_2 \leqslant G$. 
In the sequel, we will be concerned with the subgroups of $F\wr G$ of the form $(F \wr G_1) \times \overline{G_2}$,
where the second factor $\overline{G_2}$ is the passive factor of $F\wr G_2$.

\subsection{Basic properties of induced representations.}\label{SS:Induction}

There are many equivalent ways of defining induced representation.
We provide a definition for completeness: 
If $H$ is a subgroup of $G$ and $V$ is a representation of $H$, 
then we view $V$ as a left $\bbc[H]$-module and $\bbc[G]$ as a $(\bbc[G],\bbc[H])$-bimodule. 
Thus we have the following $\bbc[G]$-module:
\begin{align}\label{A:defind}
\ind_H^G V := \bbc[G] \otimes_{\bbc[H]} V.
\end{align}
In loose terms, the $G$ representation afforded by $\ind_H^G V$ is called the {\em induced representation}. 
Notice that for $V= \mathbf{1}$, the r.h.s. of (\ref{A:defind}) is isomorphic to $\bbc[G/H]$.

The following fact, which is referred to as the {\em tensor identity} by some authors, 
is another useful fact that we will refer to later in the text.  
\begin{lem}\label{L:useful}
Let $V$ be a representation of $G$, and let $W$ be a representation of the subgroup $H$.
Then we have the following isomorphism of $G$ representations:
\[
V\otimes \ind_H^G (W) \cong \ind^G_H ( (\res^G_H V)\otimes W).
\]
\end{lem}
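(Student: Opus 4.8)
The plan is to exhibit an explicit $G$-equivariant isomorphism between the two sides, using the module-theoretic definition of induction from~(\ref{A:defind}). Unwinding the definitions, the right-hand side is $\bbc[G]\otimes_{\bbc[H]}(V\otimes W)$, where $V$ carries the restricted $H$-action and $G$ acts by left multiplication on the $\bbc[G]$-factor alone, while the left-hand side is $V\otimes(\bbc[G]\otimes_{\bbc[H]}W)$ equipped with the diagonal $G$-action. The natural candidate is the map
\[
\phi\colon V\otimes(\bbc[G]\otimes_{\bbc[H]}W)\longrightarrow \bbc[G]\otimes_{\bbc[H]}(V\otimes W),\qquad v\otimes(g\otimes w)\longmapsto g\otimes(g^{-1}v\otimes w),
\]
together with its proposed inverse $\psi\colon g\otimes(v\otimes w)\mapsto (gv)\otimes(g\otimes w)$.

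First I would verify that $\phi$ is well-defined. Since it is specified on elementary tensors through a representative $g\in G$ of an element of $\bbc[G]\otimes_{\bbc[H]}W$, the substantive point is that $\phi$ respects the balancing relation over $\bbc[H]$: for $h\in H$ one must check that $v\otimes(gh\otimes w)$ and $v\otimes(g\otimes hw)$ have the same image. A short computation in the target, using the relation $gh\otimes x = g\otimes hx$ and that the $H$-action on $V$ is the restriction of the $G$-action, reduces both images to $g\otimes(g^{-1}v\otimes hw)$, confirming well-definedness. The map $\psi$ requires no such check, being defined directly on $\bbc[G]\times(V\otimes W)$ and manifestly $\bbc[H]$-balanced.

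Next I would check $G$-equivariance of $\phi$. Applying $\phi$ after the diagonal action $g'\cdot(v\otimes(g\otimes w))=(g'v)\otimes(g'g\otimes w)$ yields $g'g\otimes(g^{-1}v\otimes w)$, which is precisely $g'\cdot\phi(v\otimes(g\otimes w))$ under left multiplication on the $\bbc[G]$-factor. Finally, a direct computation shows that $\psi\circ\phi$ and $\phi\circ\psi$ are both the identity on elementary tensors, so $\phi$ is the desired isomorphism of $G$-representations.

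The only genuinely delicate point is the well-definedness of $\phi$ over $\bbc[H]$; everything else is a mechanical verification on elementary tensors. One could alternatively deduce the identity abstractly via the projection formula for the adjoint pair $(\ind_H^G,\res_H^G)$, but the explicit map above is both shorter and self-contained.
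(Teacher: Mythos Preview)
Your proof is correct: the maps $\phi$ and $\psi$ are well-defined, mutually inverse, and $G$-equivariant exactly as you describe. The paper does not actually prove this lemma; it is stated there as a known fact (the ``tensor identity''), so there is no proof to compare against, and what you have written is precisely the standard argument one would supply.
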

If we set $W = \mathbf{1}$ in Lemma~\ref{L:useful}, then we see that 
$V\otimes \bbc[G/H] \cong \ind^G_H \res^G_H V$.

Let $H$ be a proper subgroup of $G$, and let $V$ be an irreducible representation of $H$. 
It is well-known that the dimension of $\text{ind}_H^G V$ is equal to $[G:H]\dim V$, where $[G:H]$ is the index of $H$ in $G$. 
Consequently, for any two representations $V_1$ and $V_2$ of $H$, we have 
\[
\ind_H^G (V_1\otimes V_2) \ncong (\ind_H^G V_1) \otimes (\text{ind}_H^G V_2).
\]
Fortunately, there is a favorable situation where we have a similar factorization.

\begin{lem}\label{L:ellproduct}
Let $G_1,\dots, G_\ell$ be a list of finite groups.
For $i\in \{1,\dots, \ell\}$, let $H_i$ be a subgroup of $G_i$, and let  
$V_i$ be a representation of $H_i$.
Then we have 
\[
\text{ind}_{H_1\times \cdots \times H_\ell}^{G_1\times \cdots \times G_\ell} (V_1\boxtimes \cdots \boxtimes V_\ell) 
\cong (\text{ind}_{H_1}^{G_1} V_1 ) \boxtimes \cdots \boxtimes
(\text{ind}_{H_\ell}^{G_\ell} V_\ell).
\]
\end{lem}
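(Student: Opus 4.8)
The plan is to prove the isomorphism at the level of characters, using the fact (valid since we work in characteristic $0$) that two finite-dimensional complex representations of a finite group are isomorphic if and only if they have the same character. By induction on $\ell$ the statement reduces to the case $\ell = 2$, but the character computation handles all $\ell$ simultaneously, so I would carry it out directly for general $\ell$.

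First I would recall the Frobenius formula for an induced character: for a subgroup $H \leqslant G$ and a representation $V$ of $H$ with character $\chi_V$,
\[
\chi_{\ind_H^G V}(g) = \frac{1}{|H|} \sum_{\substack{x \in G \\ x^{-1} g x \in H}} \chi_V(x^{-1} g x).
\]
Applying this to $G = G_1 \times \cdots \times G_\ell$, $H = H_1 \times \cdots \times H_\ell$, and $V = V_1 \boxtimes \cdots \boxtimes V_\ell$, I would evaluate the left-hand character at a point $(g_1, \dots, g_\ell)$. The key structural observation is that conjugation and subgroup membership in a direct product are coordinatewise: writing $x = (x_1, \dots, x_\ell)$, the condition $x^{-1}(g_1, \dots, g_\ell)x \in H_1 \times \cdots \times H_\ell$ is equivalent to $x_i^{-1} g_i x_i \in H_i$ for every $i$. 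Hence the index set of the summation factors as a product of the $\ell$ individual index sets.

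Next I would use multiplicativity of the outer tensor product on characters, namely $\chi_{V_1 \boxtimes \cdots \boxtimes V_\ell}(h_1, \dots, h_\ell) = \prod_{i=1}^\ell \chi_{V_i}(h_i)$, together with the factorization $|H_1 \times \cdots \times H_\ell| = \prod_{i=1}^\ell |H_i|$. Substituting these into the Frobenius formula, the single multi-index sum splits into a product of $\ell$ independent sums, and I would recognize each factor as $\chi_{\ind_{H_i}^{G_i} V_i}(g_i)$. This yields
\[
\chi_{\ind_{H_1 \times \cdots \times H_\ell}^{G_1 \times \cdots \times G_\ell}(V_1 \boxtimes \cdots \boxtimes V_\ell)}(g_1, \dots, g_\ell) = \prod_{i=1}^\ell \chi_{\ind_{H_i}^{G_i} V_i}(g_i),
\]
and the right-hand side is precisely the character of $(\ind_{H_1}^{G_1} V_1) \boxtimes \cdots \boxtimes (\ind_{H_\ell}^{G_\ell} V_\ell)$ evaluated at $(g_1, \dots, g_\ell)$, again by multiplicativity of outer tensor product characters. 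Equality of characters gives the claimed isomorphism.

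The only step needing genuine care --- and the main (if modest) obstacle --- is the factorization of the summation index set: one must verify cleanly that $\{x \in G_1 \times \cdots \times G_\ell : x^{-1} g x \in H_1 \times \cdots \times H_\ell\}$ is exactly $\prod_i \{x_i \in G_i : x_i^{-1} g_i x_i \in H_i\}$, so that the separation of the sum into a product is justified. Alternatively, one could give a purely module-theoretic proof, reducing to $\ell = 2$ and exhibiting the explicit map $(a_1 \otimes a_2) \otimes (m_1 \otimes m_2) \mapsto (a_1 \otimes m_1) \otimes (a_2 \otimes m_2)$ under the identifications $\bbc[G_1 \times G_2] \cong \bbc[G_1] \otimes \bbc[G_2]$ and $\bbc[H_1 \times H_2] \cong \bbc[H_1] \otimes \bbc[H_2]$; there the obstacle shifts to checking that this map is well defined with respect to the balancing relations over $\bbc[H_1] \otimes \bbc[H_2]$ and that it is a bijection, which is routine but more bookkeeping-heavy than the character argument.
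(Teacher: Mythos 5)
Your proof is correct, but it takes a different route from the paper: the paper simply reduces to the case $\ell=2$ and cites \cite[Theorem 43.2]{CurtisReiner}, which establishes the isomorphism module-theoretically (essentially the explicit map you sketch in your closing alternative). Your character-theoretic argument is self-contained and handles all $\ell$ in one stroke: the Frobenius formula, coordinatewise conjugation in a direct product, and multiplicativity of characters under $\boxtimes$ immediately give
\[
\chi_{\ind_{H_1\times\cdots\times H_\ell}^{G_1\times\cdots\times G_\ell}(V_1\boxtimes\cdots\boxtimes V_\ell)}(g_1,\dots,g_\ell)
=\prod_{i=1}^{\ell}\chi_{\ind_{H_i}^{G_i}V_i}(g_i),
\]
and since the ambient setting is finite groups in characteristic $0$, equality of characters implies isomorphism (this works for arbitrary finite-dimensional representations, not just irreducibles, so no hypothesis is missing). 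The step you flag as delicate --- factorization of the summation set --- is indeed the only point requiring care, and it holds because both conjugation and membership in $H_1\times\cdots\times H_\ell$ are checked coordinatewise. What the two approaches buy: yours avoids all tensor-product bookkeeping and stays entirely within elementary character theory, at the cost of producing only an abstract (non-canonical) isomorphism, which suffices for everything this paper uses the lemma for (multiplicity computations); the cited module-theoretic proof yields an explicit natural isomorphism and remains valid in settings where character arguments fail, e.g.\ over fields of positive characteristic.
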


\begin{proof}
Clearly, it suffices to prove our claim for $\ell=2$. 
But this case is proved in~\cite[Theorem 43.2]{CurtisReiner}.
\end{proof}

\subsection{Mackey Theory}\label{SS:Mackey}
 
In this subsection, we will mention some useful results of Mackey describing the relationship between 
the induced representations of two subgroups of $G$.
A good exposition of the main ideas of these results is given in~\cite{CST09}.

Let $H,K\leq G$ be two subgroups with a system $S$ of representatives for the double $(H,K)$-cosets in $G$.
Let $(\sigma,V)$ and $(\rho,W)$ be representations of $H$ and $K$, respectively. 
For $s$ in $S$, let $G_s$ denote $H\cap sKs^{-1}$, and let $W_s$ denote the representation $\rho_s :G_s \to GL(W)$ by setting 
$\rho_s(g) w = \rho (s^{-1}gs) w$ for all $g\in G_s$, and $w\in W$. 
{\em Mackey's formula} states that 
\begin{align}\label{A:Mackey}
\Hom_G( \ind_H^G V , \ind_K^G W ) = \bigoplus_{s\in S} \Hom_{G_s} ( \res_{G_s}^H V ,W_s).
\end{align}  
In this notation, a closely related fact, which is called {\em Mackey's lemma}, states that 
\begin{align}\label{A:MackeyLemma}
\res_{H}^G \ind_K^G W = \bigoplus_{s\in S} \ind^H_{G_s} W_s. 
\end{align}

\subsection{Generalized Johnson schemes}\label{SS:Johnson}

Let $h$ be an element of the set $\{0,\dots, n\}$, and let $\mathbf{1}$ denote the trivial 
representation of the Young subgroup $S_{n-h}\times S_h$ of the symmetric group $S_n$. 
It is well-known that $(S_n, S_h\times S_{n-h})$ is a Gelfand pair.
Indeed, by using Pieri's rule~\cite[Corollary 3.5.14]{CST10}, it is easy to see that 
\begin{align}\label{A:Johnson}
\text{ind}_{S_h\times S_{n-h}}^{S_n} \mathbf{1}= \oplus_{j=0}^h S^{(n-j,j)},
\end{align}
where $S^{(n-j,j)}$ is the Specht module indexed by the partition $(n-h,h)$.

\begin{defn}
Let $(F,H)$ be Gelfand pair. Let $n$ be a positive integer, and let $h$ be an element of the set $\{0,\dots, n\}$.
A pair of the form $(F\wr S_n, H\wr S_h \times F\wr S_{n-h})$ is called a {\em generalized Johnson scheme}. 
\end{defn}

According to~\cite[Theorem 3.2.19 (ii)]{CST14}, every generalized Johnson scheme is a Gelfand pair. 
Evidently, if $F=H$, then $(F,H)$ is a Gelfand pair, hence, we have 
\[
H\wr S_h \times F\wr S_{n-h} = F\wr S_h \times F\wr S_{n-h} = F \wr (S_h\times S_{n-h}).
\] 
In particular, the pair $(F \wr S_n, F \wr (S_h\times S_{n-h}))$ is a Gelfand pair.

\subsection{Characterizations of the Gelfand property.}

We can take quotients by normal subgroups and preserve the Gelfand property.
\begin{lem}\label{L:iff1}
Let $N$ and $H$ be two subgroups of $G$ such that $N$ is normal in $G$ and $N\leqslant H$. 
Then we have $(G,H)$ is a Gelfand pair if and only if $(G/N,H/N)$ is a Gelfand pair. 
\end{lem}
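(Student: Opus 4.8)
The plan is to prove the equivalence by directly relating the multiplicity structures of induced representations on both sides via the standard lifting/inflation correspondence for representations of quotient groups. The key observation is that, since $N$ is normal in $G$ and $N \leqslant H$, the irreducible representations of $G/N$ (respectively $H/N$) are in natural bijection with those irreducible representations of $G$ (respectively $H$) that contain $N$ in their kernel, i.e. those on which $N$ acts trivially. I would denote this inflation operation by writing $\overline{V}$ for the representation of $G$ (or $H$) obtained by pulling back a representation $V$ of $G/N$ (or $H/N$) along the canonical projection.

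The main step is to establish the compatibility of induction with inflation: if $W$ is an irreducible representation of $H/N$ and $\overline{W}$ is its inflation to $H$, then there is an isomorphism of $G$-representations
\[
\ind_H^G \overline{W} \cong \overline{\ind_{H/N}^{G/N} W},
\]
where the right-hand side denotes the inflation to $G$ of the induced $G/N$-representation. This follows because $N$ acts trivially on $\overline{W}$ and $N$ is normal, so $N$ acts trivially on the induced module $\ind_H^G \overline{W} = \bbc[G] \otimes_{\bbc[H]} \overline{W}$; identifying $\bbc[G/N] \cong \bbc[G] \otimes_{\bbc[N]} \mathbf{1}$ gives the claimed factorization of the induction through the quotient. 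First I would verify this isomorphism carefully at the level of modules, using that $\bbc[G]/(\bbc[G] \cdot I_N) \cong \bbc[G/N]$ where $I_N$ is the augmentation ideal of $\bbc[N]$.

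Granting this compatibility, the two directions follow immediately. For the forward direction, suppose $(G/N, H/N)$ is a Gelfand pair, so $\ind_{H/N}^{G/N} \mathbf{1}$ is multiplicity-free as a $G/N$-representation. Its inflation $\ind_H^G \mathbf{1}_H$ is then multiplicity-free as a $G$-representation: inflation preserves multiplicity-freeness because it is an injection on irreducibles, and no irreducible $G$-constituent can have $N$ acting nontrivially (every constituent of $\ind_H^G \mathbf{1}_H$ is killed by $N$, since $N$ acts trivially on the whole module). Thus $(G,H)$ is a Gelfand pair. Conversely, if $(G,H)$ is a Gelfand pair, then $\ind_H^G \mathbf{1}_H$ is multiplicity-free over $G$; since every constituent factors through $G/N$, this module is also multiplicity-free when viewed through the quotient, and the isomorphism above identifies it with the inflation of $\ind_{H/N}^{G/N} \mathbf{1}$, giving multiplicity-freeness of the latter over $G/N$.

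I expect the main obstacle to be stating and verifying the inflation–induction compatibility isomorphism cleanly, and in particular making precise that the inflation functor sets up a multiplicity-preserving bijection between $N$-trivial irreducibles of $G$ and all irreducibles of $G/N$. The remaining bookkeeping is routine once this is in place. A cleaner packaging would avoid modules entirely and argue purely in terms of characters: the trivial-kernel constituents of $\ind_H^G \mathbf{1}_H$ correspond under inflation to the constituents of $\ind_{H/N}^{G/N}\mathbf{1}$ with matching multiplicities, so one side is multiplicity-free if and only if the other is.
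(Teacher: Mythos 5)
Your proposal is correct. The paper itself offers no proof of this lemma---it is stated in the preliminaries as a standard fact (and immediately rephrased in Remark~\ref{R:iff1})---so there is nothing to compare against; your argument supplies exactly the expected standard proof. The two pillars you identify are both sound: normality of $N$ together with $N\leqslant H$ gives $n\cdot(g\otimes w)=g\otimes(g^{-1}ng)w=g\otimes w$, so $N$ acts trivially on $\ind_H^G\overline{W}$, and the explicit map $g\otimes w\mapsto gN\otimes w$ (or a Frobenius-reciprocity computation against inflated irreducibles) identifies it with the inflation of $\ind_{H/N}^{G/N}W$; since inflation is a multiplicity-preserving bijection between irreducibles of $G/N$ and the $N$-trivial irreducibles of $G$, multiplicity-freeness transfers in both directions.
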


\begin{Remark}\label{R:iff1}
Lemma~\ref{L:iff1} can be restated as follows: Let $\varphi : G \to G'$ be a homomorphism such that $\ker \varphi \leqslant H$.
Then $(G,H)$ is a Gelfand pair if and only if $(\varphi(G),\varphi(H))$ is a Gelfand pair.
\end{Remark}

\begin{cor}\label{C:iff}
Let $G$ and $F$ be two finite groups for which we can define $F\wr G$. 
Let $H$ be a subgroup of $G$. 
Then $(G,H)$ is a Gelfand pair if and only if $(F\wr G, F\wr H)$ is a Gelfand pair.
\end{cor}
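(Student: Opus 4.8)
The plan is to prove Corollary~\ref{C:iff} by reducing to Lemma~\ref{L:iff1} via the normal subgroup $N = \overline{F^X}$, the base group of $F\wr G$. First I would observe that the base group $N=\overline{F^X}$ is normal in $F\wr G$ by construction of the wreath product: the defining multiplication in~(\ref{A:*}) shows that conjugating an element $(f,\id_G)$ of the base group by an arbitrary $(f',g')$ keeps the second coordinate equal to $\id_G$, so $N$ is stable under conjugation. Moreover, since $H\leqslant G$, the subgroup $F\wr H$ contains the full base group $\overline{F^X}$ as well (the base group of $F\wr H$ is the same set of functions $F^X$, since the wreath product over $H\leqslant G$ uses the same underlying set $X=\{1,\dots,n\}$), so we have $N\leqslant F\wr H$.

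Next I would identify the quotients. By the basic quotient property~(\ref{A:basicquotient}), setting $H=G$ (equivalently, directly from the semidirect product structure) gives $(F\wr G)/\overline{F^X}\cong G$, and the isomorphism is induced by the canonical projection $\pi_G\colon F\wr G\to G$ whose kernel is precisely the base group $N$. Under this same projection, $F\wr H$ maps onto $H$ with kernel $N$ as well, so $(F\wr H)/N\cong H$. Concretely, applying~(\ref{A:basicquotient}) to the pair $H\leqslant G$ yields $(F\wr G)/(F\wr H)\cong G/H$, which is consistent with $G/H$ being the quotient of $(F\wr G)/N\cong G$ by $(F\wr H)/N\cong H$.

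Having set up $N\leqslant F\wr H\leqslant F\wr G$ with $N$ normal in $F\wr G$, the conclusion is immediate from Lemma~\ref{L:iff1} (or equivalently its reformulation in Remark~\ref{R:iff1} applied to $\varphi=\pi_G$): the pair $(F\wr G, F\wr H)$ is a Gelfand pair if and only if the quotient pair $\bigl((F\wr G)/N,\,(F\wr H)/N\bigr)$ is a Gelfand pair, and we have just identified this quotient pair with $(G,H)$. This gives exactly the desired equivalence.

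I do not anticipate a serious obstacle here; the entire content is bookkeeping to verify the three hypotheses of Lemma~\ref{L:iff1}, namely that $N$ is normal in $F\wr G$ and that $N\leqslant F\wr H$. The one point requiring a little care is making sure that the base group of $F\wr H$ genuinely coincides with the base group $N$ of $F\wr G$ as a subset of functions $F^X$, rather than being a smaller group of functions supported on a sub-$H$-set; this is clear once one recalls the convention stated in the preliminaries that, for $G\leqslant S_n$, the wreath product is always formed with respect to the fixed set $X=\{1,\dots,n\}$, so passing from $G$ to the subgroup $H$ does not shrink $X$ and hence does not shrink the base group.
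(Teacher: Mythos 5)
Your proposal is correct and follows essentially the same route as the paper: both take $N=\overline{F^X}$ as the normal subgroup of $F\wr G$ contained in $F\wr H$, identify the quotient pair with $(G,H)$, and invoke Lemma~\ref{L:iff1}. Your extra care about the base group of $F\wr H$ coinciding with that of $F\wr G$ is a fair point of bookkeeping, but it is the same argument the paper gives.
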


\begin{proof}
Let $X$ be a $G$-set such that the semidirect product of $F^X\rtimes G$ is the wreath product $F\wr G$, and $F^X \rtimes H$ is the wreath product $F\wr H$. 
Since $F^X$ is a normal subgroup of $F\wr G$, the proof follows from Lemma~\ref{L:iff1}. 
\end{proof}

In the spirit of Corollary~\ref{C:iff}, we can fix the second factor and choose a Gelfand pair in the first factor. 
\begin{lem}{\cite[Theorem 3.3.18]{CST14}}\label{L:3.3.18}
Let $(F,H)$ be a finite Gelfand pair, and let $G$ be a finite group. 
Then $(F\wr G, H\wr G)$ is a Gelfand pair. 
\end{lem}

If $F$ is an abelian group, then $(F,\{\id\})$ is a Gelfand pair. 
Hence, Lemma~\ref{L:3.3.18} implies that $(F\wr G, G)$ is a Gelfand pair for every finite abelian group $F$,
and for every finite group $G$.

\subsection{A brief review of representations of $F\wr S_n$}\label{SS:Repsofwreaths}

The purpose of this section is to review the construction of the irreducible representations of $F\wr S_n$, 
where $F$ is a finite group.
We loosely follow James and Kerber's book~\cite[Section 4.4]{JamesKerber}.

Let $W_1,\dots, W_r$ be the complete list of pairwise inequivalent and irreducible representations of $F$.
If $n$ denotes a positive integer, then every irreducible representation $D^*$ of $F^n$ is given by an outer tensor product of the form 
\[
D^*:=D_1\boxtimes \cdots \boxtimes D_n, 
\]
where $D_i \in \{W_1,\dots, W_r\}$. 
For $j\in \{1,\dots, r\}$, let $n_j$ denote the number of factors of $D^*$ that are isomorphic to $D_j$. 
Of course, some of these numbers might be equal to zero, nevertheless, the terms of the sequence $\mathbf{n}:= (n_1,\dots, n_r)$ sum to $n$. 
We will call such a sequence of nonnegative integers a {\em composition of $n$}. 
The composition $\mathbf{n}$ will be called the {\em type of $D^*$}.
Since $S_n$ permutes the factors of $F^n$, two irreducible representations of $F^n$ are $S_n$-conjugate 
if and only if they have the same type. 
An important group theoretic invariant of the irreducible representation $D^*$, called the {\em inertia group of $D^*$}, 
is given by 
\begin{align}\label{D:inertia}
F\wr S(\mathbf{n}) := F\wr (S_{n_1} \times \cdots \times S_{n_r}) = F\wr S_{n_1} \times \cdots \times F\wr S_{n_r}.
\end{align}

For every irreducible representation $D$ of $F$, we have a representation of type $\mathbf{n} = (n)$ of $F\wr S_n$,
which is denoted by $D^{(n)}$, and defined as follows: The underlying vector space of $D^{(n)}$ is $V=D^{\boxtimes n}$.
If $v:= v_1\otimes \cdots \otimes v_n$ is a basis element for $V$, 
then the action of an element $((f_1,\dots, f_n),\pi)$ of $F^n\wr S_n$ on $v$ is given by 
\[
((f_1,\dots, f_n),\pi)\cdot v_1\otimes \cdots \otimes v_n := (f_1\cdot v_{\pi(1)} \otimes \cdots \otimes f_n \cdot v_{\pi(n)}).
\]
At the same time, for every irreducible representation $D''$ of $S_n$, we have a corresponding 
irreducible representation of $F\wr S_n$.
It is defined as follows: If $(f,\pi)$ is an element of $F\wr S_n$ and $v$ is a vector from $D''$, then the action of $(f,\pi)$ on $v$ 
is given by   
\begin{align}\label{A:inflationfromSn}
(f,\pi) \cdot v := \pi\cdot v.
\end{align}
\begin{Remark}\label{R:tobeused}
Another name for the representation that is defined in (\ref{A:inflationfromSn}) is {\em inflation}.
More generally, if $N$ is a normal subgroup of a finite group $M$ and $\rho: M/N\to GL(V)$ is a representation,
then we have an associated representation $\bar{\rho}: M\to GL(V)$, $\bar{\rho}(g) := \rho(gN)$, 
which is called the {\em inflation of $\rho$}.
Since the canonical quotient map $M\to M/N$ is a surjective homomorphism, if $\rho$ is irreducible, then so is its inflation. 
\end{Remark}

We now consider the {\em inner tensor product of $D^{(n)}$ and $D''$}, that is, 
\begin{align}\label{A:asin}
(D; D'') :=D^{(n)} \otimes D'',
\end{align}
on which $F\wr S_n$ acts diagonally. 
In general, the inner tensor products of irreducible representations are reducible, however, 
(\ref{A:asin}) is an irreducible representation for $F\wr S_n$. 
Indeed, according to Specht (but see~\cite[Theorem 4.4.3]{JamesKerber})
the complete list of pairwise inequivalent and irreducible representations of 
$F\wr S_n$ is comprised of representations of the form
\begin{align}\label{A:apply}
\ind_{F\wr S(\mathbf{n})}^{F\wr S_n} (D_1;D_1'')\boxtimes \cdots \boxtimes (D_r; D_r''),
\end{align}
where $\mathbf{n}=(n_1,\dots, n_r)$ is a composition of $n$, $D_i$'s ($1\leq i \leq r$) are pairwise
inequivalent irreducible representations of $F$, 
and $D_i''$ ($1\leq i \leq r$) is an irreducible representation of $S_{n_i}$.

\section{A Useful Lemma and Induction From the Passive Factor}\label{S:PassiveFactor}

The main goal of this section is to prove a technical but a useful lemma that we will use repeatedly in the sequel.
Also, we will show that in general the pair $(F\wr S_n, S_n)$ need not be a strong Gelfand pair.

We begin with a lemma which we will use several times in the sequel. 
To keep its statement simple, we introduce some of the notation of its hypothesis here:
$F$ will denote a group, and $D$ will be an irreducible representation of $F$. 
For two nonnegative integers $n$ and $k$ such that $0\leq k \leq n$, we will denote by 
$E$ (resp.~$E'$) an irreducible representation of $S_{n-k}$ (resp.~$S_k$). 
By $U$, we will denote the $S_n$ representation $U:=\ind^{S_n}_{S_{n-k}\times S_k} E\boxtimes E'$.
We assume that the decomposition of $U$ into irreducible $S_n$ representations is given by 
\[
U \cong {m_1} E_1\oplus \cdots \oplus {m_r} E_r.
\]

\begin{lem}\label{L:nail}
We maintain the notation from the previous paragraph. 
If $A$ is the $F\wr S_n$ representation defined by 
$\ind^{ F\wr S_n}_{F\wr S_{n-k}  \times  F\wr S_k}  (D; E) \boxtimes (D; E')$, 
then its decomposition into irreducible subrepresentations is given by 
\begin{align}\label{A:A01}
A \cong   \bigoplus_{i=1}^r m_i  (D;E_i).
\end{align}
In particular, if $U$ is a multiplicity-free $S_n$ representation, then $A$ is a multiplicity-free $F\wr S_n$ representation. 
\end{lem}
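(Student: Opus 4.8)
The plan is to transport the known $S_n$-decomposition $U \cong m_1E_1\oplus\cdots\oplus m_rE_r$ across the wreath product by means of the tensor identity (Lemma~\ref{L:useful}) together with the commutation of induction and inflation. Write $G := F\wr S_n$ and $H := F\wr S_{n-k}\times F\wr S_k = F\wr(S_{n-k}\times S_k)$, so that $H\leqslant G$, the base group $F^n$ is normal in $G$, $F^n\leqslant H$, and the quotients are $G/F^n\cong S_n$ and $H/F^n\cong S_{n-k}\times S_k$. For a representation $V$ of $S_n$ write $\widetilde V$ for its inflation to $G$; with the abuse of notation of Subsection~\ref{SS:Repsofwreaths}, the $S_n$-representation $E_i$ is identified with its inflation $\widetilde{E_i}$, so that $(D;E_i) = D^{(n)}\otimes\widetilde{E_i}$.

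I would first record two bookkeeping identities. Directly from the defining action of $D^{(n)}$ given in Subsection~\ref{SS:Repsofwreaths}, an element of $H$ preserves the two blocks $\{1,\dots,n-k\}$ and $\{n-k+1,\dots,n\}$, whence $\res^G_H D^{(n)} \cong D^{(n-k)}\boxtimes D^{(k)}$. Combining this with the standard identity $(A_1\boxtimes A_2)\otimes(B_1\boxtimes B_2)\cong (A_1\otimes B_1)\boxtimes(A_2\otimes B_2)$ for inner tensor products of representations of a direct product, I obtain
\[
(\res^G_H D^{(n)})\otimes (E\boxtimes E') \cong (D^{(n-k)}\otimes E)\boxtimes(D^{(k)}\otimes E') = (D;E)\boxtimes(D;E'),
\]
where on the left $E\boxtimes E'$ denotes the $H$-representation that is the inflation of the $(S_{n-k}\times S_k)$-representation $E\boxtimes E'$ along $H/F^n\cong S_{n-k}\times S_k$.

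The main step then applies the tensor identity. Using the displayed identity and Lemma~\ref{L:useful} with $V = D^{(n)}$ a representation of $G$ and $W = E\boxtimes E'$ a representation of $H$,
\[
A = \ind_H^G\bigl((D;E)\boxtimes(D;E')\bigr) \cong \ind_H^G\bigl((\res^G_H D^{(n)})\otimes(E\boxtimes E')\bigr) \cong D^{(n)}\otimes \ind_H^G(E\boxtimes E').
\]
Since $E\boxtimes E'$ is inflated from $H/F^n$, and induction commutes with inflation --- for the normal subgroup $N = F^n$ of $G$ with $N\leqslant H$, inflating an $(H/N)$-representation to $H$ and then inducing to $G$ gives the same result as inducing to $G/N$ and then inflating to $G$ --- it follows that $\ind_H^G(E\boxtimes E')\cong \widetilde U$, the inflation of $U=\ind^{S_n}_{S_{n-k}\times S_k}(E\boxtimes E')$ to $G$.

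Finally I would assemble the pieces: inflation is additive, so $\widetilde U\cong\bigoplus_{i=1}^r m_i\widetilde{E_i}$, and therefore
\[
A\cong D^{(n)}\otimes\bigoplus_{i=1}^r m_i\widetilde{E_i} \cong \bigoplus_{i=1}^r m_i\bigl(D^{(n)}\otimes E_i\bigr) = \bigoplus_{i=1}^r m_i\,(D;E_i),
\]
which is exactly~(\ref{A:A01}). For the final assertion, recall from Subsection~\ref{SS:Repsofwreaths} that for the single irreducible $D$ of $F$ and the composition $\mathbf{n}=(n)$ the representations $(D;E_i)$ are irreducible and pairwise inequivalent for distinct $E_i$; hence if every $m_i=1$ then $A$ is a sum of pairwise distinct irreducibles, that is, multiplicity-free. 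The only steps demanding care are the induction--inflation commutation and the preceding identification of $(\res^G_H D^{(n)})\otimes(E\boxtimes E')$ with $(D;E)\boxtimes(D;E')$, since these require tracking the wreath-product action on the two blocks; once they are in place, the tensor identity and additivity of inflation do the rest.
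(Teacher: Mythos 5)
Your proof is correct and follows essentially the same route as the paper's: rewriting $(D;E)\boxtimes(D;E')$ as the inner tensor product $D^{(n)}\otimes(E\boxtimes E')$ over $F\wr(S_{n-k}\times S_k)$, applying the tensor identity (Lemma~\ref{L:useful}), and identifying $\ind^{F\wr S_n}_{F\wr(S_{n-k}\times S_k)}(E\boxtimes E')$ with the inflation of $U$. Your only addition is making explicit the induction--inflation commutation that the paper compresses into the remark that $F^n$ acts trivially on $E\boxtimes E'$, which is a welcome but inessential refinement.
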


\begin{proof} 
By its definition, the induced representation 
$\ind^{ F\wr S_n}_{F\wr S_{n-k}  \times  F\wr S_k}  (D; E) \boxtimes (D; E') $ is given by 
\begin{align}\label{A:bydef}
 \ind^{ F\wr S_n}_{F\wr S_{n-k}  \times  F\wr S_k}  (D; E) \boxtimes (D; E') 
 = \bbc[ F\wr S_n] \otimes_{\bbc[ F\wr S_{n-k}  \times  F\wr S_k]}( (D; E) \boxtimes (D; E') ).
\end{align}
The outer tensor product $ (D; E) \boxtimes (D; E')$, 
as a representation of $F\wr S_{n-k}  \times  F\wr S_k$, can be rewritten as an inner tensor product 
of representations of $F\wr (S_{n-k}\times S_k)$ as follows: 
\begin{align}\label{A:DEDE'}
(D; E) \boxtimes (D; E') &=   (D^{(n-k)}\boxtimes D^{(k)}) \otimes (E \boxtimes E') = D^{(n)} \otimes (E\boxtimes E'),
\end{align}
where $F\wr (S_{n-k}\times S_{k})$ acts on $D^{(n)}$, as a subgroup of $F\wr S_n$, by $F^n$;
it acts on $E \boxtimes E'$ by $S_{n-k}\times S_{k}$. 
By substituting (\ref{A:DEDE'}) in (\ref{A:bydef}), we see that
\begin{align}\label{A:bydef2}
\bbc[ F\wr S_n] \otimes_{\bbc[ F\wr S_{n-k}  \times  F\wr S_k]}   D^{(n)} \otimes (E\boxtimes E')
= \ind^{F\wr S_n}_{F\wr (S_{n-k}\times S_k)} D^{(n)} \otimes (E\boxtimes E').
\end{align}
Since $F\wr (S_{n-k}\times S_k)$ acts on $D^{(n)}$ as a subgroup of $F\wr S_n$, by definition, 
$D^{(n)}$ is the restricted representation $\res^{F\wr S_n}_{F\wr (S_{n-k}\times S_k)} D^{(n)}$. 
Thus, by Lemma~\ref{L:useful}, we have 
\begin{align}\label{A:bydef3}
\ind^{F\wr S_n}_{F\wr (S_{n-k}\times S_k)} D^{(n)} \otimes (E\boxtimes E') 
&=  \ind^{F\wr S_n}_{F\wr (S_{n-k}\times S_k)} ((\res^{F\wr S_n}_{F\wr (S_{n-k}\times S_k)} D^{(n)}) \otimes (E\boxtimes E')) \notag  \\
&\cong D^{(n)} \otimes \ind^{F\wr S_n}_{F\wr (S_{n-k}\times S_k)} E\boxtimes E'.
\end{align}
Since $F^n$ acts trivially on $E\boxtimes E'$, 
we know that $\ind^{F\wr S_n}_{F\wr (S_{n-k}\times S_k)} E\boxtimes E' = {m_1}E_1 \oplus \cdots \oplus m_r E_r $.
Therefore, (\ref{A:bydef3}) is given by 
\begin{align*}
D^{(n)} \otimes ( {m_1}E_1 \oplus \cdots \oplus m_r E_r ) &= 
{m_1} D^{(n)}\otimes E_1 \oplus \cdots \oplus m_r D^{(n)}\otimes E_r   \\ 
&= 
{m_1} (D; E_1) \oplus \cdots \oplus m_r(D; E_r). 
\end{align*}
This finishes the proof of our first claim. 
Our second assertion follows from (\ref{A:A01}) by setting all of the $m_i$'s ($1\leq i \leq r$) to 1. 
This finishes the proof of our lemma.
\end{proof}

Let us present a straightforward consequence of Lemma~\ref{L:nail}.
We will use the following notation: 
Let $D$ be an irreducible representation of $F$.
Let $n$ be a positive integer, and let $\mathbf{b}=(b_1,\dots, b_r)$ be a composition of $n$. 
For $i\in \{1,\dots, r\}$, let $E_i$ be an irreducible representation of $S_{b_i}$,
and let $U$ denote the $S_n$ representation $U:=\ind^{S_n}_{S(\mathbf{b})} (\boxtimes_{i=1}^r E_i)$
whose decomposition into irreducible constituents is given by 
\[
U = \bigoplus_{\lambda \vdash n } m_\lambda  S^\lambda , 
\]
where $S^\lambda$ is the Specht module indexed by the partition $\lambda$, and $m_\lambda \in \Z_{\geq 0}$
is the multiplicity of $S^\lambda$ in $U$.

\begin{cor}\label{C:nail2}
We maintain the notation of the previous paragraph. 
If $A$ is the induced representation $\ind^{ F\wr S_n}_{F\wr S(\mathbf{b})}  (D; E_1) \boxtimes\cdots
\boxtimes (D; E_r)$, 
then its decomposition into irreducible constituents is given by 
\begin{align}\label{A:Pnail2}
A=\bigoplus_{i=1}^r m_\lambda (D;S^\lambda).
\end{align}
In particular, if $U$ is a multiplicity-free $S_n$ representation, then $A$ is a multiplicity-free $F\wr S_n$ representation. 
\end{cor}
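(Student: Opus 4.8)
The plan is to reduce Corollary~\ref{C:nail2} to an iterated application of Lemma~\ref{L:nail}. The key observation is that the inertia group factors as $F\wr S(\mathbf{b}) = F\wr S_{b_1}\times\cdots\times F\wr S_{b_r}$, which is exactly the kind of multi-factor product that Lemma~\ref{L:nail} treats when $\ell=2$ and which the associativity of induction lets us build up one factor at a time. So the first step is to set up an induction on $r$, the number of parts of the composition $\mathbf{b}$. The base case $r=1$ is trivial (there $A=(D;E_1)$ and $U=E_1$ is already irreducible, so there is nothing to prove), and the case $r=2$ is precisely Lemma~\ref{L:nail} with $n-k=b_1$ and $k=b_2$.

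For the inductive step I would group the last factor separately from the rest. Writing $m=b_1+\cdots+b_{r-1}$ and using transitivity of induction through the intermediate subgroup $(F\wr S_m)\times(F\wr S_{b_r})$, I would first induce
\[
\ind^{F\wr S_m}_{F\wr S(b_1,\dots,b_{r-1})} (D;E_1)\boxtimes\cdots\boxtimes(D;E_{r-1})
\]
and apply the inductive hypothesis to decompose this into a sum $\bigoplus_\mu c_\mu\,(D;S^\mu)$ over $\mu\vdash m$, where the $c_\mu$ are the multiplicities appearing in the parallel $S_m$-computation $\ind^{S_m}_{S(b_1,\dots,b_{r-1})}(\boxtimes_{i=1}^{r-1}E_i)$. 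Then I would induce the resulting $\bigoplus_\mu c_\mu\,(D;S^\mu)\boxtimes(D;E_r)$ from $(F\wr S_m)\times(F\wr S_{b_r})$ up to $F\wr S_n$, applying Lemma~\ref{L:nail} to each summand $(D;S^\mu)\boxtimes(D;E_r)$. By Lemma~\ref{L:nail}, that last induction mirrors the $S_n$-side induction $\ind^{S_n}_{S_m\times S_{b_r}}S^\mu\boxtimes E_r$, and assembling the two stages reproduces exactly the two-stage computation of $U=\ind^{S_n}_{S(\mathbf{b})}(\boxtimes_{i=1}^r E_i)$ on the symmetric-group side.

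The crux of the argument is therefore bookkeeping: I must verify that the multiplicities $m_\lambda$ emerging from this two-stage wreath-product induction agree with those obtained from the single-stage $S_n$-induction defining $U$. This follows because induction is transitive on the $S_n$ side as well, so $\ind^{S_n}_{S(\mathbf{b})}(\boxtimes E_i)\cong\ind^{S_n}_{S_m\times S_{b_r}}\bigl((\ind^{S_m}_{S(b_1,\dots,b_{r-1})}\boxtimes_{i<r}E_i)\boxtimes E_r\bigr)$, and Lemma~\ref{L:nail} guarantees that at each stage the wreath-product multiplicities track the $S_n$-multiplicities with the label $(D;\,\cdot\,)$ carried along unchanged. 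The main obstacle I anticipate is purely notational rather than conceptual: keeping the index $\lambda\vdash n$ in the final sum consistent while the intermediate stage is indexed by $\mu\vdash m$, and making sure the $(D;\,\cdot\,)$ labels combine correctly since all factors share the \emph{same} irreducible $D$ of $F$ (which is what makes Lemma~\ref{L:nail} applicable at every stage). Once the induction is framed this way, the final claim in \eqref{A:Pnail2} and the multiplicity-free consequence follow immediately, with the latter obtained as before by setting every $m_\lambda$ to $1$.
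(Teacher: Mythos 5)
Your proof is correct and takes essentially the same approach as the paper: induction on $r$ with Lemma~\ref{L:nail} as the base case, the inductive step being exactly the transitivity/additivity bookkeeping that the paper compresses into the phrase ``distributivity of the tensor products over direct sums.'' Your writeup simply spells out the multiplicity-matching details that the paper's one-line inductive step leaves implicit.
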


\begin{proof}
We apply induction on $r$. The base cases $r=2$ is already proven in Lemma~\ref{L:nail}.
The general case follows from $r-1$ by distributivity of the tensor products over direct sums. 
\end{proof}

\begin{Remark}\label{R:Jantzen}
 
Let $\mathbf{b}$ be the composition $(1,\dots, 1)$ of $n$. 
In this case, we have 
\[
F\wr S(\mathbf{b})=F\wr (S_1\times \cdots \times S_1) = F^n.
\]
Let us denote $S_1\times \cdots \times S_1$ by $\prod^n S_1$. 
Clearly, $\prod^n S_1$ is the trivial subgroup of $S_n$.  
Let $U$ denote the representation $\ind^{S_n}_{\prod^n S_1} \mathbf{1}  \boxtimes\cdots \boxtimes \mathbf{1}=  
\ind^{S_n}_{\{\id\}} \mathbf{1} \cong \bbc[S_n]$.
Hence, every irreducible representation $S^\lambda$ of $S_n$ appears in $U$ with multiplicity 
$m_\lambda = \dim S^\lambda$. 
Corollary~\ref{C:nail2} shows that 
\[
\ind^{ F\wr S_n}_{F\wr \prod^n S_1}  (D;\mathbf{1}) \boxtimes\cdots \boxtimes (D;\mathbf{1})=\bigoplus_{\lambda \vdash n} \dim S^\lambda (D;S^\lambda)
=(D;\bigoplus_{\lambda \vdash n} (\dim S^\lambda) S^\lambda)
 \cong D^{(n)} \otimes \bbc[ S_n]. 
\]
This observation is a special case of a more general, well-known isomorphism that is 
presented in Jantzen's textbook~\cite[Section 3.8]{Jantzen}.
In our special case, it implies that, for any $F^n$ representation $N$, there is an isomorphism of $F\wr S_n$ representations:
\begin{align}\label{A:fromJantzen1}
\ind_{F\wr \prod^n S_1}^{F\wr S_n} N \cong \bbc [S_n]\otimes N. 
\end{align}
We also know from~\cite[Section 3.8]{Jantzen} that, for any $S_n$ representation $W$, there is an isomorphism of $F\wr S_n$ representations:
\begin{align}\label{A:fromJantzen2}
\ind_{S_n}^{F\wr S_n} W \cong \bbc[F^n] \otimes W.
\end{align}
Here, $F^n$ acts on $\bbc[F^n]$ via its left regular representation, $S_n$ acts on $W$ the usual way, 
and it acts on $\bbc[F^n]$ by permuting the factors of $F^n$.  
\end{Remark}

Although the isomorphism in (\ref{A:fromJantzen2}) provides us with 
the general structure of the induced representation $\ind_{S_n}^{F\wr S_n} W$, 
we still want to determine the multiplicities of the irreducible representations in it.  
We resolve this problem by our next theorem.

\begin{thm}\label{T:firststep}
Let $F$ be an abelian group, and let $U$ be an irreducible representation of $F\wr S_n$ of the form 
$U:=\ind_{F \wr S({\mathbf{a}})}^{F\wr S_n} (D_1;D_1'')\boxtimes \cdots \boxtimes (D_s; D_s'')$,
where $D_1,\dots, D_s$ are some pairwise inequivalent irreducible representations of $F$, 
and $D_1'',\dots, D_s''$ are some irreducible representations of $S_{a_1},\dots, S_{a_s}$, respectively.

Under these assumptions, 
if $W$ is an irreducible representation of $S_n$, then the multiplicity of $U$ in $\ind^{F\wr S_n}_{S_n} W$
is equal to the multiplicity of $W$ in $\ind^{S_n}_{S(\mathbf{a})} D_1'' \boxtimes \cdots \boxtimes D_s''$.
\end{thm}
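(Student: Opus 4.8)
The plan is to read off the multiplicity of $U$ in $\ind_{S_n}^{F\wr S_n} W$ through Frobenius reciprocity, which converts the question into one about the restriction $\res_{\overline{S_n}}^{F\wr S_n} U$. Writing $\langle\cdot,\cdot\rangle$ for the multiplicity pairing and identifying $S_n$ with its natural copy $\overline{S_n}$ in $F\wr S_n$, Frobenius reciprocity gives
\[
\langle \ind_{S_n}^{F\wr S_n} W,\, U\rangle_{F\wr S_n} = \langle W,\, \res_{\overline{S_n}}^{F\wr S_n} U\rangle_{S_n}.
\]
Since the right-hand side is exactly the multiplicity of $W$ in $\res_{\overline{S_n}}^{F\wr S_n} U$, it suffices to prove the isomorphism of $S_n$ representations $\res_{\overline{S_n}}^{F\wr S_n} U \cong \ind_{S(\mathbf{a})}^{S_n}(D_1''\boxtimes\cdots\boxtimes D_s'')$.

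First I would apply Mackey's lemma (\ref{A:MackeyLemma}) to the subgroups $H=\overline{S_n}$ and $K=F\wr S(\mathbf{a})$ of $G=F\wr S_n$. The key observation is that $K$ contains the base subgroup $\overline{F^n}$, while every element factors as $(f,\sigma) = (0,\sigma)*(\sigma^{-1}\cdot f,\id)$ with $(0,\sigma)\in\overline{S_n}$ and $(\sigma^{-1}\cdot f,\id)\in\overline{F^n}\leqslant K$. Hence $G=\overline{S_n}\cdot K$, so there is a single $(\overline{S_n},K)$-double coset, represented by the identity, and Mackey's lemma collapses to the single term
\[
\res_{\overline{S_n}}^{F\wr S_n} U \cong \ind_{\overline{S_n}\cap K}^{\overline{S_n}} \res_{\overline{S_n}\cap K}^{K}\bigl((D_1;D_1'')\boxtimes\cdots\boxtimes(D_s;D_s'')\bigr),
\]
where $\overline{S_n}\cap K = \overline{S(\mathbf{a})}$.

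Next I would compute the inner restriction. Since $K = \prod_{i} F\wr S_{a_i}$ and $\overline{S(\mathbf{a})} = \prod_i \overline{S_{a_i}}$, the restriction factors as an outer tensor product, so it is enough to analyse each $\res_{\overline{S_{a_i}}}^{F\wr S_{a_i}}(D_i;D_i'')$. This is the step where the hypothesis that $F$ is abelian is essential: each $D_i$ is then one-dimensional, so the underlying space $D_i^{\boxtimes a_i}$ of $D_i^{(a_i)}$ is one-dimensional, and on it $\overline{S_{a_i}}$ acts only by permuting identical tensor factors, hence trivially. Equivalently, as a character $D_i^{(a_i)}$ sends $(f,\pi)\mapsto \prod_j D_i(f_j)$, which is $1$ on $\overline{S_{a_i}}$. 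Therefore $\res_{\overline{S_{a_i}}}^{F\wr S_{a_i}}(D_i;D_i'') = \mathbf{1}\otimes D_i'' \cong D_i''$, and recombining the factors gives $\res_{\overline{S(\mathbf{a})}}^{K} \bigl((D_1;D_1'')\boxtimes\cdots\boxtimes(D_s;D_s'')\bigr) \cong D_1''\boxtimes\cdots\boxtimes D_s''$. Substituting this back and identifying $\overline{S_n}\cong S_n$, $\overline{S(\mathbf{a})}\cong S(\mathbf{a})$ via the projection yields $\res_{\overline{S_n}}^{F\wr S_n}U\cong\ind_{S(\mathbf{a})}^{S_n}(D_1''\boxtimes\cdots\boxtimes D_s'')$, which together with the Frobenius reciprocity identity completes the proof.

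I expect the main obstacle to be bookkeeping rather than conceptual: verifying cleanly that there is a single double coset (that is, $G=\overline{S_n}\cdot K$) and correctly identifying the intersection $\overline{S_n}\cap K$ with $\overline{S(\mathbf{a})}$ under the conventions for the wreath-product multiplication in (\ref{A:*}). The representation-theoretic heart — that one-dimensionality of each $D_i$ forces $D_i^{(a_i)}$ to restrict trivially to $\overline{S_{a_i}}$ — is short, but it is precisely where abelianness of $F$ enters, and so should be stated with care.
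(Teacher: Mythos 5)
Your proof is correct and is essentially the paper's own argument: both hinge on the observation that the base group $\overline{F^n}$ lies inside $F\wr S(\mathbf{a})$, forcing a single $(\overline{S_n},\,F\wr S(\mathbf{a}))$-double coset, and on abelianness of $F$ making each $(D_i;D_i'')$ restrict to $D_i''$ on $\overline{S_{a_i}}$. The only difference is organizational: you apply Frobenius reciprocity first and then Mackey's lemma (\ref{A:MackeyLemma}) to compute $\res^{F\wr S_n}_{\overline{S_n}} U$, whereas the paper invokes Mackey's intertwining formula (\ref{A:Mackey}) directly and saves Frobenius reciprocity for the final step — two equivalent arrangements of the same computation.
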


\begin{proof}
Since we will work with a fixed number $n$, and a fixed abelian group $F$, 
to ease our notation, let us set $G:= F\wr S_n$ and $K:=S_n$. 

The multiplicity of $U$ in $\ind_K^G W$ is equal to the dimension of the vector space 
\[
M:= \Hom_G ( U , \ind_K^G W). 
\]
We will use Mackey's formula and Frobenius reciprocity to compute the dimension of $M$.
For brevity, we will denote the inertia group of $V:= (D_1;D_1'')\boxtimes \cdots \boxtimes (D_s; D_s'')$, 
that is $F\wr S(\mathbf{a})$, by $H$.

Let $S$ be a system of representatives for the $(H,K)$-double cosets in $G$. 
Since $\overline{F}$ is a normal subgroup of $G$, and since it is contained in $H$, we see that $HK=G$.
In other words, $S$ has only one element, $S= \{\id\}$. 
Therefore, there is only one local group of the form $G_s= H\cap s K s^{-1}$,
which is given by $G_{\id} = F\wr S(\mathbf{a}) \cap S_n = S(\mathbf{a})$.

Now we apply Mackey's formula~(\ref{A:Mackey}):
\begin{align}\label{A:toapplyMackey}
M&= \Hom_G (\ind_{H}^G V, \ind_K^G W)  \notag \\
&=  \bigoplus_{s\in S} \Hom_{G_s} (\res^H_{G_s} V, W_s) \notag \\
&= \Hom_{S(\mathbf{a})} (\res^{H}_{S(\mathbf{a})} V, W_{\id}), 
\end{align}
where $W_{\id}$ is the copy of $W$ viewed as a representation of $S(\mathbf{a})$,
that is, $W_{\id} = \res^K_{S(\mathbf{a})} W$.
In our abelian case, $D_1,\dots, D_s$ are one-dimensional representations of $F$,
so, the dimension of each factor $(D_i, D_i'')$ of $V$ is equal to the dimension of $D_i''$. 
In particular, the factor $(D_i,D_i'')$ can be identified, as a representation of $S_{a_i}$, with $D_i''$. 
Therefore, $\res^{H}_{S(\mathbf{a})} V$ is equivalent to $D_1'' \boxtimes \cdots \boxtimes  D_s''$. 
Thus (\ref{A:toapplyMackey}) is equivalent to 
\[
M = \Hom_{S(\mathbf{a})} ( D_1'' \boxtimes \cdots \boxtimes  D_s'',   \res^K_{S(\mathbf{a})} W).
\]
This shows that $\dim M$ is given by the multiplicity of $D_1'' \boxtimes \cdots \boxtimes  D_s''$ in 
$ \res^K_{S(\mathbf{a})} W$.
By applying Frobenius reciprocity, we see that 
\[
\dim M = \text{the multiplicity of $W$ in $\ind^{S_n}_{S(\mathbf{a})} D_1'' \boxtimes \cdots \boxtimes  D_s''$}.
\]
This finishes the proof of our theorem. 
\end{proof}

\begin{cor}\label{C:firststep}
Let $B$ be a subgroup of $F\wr S_n$. 
If $n \leq 5$ and $\overline{S_n} \leqslant B$, then the pair $(F\wr S_n, B)$ is a strong Gelfand pair.
If $6\leq n$ and $B\leqslant \overline{S_n}$, then the pair $(F\wr S_n, B)$ is not a strong Gelfand pair.
\end{cor}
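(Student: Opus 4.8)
Corollary \ref{C:firststep} has two parts, and I would treat them separately, in both cases leveraging Theorem~\ref{T:firststep} together with the known strong Gelfand pairs of the symmetric group.

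For the negative direction ($6 \leq n$, $B \leqslant \overline{S_n}$), the plan is to reduce to showing that $(F\wr S_n, \overline{S_n})$ itself fails to be a strong Gelfand pair, since a strong Gelfand subgroup must have all its overgroups fail as well—more precisely, if $B \leqslant \overline{S_n} \leqslant F\wr S_n$ and $(F\wr S_n, B)$ were a strong Gelfand pair, then by transitivity of the multiplicity-free property under passing to larger subgroups (restricting an irreducible of the larger group to $B$ and inducing back up factors through $\overline{S_n}$), the pair $(F\wr S_n, \overline{S_n})$ would also be strong Gelfand. So it suffices to exhibit, for $n \geq 6$, an irreducible $S_n$-representation $W$ whose induction $\ind_{S_n}^{F\wr S_n} W$ contains some irreducible of $F\wr S_n$ with multiplicity at least $2$. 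By Theorem~\ref{T:firststep}, the multiplicity of an irreducible $U = \ind_{F\wr S(\mathbf{a})}^{F\wr S_n}(D_1; D_1'')\boxtimes\cdots\boxtimes(D_s; D_s'')$ in $\ind_{S_n}^{F\wr S_n} W$ equals the multiplicity of $W$ in $\ind_{S(\mathbf{a})}^{S_n} D_1''\boxtimes\cdots\boxtimes D_s''$. Choosing a composition $\mathbf{a}$ and factors $D_i''$ so that the latter induced $S_n$-representation is \emph{not} multiplicity-free—which is possible precisely because $(S_n, S_{a_1}\times\cdots\times S_{a_s})$ need not be a strong Gelfand pair once $n \geq 6$—produces the desired repeated constituent. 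Concretely, I would pick a two-block composition $\mathbf{a} = (n-3, 3)$ (or similar) so that a Littlewood--Richardson coefficient, equivalently a Kronecker-type multiplicity in $\ind_{S_{n-3}\times S_3}^{S_n}$, exceeds $1$; this is exactly the phenomenon recorded in the list of strong Gelfand pairs of $S_n$ from \cite{AHN}, where only $S_n, A_n, S_1\times S_{n-1}, S_2\times S_{n-2}$ survive.

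For the positive direction ($n \leq 5$, $\overline{S_n} \leqslant B$), I would argue that since $\overline{S_n} \leqslant B \leqslant F\wr S_n$, a strong Gelfand pair $(F\wr S_n, \overline{S_n})$ would force $(F\wr S_n, B)$ to be strong Gelfand as well, because multiplicity-freeness of $\ind_{\overline{S_n}}^{F\wr S_n}$ on every irreducible is inherited by the intermediate subgroup $B$ (restricting an irreducible of $F\wr S_n$ to $B$ and inducing back up is a summand of the corresponding operation through $\overline{S_n}$). Hence it suffices to verify that $(F\wr S_n, \overline{S_n})$ is a strong Gelfand pair for $n \in \{1,\dots,5\}$. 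Again invoking Theorem~\ref{T:firststep}, this reduces to checking that for every composition $\mathbf{a}$ of $n$ and every choice of irreducibles $D_i''$ of $S_{a_i}$, the induced representation $\ind_{S(\mathbf{a})}^{S_n} D_1''\boxtimes\cdots\boxtimes D_s''$ is multiplicity-free as an $S_n$-representation; equivalently, that $(S_n, S(\mathbf{a}))$ is a strong Gelfand pair for all Young subgroups when $n \leq 5$. This is a finite verification—one checks the relevant Littlewood--Richardson and iterated-induction multiplicities for $n = 1,2,3,4,5$, all of which come out to be at most $1$—and is exactly the small-$n$ exception alluded to in the introduction.

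The main obstacle I anticipate is the \emph{inheritance step} common to both directions: transferring multiplicity-freeness between $\overline{S_n}$ and an intermediate subgroup $B$. For the positive direction one needs that passing to a \emph{larger} strong Gelfand subgroup preserves the property, and for the negative direction that a \emph{smaller} subgroup inside a failed pair also fails; these are the two standard monotonicity facts, and one must be careful to apply each in the correct direction. The representation-theoretic content is entirely front-loaded into Theorem~\ref{T:firststep}, so once the reduction to the symmetric-group side is in place, the argument is a matter of citing \cite{AHN} for $n \geq 6$ and performing the finite check for $n \leq 5$. I would state the monotonicity facts as a brief lemma or recall them inline, then dispatch both parts in a few lines each.
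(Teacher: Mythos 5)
Your proposal is correct and follows essentially the same route as the paper: both use the standard monotonicity of the strong Gelfand property to reduce the corollary to the single claim that $(F\wr S_n,\overline{S_n})$ is a strong Gelfand pair if and only if $n\leq 5$, then apply Theorem~\ref{T:firststep} to convert this into multiplicity-freeness of Young-subgroup inductions in $S_n$, settling $n\geq 6$ by the same Littlewood--Richardson example on $S_{n-3}\times S_3$ (the paper takes $S^{(n-4,1)}\boxtimes S^{(2,1)}$, in which $S^{(n-3,2,1)}$ occurs with multiplicity $2$). The only minor differences are that for $n\leq 5$ the paper replaces your finite verification with the structural observation that at most one tensor factor can be a non-linear representation, so Pieri's rule gives multiplicity-freeness, and that your appeal to \cite{AHN} only covers $n\geq 7$, so the explicit Littlewood--Richardson computation is genuinely needed at $n=6$.
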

\begin{proof}
It suffices to show that $(F\wr S_n,\overline{S_n})$ is a strong Gelfand pair if and only if $n\leq 5$.
In particular, we can now use Theorem~\ref{T:firststep}.

Let $\mathbf{a}$ be a composition of $n$, and let $S(\mathbf{a})$ denote the corresponding subgroup
$S_{a_1}\times \cdots \times S_{a_s}$ of $S_n$. Let $D_1''\boxtimes \cdots \boxtimes D_s''$ 
be an irreducible representation of $S(\mathbf{a})$.
Clearly, if $n\leq 5$, then at most one of the factors $D_i''$ ($1\leq i \leq s$) is of the form
$S^{(1^a)}$ or $S^{(a)}$. 
Then by Pieri's formula, 
$\ind^{S_n}_{S(\mathbf{a})} D_1''\boxtimes \cdots \boxtimes D_s''$ is a multiplicity-free representation. 
For $n\geq 6$, we can use the induced representation $\ind_{S_{n-3}\times S_3}^{S_n} S^{(n-4,1)}\boxtimes S^{(2,1)}$ and note that $S^{(n-3,2,1)}$ appears as a summand with multiplicity 2 -- this is an easy check on the number of Littlewood--Richardson tableaux of skew-shape $(n-3,2,1) / (n-4,1)$ and weight $(2,1)$.
This completes the proof. 
\end{proof}

\section{Some Strong Gelfand Subgroups of Wreath Products}\label{S:Some}

In this section we will prove that, for an arbitrary group $F$, 
a pair of the form $(F\wr S_n, F\wr (S_{n-k}\times S_k))$ is a strong Gelfand pair
if and only if $k\leq 2$.
Furthermore, we will prove that, for an abelian group $F$, 
$(F\wr S_n, (F\wr S_{n-k})\times S_k$ is a strong Gelfand pair if $k\leq 2$. 

\subsection{The nonabelian base group case.}\label{SS:Somenonabelian}

\begin{thm}\label{T:nonabelian}
Let $F$ be a group, and let $n\geq 2$. 
If $k$ is 1 or 2, then the pair $(F\wr S_n, F\wr (S_{n-k}\times S_k))$ is a strong Gelfand pair. 
\end{thm}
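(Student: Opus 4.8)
The plan is to reduce the strong Gelfand property to a multiplicity-free branching statement and then extract that statement from a two-stage induction that separates the contributions of the distinct irreducible constituents of $F$. Write $H := F\wr(S_{n-k}\times S_k)$ and recall the isomorphism $H\cong (F\wr S_{n-k})\times(F\wr S_k)$. By definition it suffices to show that $\ind_H^{F\wr S_n}(V_1\boxtimes V_2)$ is multiplicity-free for every irreducible $V_1$ of $F\wr S_{n-k}$ and every irreducible $V_2$ of $F\wr S_k$. Fixing the complete list $\{W_1,\dots,W_r\}$ of irreducibles of $F$, I would write $V_1$ and $V_2$ in the Specht form of Section~\ref{SS:Repsofwreaths}, namely $V_1 = \ind^{F\wr S_{n-k}}_{F\wr S(\mathbf{a})}\boxtimes_{i=1}^r (W_i; S^{\alpha^{(i)}})$ and $V_2 = \ind^{F\wr S_{k}}_{F\wr S(\mathbf{b})}\boxtimes_{i=1}^r (W_i; S^{\beta^{(i)}})$, where $\alpha^{(i)}\vdash a_i$ and $\beta^{(i)}\vdash b_i$ are (possibly empty) partitions with $\sum_i a_i = n-k$ and $\sum_i b_i = k$. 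Indexing over all of $\{W_1,\dots,W_r\}$, with empty partitions allowed, is the bookkeeping device that lets the two factors be combined irreducible-by-irreducible.

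Next I would compute $\ind_H^{F\wr S_n}(V_1\boxtimes V_2)$ in two stages. Using Lemma~\ref{L:ellproduct} together with transitivity of induction, the problem becomes an induction from $F\wr S(\mathbf{a})\times F\wr S(\mathbf{b})$ up to $F\wr S_n$ of $\big(\boxtimes_i (W_i;S^{\alpha^{(i)}})\big)\boxtimes\big(\boxtimes_i (W_i;S^{\beta^{(i)}})\big)$. Regrouping the factors so that the two copies indexed by the same $W_i$ sit together, I induce first within each block from $F\wr S_{a_i}\times F\wr S_{b_i}$ to $F\wr S_{c_i}$, where $c_i := a_i+b_i$, and then from $F\wr S(\mathbf{c})$ to $F\wr S_n$. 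The inner induction in the $i$-th block is exactly the situation of Lemma~\ref{L:nail}, since the same irreducible $W_i$ appears in both factors, so it yields $\bigoplus_{\gamma^{(i)}} c^{\gamma^{(i)}}_{\alpha^{(i)}\beta^{(i)}}(W_i;S^{\gamma^{(i)}})$, where $c^{\gamma}_{\alpha\beta}$ denotes the Littlewood--Richardson coefficient defined by $\ind^{S_{c}}_{S_a\times S_b}S^\alpha\boxtimes S^\beta = \bigoplus_\gamma c^\gamma_{\alpha\beta}S^\gamma$. Finally, the outer induction from $F\wr S(\mathbf{c})$ reassembles each choice of multipartition $\gamma = (\gamma^{(i)})_{i=1}^r$ into the irreducible $V_{\gamma} := \ind^{F\wr S_n}_{F\wr S(\mathbf{c})}\boxtimes_i (W_i;S^{\gamma^{(i)}})$ of $F\wr S_n$, exactly as in the classification recalled in Section~\ref{SS:Repsofwreaths}.

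Combining the stages, I expect the branching rule
\[
\ind_H^{F\wr S_n}(V_1\boxtimes V_2)\;\cong\;\bigoplus_{\gamma}\Big(\prod_{i=1}^r c^{\gamma^{(i)}}_{\alpha^{(i)}\beta^{(i)}}\Big)\,V_{\gamma},
\]
where the $V_{\gamma}$ are pairwise non-isomorphic irreducibles, so the multiplicity of $V_{\gamma}$ is the product $\prod_i c^{\gamma^{(i)}}_{\alpha^{(i)}\beta^{(i)}}$. To finish, I use that $\sum_i|\beta^{(i)}| = k\le 2$: each $\beta^{(i)}$ is then a partition of size at most $2$, hence lies in $\{\emptyset,(1),(2),(1,1)\}$. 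For every such $\beta^{(i)}$ the coefficient $c^{\gamma^{(i)}}_{\alpha^{(i)}\beta^{(i)}}$ is computed by Pieri's rule ($\emptyset$ forces $\gamma^{(i)}=\alpha^{(i)}$, while $(2)$ and $(1,1)$ correspond to adding a horizontal resp.\ vertical strip of size $2$), so each factor is $0$ or $1$. Therefore the product is at most $1$, and $\ind_H^{F\wr S_n}(V_1\boxtimes V_2)$ is multiplicity-free, as required.

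The main obstacle I anticipate is organizational rather than conceptual: carefully justifying the regrouping of the subgroup $F\wr S(\mathbf{a})\times F\wr S(\mathbf{b})$ into the block form $\prod_i (F\wr S_{a_i}\times F\wr S_{b_i})$ inside $F\wr S_n$, checking that the outer induction indeed produces the labelled irreducibles $V_{\gamma}$ (including the correct treatment of vanishing parts $c_i = 0$ and empty partitions), and confirming that distinct multipartitions $\gamma$ give non-isomorphic summands so that no multiplicities are accidentally pooled. Once this bookkeeping is in place, the arithmetic collapse for $k\le 2$ via Pieri's rule is immediate, and the threshold $k\le 2$ is exactly the point at which every $\beta^{(i)}$ is a single row or a single column; for $k=3$ the shape $(2,1)$ reappears and Pieri no longer applies, which is consistent with the sharpness asserted at the start of the section.
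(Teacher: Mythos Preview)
Your proposal is correct and follows essentially the same approach as the paper: transitivity of induction, regrouping by $F$-irreducible type, Lemma~\ref{L:nail} on each block, and Pieri's rule to force each Littlewood--Richardson factor to be $\le 1$. Your presentation is slightly more streamlined in that indexing both $V_1$ and $V_2$ over the full list $\{W_1,\dots,W_r\}$ with empty partitions allowed avoids the paper's explicit case split on whether the $F$-types occurring in $V_1$ and $V_2$ overlap, and it yields as a byproduct the explicit branching formula $\ind_H^{F\wr S_n}(V_1\boxtimes V_2)\cong\bigoplus_\gamma\big(\prod_i c^{\gamma^{(i)}}_{\alpha^{(i)}\beta^{(i)}}\big)V_\gamma$.
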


\begin{proof}
Let $K$ denote $F\wr (S_{n-k}\times S_k)$. 
Since $K \cong F\wr S_{n-k} \times F \wr S_k$, every irreducible representation of $K$ is of the form $E\boxtimes D$, where 
$E$ is an irreducible representation of $F\wr S_{n-k}$ and $D$ is an irreducible representation of $F\wr S_k$.
Then there exists a composition $\mathbf{c}=(c_1,\dots,c_s)$ of $k$ such that 
\[
D  = \ind_{F \wr S({\mathbf{c}})}^{F\wr S_{k}} (D_1;D_1'')\boxtimes \cdots \boxtimes (D_s; D_s''),
\] 
where $D_1,\dots, D_s$ are some pairwise inequivalent irreducible representations of $F$, 
and $D_1'',\dots, D_s''$ are some irreducible representations of $S_{c_1},\dots, S_{c_s}$, respectively. 
Similarly for $E$, let $\mathbf{b}$ denote the composition of $n-k$ such that  
\[
E  = \ind_{F \wr S({\mathbf{b}})}^{F\wr S_{n-k}} (E_1;E_1'')\boxtimes \cdots \boxtimes (E_r; E_r''),
\] 
where $E_1,\dots, E_r$ are some pairwise inequivalent irreducible representations of $F$, 
and $E_1'',\dots, E_r''$ are some irreducible representations of $S_{b_1},\dots, S_{b_r}$, respectively. 
Of course, if $k=2$, then we have only two possibilities for $\mathbf{c}$; they are given by $\mathbf{c} \in\{ (1,1), (2)\}$. 
If $k=1$, then $\mathbf{c}\in \{ (1)\}$. 
In any case, to ease our notation, let us denote $(D_1;D_1'')\boxtimes \cdots \boxtimes (D_s; D_s'')$ by $D_0$, 
and let us denote $(E_1;E_1'')\boxtimes \cdots \boxtimes (E_r; E_r'')$ by $E_0$.

As far as the irreducible representations $E_1,\dots, E_r,D_1,\dots, D_s$ are concerned, we have two possibilities:
\begin{enumerate}
\item $\{ E_1,\dots, E_r \} \cap \{ D_1,\dots, D_s \} = \emptyset$, or 
\item $\{ E_1,\dots, E_r \} \cap \{ D_1,\dots, D_s \} \neq \emptyset$.
\end{enumerate}
We proceed with the first case. 
In this case, by Lemma~\ref{L:ellproduct}, we have 
\begin{align*}
\ind^{F\wr S_n}_{F\wr (S_{n-k}\times S_k)} (E\boxtimes D) &=
\ind^{F\wr S_n}_{F\wr (S_{n-k}\times S_k)} (\ind^{F\wr S_{n-k}}_{F\wr S(\mathbf{b})} E_0 )\boxtimes (\ind^{F\wr S_k}_{F\wr S(\mathbf{c})} D_0 ) \\
&=
\ind^{F\wr S_n}_{F\wr (S_{n-k}\times S_k)} (\ind^{F\wr S_{n-k} \times F\wr S_k}_{F\wr S(\mathbf{b})\times F\wr S(\mathbf{c})} E_0 \boxtimes D_0) \\
&=
\ind^{F\wr S_n}_{F\wr S(\mathbf{b})\times F\wr S(\mathbf{c})} E_0 \boxtimes  D_0 \\
&
=\ind^{F\wr S_n}_{F\wr S(\mathbf{b} \mathbf{c})} E_0 \boxtimes  D_0,
\end{align*}
where $\mathbf{b} \mathbf{c}$ is the composition obtained by concatanating $\mathbf{b}$ and $\mathbf{c}$.
It is easy to see that the representation 
$\ind^{F\wr S_n}_{F\wr S(\mathbf{b} \mathbf{c})} E_0 \boxtimes  D_0$ is one of the irreducible representations of $F\wr S_n$ as in (\ref{A:apply}).

Next, we will handle the second case. 
Without loss of generality let us assume that $D_1 = E_r$. 
By arguing as before, first, we write  
\begin{align*}
\ind^{F\wr S_n}_{F\wr (S_{n-k}\times S_k)} (E\boxtimes D)
&=
\ind^{F\wr S_n}_{F\wr S(\mathbf{b})\times F\wr S(\mathbf{c})} E_0 \boxtimes  D_0.
\end{align*}
Notice that we have 
\begin{align*}
{F\wr S(\mathbf{b})\times F\wr S(\mathbf{c})} =
F\wr S(\mathbf{b}') \times F\wr S_{b_{r}}  \times  F\wr S_{c_1}\times F\wr S(\mathbf{c}'),
\end{align*}
where 
$F\wr S(\mathbf{b}') = F\wr (S_{b_1}\times \cdots \times S_{b_{r-1}})$ and 
$F\wr S(\mathbf{c}') = F\wr (S_{c_2}\times \cdots \times S_{c_s})$.
Accordingly we write 
\begin{align*}
E\boxtimes D & = E'_0\boxtimes L \boxtimes D'_0,
\end{align*}
where $E'_0 := (E_1;E_1'')\boxtimes \cdots \boxtimes (E_{r-1}; E_{r-1}'')$, 
$D'_0 := (D_2;D_2'')\boxtimes \cdots \boxtimes (D_s; D_s'')$, and 
\begin{align*}
L:= (E_{r}; E_{r}'')\boxtimes  (D_1;D_1'') = E_r^{(b_r)}\otimes E_r'' \boxtimes D_1^{(c_1)}\otimes D_1''= D_1^{(b_r)}\otimes E_r'' \boxtimes D_1^{(c_1)}\otimes D_1''.
\end{align*}
Now, by using by Lemma~\ref{L:ellproduct} and transitivity of the induction, we split our induction:
\begin{align*}
\ind^{F\wr S_n}_{F\wr S(\mathbf{b})\times F\wr S(\mathbf{c})} E_0 \boxtimes  D_0
&= \ind^{F\wr S_n}_{F\wr S(\mathbf{b'})\times F\wr S_{b_r+c_1} \times F\wr S(\mathbf{c'})} \ind^{F\wr S(\mathbf{b'})\times F\wr S_{b_r+c_1} \times F\wr S(\mathbf{c'})}_{F\wr S(\mathbf{b}') \times F\wr S_{b_{r}}  \times  F\wr S_{c_1}\times F\wr S(\mathbf{c}')} E_0 \boxtimes  D_0\\
&= \ind^{F\wr S_n}_{F\wr S(\mathbf{b'})\times F\wr S_{b_r+c_1} \times F\wr S(\mathbf{c'})} \ind^{F\wr S(\mathbf{b'})\times F\wr S_{b_r+c_1} \times F\wr S(\mathbf{c'})}_{F\wr S(\mathbf{b}') \times F\wr S_{b_{r}}  \times  F\wr S_{c_1}\times F\wr S(\mathbf{c}')} E_0' \boxtimes L \boxtimes  D_0' \\
&= \ind^{F\wr S_n}_{F\wr S(\mathbf{b'})\times F\wr S_{b_r+c_1} \times F\wr S(\mathbf{c'})} 
E_0' \boxtimes \left(\ind^{ F\wr S_{b_r+c_1}}_{F\wr S_{b_{r}}  \times  F\wr S_{c_1}}  L \right) \boxtimes  D_0'.
\end{align*}

We continue with the computation of the middle term, 
\begin{align}\label{A:AA}
A:= \ind^{ F\wr S_{b_r+c_1}}_{F\wr S_{b_{r}}  \times  F\wr S_{c_1}}  L  &= 
\ind^{ F\wr S_{b_r+c_1}}_{F\wr S_{b_{r}}  \times  F\wr S_{c_1}}   (D_1^{(b_r)}\otimes E_r') \boxtimes (D_1^{(c_1)}\otimes D_1').
\end{align}
Notice here that we can apply Lemma~\ref{L:nail}. 
Indeed, since $c_1\in \{1,2\}$, $D_1'$ is either a sign representation or the trivial representation of $S_{c_1}$, therefore, 
by Pieri's formula, the induced representation $\ind_{S_{b_r}\times S_{c_1}}^{S_{b_r+c_1}} (E_r' \boxtimes D_1')$ 
is a multiplicity-free representation of ${S_{b_r+c_1}}$. Let $L_1,\dots, L_l$ denote its irreducible constituents.
Then, by Lemma~\ref{L:nail}, 
(\ref{A:AA}) is equivalent to the $F\wr S_{b_r+c_1}$ representation $(D_1;L_1)\oplus \cdots \oplus (D_1;L_l)$,
hence we have 
\begin{align}
\ind^{F\wr S_n}_{F\wr S(\mathbf{b})\times F\wr S(\mathbf{c})} E_0 \boxtimes  D_0
&= \ind^{F\wr S_n}_{F\wr S(\mathbf{b'})\times F\wr S_{b_r+c_1} \times F\wr S(\mathbf{c'})} 
E_0' \boxtimes \left( (D_1;L_1)\oplus \cdots \oplus (D_1;L_l) \right) \boxtimes  D_0'  \notag \\
&= \bigoplus_{i=1}^l \ind^{F\wr S_n}_{F\wr S(\mathbf{b'})\times F\wr S_{b_r+c_1} \times F\wr S(\mathbf{c'})} 
E_0' \boxtimes (D_1;L_i) \boxtimes  D_0'. \label{A:eachsummand}
\end{align}
Evidently, (\ref{A:eachsummand}) is a multiplicity-free representation of $F\wr S_n$ if there is no other term 
$(D_i;D_i'')$ in $D_0$ for $i>1$ such that $D_i\in \{E_1,\dots, E_r\}$. 
In fact, even if there is another such summand, since $\mathbf{c}$ has at most two parts, 
we can apply the same procedure to our decomposition (\ref{A:eachsummand}) by the second (inequivalent) irreducible representation
$D_2$. Since the list of irreducible representations of $F$ that appear in the final direct sum would all be distinct, in this case also, we get a multiplicity-free representation of $F\wr S_n$. 
This finishes the proof of our theorem. 
\end{proof}

\subsection{Abelian base groups.}\label{SS:Somenabelian}

In this subsection, $F$ denotes an abelian group. 
Also, by a slight abuse of notation, the subgroup $(F\wr S_{n-1})\times S_1 \leqslant F\wr S_n$,
where $S_1$ corresponds to the (trivial) subgroup of $F\wr S_1$, will be
denoted by $F\wr S_{n-1}$.

\begin{prop}\label{P:firstcase}
If $n\geq 2$, then $(F\wr S_n, F\wr S_{n-1})$ is a strong Gelfand pair.
\end{prop}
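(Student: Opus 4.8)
The plan is to relate the pair $(F\wr S_n, F\wr S_{n-1})$ to the pair $(F\wr S_n, F\wr(S_{n-1}\times S_1))$, which is already known to be a strong Gelfand pair by Theorem~\ref{T:nonabelian} (the case $k=1$). Set $M := (F\wr S_{n-1})\times F = F\wr(S_{n-1}\times S_1)$, so that the subgroup $K := F\wr S_{n-1} = (F\wr S_{n-1})\times\{\id\}$ (in the notation of this subsection) sits inside $M$ as a subgroup of index $|F|$, with the last copy of $F$ removed. The idea is to induce in two stages through $M$.

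First I would compute the induction from $K$ to $M$. If $V$ is an irreducible representation of $F\wr S_{n-1}$, then, regarding $V$ as the representation $V\boxtimes\mathbf{1}$ of $(F\wr S_{n-1})\times\{\id\}$, Lemma~\ref{L:ellproduct} gives $\ind^M_K V \cong V\boxtimes \ind^F_{\{\id\}}\mathbf{1} = V\boxtimes\bbc[F]$. Since $F$ is abelian, $\bbc[F]\cong\bigoplus_\chi \chi$, where $\chi$ runs over the (one-dimensional) irreducible representations of $F$, each occurring once. Hence, by transitivity of induction,
\[
\ind^{F\wr S_n}_K V \cong \bigoplus_\chi \ind^{F\wr S_n}_M (V\boxtimes\chi),
\]
and each $V\boxtimes\chi$ is an irreducible representation of $M$. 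By Theorem~\ref{T:nonabelian}, every summand $\ind^{F\wr S_n}_M(V\boxtimes\chi)$ is therefore multiplicity-free.

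The main obstacle is then to rule out overlaps \emph{between} different summands: a priori, $\ind^{F\wr S_n}_M(V\boxtimes\chi)$ and $\ind^{F\wr S_n}_M(V\boxtimes\chi')$ for $\chi\neq\chi'$ could share an irreducible constituent, which would destroy the multiplicity-freeness of the total. I would dispose of this using central characters. Because $F$ is abelian, $\diag(F)$ is a central subgroup of $F\wr S_n$ contained in $M$. On the irreducible module $V\boxtimes\chi$, the element $(f,\dots,f)\in\diag(F)$ acts by the scalar $\omega_V(f)\,\chi(f)$, where $\omega_V$ denotes the central character of $V$; since $\diag(F)$ is central in $F\wr S_n$, it continues to act by this same scalar character on the whole of $\ind^{F\wr S_n}_M(V\boxtimes\chi)$. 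As $\chi$ ranges over the irreducible characters of $F$, the characters $\omega_V\chi$ are pairwise distinct, since multiplication by $\omega_V$ permutes $\Hom(F,\bbc^*)$.

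Consequently, the summands indexed by distinct $\chi$ carry distinct $\diag(F)$-central characters, and by Schur's lemma they can share no irreducible constituent. Combining this disjointness with the multiplicity-freeness of each individual summand shows that $\ind^{F\wr S_n}_K V$ is multiplicity-free for every irreducible $V$, which is exactly the assertion that $(F\wr S_n, F\wr S_{n-1})$ is a strong Gelfand pair. I expect the only delicate point to be the bookkeeping of the central-character computation on $V\boxtimes\chi$ (tracking how $\diag(F)\le M$ decomposes across the two tensor factors), but this is routine given the description of the irreducible representations of $F\wr S_m$ in Section~\ref{SS:Repsofwreaths}.
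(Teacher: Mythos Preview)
Your proof is correct, and it takes a genuinely different route from the paper's argument. Both proofs begin the same way, inducing in two stages through $M=F\wr(S_{n-1}\times S_1)$ and writing $\ind^M_K V\cong\bigoplus_\chi V\boxtimes\chi$. From there the paper proceeds combinatorially: it writes $V=\ind^{F\wr S_{n-1}}_{F\wr S(\mathbf{b})}(E_1;E_1'')\boxtimes\cdots\boxtimes(E_r;E_r'')$ explicitly, distinguishes the cases $\chi\notin\{E_1,\dots,E_r\}$ (where $\ind^{F\wr S_n}_M(V\boxtimes\chi)$ is already irreducible) and $\chi=E_i$ (where Lemma~\ref{L:nail} and Pieri's rule give a multiplicity-free decomposition), and then observes that the resulting irreducibles for different $\chi$ are distinguished by the list of $F$-types appearing in them. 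Your argument instead appeals to Theorem~\ref{T:nonabelian} for the multiplicity-freeness of each $\ind^{F\wr S_n}_M(V\boxtimes\chi)$ and separates the summands by the central character of $\diag(F)$, which is cleaner and avoids the case analysis entirely. The paper's approach has the advantage of producing the explicit decomposition (and not relying on Theorem~\ref{T:nonabelian}, whose proof is itself somewhat involved), while yours is more conceptual and shorter. The central-character computation you flag as ``delicate'' is indeed routine: for $V$ of type $\mathbf{b}=(b_1,\dots,b_r)$ one has $\omega_V(f)=\prod_i E_i(f)^{b_i}$, and since $\diag(F)$ is central in both $M$ and $F\wr S_n$, induction preserves this scalar action.
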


\begin{proof}
Let $K$ denote the subgroup $F \wr S_{n-1} \leqslant F \wr S_n$.
Every irreducible representation of $K$ is of the form $E\boxtimes \mathbf{1}$, where $\mathbf{1}$ is the trivial representation of $S_1$, and $E$ is an irreducible representation of the factor $F\wr S_{n-1}$.
Then there exist a composition $\mathbf{b}=(b_1,\dots, b_r)$ of $n-1$ and an irreducible representation $E_0$
of $F \wr S(\mathbf{b})$ such that $E= \ind^{F\wr S_{n-1}}_{F\wr S(\mathbf{b})} E_0$. 
We want to prove that $\ind^{F\wr S_{n}}_K E\boxtimes \mathbf{1}$ is a multiplicity-free $F\wr S_n$ representation.
Since $\ind^{F\wr S_{n}}_K E\boxtimes \mathbf{1} = \ind^{F\wr S_{n}}_{F\wr (S_{n-1}\times S_1)}
\ind^{F\wr (S_{n-1}\times S_1)}_K E\boxtimes \mathbf{1}$, we will analyze the induced representation 
$\ind^{F\wr (S_{n-1}\times S_1)}_K E\boxtimes \mathbf{1}$. 
By Lemma~\ref{L:ellproduct}, we have 
\begin{align}\label{A:EU}
\ind_K^{F\wr (S_{n-1}\times S_1)} E\boxtimes \mathbf{1} =
\ind_{F\wr S_{n-1}}^{F\wr S_{n-1}} E\boxtimes \ind_{S_1}^{F\wr S_1} \mathbf{1}
= E\boxtimes U = \bigoplus_{j=1}^s E \boxtimes U_j,
\end{align}
where $U:=\ind_{S_1}^{F\wr S_1} \mathbf{1}$, and $U_1\oplus \cdots \oplus U_s = U$ is the decomposition of $U$ 
into irreducible $F\wr S_1$ representations. 
Since $F$ is abelian and $F\wr S_1 = F$, we see that $U_1,\dots, U_s$ is the complete list of pairwise inequivalent and irreducible $F$ representations.  
Finally, since $E_0$ is an irreducible $F\wr S_{n-1}$ representation, 
it is of the form $(E_1;E_1'')\boxtimes \cdots \boxtimes (E_r; E_r'')$, where 
$\{E_1,\dots, E_r\}\subseteq \{U_1,\dots, U_s\}$, 
and $E_i''$ is an irreducible representation of $S_{b_i}$ for $1\leq i \leq r$.

We now look closely at the tensor product $E\boxtimes U_j$ for $j\in \{1,\dots, s\}$. 
Since $U_j$ is an irreducible representation of $F\wr S_1$, 
we have $U_j = \ind^{F\wr S_1}_{F\wr S_1} (U_j;\mathbf{1})$.
Then, by Lemma~\ref{L:ellproduct} once again, we have 
$E\boxtimes  \ind^{F\wr S_1}_{F\wr S_1} (U_j;\mathbf{1}) =
\ind^{F\wr S_{n-1}\times F\wr S_1}_{F\wr S(\mathbf{b})\times F\wr S_1} E_0 \boxtimes  (U_j;\mathbf{1})$.
If $U_j \notin \{E_1,\dots, E_r\}$, then $E_0 \boxtimes  (U_j;\mathbf{1})$ is a typical irreducible representation of 
$F\wr S(\mathbf{b})\times F\wr S_1= F\wr S(\mathbf{b'})$, where $\mathbf{b'} = (b_1,\dots, b_r,1)$.
In this case, $\ind^{F\wr S_n}_{F\wr S_{n-1}\times F\wr S_1} E\boxtimes U_j$ is an irreducible representation of $F\wr S_n$.
If $U_j =E_i$ for some $i\in \{1,\dots, r\}$, then there is exactly one such index $i$.
Without loss of generality, let us assume that this index is $r$. 
Then by another application of Lemma~\ref{L:ellproduct} we get 
\begin{align*}
\ind^{F\wr S_{n-1}\times F\wr S_1}_{F\wr S(\mathbf{b})\times F\wr S_1} E_0 \boxtimes  (U_j;\mathbf{1})  
&= \ind^{F\wr S_{n-1}\times F\wr S_1}_{F\wr S(\mathbf{b})\times F\wr S_1}
(E_1;E_1'')\boxtimes \cdots \boxtimes ((E_r; E_r'') \boxtimes  (U_j;\mathbf{1})) \\ 
&= \ind^{F\wr S_{n-1}\times F\wr S_1}_{F\wr S(\mathbf{b''})}
\ind^{F\wr S(\mathbf{b''})}_{F\wr S(\mathbf{b})\times F\wr S_1}
(E_1;E_1'')\boxtimes \cdots \boxtimes ((E_r; E_r'') \boxtimes  (U_j;\mathbf{1})) \\ 
&= \ind^{F\wr S_{n-1}\times F\wr S_1}_{F\wr S(\mathbf{b''})}
(E_1;E_1'')\boxtimes \cdots \boxtimes (\ind^{F\wr S_{b_r+1}}_{F\wr S_{b_r}\times F\wr S_1}(E_r; E_r'') \boxtimes  (U_j;\mathbf{1})),
\end{align*}
where $\mathbf{b''}=(b_1,\dots, b_{r-1}, b_r+1)$.
Since $\ind^{S_{b_r+1}}_{S_{b_r}\times S_1} E_r''\boxtimes \mathbf{1}$ is a multiplicity-free $S_{b_r+1}$ representation,
by Lemma~\ref{L:nail}, the representation $\ind^{F\wr S_{b_r+1}}_{F\wr S_{b_r}\times F\wr S_1}(E_r; E_r'') \boxtimes  (U_j;\mathbf{1})$ is a multiplicity-free $F\wr S_{b_r+1}$ representation with irreducible summands of the form 
$(E_r; \tilde{E_r})$, where $\tilde{E_r}$ is an irreducible $S_{b_r+1}$ representation.
It follows that $E\boxtimes U_j$ for $j\in \{1,\dots, s\}$ is a multiplicity-free $F\wr S_{n-1}\times F\wr S_1$ representation. 
But since $E_r$ is uniquely determined by $U_j$ (in fact, we assumed that $E_r= U_j$),
the representations $E\boxtimes U_j$, where $j\in \{1,\dots, s\}$, do not have any irreducible constituent in common.
Also, any irreducible representation that appears in $E\boxtimes U_j$ for $j\in \{1,\dots, s\}$ 
induces up to an irreducible representation of $F\wr S_n$.  
Now applying $\ind^{F\wr S_n}_{F\wr S_{n-1}\times F\wr S_1}$ to (\ref{A:EU}) proves our claim at once; 
the representation $\ind^{F\wr S_n}_K E\boxtimes \mathbf{1}$ is multiplicity-free.
This finishes the proof of our proposition.
\end{proof}

We proceed with another example. 

\begin{lem}\label{L:GwrS2}
For every abelian group $F$, the pair $(F\wr S_2, S_2)$ is a strong Gelfand pair. 
\end{lem}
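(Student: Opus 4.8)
The plan is to apply Theorem~\ref{T:firststep} directly; indeed, since $\overline{S_2}$ is the natural copy of $S_2$ in $F\wr S_2$, the claim is precisely the $n=2$ instance of Corollary~\ref{C:firststep}, but it is worth recording the short computation explicitly. It suffices to check that $\ind_{\overline{S_2}}^{F\wr S_2} W$ is multiplicity-free for each of the two irreducible representations $W$ of $S_2$, namely the trivial module $S^{(2)}=\mathbf{1}$ and the sign module $S^{(1,1)}$.

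By the classification recalled in Section~\ref{SS:Repsofwreaths}, every irreducible representation of $F\wr S_2$ is indexed by a composition $\mathbf{a}$ of $2$ together with the accompanying data, and there are exactly two compositions to consider, $\mathbf{a}=(2)$ and $\mathbf{a}=(1,1)$. Theorem~\ref{T:firststep} then computes the multiplicity of each such irreducible $U$ in $\ind_{\overline{S_2}}^{F\wr S_2} W$ as the multiplicity of $W$ in $\ind_{S(\mathbf{a})}^{S_2} D_1''\boxtimes\cdots\boxtimes D_s''$. For $\mathbf{a}=(2)$ we have $U=(D_1;D_1'')$ with $D_1''$ an irreducible $S_2$-module, and $\ind_{S_2}^{S_2} D_1''=D_1''$ is irreducible, so $W$ occurs with multiplicity at most $1$. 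For $\mathbf{a}=(1,1)$ the relevant $S_2$-representation is $\ind_{\{\id\}}^{S_2}\mathbf{1}\cong\bbc[S_2]\cong \mathbf{1}\oplus S^{(1,1)}$, which is multiplicity-free, so again $W$ occurs with multiplicity at most $1$.

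Combining the two cases, every irreducible representation of $F\wr S_2$ appears in $\ind_{\overline{S_2}}^{F\wr S_2} W$ with multiplicity at most one, for both choices of $W$; hence each induced module is multiplicity-free and $(F\wr S_2,S_2)$ is a strong Gelfand pair. There is no genuine obstacle here beyond correctly enumerating the irreducibles of $F\wr S_2$ and invoking the two elementary branching computations $\ind_{S_2}^{S_2}D_1''=D_1''$ and $\ind_{\{\id\}}^{S_2}\mathbf{1}\cong\mathbf{1}\oplus S^{(1,1)}$; the abelianness of $F$ enters only to guarantee that the $D_i$ are one-dimensional, which is the hypothesis under which Theorem~\ref{T:firststep} applies.
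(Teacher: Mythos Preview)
Your proof is correct, and it is indeed exactly the $n=2$ case of Corollary~\ref{C:firststep}, as you note.  However, the paper's own proof of Lemma~\ref{L:GwrS2} takes a different route: for the trivial representation it invokes the fact that $(F\wr S_2,S_2)$ is a Gelfand pair (via Lemma~\ref{L:3.3.18}), and for the sign representation it observes that $\mathbf{\epsilon}$ inflates to a linear character of $F\wr S_2$ and then appeals to the \emph{twisted Gelfand pair} machinery of Macdonald~\cite[Ch~VII, \S1, Exercises~10--11]{Macdonald}.  Your argument via Theorem~\ref{T:firststep} is more self-contained, relying only on results already established in the paper, whereas the paper's argument is shorter but imports an external criterion.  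Both are entirely valid; yours has the advantage that it makes the redundancy with Corollary~\ref{C:firststep} explicit.
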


\begin{proof}
The subgroup $S_2$ has two one-dimensional irreducible representations; they are given by 
$\mathbf{1}$ and the sign representation $\mathbf{\epsilon}$. 
On one hand, since $F$ is abelian, $(F\wr S_2, S_2)$ is a Gelfand pair, hence, $\ind_{S_2}^{F\wr S_2} \mathbf{1}$ is multiplicity-free.
On the other hand, we know from the construction of irreducible representations of wreath products $F\wr S_2$ that 
the inflation of any irreducible representation of $S_2$ is an irreducible representation of $F\wr S_2$.
In particular, the (irreducible) linear representation $\mathbf{\epsilon}$ of $S_2$ extends to a linear representation of $F\wr S_2$.
Then we know that the triplet $(F\wr S_2, S_2,\mathbf{\epsilon})$ is an example of a ``twisted Gelfand pair''
~\cite[Ch VII, \S1, Exercise 10]{Macdonald}. Hence, 
$\ind_{S_2}^{F\wr S_2} \mathbf{\epsilon}$ is multiplicity-free representation of $F\wr S_2$~\cite[Ch VII, \S1, Exercise 11]{Macdonald}.
This finishes the proof. 
\end{proof}

\begin{Remark}
Let $D''$ be an irreducible representation of $S_2$. 
Then the inflation of $D''$ to $F\wr S_2$ can be identified with the irreducible representation $(\mathbf{1};D'')$.
\end{Remark}

\begin{prop}\label{P:secondcase}
For every positive integer $n$, the pair $(F\wr S_n, (F\wr S_{n-2})\times S_2)$ is a strong Gelfand pair. 
\end{prop}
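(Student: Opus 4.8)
The plan is to reduce the claim to the two facts already at our disposal — that $(F\wr S_2, S_2)$ is a strong Gelfand pair (Lemma~\ref{L:GwrS2}) and that $(F\wr S_n, F\wr(S_{n-2}\times S_2))$ is a strong Gelfand pair (Theorem~\ref{T:nonabelian} with $k=2$) — and then to separate the resulting pieces by a character-content invariant. Write $K:=(F\wr S_{n-2})\times S_2$, where $S_2$ is the passive copy $\overline{S_2}$ inside $F\wr S_2$, and fix an arbitrary irreducible representation of $K$; since $K\cong (F\wr S_{n-2})\times \overline{S_2}$, it has the form $E\boxtimes D''$ with $E$ an irreducible $F\wr S_{n-2}$-representation and $D''\in\{\mathbf 1,\epsilon\}$ an irreducible $S_2$-representation. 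Factoring the induction through the intermediate subgroup $F\wr(S_{n-2}\times S_2)=(F\wr S_{n-2})\times(F\wr S_2)$ and applying Lemma~\ref{L:ellproduct} to the inner step, I would first obtain
\[
\ind^{F\wr S_n}_K (E\boxtimes D'') \cong \ind^{F\wr S_n}_{F\wr(S_{n-2}\times S_2)}\bigl(E\boxtimes \ind^{F\wr S_2}_{\overline{S_2}} D''\bigr).
\]

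Next I would invoke Lemma~\ref{L:GwrS2}: the inner induced representation $\ind^{F\wr S_2}_{\overline{S_2}} D''$ is multiplicity-free, say $\bigoplus_i M_i$ with the $M_i$ pairwise non-isomorphic. Using the parametrization of Section~\ref{SS:Repsofwreaths} (here $F$ is abelian, so its irreducibles are the characters $\chi_1,\dots,\chi_r$), I would record which irreducibles of $F\wr S_2$ occur: the ``type $(2)$'' representations $(\chi_j;D'')$, one for each character $\chi_j$, and the ``type $(1,1)$'' representations $\ind^{F\wr S_2}_{F\times F}(\chi_j;\mathbf 1)\boxtimes(\chi_k;\mathbf 1)$, one for each unordered pair $\chi_j\neq\chi_k$. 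Distributing the outer induction over the direct sum yields $\ind^{F\wr S_n}_K(E\boxtimes D'')\cong\bigoplus_i\ind^{F\wr S_n}_{F\wr(S_{n-2}\times S_2)}(E\boxtimes M_i)$, and by Theorem~\ref{T:nonabelian} each summand is individually multiplicity-free, since $E\boxtimes M_i$ is an irreducible representation of $F\wr(S_{n-2}\times S_2)$.

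The remaining and main point is to show that distinct summands share no irreducible constituent. For this I would use the \emph{type} of an irreducible of a wreath product as an invariant: to an irreducible of $F\wr S_m$ attach the composition $\mathbf n=(n_1,\dots,n_r)$ recording the number of base-group factors isomorphic to each $\chi_j$. This type is determined by the restriction to the base group $F^m$, hence is an isomorphism invariant, and $S_m$-conjugate $F^m$-irreducibles share a type. For the box product $E\boxtimes M_i$ the type is additive, equal to $\mathbf n_E+\mathbf n_{M_i}$, and Mackey's lemma~(\ref{A:MackeyLemma}) applied to the base group $F^n$, which is normal in $F\wr S_n$, shows that every irreducible constituent of $\ind^{F\wr S_n}_{F\wr(S_{n-2}\times S_2)}(E\boxtimes M_i)$ again has type $\mathbf n_E+\mathbf n_{M_i}$. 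Since $\mathbf n_E$ is fixed, it suffices to observe that the types $\mathbf n_{M_i}$ are pairwise distinct: the type-$(2)$ constituents $(\chi_j;D'')$ contribute $2\mathbf e_j$, while the type-$(1,1)$ constituents $\ind(\chi_j,\chi_k)$ contribute $\mathbf e_j+\mathbf e_k$, and these vectors are visibly all different. Hence the summands are constituent-disjoint, so the total induced representation is multiplicity-free, proving the proposition.

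I expect the only delicate step to be this last one — the additivity and invariance of the type together with the Mackey reduction to the base group; the remainder is bookkeeping with the parametrization of Section~\ref{SS:Repsofwreaths} and direct appeals to Lemma~\ref{L:GwrS2} and Theorem~\ref{T:nonabelian}.
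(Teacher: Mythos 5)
Your proof is correct, and although it opens exactly as the paper does --- factoring the induction through $F\wr(S_{n-2}\times S_2)\cong (F\wr S_{n-2})\times(F\wr S_2)$ via transitivity and Lemma~\ref{L:ellproduct}, then invoking Lemma~\ref{L:GwrS2} to split the inner induction $E\boxtimes\ind^{F\wr S_2}_{\overline{S_2}} D''$ into pairwise distinct irreducibles --- it settles the two substantive points by genuinely different means. The paper's proof of Proposition~\ref{P:secondcase} is by reference: it adapts the proof of Proposition~\ref{P:firstcase}, which handles each summand $E\boxtimes M_i$ by an explicit case analysis (does the $F$-character underlying $M_i$ occur among the characters $E_1,\dots,E_r$ from which $E$ is built?) together with Lemma~\ref{L:nail}, and which disposes of cross-summand multiplicities with the brief remark that the merged factor is uniquely determined by the character being adjoined. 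You instead (i) obtain multiplicity-freeness of each summand $\ind^{F\wr S_n}_{F\wr(S_{n-2}\times S_2)}(E\boxtimes M_i)$ as a black-box consequence of Theorem~\ref{T:nonabelian} with $k=2$ --- a legitimate shortcut, since that theorem is proved earlier and independently, and $E\boxtimes M_i$ is indeed irreducible for the intermediate group --- and (ii) prove disjointness of the summands by a formal invariant: the type of an irreducible representation of a wreath product, its additivity under $\boxtimes$, and Mackey's lemma~(\ref{A:MackeyLemma}) applied to the normal base group $F^n$ (all local subgroups $G_s$ equal $F^n$, so the restriction of each induced summand to $F^n$ consists of $S_n$-conjugates of $\res_{F^n}(E\boxtimes M_i)$, all of type $\mathbf{n}_E+\mathbf{n}_{M_i}$), these types being pairwise distinct as $i$ varies since $2\mathbf{e}_j$ and $\mathbf{e}_j+\mathbf{e}_k$ ($j\neq k$) are all different. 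Your route buys rigor and economy: the type/Mackey step makes airtight what the paper leaves implicit, and the appeal to Theorem~\ref{T:nonabelian} avoids repeating the Lemma~\ref{L:nail} analysis. What it costs is self-containment: you need the explicit list of constituents of $\ind^{F\wr S_2}_{\overline{S_2}} D''$ (the representations $(\chi_j;D'')$ and the type-$(1,1)$ inductions), which you assert rather than derive; this is genuinely only bookkeeping, as it follows in one line from Theorem~\ref{T:firststep} or from Frobenius reciprocity, but it is worth citing one of those explicitly, since Lemma~\ref{L:GwrS2} alone gives multiplicity-freeness without identifying which irreducibles occur.
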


\begin{proof}
The of this proposition proof is very similar to the proof of Proposition~\ref{P:firstcase}.
The only difference is that, instead of using the permutation representation $\ind_{S_1}^{F\wr S_1} \mathbf{1}$, 
we use the representations $\ind_{S_2}^{F\wr S_2} V$ where $V\in \{\mathbf{1},\mathbf{\epsilon}\}$.
By Lemma~\ref{L:GwrS2}, we know that they are multiplicity-free representations of $F\wr S_2$. 
Since the rest of the arguments are the same as in the proof of Proposition~\ref{P:firstcase}, we omit the details. 
\end{proof}

\section{A Reduction Theorem}\label{S:Reduction}

We begin with setting up some new notation that will stay in effect in the rest of our paper.

Let $X$ be a finite $G$-set with $n:=|X|$.  
Let $F$ be a finite group. 
Although $F$ is not necessarily an abelian group, for simplifying (the exponents in) our notation, 
the inverse of an element $a$ of $F$ will be denoted by $-a$. 
Accordingly, if $f$ is an element of $F^X$, or equivalently, if it is an element of the subgroup $\overline{F^X}$ in $F\wr G$,
then we will write $-f$ to denote its inverse in $F^X$ (respectively in $\overline{F^X}$).
In this notation, if $(f,g)$ is an element in $F\wr G$, then its inverse is given by 
\begin{align*}
(f,g)^{-1} = (g^{-1} (-f), g^{-1}),
\end{align*}
where $g^{-1}$ is the inverse of $g$ in $G$. 
If $(f',g')$ and $(f,g)$ are two elements from $F\wr G$, then their product is given by 
\begin{align*}
(f,g)  (f',g') = (f + g\cdot f', gg'),
\end{align*}
where $g\cdot f' : F\to X$ is the function defined by 
$g\cdot f'(x) = f'(g^{-1}x)$ for $x\in X$.

Let $\pi_G$ denote the canonical projection homomorphism onto $G$, that is, 
\begin{align*}
\pi_G : F\wr G & \longrightarrow  G\\
(f,g ) &\longmapsto g
\end{align*}
If $K$ is a subgroup of $F\wr G$, then we denote the image of $K$ under $\pi_G$ by $\gamma_K$.
Equivalently, $\gamma_K$ is given by 
\begin{align*}
\gamma_K := \{ g\in G:\ \text{there exists $f\in F^X$ such that $(f,g)\in K$}\}.
\end{align*}
The following remark/notation will be useful in the sequel.

\begin{Remark}\label{R:normalized}
For $g\in \gamma_K$, let $\Gamma_K^g$ denote the preimage $(\pi_G|_K)^{-1}(g)$. 
It is evident that $\Gamma^g_K$ ($g\in \gamma_K$) is a subgroup of $K$ if and only if $g= \id$.
Indeed, we have the following short exact sequence:
$
\{(\id,\id)\} \longrightarrow \Gamma_K^{\id} \longrightarrow K \xrightarrow{\pi_G|_K} \gamma_K \longrightarrow \{\id\}.
$
Let $(f,g)$ be an element from $K$. It is easy to check that $\pi_G((f,g)*\Gamma^{\id}_G)=\{g\}$.
In other words, we have the inclusion 
\begin{align}\label{A:inclusions}
(f,g)*\Gamma^{\id}_K \subseteq \Gamma^g_K.
\end{align}
Since the union of all left cosets of $\Gamma^{\id}_K$ covers $K$, the inclusions (\ref{A:inclusions}) are 
actually equalities of sets; every $\Gamma_K^g$ ($g\in \gamma_K$) is a left coset of $\Gamma^{\id}_K$. 
Therefore, $|\Gamma_K^g | = |\Gamma_K^{\id} |$ for all $g\in \gamma_K$.
\end{Remark}

We now go back to the strong Gelfand pairs. 
The following characterization of the strong Gelfand pairs is easy to prove.

\begin{lem}\label{L:sgiff}
Let $H$ be a subgroup of $G$. 
Then $(G,H)$ is a strong Gelfand pair if and only if $(G\times H,\diag(H))$ is a Gelfand pair. 
\end{lem}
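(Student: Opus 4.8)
The plan is to prove the equivalence by relating both conditions to multiplicities of irreducible constituents, using Frobenius reciprocity and the standard fact that a representation is multiplicity-free precisely when its endomorphism algebra is abelian, equivalently when all pairwise intersection multiplicities of distinct irreducibles are zero and self-multiplicities are one. I would work with the bilinear form on characters: for representations $V$ and $W$ of a group, write $\langle V, W\rangle$ for the dimension of $\Hom(V,W)$. The key object is, for each irreducible representation $V$ of $H$, the induced representation $\ind_H^G V$, and I would translate the strong Gelfand condition into a statement about inner products of these inductions with each other and with themselves.

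First I would unpack the right-hand side. By the definition following Lemma~\ref{L:useful} (the tensor identity specialized to $W=\mathbf{1}$), we have $\bbc[(G\times H)/\diag(H)] \cong \ind_{\diag(H)}^{G\times H} \mathbf{1}$, so $(G\times H, \diag(H))$ being a Gelfand pair means $\ind_{\diag(H)}^{G\times H}\mathbf{1}$ is multiplicity-free as a $G\times H$ representation. Every irreducible representation of $G\times H$ is an outer tensor product $V_G \boxtimes V_H$ with $V_G$ irreducible for $G$ and $V_H$ irreducible for $H$. The multiplicity of $V_G\boxtimes V_H$ in $\ind_{\diag(H)}^{G\times H}\mathbf{1}$ is, by Frobenius reciprocity, the dimension of $\Hom_{\diag(H)}(\mathbf{1}, \res^{G\times H}_{\diag(H)}(V_G\boxtimes V_H))$. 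Identifying $\diag(H)$ with $H$, the restriction of $V_G\boxtimes V_H$ is $(\res^G_H V_G)\otimes V_H$ as an inner tensor product, so this multiplicity equals $\langle \mathbf{1}, (\res^G_H V_G)\otimes V_H\rangle_H = \langle V_H^*, \res^G_H V_G\rangle_H$, which by Frobenius reciprocity again equals the multiplicity of $V_G$ in $\ind_H^G V_H^*$.

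The decisive step is then to observe that $\ind_{\diag(H)}^{G\times H}\mathbf{1}$ is multiplicity-free if and only if every such multiplicity is $0$ or $1$, i.e.\ if and only if for every irreducible $V_H$ of $H$ the multiplicity of each irreducible $V_G$ of $G$ in $\ind_H^G V_H^*$ is at most one. As $V_H$ ranges over all irreducibles of $H$, so does $V_H^*$; hence this condition says exactly that $\ind_H^G W$ is multiplicity-free for every irreducible representation $W$ of $H$, which is precisely the definition of $(G,H)$ being a strong Gelfand pair. Reading the chain of equivalences in both directions establishes the claim. The main obstacle, though it is routine rather than deep, is keeping the inner versus outer tensor product bookkeeping straight through the restriction to the diagonal and being careful that dualization permutes the indexing set of irreducibles bijectively so that the quantifier over all irreducible $V_H$ matches the quantifier over all irreducible $W$ in the strong Gelfand definition.
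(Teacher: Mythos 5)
Your proof is correct. Note that the paper itself gives no argument for this lemma at all: it is stated with the remark that it "is easy to prove," so there is no authorial proof to compare against. Your argument is the standard one that fills this gap: identify $\bbc[(G\times H)/\diag(H)]$ with $\ind_{\diag(H)}^{G\times H}\mathbf{1}$, use that the irreducibles of $G\times H$ are exactly the outer products $V_G\boxtimes V_H$, and compute via Frobenius reciprocity that the multiplicity of $V_G\boxtimes V_H$ in this induced representation equals
\[
\dim \Hom_H\bigl(\mathbf{1},(\res^G_H V_G)\otimes V_H\bigr)
=\dim \Hom_H\bigl(V_H^*,\res^G_H V_G\bigr)
=\dim \Hom_G\bigl(\ind_H^G V_H^*,V_G\bigr),
\]
so that multiplicity-freeness of $\ind_{\diag(H)}^{G\times H}\mathbf{1}$ is exactly the condition that $\ind_H^G W$ is multiplicity-free for every irreducible $W$ of $H$ (using that $W\mapsto W^*$ permutes the irreducibles). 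Each step you invoke — the identification of the restriction to the diagonal as an inner tensor product, the identity $\langle \mathbf{1}, A\otimes B\rangle_H=\langle B^*,A\rangle_H$, and the bijectivity of dualization — is accurately handled, including the quantifier bookkeeping at the end.
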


We will apply this result to wreath products.
The main result of this section is the following reduction result.

\begin{thm}\label{T:reduction}
Let $F$ and $G$ be two finite groups, and let $K$ be a subgroup of $F\wr G$.
If $(F\wr G, K)$ is a strong Gelfand pair, then so is $(G,\gamma_K)$. 
\end{thm}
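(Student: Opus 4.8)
The plan is to reduce the statement about the strong Gelfand pair $(F \wr G, K)$ to a statement about Gelfand pairs, using Lemma~\ref{L:sgiff}, and then transport the Gelfand property down to $(G, \gamma_K)$ via the quotient criterion of Lemma~\ref{L:iff1} (equivalently Remark~\ref{R:iff1}). Concretely, by Lemma~\ref{L:sgiff}, the hypothesis that $(F \wr G, K)$ is a strong Gelfand pair is equivalent to the assertion that $(F\wr G \times K, \diag(K))$ is a Gelfand pair. Likewise, the conclusion that $(G, \gamma_K)$ is a strong Gelfand pair is equivalent, again by Lemma~\ref{L:sgiff}, to $(G \times \gamma_K, \diag(\gamma_K))$ being a Gelfand pair. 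So the task becomes: starting from the Gelfand pair $(F\wr G \times K, \diag(K))$, produce the Gelfand pair $(G \times \gamma_K, \diag(\gamma_K))$.

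The natural bridge is a homomorphism. First I would observe that $F\wr G \times K$ carries an obvious surjection onto $G \times \gamma_K$: on the first factor use the projection $\pi_G \colon F\wr G \to G$, and on the second factor use the restriction $\pi_G|_K \colon K \to \gamma_K$, which is surjective by the very definition of $\gamma_K$. Call this map $\varphi = \pi_G \times (\pi_G|_K)$. The key point to check is that $\varphi$ carries $\diag(K)$ onto $\diag(\gamma_K)$ and, crucially, that $\ker \varphi \leqslant \diag(K)$, so that Remark~\ref{R:iff1} applies. The image condition is immediate: for $(k,k) \in \diag(K)$ with $k = (f,g)$, we have $\varphi(k,k) = (g,g) \in \diag(\gamma_K)$, and surjectivity onto $\diag(\gamma_K)$ follows from surjectivity of $\pi_G|_K$.

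The heart of the argument, and the step I expect to be the main obstacle, is verifying the kernel containment $\ker \varphi \leqslant \diag(K)$. An element of $F\wr G \times K$ lies in $\ker \varphi$ precisely when its first coordinate $(f, g) \in F\wr G$ has $g = \id$ and its second coordinate $k \in K$ has $\pi_G(k) = \id$, i.e.\ $k \in \Gamma_K^{\id}$ in the notation of Remark~\ref{R:normalized}. For this to be forced into $\diag(K)$, I would need the two coordinates to coincide, which is where the structure of $\diag(K)$ inside $F\wr G \times K$ must be examined carefully — in particular whether the kernel genuinely sits inside the diagonal or whether one must instead argue that $\varphi(\ker \varphi \cdot \diag(K)) = \diag(\gamma_K)$ and that the relevant multiplicity-freeness is preserved under the quotient. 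This is the delicate bookkeeping point, since $\diag(K)$ embeds $K$ diagonally while $\ker\varphi$ mixes the first-factor base-group directions with the $\Gamma_K^{\id}$ directions in $K$.

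Once the kernel containment is established, the conclusion is formal: by Remark~\ref{R:iff1}, $(F\wr G \times K, \diag(K))$ is a Gelfand pair if and only if its image $(\varphi(F\wr G \times K), \varphi(\diag(K))) = (G \times \gamma_K, \diag(\gamma_K))$ is a Gelfand pair. Since the left-hand pair is Gelfand by hypothesis (via Lemma~\ref{L:sgiff} applied to the strong Gelfand assumption), the right-hand pair is Gelfand, and one final application of Lemma~\ref{L:sgiff} yields that $(G, \gamma_K)$ is a strong Gelfand pair, as desired. The whole proof is thus a chain of equivalences pivoting on the single homomorphism $\varphi$, and essentially all the real work is concentrated in pinning down $\ker\varphi$ relative to $\diag(K)$.
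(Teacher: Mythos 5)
Your overall strategy---reduce via Lemma~\ref{L:sgiff} to the Gelfand pair $(F\wr G\times K,\diag(K))$, push it through the homomorphism $\varphi=\pi_G\times(\pi_G|_K)$ onto $G\times\gamma_K$, and conclude with Lemma~\ref{L:sgiff} again---is exactly the skeleton of the paper's proof, and your $\varphi$ is the paper's $\phi$. But the step your argument pivots on is false: the containment $\ker\varphi\leqslant\diag(K)$ does not hold whenever $F$ is nontrivial. Indeed, $\ker\varphi=\overline{F^X}\times\Gamma_K^{\id}$, so for any $x\in F^X$ with $x\neq\id$ the element $\bigl((x,\id_G),(\id,\id_G)\bigr)$ lies in $\ker\varphi$ but is not diagonal; only the subgroup $\diag(\Gamma_K^{\id})=\ker\varphi\cap\diag(K)$ sits inside the diagonal. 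Since the kernel genuinely mixes the base-group directions of the first factor with $\Gamma_K^{\id}$ in the second, Remark~\ref{R:iff1} cannot be applied with the subgroup $\diag(K)$, and the chain of equivalences you describe breaks at its central link.

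The repair---which is what the paper actually does---is the alternative you gesture at but do not carry out: replace $\diag(K)$ by the larger subgroup
\[
H:=\{\,((f,b),(a,b)) :\ (a,b)\in K,\ f\in F^X\,\},
\]
which is precisely $\ker\varphi\cdot\diag(K)$ (a subgroup, since $\ker\varphi$ is normal). Three things must then be checked, none of which appear in your proposal: (i) $H$ is closed under the wreath-product multiplication (the paper verifies this by direct computation); (ii) the Gelfand property is inherited by overgroups, so $(F\wr G\times K,\diag(K))$ Gelfand and $\diag(K)\leqslant H$ imply $(F\wr G\times K,H)$ is Gelfand --- this fact is used but never stated in your argument; and (iii) $\ker\varphi\leqslant H$ and $\varphi(H)=\diag(\gamma_K)$, so that Remark~\ref{R:iff1} applied to $H$ (not to $\diag(K)$) yields that $(G\times\gamma_K,\diag(\gamma_K))$ is Gelfand. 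With these three points supplied, your closing application of Lemma~\ref{L:sgiff} goes through. As written, however, your proof asserts that everything becomes formal ``once the kernel containment is established,'' and that containment is exactly the thing that is not true.
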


\begin{proof}
We assume that $(F\wr G, K)$ is a strong Gelfand pair. 
By Lemma~\ref{L:sgiff}, we know that $(F\wr G \times K, \diag(K))$ is a Gelfand pair. 

Let $X$ denote the finite $G$-set such that $F\wr G = F^X \rtimes G$, 
and let $H$ denote the following subset of $F\wr G \times K$:
\begin{align*}
H:= \{ ((f,b),(a,b)):\ (a,b)\in K,\ f\in F^X\}.
\end{align*}
We claim that $H$ is a subgroup. 
First, we will show that $H$ is closed under products:
Let $((f_1,b_1),(a_1,b_1))$ and $((f_2,b_2),(a_2,b_2))$ be two elements from $H$. 
Then we have 
\[
((f_1,b_1),(a_1,b_1))*((f_2,b_2),(a_2,b_2)) = ((f_1 + (b_1\cdot f_2), b_1b_2), ( a_1+(b_1 \cdot a_2),b_1b_2)). 
\]
Since the second and the fourth entries are the same, this product is contained in $H$, hence, $H$ is closed under products. 
Next, we will show that the inverses of the elements of $H$ exist: 
For $\sigma:= ((f,b),(a,b))\in H$, let $\tau:=((x,y),(z,y))$ be the element $(( b^{-1}\cdot (-f), b^{-1}), (b^{-1}\cdot (-a), b^{-1}))$ in 
$F\wr G \times K$. 
Clearly, $\tau$ is an element of $H$.
The product of $\sigma$ and $\tau$ is given by 
\begin{align*}
\sigma * \tau &=((f,b),(a,b)) * ((x,y),(z,y))\\ 
&=( (f,b) ( b^{-1}\cdot (-f), b^{-1}),  (a,b)  (b^{-1}\cdot (-a), b^{-1})) \\
&=( (f +(b\cdot (b^{-1}\cdot (-f))), bb^{-1}),  (a + (b\cdot (b^{-1}\cdot (-a))),bb^{-1}) )\\
&= ((\id, \id), (\id, \id)),
\end{align*}
hence, $((x,y),(z,y))$ is the inverse of $((f,b),(a,b))$. 
These computations show that $H$ is a subgroup of $F\wr G\times K$.

Evidently, the diagonal subgroup $\diag(K)$ in $F\wr G \times K$ is a subgroup of $H$. 
Since $(F\wr G \times K, \diag(K))$ is a Gelfand pair, it follows that $(F\wr G \times K, H)$ is a Gelfand pair as well. 
Now we will identify a normal subgroup of $F\wr G \times K$ by the help of the following map: 
\begin{align*}
\phi : F\wr G \times K \longrightarrow  G\times \gamma_K \\
((f,g),(a,b)) \longmapsto (g,b)
\end{align*}
It easy to verify that $\phi$ is a homomorphism.
It is also evident that, if an element $((x,y),(z,w))$ from $F\wr G \times K$ lies in the kernel of $\phi$, then $y=w=\id$. 
In particular, we see that $N:=\ker(\phi) \leqslant H$.
This is the normal subgroup that we were seeking. 

By Remark~\ref{R:iff1}, now we know that the pair $((F\wr G \times K)/N, H/N)$
is a Gelfand pair. But $(F\wr G \times K)/N$ is isomorphic to $G\times \gamma_K$. 
Also, it is easy to check that the diagonal subgroup $\diag(\gamma_K)$ of $F\wr G\times K$ 
is isomorphic $H/ N$ under the restriction of $\phi$.
Therefore, we have the following identification of Gelfand pairs: 
$((F\wr G \times K)/N, H/N) \cong (G\times \gamma_K, \diag(\gamma_K))$.
Finally, by using Lemma~\ref{L:iff1} once again, we conclude that $(G,\gamma_K)$ is a strong Gelfand pair.
This finishes the proof of our theorem.
\end{proof}

We mentioned earlier that, for $n\geq 7$, there are only four (minimal) strong Gelfand subgroups in $S_n$~\cite{AHN}.
As a simple consequence of Theorem~\ref{T:reduction}, we deduce a similar statement for the strong Gelfand subgroups in $F\wr S_n$.

\begin{cor}\label{C:onlyfour}
Let $n\geq 7$, and let $K$ be a subgroup of $F\wr S_n$. 
If $(F\wr S_n, K)$ is a strong Gelfand pair, then $\gamma_K \in \{S_n, A_n, S_{n-1}\times S_1,S_{n-2}\times S_2\}$.
\end{cor}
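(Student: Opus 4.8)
The plan is to combine the reduction result just proven (Theorem~\ref{T:reduction}) with the classification of strong Gelfand subgroups of the symmetric group due to Anderson, Humphries, and Nicholson~\cite{AHN}, as recalled in the introduction. The logical skeleton is extremely short: if $(F\wr S_n, K)$ is a strong Gelfand pair, then Theorem~\ref{T:reduction} (applied with $G = S_n$) tells us immediately that $(S_n, \gamma_K)$ is a strong Gelfand pair. So $\gamma_K$ must itself be a strong Gelfand subgroup of $S_n$, and the whole burden of the corollary is transferred to understanding which subgroups of $S_n$ can occur.

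\medskip

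First I would invoke Theorem~\ref{T:reduction} to reduce to the statement that $\gamma_K \leqslant S_n$ is a strong Gelfand subgroup. Then I would appeal to the result of~\cite{AHN}, quoted in the introduction, which says that for $n \geq 7$ the only strong Gelfand \emph{pairs} of the form $(S_n, B)$ are $(S_n, S_n)$, $(S_n, A_n)$, $(S_n, S_1\times S_{n-1})$, and $(S_n, S_2\times S_{n-2})$ (up to interchanging the factors of the Young subgroups). Reading off the second coordinate of each of these pairs gives exactly the list
\[
\gamma_K \in \{ S_n,\ A_n,\ S_{n-1}\times S_1,\ S_{n-2}\times S_2 \},
\]
which is the desired conclusion. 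The hypothesis $n \geq 7$ in the corollary is precisely the hypothesis needed for the~\cite{AHN} classification to hold, so the threshold matches up cleanly.

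\medskip

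The one point that needs a word of care — and the only place where a genuine (if small) subtlety lives — is that the~\cite{AHN} list is stated ``up to interchange of the factors,'' whereas our list writes the Young subgroups in a fixed order as $S_{n-1}\times S_1$ and $S_{n-2}\times S_2$. Since $S_1\times S_{n-1} \cong S_{n-1}\times S_1$ and $S_2\times S_{n-2} \cong S_{n-2}\times S_2$ as subgroups of $S_n$ (they are literally the same Young subgroup, only listed in a different order), there is nothing to prove here beyond noting that the normalisation of the order of the parts does not change the subgroup. I do not anticipate any serious obstacle: this corollary is a direct packaging of Theorem~\ref{T:reduction} with an external classification, and the proof is essentially one line once those two inputs are in place.
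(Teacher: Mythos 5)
Your proposal is correct and is precisely the paper's own argument: the paper derives this corollary as an immediate consequence of Theorem~\ref{T:reduction} combined with the classification of strong Gelfand subgroups of $S_n$ for $n\geq 7$ from~\cite{AHN}, exactly as you do. The remark about the ordering of the Young subgroup factors is a harmless normalisation and does not constitute a gap.
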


We record a partial converse of Theorem~\ref{T:reduction}.

\begin{prop}\label{P:Fiff}
Let $n\geq 7$, and let $B$ be a subgroup of $S_n$. 
Then $(F\wr S_n, F\wr B)$ is a strong Gelfand pair if and only if $(S_n,B)$ is a strong Gelfand pair. 
\end{prop}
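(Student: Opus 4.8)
The plan is to prove the two implications separately, treating the forward direction as an immediate application of the reduction theorem and the backward direction by a short case analysis based on the classification of strong Gelfand subgroups of $S_n$.

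For the forward direction, suppose $(F\wr S_n, F\wr B)$ is a strong Gelfand pair. Writing $F\wr B = F^X\rtimes B$ with $X=\{1,\dots,n\}$, the projection $\pi_{S_n}$ sends $(f,b)\mapsto b$, so its image is exactly $\gamma_{F\wr B}=B$. Theorem~\ref{T:reduction} then gives at once that $(S_n,\gamma_{F\wr B})=(S_n,B)$ is a strong Gelfand pair, with no further work needed.

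For the backward direction, assume $(S_n,B)$ is a strong Gelfand pair with $n\geq 7$. By the theorem of Anderson, Humphries, and Nicholson recalled in the introduction, $B$ is conjugate in $S_n$ to one of $S_n$, $A_n$, $S_{n-1}\times S_1$, or $S_{n-2}\times S_2$. Conjugating $B$ by $\sigma\in S_n$ corresponds to conjugating $F\wr B$ in $F\wr S_n$ by the element $(0,\sigma)$, so $(F\wr S_n, F\wr B)$ is a strong Gelfand pair for $B$ if and only if it is for any conjugate of $B$; hence it suffices to treat these four representatives. The case $B=S_n$ is trivial, since $(G,G)$ is always a strong Gelfand pair. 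For $B=S_{n-1}\times S_1$ and $B=S_{n-2}\times S_2$ we have $F\wr B=F\wr(S_{n-k}\times S_k)$ with $k\in\{1,2\}$, and Theorem~\ref{T:nonabelian} shows directly that these are strong Gelfand pairs.

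The one remaining case, $B=A_n$, is where the genuinely separate ingredient enters, so I expect it to be the crux of the argument. The key observation is that $F\wr A_n$ is a \emph{normal subgroup of index $2$} in $F\wr S_n$: indeed $A_n\trianglelefteq S_n$ forces $F\wr A_n\trianglelefteq F\wr S_n$, and by~\eqref{A:basicquotient} the quotient is $S_n/A_n\cong\Z/2$. I would then invoke the general principle that \emph{every index-$2$ subgroup is automatically a strong Gelfand subgroup}. To verify this, let $H\trianglelefteq G$ with $[G:H]=2$ and let $V$ be an irreducible representation of $H$. Since $H$ is normal of index $2$, the $(H,H)$-double cosets are $\{\id, g\}$ for any $g\notin H$, and $H\cap sHs^{-1}=H$ for each representative; Mackey's formula~\eqref{A:Mackey} therefore gives $\dim\Hom_G(\ind_H^G V,\ind_H^G V)=\dim\Hom_H(V,V)+\dim\Hom_H(V,V^g)\leq 2$. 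Writing $\ind_H^G V=\bigoplus_i m_i W_i$ for its isotypic decomposition, this endomorphism dimension equals $\sum_i m_i^2$, so being at most $2$ forces every $m_i\leq 1$; thus $\ind_H^G V$ is multiplicity-free. Applying this to $H=F\wr A_n\leqslant G=F\wr S_n$ settles the case $B=A_n$ and completes the proof. The only point requiring care is the double-coset count feeding into Mackey's formula, after which the multiplicity-free conclusion follows from the clean dimension identity $\sum_i m_i^2\leq 2$ rather than from any finer structure theory.
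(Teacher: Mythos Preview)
Your proof is correct and follows essentially the same approach as the paper's own proof: the forward direction via the reduction theorem (equivalently Corollary~\ref{C:onlyfour}), and the backward direction via the Anderson--Humphries--Nicholson classification together with Theorem~\ref{T:nonabelian} for the Young-subgroup cases and the index-$2$ observation for $A_n$. The only difference is that you spell out, via Mackey's formula, why an index-$2$ subgroup is automatically strong Gelfand, whereas the paper simply invokes this as a known fact.
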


\begin{proof}
Let $K$ be a subgroup of the form $F\wr B$ for some subgroup $B\leqslant S_n$.
Then $\gamma_K = B$.
Now Corollary~\ref{C:onlyfour} gives the $\Rightarrow$ direction. 
For the converse, by the main result of~\cite{AHN}, 
we have four cases: $B= S_n$, $B=A_n$, $B= S_{n-1}\times S_{1}$, and $B= S_{n-2}\times S_2$. 
In the first case there is nothing to do. 
The second case follows from the fact that $F\wr A_n$ is an index 2 subgroup of $F\wr S_n$. 
The last two cases follow from Theorem~\ref{T:nonabelian}. 
\end{proof}

\section{Hyperoctahedral Groups}\label{S:Hyperoctahedral}

From now on we will denote by $F$ the cyclic group of order 2. 
To simplify our notation, we will use the additive notation, so, $F=\Z/2 = \{ 0,1\}$. 
If there is no danger for confusion, the identity element of $F^n$ ($n\in \N$) will be denoted by $0$ as well. 
The wreath product $F\wr S_n$ will be denoted by $B_n$.
For $i\in \{1,\dots, n\}$, the element $x \in F^n$ which has 1 at its $i$-th entry and 0's elsewhere 
will be denoted by $e_i$. The set $\{e_1,\dots, e_n\}$ will be called the {\em standard basis for $F^n$}. 
In this notation, for $i\in \{1,\dots, n\}$, if $f\in F^n$, then $f_i$ will denote the coefficient of $e_i$ in $f$. 
When there is no danger for confusion, we will use $1$ to denote the sum of the standard basis elements, 
\begin{align}\label{A:sum}
1 = e_1+\cdots + e_n.
\end{align}

The wreath product $B_n$ is called the {\em $n$-th hyperoctahedral group}. 
It follows from the general description of the irreducible representations of wreath products that 
every irreducible linear representation of $B_n$ is equivalent to one of the induced representations, 
\begin{align}\label{A:Bnirrs}
S^{\lambda,\mu}:=\ind^{B_n}_{B_{n-k}\times B_k} (\mathbf{1}; S^\lambda)\boxtimes (\mathbf{\epsilon}; S^\mu),
\end{align}
where $0 \leq k \leq n$, 
and $S^\lambda$ (resp.~$S^\mu$) is the Specht module indexed by the partition $\lambda$ of $n-k$ (resp.~by the partition $\mu$ of $k$).
The character of $S^{\lambda,\mu}$ will be denoted by $\chi^{\lambda,\mu}$.

Our goal in the rest of this section is to determine the strong Gelfand subgroups of $B_n$.
In light of Corollary~\ref{C:onlyfour}, it will suffice to determine the Gelfand subgroups $K$ with 
$\gamma_K \in \{S_n,A_n,S_{n-1}\times S_1,S_{n-2}\times S_2\}$ only.
Before going through these cases, we will point out some well-known facts about the structures of certain subgroups of $B_n$. 
Also, we will describe a branching rule for the subgroup $B_{n-1}$ in $B_n$.

\subsection{Some special subgroups of $B_n$.}\label{SS:Bnsubgroups}

First of all, we want to point out that $B_n$ has a distinguished index 2 subgroup, denoted $D_n$. 
Actually, it is the Weyl group of type $\text{D}_n$. 
To describe it, we will view $B_n$ as a subgroup of $S_{2n}$. 
Let $(f,\sigma)$ be an element of $B_n$. 
For $i\in \{1,\dots, n\}$, we will construct a permutation $\tilde{f_i}$ of $\{1,\dots, 2n\}$ as follows: 
For $j\in \{1,\dots, 2n\}$, the value of $\tilde{f_i}$ at $j$ is defined by 
\[
\tilde{f_i}( j) =
\begin{cases} 
j & \text{ if $j\notin \{2i-1,2i\}$ or $f_i=0$},\\
2i & \text{ if $j=2i-1$ and $f_i=1$},\\
2i-1 & \text{ if $j=2i$ and $f_i=1$}.
\end{cases}
\]
Likewise, by using $\sigma$ we define a permutation $\tilde{\sigma}$ of $\{1,\dots, 2n\}$ by
\[
\tilde{\sigma} (2i-1) = 2\sigma(i)-1 \quad \text{and} \quad \tilde{\sigma} (2i) = 2\sigma(i), \quad \text{for $i\in \{1,\dots, n\}$}.
\]
We set $x= x(f,\sigma):= \tilde{f_1} \cdots \tilde{f_n} \tilde{\sigma} \in S_{2n}$.
Then the map $(f,\sigma) \mapsto x(f,\sigma)$
is an injective group homomorphism $B_n \hookrightarrow S_{2n}$. 
Abusing notation, we will denote the image of $B_n$ in $S_{2n}$ by $B_n$ as well. 
Then our distinguished subgroup is given by the intersection $D_n = A_{2n} \cap B_n$.
In the sequel, we will identify $D_n$ in $B_n$ in a different way.

It is going to be important for our purposes that we know all index 2 subgroups of $B_n$.
Fortunately, they are fairly easy to find once we give the Coxeter generators of $B_n$. 
Let $s_1,\dots, s_{n-1}$ denote the (simple) transpositions $(0,(1\,2)),(0,(2\,3)),\dots, (0,(n-1\,n))$;
these elements are the Coxeter generators of the passive factor $\overline{S_n}$ in $B_n$. 
Let $t$ denote the element $((1,0,\dots,0),\id) \in B_n$. 
Then we have the following Coxeter relations: 
\begin{itemize}
\item $s_i^2 = (s_i s_{i+1})^3 = (0,\id)$ for every $i\in \{1,\dots, n-1\}$;
\item $t^2 = (0,\id)$;
\item $(s_is_j)^2 = (0,\id)$ for every $i,j\in \{1,\dots, n-1\}$ with $|i-j| \geq 2$;
\item $(s_it)^2 = (0,\id)$ for every $i\in \{2,\dots, n-1\}$;
\item $(s_1t)^4 = (0,\id)$.
\end{itemize}

The group of linear characters of $B_n$, that is $L_n:=\Hom (B_n, \bbc^*)$, is generated by the characters $\varepsilon$
and $\delta$ defined by
\begin{align}\label{A:varepsilondelta}
\varepsilon(s_i) = -1,\ \varepsilon(t) = +1,\qquad   \delta(s_i) = +1,\ \delta(t) = -1.
\end{align}
Thus, $L_n$ is isomorphic to $F\times F$. 
Explicitly, the four linear characters $\mathbf{1}, \varepsilon, \delta, \varepsilon \delta$ correspond to $B_n$ representations as follows.

\begin{itemize}
\item The trivial character $1$ is the character of $S^{(n),\varnothing} = (\mathbf{1}; \mathbf{1}) = \mathbf{1}^{(n)} \otimes \mathbf{1}$.

\item The character $\varepsilon$ is the character of $S^{(1^n),\varnothing} = (\mathbf{1}; S^{(1^n)}) = \mathbf{1}^{(n)} \otimes \mathbf{\epsilon}$.

\item The character $\delta$ is the character of $S^{\varnothing,(n)} = (\mathbf{\epsilon}; \mathbf{1}) = \mathbf{\epsilon}^{(n)} \otimes \mathbf{1}$.

\item The character $\varepsilon \delta$ is the character of $S^{\varnothing,(1^n)} = (\mathbf{\epsilon}; S^{(1^n)}) = \mathbf{\epsilon}^{(n)} \otimes \mathbf{\epsilon}$.
\end{itemize}

These facts allow us to conclude the following useful statements:
\begin{enumerate}
\item $B_n$ has exactly three subgroups of index 2, corresponding to the kernels of the homomorphisms 
$\varepsilon,\delta$, and $\varepsilon \delta$. 
\item $B_n$ has exactly one normal subgroup of index 4, denoted by $J_n$, that is given by the intersection $\ker \varepsilon \cap \ker \delta$.
\item The kernel of $\delta$ is $D_n$.
Indeed, if $s_0$ denotes $ts_1 t$, then, as a Coxeter group, $D_n$ is generated by $s_0,s_1,\dots, s_{n-1}$.
Note that $ts_1 t = ((1,1,0,\dots, 0),\id)$.
\item The kernel of $\varepsilon$ is $F\wr A_n$. 
\end{enumerate}

\begin{Remark}
Let $G$ be a finite group. 
If there is a unique normal index 4 subgroup $J$ of $G$, 
then we think that it would be appropriate to call it the {\em Stembridge subgroup of $G$} because of
John Stembridge's seminal work on the projective representations~\cite{Stembridge92} where such a subgroup is extensively used. 
\end{Remark}

\subsubsection{The associators of index 2 subgroups of $B_n$.}\label{associators}

Our references for this subsection are the two papers~\cite{Stembridge89,Stembridge92} of Stembridge.

The linear character group $L_n$ of $B_n$ acts on the isomorphism classes of irreducible representations of $B_n$ 
via $V \mapsto \tau \otimes V$, where $\tau$ is a one-dimensional representation corresponding to an element of $L_n$. 
We continue with the assumption that $V$ is an irreducible representation of $B_n$. 
If $V \cong \tau \otimes V$, then we will say that $V$ is {\em self-associate with respect to $\tau$}; otherwise, $V$ and $\tau\otimes V$ are said to be {\em associate representations with respect to $\tau$}. 
In the sequel, when there is no danger for confusion, it will be convenient to 
denote $\tau \otimes V$ by $\chi_\tau V$, where $\chi_\tau$ is the linear character of $\tau$.

Let $H$ denote the kernel of $\chi_\tau : B_n \to \bbc^*$, 
and let $V$ be a self-associate representation with respect to $\tau$.
Then there exists an endomorphism $S\in GL(V)$ such that $g Sv = \tau(g) Sgv$ for all $g\in B_n$ and $v\in V$.
Furthermore, as a consequence of Schur's lemma, one knows that $S^2 = 1$, hence, $S$ has two eigenvalues, $\pm 1$. 
Any of the two endomorphisms $\pm S$ is called the {\em $\tau$-associator of $V$}.
Let $V^+$ (resp.~$V^-$) denote the $S$-eigenspace of eigenvalue $+1$ (resp.~eigenvalue $-1$). 
Then $V^+$ and $V^-$ are irreducible pairwise inequivalent $H$ representations. 
If $V$ and $\tau \otimes V$ are associate representations with respect to $\tau$, 
then both of them are irreducible and isomorphic as $H$ representations
~\cite[Lemma 4.1]{Stembridge89}.
Let us translate these statements to the `induced/restricted representation' language. 
Let $V$ be a self-associate representation with respect to $\tau$.
Then the restriction of $V$ to the subgroup $H$ splits into two inequivalent irreducible representations. 
If $V$ is not a self-associate representation with respect to $\tau$, then 
the restriction of $V$ to $H$ is an irreducible representation of $H$, and furthermore, 
$\ind_H^{B_n} \res^{B_n}_H V = V \oplus \tau\otimes  V$.

Let $S^{\lambda,\mu}$ be an irreducible representation of $B_n$, and let $\chi^{\lambda,\mu}$ 
denote the corresponding character. 
Then we have 
\begin{enumerate}
\item $\delta \chi^{\lambda,\mu} = \chi^{\mu, \lambda}$, hence, $S^{\lambda,\mu}$ is 
self-associate with respect to $\delta$ if and only if $\lambda = \mu$; 
\item $\varepsilon  \chi^{\lambda,\mu} = \chi^{\lambda', \mu'}$, hence, $S^{\lambda,\mu}$ is 
self-associate with respect to $\varepsilon$ if and only if $\lambda = \lambda'$ and $\mu = \mu'$; 
\item $\varepsilon \delta \chi^{\lambda,\mu} = \chi^{\mu', \lambda'}$, hence, $S^{\lambda,\mu}$ is 
self-associate with respect to $\varepsilon \delta$ if and only if $\lambda = \mu'$. 
\end{enumerate}

\subsection{$\gamma_K = S_n$.}\label{SS:1st}

If $f$ is an element of $F^n$, then we will denote by $\#f$ the number of 1's in $f$. 
For a subgroup $K$ of $B_n$, we define 
\begin{align}
m_K:= \min_{f\in \Gamma^{\id}_{K} \setminus\{ (0,\id)\}} \#f .
\end{align}
Note that $m_K$ may not exist, as we may sometimes have $\Gamma^{\id}_{K} = \{(0,\id)\}$.
Clearly, if it exists, then $m_K$ is an element of the set $\{1,\dots, n\}$. 
We have five major cases for $m_K$:
\begin{enumerate}
\item $m_K=1$,
\item $m_K=2$,
\item $3\leq m_K \leq n-1$
\item $m_K=n$,
\item $m_K$ does not exist.
\end{enumerate}

Although the above five cases are defined for $K$ with $\gamma_K=S_n$, in the sequel the same cases will be considered for 
the subgroups $K \leqslant B_n$ where $\gamma_K \in \{A_n, S_1\times S_{n-1}, S_2\times S_{n-2}\}$. 
We recall our notation from Remark~\ref{R:normalized}:
For $g\in \gamma_K$, $\Gamma^g_K$ is the preimage $(\pi_G|_K)^{-1}(g)$.

\begin{lem}\label{L:Case1}
If $m_K=1$, then $\Gamma^{\id}_K= \{ (f, \id) \in B_n:\ f\in F^n\} = \overline{F}$.
In this case, $K$ is equal to $B_n$, hence, it is a strong Gelfand subgroup.
\end{lem}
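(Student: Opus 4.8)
The plan is to show that the hypothesis $m_K = 1$ forces $\Gamma^{\id}_K$ to contain the entire base subgroup $\overline{F}$, and that this in turn forces $K = B_n$. First I would unpack what $m_K = 1$ means: there is an element $(f, \id) \in \Gamma^{\id}_K$ with $\#f = 1$, that is, $f = e_i$ for some $i \in \{1,\dots,n\}$. Since $\gamma_K = S_n$ by the standing assumption of this subsection, for every $\sigma \in S_n$ there exists some $g \in F^n$ with $(g, \sigma) \in K$. The key mechanism is conjugation: conjugating $(e_i, \id)$ by an element $(g,\sigma) \in K$ produces another element of $\Gamma^{\id}_K$ whose $F^n$-component is $\sigma \cdot e_i = e_{\sigma(i)}$ (the permutation action permutes the standard basis). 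Because $\Gamma^{\id}_K$ is a normal subgroup of $K$ (it is the kernel of $\pi_G|_K$, as noted in Remark~\ref{R:normalized}), each such conjugate again lies in $\Gamma^{\id}_K$.

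Concretely, I would argue as follows. Pick $(g,\sigma) \in K$; a direct computation with the multiplication and inversion rules recorded at the start of Section~\ref{S:Reduction} gives
\[
(g,\sigma)(e_i,\id)(g,\sigma)^{-1} = (\sigma\cdot e_i,\ \id) = (e_{\sigma(i)},\ \id),
\]
since the base-group element $e_i$ conjugates to its $\sigma$-translate and the permutation parts cancel. As $\sigma$ ranges over $\gamma_K = S_n$, the index $\sigma(i)$ ranges over all of $\{1,\dots,n\}$, so $\Gamma^{\id}_K$ contains every standard basis vector $e_1,\dots,e_n$ (paired with $\id$). Since $\Gamma^{\id}_K$ is a group and $\{e_1,\dots,e_n\}$ generates $F^n = (\Z/2)^n$ under addition, we conclude $\Gamma^{\id}_K \supseteq \{(f,\id) : f \in F^n\} = \overline{F}$. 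The reverse inclusion is automatic because $\Gamma^{\id}_K \subseteq F^n \times \{\id\} = \overline{F}$ by definition, giving $\Gamma^{\id}_K = \overline{F}$.

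For the final claim that $K = B_n$, I would use the short exact sequence from Remark~\ref{R:normalized}, namely $\lvert K \rvert = \lvert \Gamma^{\id}_K\rvert \cdot \lvert \gamma_K \rvert$. With $\Gamma^{\id}_K = \overline{F} = F^n$ and $\gamma_K = S_n$, this gives $\lvert K \rvert = 2^n \cdot n! = \lvert B_n \rvert$, and since $K \leqslant B_n$, equality of orders forces $K = B_n$. (Alternatively, one checks directly that $K \supseteq \overline{F}$ together with $\pi_G(K) = S_n$ forces $K$ to contain a full set of coset representatives, hence all of $B_n$.) That $(B_n, B_n)$ is a strong Gelfand pair is immediate, since inducing any irreducible representation to the group itself returns that same irreducible, which is trivially multiplicity-free.

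The only mildly delicate step is the conjugation computation, and in particular pinning down the correct direction of the permutation action on $e_i$ — whether conjugation yields $e_{\sigma(i)}$ or $e_{\sigma^{-1}(i)}$ — under the conventions fixed at the start of Section~\ref{S:Reduction} (where $g\cdot f'(x) = f'(g^{-1}x)$). This indexing detail does not affect the argument, however: for either convention the orbit of $e_i$ under all of $S_n$ is the full standard basis, so the generation argument goes through unchanged. Thus the main obstacle is purely bookkeeping, and the essential content is the normality of $\Gamma^{\id}_K$ combined with the transitivity of the $S_n$-action on coordinates.
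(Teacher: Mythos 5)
Your proof is correct and follows essentially the same route as the paper's: use normality of $\Gamma^{\id}_K$ together with $\gamma_K = S_n$ to conjugate $(e_i,\id)$ into every $(e_j,\id)$, conclude $\Gamma^{\id}_K = \overline{F}$ by generation, and then count $|K| = |\Gamma^{\id}_K|\cdot|\gamma_K| = |B_n|$. The only cosmetic difference is that the paper conjugates by elements covering transpositions $(i\,k)$ while you conjugate by arbitrary $(g,\sigma)$, which is exactly the generalization the paper itself records immediately afterward in Remark~\ref{R:itisnormalized}.
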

\begin{proof}
Since $m_K=1$, we know that $\Gamma^{\id}_K$ contains an element of the form $(e_k, \id)$ ($1\leq k \leq n$). 
Also, we know from Remark~\ref{R:normalized} that $\Gamma^{\id}_K$ is a normal subgroup of $K$. 
Let $(f,(i\, k))$ be an element in $K$, where $(i\, k)$ is the transposition that interchanges $i$ and $k$.
Then we have 
\begin{align*}
(f,(i\, k)) * (e_k, \id) *(f,(i\, k))^{-1} &=  
(f,(i\, k)) * (e_k, \id) *  ((i\, k)\cdot (-f), (i\, k))\\ 
&= (f + e_i, (i\,k)) *  ((i\, k)\cdot (-f), (i\, k)) \\
&= (e_i , \id) \in \Gamma^{\id}_K.
\end{align*}
But this implies that $\Gamma^{\id}_K = \overline{F}$.
Since $K/\Gamma^{\id}_K \cong S_n$, the cardinality of $K$ is equal to that of $B_n$, hence, $K=B_n$. 
This finishes the proof of our assertion.
\end{proof}

\begin{Remark}\label{R:itisnormalized}
The computation in the proof of Lemma~\ref{L:Case1} can be generalized as follows.
If $(f,\sigma)$ and $(g,\id)$ are two elements from $K$ and $\Gamma^{\id}_K$, respectively, then 
\begin{align*}
(f,\sigma)* (g,\id) * (\sigma^{-1}\cdot (-f) , \sigma^{-1}) &= (f + \sigma\cdot g, \sigma) *(\sigma^{-1}\cdot (-f) , \sigma^{-1})\\
&= (f+\sigma\cdot g -f, \id)\\
&= (\sigma\cdot g, \id).
\end{align*}
Since $\Gamma^{\id}_K$ is a normal subgroup of $K$, this computation shows that $(\sigma\cdot g,\id)\in\Gamma^{\id}_K$.
We interpret this as follows: Although $S_n$ need not be a subgroup of $K$, it still normalizes $\Gamma^{\id}_K$. 
Also, let us point out that, in coordinates, if $g = (g_1,\dots, g_n)$, then the action of $\sigma$ on $g$ is given by $\sigma\cdot g = (g_{\sigma^{-1}(1)},\dots, g_{\sigma^{-1}(n)})$.
\end{Remark}

\begin{lem}\label{L:Case2}
If $K$ is a subgroup of $B_n$ with $\gamma_K = S_n$ and $m_K=2$, then we have 
\begin{align}\label{L:Aasin}
\Gamma^{\id}_K= \{ (f, \id) \in B_n:\ \text{$\#f$ is even}\}.
\end{align}
There are two subgroups $K\leqslant B_n$ with $\Gamma^{\id}_K$ as in (\ref{L:Aasin}):
\begin{enumerate}
\item $D_n= \ker \delta$,
\item $H_n:= \ker (\varepsilon \delta)$. 
\end{enumerate}
In particular, in both of these cases, $(B_n,K)$ is a strong Gelfand pair. 
\end{lem}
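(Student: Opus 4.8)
The plan is to prove the three assertions in turn, starting from the identification of $\Gamma^{\id}_K$, then counting the admissible $K$, and finally deducing the strong Gelfand property. Throughout I identify $\Gamma^{\id}_K$ with the subset of $F^n$ obtained via $(f,\id)\leftrightarrow f$.

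First I would observe that, because $\gamma_K = S_n$, Remark~\ref{R:itisnormalized} shows that $\Gamma^{\id}_K$ is invariant under the coordinate-permutation action of $S_n$ on $F^n$; being a subgroup of $\overline{F}$ it is moreover an $\mathbb{F}_2$-subspace. The hypothesis $m_K = 2$ supplies a weight-two vector $e_i + e_j$ in $\Gamma^{\id}_K$, and applying $S_n$-invariance yields every $e_k + e_l$ with $k\neq l$. These vectors span the even-weight subspace $C := \{f\in F^n :\ \#f \text{ is even}\}$, so $C\subseteq \Gamma^{\id}_K$. For the reverse inclusion I would rule out odd-weight vectors: since $C$ has index $2$ in $F^n$, any odd-weight vector together with $C$ would generate all of $F^n$, hence $e_1\in\Gamma^{\id}_K$, contradicting $m_K=2$. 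Therefore $\Gamma^{\id}_K = C$, which is exactly~(\ref{L:Aasin}).

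For the counting statement I would use the short exact sequence of Remark~\ref{R:normalized} to get $|K| = |\Gamma^{\id}_K|\cdot|\gamma_K| = 2^{n-1}\cdot n! = |B_n|/2$, so that $K$ is an index-$2$ subgroup of $B_n$. As recalled in Section~\ref{SS:Bnsubgroups}, $B_n$ has exactly three such subgroups: $\ker\varepsilon = F\wr A_n$, $\ker\delta = D_n$, and $\ker(\varepsilon\delta) = H_n$. Computing from~(\ref{A:varepsilondelta}), one has $\varepsilon(f,\sigma)=\mathrm{sgn}(\sigma)$, so $\gamma_{\ker\varepsilon} = A_n \neq S_n$, excluding $F\wr A_n$; whereas $\delta(f,\sigma) = (-1)^{\#f}$ and $(\varepsilon\delta)(f,\sigma) = \mathrm{sgn}(\sigma)(-1)^{\#f}$ show that both $D_n$ and $H_n$ have $\gamma_K = S_n$ and $\Gamma^{\id}_K = C$. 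Hence exactly $D_n$ and $H_n$ arise.

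Finally, for the strong Gelfand conclusion I would invoke the general fact that any index-$2$ subgroup $H\leqslant G$ is a strong Gelfand subgroup. Indeed, for an irreducible $H$-representation $W$, Mackey's lemma~(\ref{A:MackeyLemma}) gives $\res^G_H \ind^G_H W \cong W \oplus W_g$ for $g\in G\setminus H$, so by Frobenius reciprocity $\dim\Hom_G(\ind^G_H W, \ind^G_H W) = 1 + \dim\Hom_H(W, W_g) \leq 2$; thus $\ind^G_H W$ is either irreducible or a sum of two inequivalent irreducibles, and in either case multiplicity-free. Applying this to $D_n$ and $H_n$ finishes the proof. The genuinely substantive step is the first one: correctly exploiting the $S_n$-invariance furnished by $\gamma_K = S_n$ together with the minimal-weight hypothesis $m_K=2$ to pin down $\Gamma^{\id}_K$ exactly, and in particular to exclude odd-weight vectors. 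The remaining parts are bookkeeping built on the structure theory of index-$2$ subgroups already recorded in Section~\ref{SS:Bnsubgroups}.
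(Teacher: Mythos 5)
Your proposal is correct and follows essentially the same path as the paper's proof: use the $S_n$-invariance of $\Gamma^{\id}_K$ from Remark~\ref{R:itisnormalized} to show the weight-two vectors generate the even-weight subgroup, rule out odd-weight vectors by the contradiction with $m_K=2$, deduce $[B_n:K]=2$ from the order count, and then identify $K$ among the three index-2 subgroups of $B_n$. The only (harmless) differences are cosmetic: you rule out $\ker\varepsilon$ via $\gamma_{\ker\varepsilon}=A_n\neq S_n$ rather than via the containments of $\Gamma^{\id}_K$ used in the paper, and you spell out the standard fact that index-2 subgroups are strong Gelfand (via Mackey and Frobenius), which the paper invokes without proof.
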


\begin{proof}
Since $m_K=2$, we know that $n\geq 2$ and $\Gamma^{\id}_K$ contains an element of the form $(e_i+e_j,\id)$ for some 
$i,j\in \{1,\dots, n\}$ with $i\neq j$. 
By Remark~\ref{R:itisnormalized}, $\Gamma^{\id}_K$ is normalized by the permutation action of $S_n$, therefore, we have 
$(e_i+e_j,\id) \in \Gamma^{\id}_K$ for every $i,j\in \{1,\dots, n\}$ with $i\neq j$. 
To simplify our notation, let us denote an element $(e_i+e_j,\id)$ with $1\leq i\neq j \leq n$ by $(e_{i,j},\id)$. 
In this notation, let $f$ be a product of the form
\begin{align}\label{A:suitable}
f=(e_{i_1,j_1},\id)*\cdots * (e_{i_r,j_r},\id)= ( e_{i_1,j_1} +\cdots + e_{i_r,j_r},\id).
\end{align} 
Clearly, $f$ is an element of $\Gamma^{\id}_K$, and the number of nonzero coordinates of $ e_{i_1,j_1} +\cdots + e_{i_r,j_r}$ is 
divisible by 2. 
More generally, for $(g,\id)\in \Gamma^{\id}_K$, the first entry $g \in F^n$ cannot have an odd number of elements. 
Otherwise, by adding a suitable element of the form (\ref{A:suitable}) to it, we would have $e_i \in \Gamma^{\id}_K$ 
for some (hence for every) $i\in \{1,\dots, n\}$. 
This argument shows that every element of $\Gamma^{\id}_K$ is of the form (\ref{A:suitable}).
An easy inductive argument shows that $|\Gamma^{\id}_K | = 2^{n-1}$. 
Since $K/\Gamma^{\id}_K \cong S_n$, it follows that $| K | = 2^{n-1} n!$, hence that, $K$ is an index 2 subgroup of $B_n$.
In particular, $K$ is a strong Gelfand subgroup of $B_n$. 

For our second claim, we look at the kernels of the three nontrivial linear characters of $B_n$ (Subsection~\ref{SS:Bnsubgroups}).
It is easy to verify from the descriptions of $\varepsilon$ and $\delta$ that 
$\Gamma^{\id}_K \leqslant \ker \delta$ and $\Gamma^{\id}_K \leqslant \ker \varepsilon \delta$.
Hence, $K$ is equal to either $\ker \delta$ or $\ker \varepsilon \delta$.
In the former case, we already noted that $\ker \delta = D_n$. 
In the latter case, we observe that $(f,\sigma) \in \ker \varepsilon \delta$ if and only if 
the parities of $\varepsilon ( (0,\sigma))$ and $\delta((f,\id))$ are the same. 
This finishes the proof of our lemma. 
\end{proof}

\begin{Remark}
We see from Lemma~\ref{L:Case2} that among the many equivalent descriptions of $D_n$ 
we have $D_n = \{ (f, \sigma) \in B_n:\ \text{$\#f$ is even}\}$.
\end{Remark}

\begin{lem}\label{L:Case3}
If $K$ is a subgroup of $B_n$ with $\gamma_K = S_n$, then $m_K \notin\{3,\dots, n-1\}$. 
In other words, there is no subgroup $K\leqslant B_n$ such that $\gamma_K = S_n$ and $3\leq m_K  \leq n-1$. 
\end{lem}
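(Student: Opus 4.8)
The plan is to translate the problem about $\Gamma^{\id}_K$ into a question about subspaces of the $\F_2$-vector space $F^n$, and then to exploit symmetry. First I would identify $\Gamma^{\id}_K$ with the set $V := \{ f \in F^n : (f,\id) \in K \}$ of first coordinates. Since the base subgroup $\overline{F}\cong F^n$ is an elementary abelian $2$-group and $\Gamma^{\id}_K\leqslant \overline{F}$ is a subgroup, $V$ is an $\F_2$-subspace of $F^n$. The crucial structural input is that $V$ is invariant under the coordinate permutation action of $S_n$: because $\gamma_K = S_n$, every $\sigma\in S_n$ is realized by some $(g,\sigma)\in K$, and then Remark~\ref{R:itisnormalized} gives $(\sigma\cdot f,\id)\in\Gamma^{\id}_K$ whenever $(f,\id)\in\Gamma^{\id}_K$. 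Thus $V$ is a union of $S_n$-orbits on $F^n$, and these orbits are precisely the weight classes $O_w := \{ f\in F^n :\ \#f = w\}$ for $0\le w\le n$.

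The heart of the argument is a short contradiction computation. Suppose, for contradiction, that $3\le m_K\le n-1$. By definition of $m_K$ there is some $f\in V$ with $\#f = m_K$, and by $S_n$-invariance the entire orbit $O_{m_K}$ is contained in $V$. Since $m_K\le n-1$, the index $m_K+1$ is available, so I would pick the two weight-$m_K$ vectors
\[
u := e_1 + \cdots + e_{m_K}
\qquad\text{and}\qquad
u' := e_1 + \cdots + e_{m_K-1} + e_{m_K+1},
\]
both of which lie in $O_{m_K}\subseteq V$. Because $V$ is closed under addition, $u + u' = e_{m_K} + e_{m_K+1}\in V$, and this vector has weight $2$. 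Hence $(e_{m_K}+e_{m_K+1},\id)$ is a nonzero element of $\Gamma^{\id}_K$ with $\#(e_{m_K}+e_{m_K+1}) = 2$, forcing $m_K\le 2$ and contradicting $m_K\ge 3$.

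I do not expect a genuine obstacle here; the computation is elementary once the setup is in place. The only step requiring care is the claim that $V$ is $S_n$-invariant, which is exactly where the hypothesis $\gamma_K = S_n$ is used (via Remark~\ref{R:itisnormalized}); without it one would only know invariance under $\gamma_K$, and the construction of $u'$ from $u$ could fail. The secondary point to check is that the two vectors $u,u'$ are legitimately distinct elements of $O_{m_K}$, which holds precisely because $m_K\le n-1$ guarantees a spare coordinate. It is worth remarking that this lemma is really the statement that the only $S_n$-stable subspaces of the natural permutation module $\F_2^n$ have minimal nonzero weight in $\{1,2,n\}$, i.e.\ correspond to $F^n$, the even-weight submodule, and $\langle 1\rangle$; the direct weight-$2$ argument above is cleaner than invoking the submodule lattice.
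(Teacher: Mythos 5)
Your proof is correct and is essentially the same argument as the paper's: the paper takes a minimal-weight element $f = e_{i_1}+\cdots+e_{i_r}$ of $\Gamma^{\id}_K$, conjugates/multiplies by an element lying over the transposition $\sigma = (i_r\, n)$, and observes that $(f+\sigma\cdot f,\id) = (e_{i_r}+e_n,\id)$ lies in $\Gamma^{\id}_K$ and has weight $2$, which is exactly your computation $u+u' = e_{m_K}+e_{m_K+1}$ since $u' = \sigma\cdot u$. Your linear-algebraic framing via Remark~\ref{R:itisnormalized} is, if anything, slightly cleaner, since it avoids the paper's unjustified (though harmless) assertion that $(0,\sigma)$ itself lies in $K$.
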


\begin{proof}
Assume towards a contradiction that there exists a subgroup $K$ in $B_n$ such that $m_K$ is an element of $\{3,\dots, n-1\}$. 
Let $(f,\id)$ be an element of $\Gamma^{\id}_K$. 
Then there are some basis elements $e_{i_1},\dots, e_{i_r}$ ($r\in \{3,\dots, n-1\}$) 
with $1\leq i_1 <\cdots < i_r \leq n$ such that $f = e_{i_1}+\cdots + e_{i_r}$. 
Without loss of generality we assume that $i_r \neq n$. 
Let $\sigma$ denote the transposition $(i_r\, n)$ so that $\sigma (i_r ) = n$. 
Since $\gamma_K = S_n$, the element $(0,\sigma)$ is contained in $K$. 
In particular, $(0,\sigma)*(f,\id) = (\sigma \cdot f, \sigma)$ is an element of $K$. 
Then, $(\sigma \cdot f,\sigma)* (\sigma \cdot f, \sigma) = (f+ \sigma \cdot f, \id)$ is an element of $K$. 
But this last element is equal to $(e_{i_r}+e_n,\id)$. 
Since $\# (e_{i_r}+e_n,\id) = 2$, we obtained a contradiction to our initial assumption $m_K \geq 3$. 
This finishes the proof of our assertion.
\end{proof}

\begin{Remark}\label{R:conjbyaut}
Let $H_1$ and $H_2$ be two subgroups of a group $G$.
The assumption that $H_1$ is isomorphic to $H_2$ does not guarantee that the following implications hold: 
\begin{align}\label{A:guarantees}
\text{$(G,H_1)$ is a strong Gelfand pair $\iff$ $(G,H_2)$ is a strong Gelfand pair}.
\end{align}
For example, let $(G,H_1,H_2)$ be the triplet $(G,H_1,H_2):=(B_2,\diag(F),\overline{S_2})$.
It is easy to verify that $(G,H_1)$ is not a strong Gelfand pair but $(G,H_2)$ is a strong Gelfand pair. 
Nevertheless, if $H_1$ and $H_2$ are isomorphic via an automorphism of $G$, then Remark~\ref{R:iff1} shows that 
(\ref{A:guarantees}) hold.
\end{Remark}

\begin{lem}\label{L:Case4}
If for a subgroup $K\leqslant B_n$ with $\gamma_K = S_n$ the integer $m_K$ is $n$, then $K$ is conjugate-isomorphic to a subgroup of $\diag(F)\times S_n$. 
Moreover, these subgroups are strong Gelfand subgroups of $B_n$ if and only if $n\leq 5$.
\end{lem}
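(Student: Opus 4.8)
The plan is to first read off the structure that $m_K=n$ forces, then settle the conjugacy claim, and finally treat the strong Gelfand question by transporting the multiplicity computation of Corollary~\ref{C:firststep} through the chain $\overline{S_n}\leqslant \diag(F)\times\overline{S_n}\leqslant B_n$.

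First I would note that $m_K=n$ leaves no room: the only $f\in F^n$ with $\#f=n$ is $1=e_1+\cdots+e_n$, so $\Gamma^{\id}_K\setminus\{(0,\id)\}=\{(1,\id)\}$, whence $\Gamma^{\id}_K=\{(0,\id),(1,\id)\}=\diag(F)$. A direct computation (in the style of Remark~\ref{R:itisnormalized}) shows $(1,\id)$ is central in $B_n$, so we obtain a central extension $1\to\diag(F)\to K\to S_n\to 1$ with $\lvert K\rvert=2\cdot n!=\lvert\diag(F)\times\overline{S_n}\rvert$. Passing to the quotient $\overline{B_n}:=B_n/\diag(F)\cong V\rtimes S_n$, where $V:=F^n/\langle 1\rangle$ carries the permutation action, the image $\overline K$ becomes a complement to $V$, i.e.\ it projects isomorphically onto the $S_n$-factor. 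If $\overline K$ is conjugate in $\overline{B_n}$ to the image of $\overline{S_n}$, then lifting the conjugating element back to $\overline{F^n}$ shows that $K$ is conjugate in $B_n$ to $\diag(F)\times\overline{S_n}$; this gives the first assertion (in particular $K$ is conjugate-isomorphic to a subgroup of $\diag(F)\times S_n$).

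The technical heart is therefore the conjugacy of complements, equivalently the vanishing of $H^1(S_n,V)$ with $V=F^n/\langle 1\rangle$: conjugation by $\overline{F^n}$ realizes exactly the coboundaries $f\mapsto(f,\sigma)\mapsto(f+g+\sigma\cdot g,\sigma)$, while conjugation by $\overline{S_n}$ acts trivially on cohomology, so the $B_n$-conjugacy classes of these $K$ are parametrized by $H^1(S_n,V)$. I would establish $H^1(S_n,V)=0$ in the relevant range $n\geq 6$, either by restricting to a Sylow $2$-subgroup (where the relevant constituents of $V$ become cohomologically trivial) or via the long exact sequence of $0\to\langle 1\rangle\to F^n\to V\to 0$ combined with Shapiro's lemma for the permutation module $F^n\cong \Z/2[S_n/S_{n-1}]$. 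Alternatively, and more in the spirit of the preceding lemmas, one argues by hand: pick lifts $(f^{(i)},s_i)\in K$ of the Coxeter generators, use $(f^{(i)},s_i)^2\in\diag(F)$ together with the braid and commutation relations to pin each $f^{(i)}$ down modulo the all-ones vector, and then exhibit an explicit $g\in F^n$ whose conjugation moves every lift into $\diag(F)\cdot\overline{S_n}$. I expect this step to be the main obstacle, as it is precisely where a potential nonsplit double cover of $S_n$ inside $B_n$ must be excluded.

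For the strong Gelfand statement the ``if'' direction is immediate: after conjugation we may assume $\overline{S_n}\leqslant K=\diag(F)\times\overline{S_n}$, and since strong Gelfandness is a conjugacy invariant by Remark~\ref{R:iff1}, Corollary~\ref{C:firststep} applies directly for $n\leq 5$ to give that $(B_n,K)$ is a strong Gelfand pair. For the ``only if'' direction I would exploit the central character. Because $(1,\id)$ acts on $S^{\lambda,\mu}$ by the scalar $(-1)^{\lvert\mu\rvert}$ and $\ind_{\overline{S_n}}^{K}S^\nu=(\mathbf 1\boxtimes S^\nu)\oplus(\theta\boxtimes S^\nu)$ for the trivial and sign characters $\mathbf 1,\theta$ of $\diag(F)\cong\Z/2$, transitivity of induction yields $\ind_{\overline{S_n}}^{B_n}S^\nu=\ind_K^{B_n}(\mathbf 1\boxtimes S^\nu)\oplus\ind_K^{B_n}(\theta\boxtimes S^\nu)$, and the two summands contain respectively the $S^{\lambda,\mu}$ with $\lvert\mu\rvert$ even and odd. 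Hence the multiplicity of any $S^{\lambda,\mu}$ in $\ind_{\overline{S_n}}^{B_n}S^\nu$ is already its multiplicity in one of the two $K$-inductions. Taking $\nu=(n-3,2,1)$ and $S^{(n-4,1),(2,1)}$, for which $\lvert\mu\rvert=3$ is odd, Theorem~\ref{T:firststep} identifies this multiplicity with that of $S^{(n-3,2,1)}$ in $\ind_{S_{n-3}\times S_3}^{S_n}S^{(n-4,1)}\boxtimes S^{(2,1)}$, which equals $2$ for $n\geq 6$ by the Littlewood--Richardson computation in Corollary~\ref{C:firststep}. Thus $\ind_K^{B_n}(\theta\boxtimes S^{(n-3,2,1)})$ fails to be multiplicity-free, so $(B_n,K)$ is not a strong Gelfand pair for $n\geq 6$, completing the equivalence.
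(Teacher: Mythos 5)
Your proof of the ``moreover'' assertion is correct, but your proof of the first (structural) assertion has a genuine gap, and it is exactly the step you yourself flag as the main obstacle. You correctly reduce that assertion to the conjugacy of all complements to $V=F^n/\langle 1\rangle$ in $B_n/\diag(F)\cong V\rtimes S_n$, i.e.\ to the vanishing of $H^1(S_n,V)$, but you never establish this vanishing; you only list candidate strategies. This cannot be waved through. Following your own long-exact-sequence route: $H^0(S_n,V)=0$, the map $H^1(S_n,\langle 1\rangle)\to H^1(S_n,F^n)$ is an isomorphism (the nontrivial class on both sides being realized by $Y_n$), and Shapiro's lemma identifies $H^1(S_n,V)$ with the kernel of the restriction map $H^2(S_n,\Z/2)\to H^2(S_{n-1},\Z/2)$. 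So the vanishing you need is equivalent to the statement that all three nontrivial central extensions of $S_n$ by $\Z/2$ (the pullback of $\Z/4\to\Z/2$ along the sign character and the two Schur double covers) remain nonsplit over $S_{n-1}$ --- a true but substantive fact for $n\geq 5$, not a formality. Indeed it fails at $n=4$: there $\dim H^2(S_4,\Z/2)=2>1=\dim H^2(S_3,\Z/2)$, so $H^1(S_4,V)\neq 0$, and the preimage in $B_4$ of a complement in the nontrivial class is a subgroup $K$ with $m_K=4$ and $\gamma_K=S_4$ which is a \emph{nonsplit} central extension of $S_4$ by $\Z/2$ (isomorphic to $GL_2(3)$), hence not even abstractly isomorphic to $\Z/2\times S_4$. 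Consequently, any completion of your argument must use the size of $n$ in an essential way; in particular your Sylow-$2$ suggestion cannot work as stated, since $V$ is not cohomologically trivial over a Sylow $2$-subgroup (already false at $n=4$), and only the injectivity of restriction, not vanishing upstairs, is available. For comparison, the paper does not pass through cohomology at all: it deduces the containment directly from $\Gamma^{\id}_K=\diag(F)$ being central and the shape of elements of $K$, treating the step as essentially immediate.

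The second half of your argument is complete and takes a genuinely different route from the paper's. The paper runs a fresh Mackey--Frobenius computation for $K=\diag(F)\times S_n$, parallel to the proof of Theorem~\ref{T:firststep}, concluding that the relevant multiplicities are Littlewood--Richardson numbers whenever the central characters match; you instead split $\ind_{\overline{S_n}}^{B_n}S^{\nu}$ into the two inductions from $K$ using the central element $(1,\id)$, which acts on $S^{\lambda,\mu}$ by $(-1)^{|\mu|}$ and whose scalar is preserved by induction, and then quote Theorem~\ref{T:firststep} together with the multiplicity-two example from Corollary~\ref{C:firststep}. This central-character refinement is clean and covers all $n\geq 6$; it is the same idea as, but sharper than, the paper's own alternative argument in the Remark following the lemma, whose pigeonhole count requires $n\geq 10$. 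Your $n\leq 5$ direction, via conjugation-invariance (Remark~\ref{R:iff1}) and Corollary~\ref{C:firststep}, agrees with the paper's.
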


\begin{proof}
For $m_K=n$, the fact that $\Gamma_K^{\id} = \{ (0,\id), (1,\id)\}$ follows from definitions. 
Therefore, we have a copy of the central subgroup $\diag(F)\times \{ \id_{S_n}\}$ in $K$. 
Since any element of $K$ is of the form $(f,\sigma)$, where $f\in \Gamma_K^{\id}$ and $\sigma \in S_n$, 
we see that $K$ is conjugate-isomorphic to a subgroup of $\diag(F)\times S_n$. 
Furthermore, since $\gamma_K = S_n$, we know that the index of $K$ in $\diag(F)\times S_n$ is at most 2. 
As the group of linear characters of $\diag(F)\times S_n$ is isomorphic to $\Z/2\times \Z/2$, we see that 
$K$ can be a conjugate of one of the following four subgroups: 1) $\diag(F)\times S_n$, 2) $\{ id\}\times S_n$, 3) $\diag(F)\times A_n$, 
4) a non-direct product subgroup of index 2. 
Notice that, in our case, 
the subgroups in 3) cannot occur since $\gamma_K = S_n$, and the subgroups in 2) cannot occur because it needs $m_K=n$.

Next, we will show that, for $n\geq 6$, 
$K:=\diag(F)\times S_n$ is not a strong Gelfand subgroup of $B_n$;
applying Remark~\ref{R:conjbyaut} will then handle the other possible subgroups in item 1).
Those in item 4) follow by transitivity of induction.

Let $U$ be an irreducible representation of $B_n$ of the form 
$U:=\ind_{F \wr (S_a \times S_b)}^{B_n} (\mathbf{1};D_1'')\boxtimes  (\mathbf{\epsilon}; D_2'')$,
where $D_1'',D_2''$ are some irreducible representations of $S_{a}$ and $S_{b}$, respectively, and $a+b=n$.
Let us set $G:= B_n$, and $H:=F\wr (S_a\times S_b)$. 
Let $W$ be an irreducible representation of $K$; it is of the form $D\boxtimes D''$, where 
$D \in \{\mathbf{1},\epsilon\}$, and $D''$ is an irreducible $S_n$-representation.
The multiplicity of $U$ in $\ind_K^G W$ is equal to the dimension of the vector space 
\[
M:= \Hom_G ( U , \ind_K^G W). 
\]
As in the proof of Theorem~\ref{T:firststep}, 
we will use Mackey's formula and Frobenius reciprocity to compute the dimension of $M$.
Let $V$ denote the representation $(\mathbf{1};D_1'') \boxtimes (\mathbf{\epsilon}; D_2'')$.
Then $H$ is the inertia group of $V$. 
Let $S$ be a system of representatives for the $(H,K)$-double cosets in $G$. 
Since $\overline{F}$ is a normal subgroup of $G$, and since it is contained in $H$, we see that $HK=G$.
In other words, $S= \{\id\}$. 
Therefore, there is only one local group of the form $G_s= H\cap s K s^{-1}$,
which is given by $G_{\id} = \diag(F)\times(S_a\times S_b)$. 
By Mackey's formula~(\ref{A:Mackey}),
\begin{align}\label{A:M}
M&= \Hom_G (\ind_{H}^G V, \ind_K^G W)  \notag \\
&=  \bigoplus_{s\in S} \Hom_{G_s} (\res^H_{G_s} V, W_s) \notag \\
&= \Hom_{\diag(F)\times (S_a\times S_b)} (\res^{H}_{\diag(F)\times (S_a\times S_b)} V, W_{\id}), 
\end{align}
where $W_{\id}$ is the copy of $W$ viewed as a representation of $\diag(F)\times (S_a\times S_b)$, 
that is, 
\begin{align}\label{A:subsback1}
W_{\id} = \res^K_{\diag(F)\times (S_a\times S_b)} W =
 \res^{\diag(F)\times S_n}_{\diag(F)\times (S_a\times S_b)} D\boxtimes D'' 
 = D\boxtimes \res^{S_n}_{S_a\times S_b} D''.
\end{align}
Recall from Subsection~\ref{SS:Repsofwreaths} that for a representation $M$ of $F$, and for $r\in \N$,
the notation $M^{(n)}$ stands for the $r$-fold outer tensor product $M\boxtimes \cdots \boxtimes M$. 
In this notation, we have 
\begin{align}\label{A:subsback2}
\res^{H}_{\diag(F)\times (S_a\times S_b)} V
&=\res^{(F\wr S_a)\times (F\wr S_b)}_{ S_a \times (\diag(F)\times S_b)} (\mathbf{1}; D_1'') \boxtimes (\epsilon;D_2'') \notag \\
&=\res^{F\wr S_a}_{ S_a} (\mathbf{1}; D_1'') \boxtimes \res^{F\wr S_b}_{ \diag(F)\times S_b}(\epsilon;D_2'')
\notag \\
&= D_1'' \boxtimes (\res^{F^b}_{\diag(F)} \epsilon^{(b)} ) \boxtimes D_2'' \notag \\
&=   (\res^{F^b}_{\diag(F)} \epsilon^{(b)} )  \boxtimes  D_1'' \boxtimes D_2''.
\end{align}
Note that since $F= \Z/2$, the restricted representation $\res^{F^b}_{\diag(F)} \epsilon^{(b)}$ is either $\mathbf{1}$ or $\epsilon$,
depending on the parity of $b$. 
Substituting (\ref{A:subsback2}) and (\ref{A:subsback1}) in (\ref{A:M}), and using 
the Frobenius reciprocity, we see that 
\begin{align*}
\dim M = 
\begin{cases}
0  & \text{ if $\res^{F^b}_{\diag(F)} \epsilon^{(b)} \neq D$; }\\
\dim \Hom_{S_a\times S_b} (D'', \ind^{S_n}_{S_a\times S_b} D_1''\boxtimes D_2'') & \text{ if $\res^{F^b}_{\diag(F)} \epsilon^{(b)} = D$.}
\end{cases}
\end{align*}
In particular, we see that if $W= (\res^{F^b}_{\diag(F)} \epsilon^{(b)}) \boxtimes D''$, then the multiplicity of $U$ in $\ind^{B_n}_K W$ is equal to 
the multiplicity of the irreducible $S_n$-representation $D''$ in $\ind^{S_n}_{S_a\times S_b} D_1''\boxtimes D_2''$.
For $n\geq6$, we have examples of such representations with multiplicity at least 2.

When $n\leq 5$, the result for subgroups conjugate to $\diag(F)\times S_n$ follows from the fact that the passive factor $\overline{S_n}$ is strong Gelfand, 
which we prove in Lemma~\ref{L:Case5}.
For the subgroups conjugate to an index 2 subgroup of this, the result can easily be checked by computer.
This completes the proof.
\end{proof}

\begin{Remark}
There is an easier, alternative proof of the second part of Lemma~\ref{L:Case4} for $n\geq 10$.
Indeed, since $n\geq10$, we always have an irreducible representation $V$ of $\overline{S_n}$ that induces up with multiplicity at least 3.
For example, by using Theorem~\ref{T:firststep}, we see that multiplicity of $S^{((n-7,2,1),(3,1))} = \ind_{F\wr(S_{n-4}\times S_4)}^{B_n} (\mathbf{1}; S^{(n-7,2,1)}) \boxtimes (\mathbf{\epsilon}; S^{(3,1)})$ in $\ind_{\overline{S_n}}^{B_n} S^{(n-6,3,2,1)}$ is equal to the multiplicity of $S^{(n-6,3,2,1)}$ in $\ind_{S_{n-4}\times S_4}^{S_n} S^{(n-7,2,1)} \boxtimes S^{(3,1)}$.
As in the proof of Corollary~\ref{C:firststep}, we can easily count that there are three Littlewood--Richardson tableaux of skew-shape $(n-6,3,2,1)/(n-7,2,1)$ and weight $(3,1)$.
Now, since $\overline{S_n}$ is a subgroup of index 2 in $Y_n$, $\ind_{\overline{S_n}}^{Y_n} V = V_1 \oplus V_2$, where $V_1$ and $V_2$
are two irreducible representations of $Y_n$. Let $W$ be an irreducible representation of $B_n$ such that the multiplicity of $W$ in $\ind_{\overline{S_n}}^{B_n} V$ is at least 3. Then the multiplicity of $W$ in one the induced representations $\ind^{B_n}_{Y_n} V_1$ or $\ind^{B_n}_{Y_n} V_2$ 
is at least 2. Hence, we conclude that $(B_n, \diag(F) \times S_n)$
is not a strong Gelfand pair.
\end{Remark}

\begin{lem}\label{L:Case5}
Let $Y_n$ denote the subgroup
\[
Y_n:= \left\{(f,\sigma) \in B_n :\ 
f=
\begin{cases}
0 & \text{ if $\sigma$ is an even permutation};\\
1 & \text{ if $\sigma$ is an odd permutation}.
\end{cases} \ \
\right\}.
\]
Then $Y_n$ is isomorphic to the passive factor $\overline{S_n}$ by an automorphism of $B_n$. 
Furthermore, if for a subgroup $K \leqslant B_n$ with $\gamma_K = S_n$ the integer $m_K$ does not exist, 
then $K$ is either conjugate to $\overline{S_n}$ or it is conjugate to $Y_n$.
In this case, $(B_n,K)$ is a strong Gelfand pair if and only if $n\leq 5$.   
\end{lem}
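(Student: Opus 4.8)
The plan is to treat the three assertions in turn, the third following almost at once from the first.

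For the claim that $Y_n$ is an automorphic image of the passive factor, I would exhibit the automorphism explicitly. Let $c_0\colon S_n\to F^n$ send every even permutation to $0$ and every odd permutation to the all-ones element $1=e_1+\cdots+e_n$. Since $1$ is fixed by the coordinate action of $S_n$ and $F=\Z/2$, a check of the four parity cases gives the cocycle identity $c_0(\sigma\tau)=c_0(\sigma)+\sigma\cdot c_0(\tau)$. Hence the map $\Phi\colon B_n\to B_n$, $(f,\sigma)\mapsto(f+c_0(\sigma),\sigma)$, is a homomorphism; as $c_0(\sigma)=-c_0(\sigma)$, it is an involution, so an automorphism, and $\Phi(\overline{S_n})=Y_n$ by construction.

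For the classification, suppose $m_K$ does not exist, so $\Gamma^{\id}_K=\{(0,\id)\}$. Then $\pi_{S_n}|_K$ is injective with image $\gamma_K=S_n$, so for each $\sigma$ there is a unique $c(\sigma)\in F^n$ with $(c(\sigma),\sigma)\in K$; thus $K$ is a complement of $\overline{F^n}$ and $c$ is a cocycle. Conjugating $K$ by a base element $(v,\id)$ replaces $c(\sigma)$ by $c(\sigma)+\sigma\cdot v+v$, so the task is to normalize $c$ modulo such coboundaries. The decisive invariant is the common \emph{off-support value} $\beta:=c((i\,j))_k$ of the cocycle on a transposition $(i\,j)$ at a coordinate $k\notin\{i,j\}$.

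I would first verify $\beta$ is well defined and conjugation-invariant. The relation $(i\,j)^2=\id$ forces $c((i\,j))_i=c((i\,j))_j$; comparing $(i\,j)(k\,l)=(k\,l)(i\,j)$ for a disjoint transposition forces $c((i\,j))_k=c((i\,j))_l$, so all off-support coordinates of $c((i\,j))$ agree. Reading the braid relation $(i\,j)(j\,k)(i\,j)=(i\,k)$ off a coordinate $m$ fixed by all three transpositions yields $c((i\,k))_m=c((j\,k))_m$, i.e.\ equal $\beta$ for transpositions sharing a point; a short connectivity argument then makes $\beta$ independent of the transposition (the cases $n\le 3$ being immediate). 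Coboundaries vanish off the support of $\sigma$, so $\beta$ is unchanged under conjugation. Finally, solving the triangular system $v_i+v_{i+1}=c((i\,i{+}1))_i$ (resp.\ $c((i\,i{+}1))_i+1$) for the adjacent transpositions produces a $v$ whose coboundary, subtracted from $c$, leaves a cocycle that vanishes on all adjacent transpositions when $\beta=0$ and equals $c_0$ when $\beta=1$. As a cocycle is determined by its values on generators, the corresponding conjugate of $K$ is exactly $\overline{S_n}$ or $Y_n$.

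The third assertion is then immediate: $Y_n=\Phi(\overline{S_n})$, and every such $K$ is an inner-automorphic image of $\overline{S_n}$ or $Y_n$, so by the automorphism-invariance of the strong Gelfand property (Remark~\ref{R:conjbyaut}) the pair $(B_n,K)$ is strong Gelfand exactly when $(B_n,\overline{S_n})$ is, which by Corollary~\ref{C:firststep} happens precisely for $n\le 5$. The main obstacle is the middle paragraph: proving that $\beta$ is genuinely well defined and conjugation-invariant demands the Coxeter-relation bookkeeping above, and one must confirm that the normalizing element $v$ can be chosen to work simultaneously across all generators.
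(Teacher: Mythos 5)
Your proposal is correct, and for the parts of the lemma that the paper actually writes out it follows the same route: your $\Phi$ is literally the paper's automorphism $\psi$ (the paper verifies the homomorphism property by a four-case parity check rather than via the cocycle identity $c_0(\sigma\tau)=c_0(\sigma)+\sigma\cdot c_0(\tau)$, but it is the same map), and the endgame is identical — transport the strong Gelfand question from $Y_n$ back to $\overline{S_n}$ via Remark~\ref{R:iff1}, invoke Corollary~\ref{C:firststep}, and handle arbitrary conjugates with Remark~\ref{R:conjbyaut}. The genuine difference is your middle paragraph: the paper simply asserts that $\overline{S_n}$ and $Y_n$ (and their base-group conjugates) exhaust the subgroups with $\gamma_K=S_n$ and $\Gamma^{\id}_K=\{(0,\id)\}$, saying ``It is not difficult to show\dots We omit the details,'' whereas you actually prove it. Your argument — identifying such $K$ with a $1$-cocycle $c\colon S_n\to F^n$, noting that base-group conjugation changes $c$ by a coboundary, extracting the off-support value $\beta$ of $c$ on transpositions as a coboundary-invariant, and then normalizing $c$ to $0$ or $c_0$ by solving the triangular system on the adjacent transpositions — is in effect a hands-on computation of $H^1(S_n,(\Z/2)^n)\cong\Z/2$, and it is sound: the involution, disjoint-commutation, and braid relations give exactly the constraints you state, coboundaries are supported on the moved points, and a cocycle is determined by its values on the Coxeter generators. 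The only caveat is that the disjointness and braid steps need $n\geq 4$ (disjoint transpositions, and a coordinate $m$ outside $\{i,j,k\}$), so the cases $n\leq 3$ do require the separate (finite, and genuinely easy) check you allude to; since the lemma's substantive use is for $n\geq 6$, with small $n$ verified computationally elsewhere in the paper, this is harmless. In short, your write-up is not just correct but strictly more complete than the paper's.
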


\begin{proof}
If $m_K$ does not exist, then $\Gamma^{\id}_K$ has only one element, $\Gamma^{\id}_K= \{ (0,\id) \}$. 
Therefore, $K$ is isomorphic to $\gamma_K$.
Clearly, $\overline{S_n}$ is such a subgroup, and the conjugate subgroups $(f,\id)* \overline{S_n} * (f,\id)^{-1}$, 
where $f\in F^n \setminus \{1\}$, are all different from each other.
Likewise, it is easy to check that $Y_n$ is a subgroup of $B_n$ such that $\gamma_{Y_n} = S_n$ and $m_{Y_n}$ does not exist. 
Furthermore, $(f,\id)* Y_n * (f,\id)^{-1}$, where $f\in F^n \setminus \{1\}$, are all different from each other.
It is not difficult to show that these are all possible subgroups of $B_n$ with $\gamma_K = S_n$ and $m_K$ does not exist. 
We omit the details of this part of the proof.

Next, we will show that the subgroups $\overline{S_n}$ and $Y_n$ are strong Gelfand subgroups of $B_n$ if and only if $n\leq 5$. 
We already proved the former case in Corollary~\ref{C:firststep}. 
To prove that $Y_n$ is not a strong Gelfand subgroup, we introduce the following map: 
\begin{align*}
\psi: B_n &\longrightarrow B_n \\
(f,\sigma) &\longmapsto 
\begin{cases}
(f,\sigma) & \text{ if $\sigma \in A_n$};\\
(f+1,\sigma) & \text{ if $\sigma \notin A_n$}.
\end{cases}
\end{align*}
We claim that $\psi$ is an automorphism of $B_n$. 
Clearly, $\psi$ is well-defined and one-to-one. 
Hence, it is a bijection. 
Let $(f,\sigma)$ and $(g,\tau)$ be two elements from $B_n$. 
Then 
$(f,\sigma) * (g,\tau) = (f+ \sigma \cdot g , \sigma \tau)$. 
We have four cases: 1) $\sigma, \tau \in A_n$;
2) $\sigma \in A_n, \tau \notin A_n$;
3) $\sigma, \tau \notin A_n$;
4) $\sigma \notin A_n, \tau \in A_n$.
In each case, it is easily checked that 
$\psi ((f,\sigma)) * \psi((g,\tau)) = \psi((f+ \sigma \cdot g , \sigma \tau))$.
It follows that $\psi$ is an automorphism. 
Moreover, we see from the description of $\psi$ that it is an outer automorphism of $B_n$. 
Evidently, we have $\psi(\overline{S_n}) = Y_n$. 
But then by Remark~\ref{R:iff1} we know that 
\[
\text{$(B_n , \overline{S_n})$ is a strong Gelfand pair} \iff 
\text{$(B_n , Y_n)$ is a strong Gelfand pair}. 
\]
Therefore, by Corollary~\ref{C:firststep}, $Y_n$ is a strong Gelfand subgroup of $B_n$ if and only if $n\leq 5$.
Applying Remark~\ref{R:conjbyaut} finishes the proof of our lemma.
\end{proof}

We now paraphrase the conclusions of the above lemmas in a single proposition.

\begin{prop}\label{P:Summary1}
Let $n\geq 6$, and let $K$ be a strong Gelfand subgroup of $B_n$.
Let $\varepsilon:B_n \to \bbc^*$ and $\delta: B_n\to \bbc^*$ be the linear characters as defined in (\ref{A:varepsilondelta}).
If $\gamma_K=S_n$, then (up to a conjugation) $K$ is one of the following subgroups:
\begin{enumerate}
\item $B_n$;
\item $D_n = \ker \delta$;
\item $H_n= \ker (\varepsilon \delta)$.
\end{enumerate}
For $n\leq 5$, in addition to these three cases, we have the following possibilities:
$\overline{S_n}$, $Y_n$ from Lemma~\ref{L:Case5} and $\diag(F) \times S_n$.
\end{prop}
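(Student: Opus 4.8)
The plan is to prove Proposition~\ref{P:Summary1} as a direct corollary of the case analysis carried out in Lemmas~\ref{L:Case1}--\ref{L:Case5}. The key observation is that for any subgroup $K \leqslant B_n$ with $\gamma_K = S_n$, the invariant $m_K$ is well-defined (possibly nonexistent), and it falls into exactly one of the five mutually exclusive and exhaustive cases enumerated in Subsection~\ref{SS:1st}: $m_K = 1$, $m_K = 2$, $3 \leq m_K \leq n-1$, $m_K = n$, or $m_K$ does not exist. Since these cases partition all possibilities, it suffices to collate what each lemma tells us.

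First I would walk through the five cases in order. By Lemma~\ref{L:Case1}, if $m_K = 1$, then $K = B_n$, which gives case (1). By Lemma~\ref{L:Case2}, if $m_K = 2$, then $K$ is either $D_n = \ker \delta$ or $H_n = \ker(\varepsilon\delta)$, giving cases (2) and (3); both are strong Gelfand subgroups since they have index $2$ in $B_n$. By Lemma~\ref{L:Case3}, the range $3 \leq m_K \leq n-1$ is vacuous: no such subgroup $K$ with $\gamma_K = S_n$ exists, so this case contributes nothing. By Lemma~\ref{L:Case4}, if $m_K = n$, then $K$ is conjugate-isomorphic to a subgroup of $\diag(F) \times S_n$, and such subgroups fail to be strong Gelfand for $n \geq 6$; hence for $n \geq 6$ this case yields no strong Gelfand subgroups, while for $n \leq 5$ it contributes $\diag(F) \times S_n$. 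Finally, by Lemma~\ref{L:Case5}, if $m_K$ does not exist, then $K$ is conjugate to $\overline{S_n}$ or to $Y_n$, neither of which is strong Gelfand for $n \geq 6$, but both of which are for $n \leq 5$.

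Assembling these, for $n \geq 6$ the only strong Gelfand subgroups $K$ with $\gamma_K = S_n$ are (up to conjugation) $B_n$, $D_n$, and $H_n$, which is exactly the asserted list. For $n \leq 5$, the additional subgroups $\overline{S_n}$, $Y_n$, and $\diag(F) \times S_n$ must be appended, as claimed. Since every subgroup with $\gamma_K = S_n$ is covered by precisely one of the five cases, no strong Gelfand subgroup is omitted, and the proposition follows.

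I do not expect any genuine obstacle here, since all the substantive work has been front-loaded into the individual lemmas; the proof is purely a matter of verifying that the five cases are exhaustive and reading off the conclusion of each lemma. The only point requiring a moment's care is bookkeeping around the phrase ``up to a conjugation'': one should confirm via Remark~\ref{R:conjbyaut} (invoked inside Lemmas~\ref{L:Case4} and~\ref{L:Case5}) that the conjugacy statements are compatible with the strong Gelfand property, so that listing one representative per conjugacy class is legitimate. Given that, the proof reduces to a single sentence per case.
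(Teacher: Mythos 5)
Your proposal is correct and follows exactly the paper's route: the paper presents Proposition~\ref{P:Summary1} precisely as a paraphrase of Lemmas~\ref{L:Case1}--\ref{L:Case5}, using the same exhaustive five-way case split on $m_K$ and the same conjugation bookkeeping via Remark~\ref{R:conjbyaut}. Nothing is missing.
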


\subsection{$\gamma_K = A_n$.}\label{SS:2nd}

\begin{Assumption}
Unless otherwise noted, we assume that $n\geq 3$. 
\end{Assumption}

We maintain our notation from the previous subsections.

For $n\geq 3$, $A_n$ is generated by the following 3-cycles:
\begin{align}\label{A:gens}
(1\, 2\, 3),(1\, 2\, 4),\dots,(1\, 2\, n).
\end{align}
Indeed, we know that the transpositions $(1\, 2),\dots, (1\, n)$ generate $S_n$. 
If $\sigma$ is element of $A_n$, then we write it as a product of these transpositions, 
\[
\sigma = (1\, l_1) (1\, l_2) \cdots (1\, l_{2j}).  
\]
Between every consecutive subproduct $(1\, l_{2i-1})(1\, l_{2i})$ for $1\leq i \leq j$, 
we insert the trivial product $\id = (1\, 2)(1\, 2)$.
Then we observe that 
\[
(1\, l_{2i-1})(1\,2) =  (1\, 2\, l_{2i-1})\ \text{ and }\  (1\, 2)(1\, l_{2i}) =  (1\, 2\, l_{2i})^{-1},
\]  
hence that  
\[
(1\, l_{2i-1})(1\, l_{2i}) =  (1\, 2\, l_{2i-1})(1\, 2\, l_{2i})^{-1}.
\]
This observation shows that the 3-cycles in (\ref{A:gens}) generate $A_n$.

Now we proceed to determine all strong Gelfand subgroups $K$ with $\gamma_K = A_n$. 
As in the case of $S_n$, we will analyze the following five cases for $m_K$: 
1) $m_K=1$; 2) $m_K=2$; 3) $3\leq m_K \leq n-1$; 4) $m_K=n$; and 5) $m_K$ does not exist.

\begin{lem}\label{L:AnCase1}
If $m_K=1$, then $\Gamma^{\id}_K= \{ (f, \id) \in B_n:\ f\in F^n\} = \overline{F}$.
In this case, $K$ is equal to $F\wr A_n$, hence, we have a strong Gelfand subgroup. 
\end{lem}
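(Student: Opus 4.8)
The plan is to mimic the structure of the proof of Lemma~\ref{L:Case1}, adapting it to the situation where $\gamma_K = A_n$ rather than $S_n$. The key observation to establish is that $\Gamma^{\id}_K$, which by Remark~\ref{R:normalized} is a normal subgroup of $K$, contains all of $\overline{F}$ once it contains a single element of the form $(e_k,\id)$. First I would note that, since $m_K=1$, the subgroup $\Gamma^{\id}_K$ contains some $(e_k,\id)$ with $1\leq k\leq n$. The difference from the $S_n$ case is that I may no longer use an arbitrary transposition $(i\,k)$ to move $e_k$ to $e_i$, since transpositions are odd permutations and need not lie in $\gamma_K=A_n$. Instead I would exploit the fact, established in the paragraph preceding this lemma, that $A_n$ is generated by the $3$-cycles $(1\,2\,3),(1\,2\,4),\dots,(1\,2\,n)$, so $\gamma_K=A_n$ guarantees that for each such $3$-cycle $\sigma$ there is some $(f,\sigma)\in K$.

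The central step is therefore a conjugation computation analogous to the one in Lemma~\ref{L:Case1}, but using a $3$-cycle in place of a transposition. Given $(e_k,\id)\in\Gamma^{\id}_K$ and an element $(f,\sigma)\in K$ with $\sigma$ a suitable $3$-cycle, I would compute the conjugate
\[
(f,\sigma)*(e_k,\id)*(f,\sigma)^{-1} = (\sigma\cdot e_k,\id) = (e_{\sigma(k)},\id),
\]
using the coordinate description of the action recorded in Remark~\ref{R:itisnormalized}, namely $\sigma\cdot e_k = e_{\sigma(k)}$. Since the $3$-cycles in (\ref{A:gens}) act transitively on $\{1,\dots,n\}$ (for $n\geq 3$), I can move the index $k$ to any desired index $i$, and hence $(e_i,\id)\in\Gamma^{\id}_K$ for every $i\in\{1,\dots,n\}$. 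As the elements $e_1,\dots,e_n$ generate $F^n$, this forces $\Gamma^{\id}_K=\overline{F}$.

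Once $\Gamma^{\id}_K=\overline{F}$ is known, the remainder is a counting argument identical in spirit to Lemma~\ref{L:Case1}: from the short exact sequence in Remark~\ref{R:normalized} we have $K/\Gamma^{\id}_K\cong\gamma_K=A_n$, so $|K| = |\overline{F}|\cdot|A_n| = 2^n\cdot(n!/2)$, which is exactly $|F\wr A_n|$. Combined with the evident inclusion $F\wr A_n\leqslant K$ (every element of $K$ projects into $A_n$ and has base component in $\overline{F}=\Gamma^{\id}_K$), this yields $K=F\wr A_n$. That $F\wr A_n$ is a strong Gelfand subgroup of $B_n$ then follows from Proposition~\ref{P:Fiff} together with the fact (from~\cite{AHN}) that $(S_n,A_n)$ is a strong Gelfand pair, or directly from the observation that $F\wr A_n=\ker\varepsilon$ is an index~$2$ subgroup of $B_n$. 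The main obstacle is the first transitivity step: I must make sure the $3$-cycles (rather than transpositions) still suffice to move a single standard basis vector to every other, which is exactly why invoking the generation of $A_n$ by the cycles in (\ref{A:gens}) and their transitivity is essential, and why the argument does not collapse to the $S_n$ case verbatim.
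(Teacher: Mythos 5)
Your proposal is correct and takes essentially the same approach as the paper: conjugating $(e_k,\id)$ by elements of $K$ lying over the $3$-cycles in (\ref{A:gens}) to conclude $\Gamma^{\id}_K=\overline{F}$, then identifying $K$ with $F\wr A_n$, and finally invoking Proposition~\ref{P:Fiff} (the paper closes the identification via $F\wr A_n\leqslant K$ plus the index-$2$/properness argument, while you use the equivalent cardinality count). One small wording slip: your parenthetical actually justifies $K\leqslant F\wr A_n$ rather than the stated inclusion $F\wr A_n\leqslant K$, but either inclusion combined with $|K|=|F\wr A_n|$ gives the equality, so the argument stands.
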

\begin{proof}
We will argue as in Lemma~\ref{L:Case1}: 
Since $m_K=1$, $\Gamma^{\id}_K$ contains an element of the form $(e_k,\id)$ for some $k$ in $\{1,\dots, n\}$. 
Let $(f,(1\, 2\, k))\in K$.
Without loss of generality, we will assume that $k>2$.
Since $\Gamma^{\id}_K$ is a normal subgroup of $K$, we know that  
\begin{align*}
(f, (1\, 2\, k))* (e_k,\id) * (f, (1\, 2\, k))^{-1} &= ((1\, 2\, k)\cdot e_k,\id) = (e_1,\id) \in \Gamma^{\id}_K.
\end{align*}
Then it is not difficult to see that $(e_l,\id) \in \Gamma^{\id}_K$ for every $l\in \{1,\dots, n\}$. 
It follows that, $\Gamma^{\id}_K$ is equal to $\overline{F}$.
This means that, for every $(f,\sigma) \in K$, we have $(0,\sigma) = (-f,\id)*(f,\sigma) \in K$.
In other words, the alternating subgroup of the passive factor $\overline{S_n}$ is a subgroup of $K$.
But this shows that $F^n \rtimes A_n$ is a subgroup of $K$. 
Since this an index 2 subgroup in $B_n$, and since $K$ is a proper subgroup, we have the equality
$F\wr A_n = K$. 
In particular, $K$ is a strong Gelfand subgroup of $B_n$ by Proposition~\ref{P:Fiff}. 
\end{proof}

\begin{lem}\label{L:AnCase2}
If $K$ is a subgroup of $B_n$ with $\gamma_K = A_n$ and $m_K=2$, then 
\begin{align}\label{L:AnCase2Gamma}
\Gamma^{\id}_K= \{ (f, \id) \in B_n:\ \text{$\#f$ is even}\}, \text{ and hence }\ K = J_n =  \ker \varepsilon \cap \ker \delta.
\end{align}
Then $(B_n,K)$ is a strong Gelfand pair if and only if $n \not\equiv 2 \mod 4$. 
\end{lem}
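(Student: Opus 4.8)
The plan is to prove the statement in two stages, mirroring the structure of the proofs of Lemmas~\ref{L:Case2} and~\ref{L:AnCase1}. First I would establish the group-theoretic identification~(\ref{L:AnCase2Gamma}), and then I would analyse the strong Gelfand property, where the parity condition $n \not\equiv 2 \bmod 4$ must emerge.

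For the first stage, the argument should closely follow Lemma~\ref{L:Case2}. Since $m_K = 2$, the subgroup $\Gamma^{\id}_K$ contains an element $(e_i + e_j, \id)$ for some $i \neq j$. The key difference from the $S_n$ case is that $S_n$ no longer normalizes $\Gamma^{\id}_K$; instead only $A_n$ does, by Remark~\ref{R:itisnormalized}. So I would need to check that the $A_n$-action still suffices to produce $(e_i + e_j, \id)$ for \emph{every} pair $i \neq j$. Using the 3-cycles from~(\ref{A:gens}), conjugating $(e_i + e_j, \id)$ by a suitable 3-cycle sends the support $\{i,j\}$ to any other 2-element subset, since $A_n$ acts transitively on 2-subsets for $n \geq 4$ (and one should note the case $n=3$ separately, where the statement is about $n \geq 3$ but the modular condition excludes only $n \equiv 2$). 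Once all $(e_i+e_j,\id)$ lie in $\Gamma^{\id}_K$, the same minimality-and-parity argument as in Lemma~\ref{L:Case2} shows that every element of $\Gamma^{\id}_K$ has even weight, giving $|\Gamma^{\id}_K| = 2^{n-1}$. Then $K/\Gamma^{\id}_K \cong A_n$ forces $|K| = 2^{n-1} \cdot \tfrac{n!}{2}$, which is exactly the order of $J_n = \ker\varepsilon \cap \ker\delta$; checking the containments $\Gamma^{\id}_K \leqslant \ker\varepsilon$ and $\Gamma^{\id}_K \leqslant \ker\delta$ (both hold since even-weight elements of the base lie in $\ker\delta$, and the passive part lies in $A_n \subseteq \ker\varepsilon$) pins down $K = J_n$.

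The second stage is where the real content lies. Here I would determine exactly when $(B_n, J_n)$ is a strong Gelfand pair. The natural approach is to use the associator machinery of Subsection~\ref{associators}, since $J_n$ is the common kernel of $\varepsilon$ and $\delta$, i.e.\ an index-4 normal subgroup (the Stembridge subgroup). The restriction of an irreducible $S^{\lambda,\mu}$ to $J_n$ decomposes according to how many of the three nontrivial characters $\varepsilon, \delta, \varepsilon\delta$ fix $S^{\lambda,\mu}$ up to isomorphism; by the three self-associativity criteria listed after Subsection~\ref{associators}, these are governed by the conditions $\lambda=\lambda', \mu=\mu'$ (for $\varepsilon$), $\lambda=\mu$ (for $\delta$), and $\lambda=\mu'$ (for $\varepsilon\delta$). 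I would translate the strong Gelfand condition for $(B_n, J_n)$ into a multiplicity-freeness statement for these restrictions, and the combinatorial obstruction to multiplicity-freeness should arise precisely from the existence of a self-conjugate partition of the relevant size. The condition $n \not\equiv 2 \bmod 4$ is exactly the arithmetic of when $n$ (or $n/2$) admits self-conjugate partitions or balanced pairs $(\lambda,\mu)$ with $\lambda = \mu$ and $|\lambda| = n/2$: a pair $(\lambda,\lambda)$ with $\lambda \vdash n/2$ exists iff $n$ is even, and the extra self-associativity forcing a multiplicity jump occurs when $n/2$ is itself even, i.e.\ $n \equiv 0 \bmod 4$ is fine but $n \equiv 2 \bmod 4$ produces the failure. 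I expect the main obstacle to be bookkeeping the eigenspace decomposition under the full character group $L_n \cong \Z/2 \times \Z/2$ acting simultaneously, rather than one character at a time—the three associators need not commute in a way that splits $\res^{B_n}_{J_n} S^{\lambda,\mu}$ cleanly, so I would carefully track which $S^{\lambda,\mu}$ split into more than two irreducible $J_n$-constituents and verify that a repeated constituent (hence a multiplicity $\geq 2$ in some induced representation) appears exactly when $n \equiv 2 \bmod 4$.
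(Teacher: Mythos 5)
Your first stage is sound and matches the paper's argument: since $\Gamma^{\id}_K$ is normalized by the permutation action of $\gamma_K = A_n$ (Remark~\ref{R:itisnormalized}) and $A_n$ acts transitively on $2$-subsets, you obtain all elements $(e_r+e_s,\id)$, and then the parity argument of Lemma~\ref{L:Case2} gives $|\Gamma^{\id}_K| = 2^{n-1}$, hence $|K| = 2^{n-2}\,n!$. (The paper conjugates by double transpositions drawn from $\Gamma^{(k\,i)(l\,j)}_K$ rather than by $3$-cycles; the difference is immaterial.) One small repair is needed at the end: the containments $\Gamma^{\id}_K \leqslant \ker\varepsilon \cap \ker\delta$ that you propose to check do not by themselves pin down $K$; you need $K \leqslant \ker\varepsilon \cap \ker\delta$, i.e.\ that \emph{every} element of $K$ has even-weight base part. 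This does follow, because $\delta|_K$ is a homomorphism to $\Z/2$ vanishing on $\Gamma^{\id}_K$, hence factors through $K/\Gamma^{\id}_K \cong A_n$, which has no index-$2$ subgroup --- but that step has to be said.

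The genuine gap is in your second stage, which is where the mod-$4$ condition must actually be proved. You correctly reduce, via Frobenius reciprocity and Clifford theory for the normal index-$4$ subgroup $J_n$, to deciding when some restriction $\res^{B_n}_{J_n} S^{\lambda,\mu}$ has a repeated irreducible constituent, and you correctly locate the danger at representations self-associate with respect to \emph{every} character in $L_n$, i.e.\ $\lambda = \mu = \mu'$ with $\lambda \vdash n/2$. But your proposed mechanism --- existence of self-conjugate partitions or of balanced pairs --- cannot produce the dichotomy: self-conjugate partitions of $m$ exist for every $m \neq 2$, so for every even $n\geq 6$ there are fully self-associate representations, regardless of $n$ modulo $4$. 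What decides the question is whether the $\varepsilon$-associator $S$ and the $\delta$-associator $T$ of such an $S^{\lambda,\lambda}$ commute or anti-commute: by Stembridge's theorem (\cite[Theorem 3.1]{Stembridge92}, which is exactly what the paper invokes), $\res^{B_n}_{J_n} V$ fails to be multiplicity-free precisely when $L_V = L_n$ and $ST = -TS$; and by the computation following \cite[Theorem 6.4]{Stembridge92}, for $\lambda = \lambda'$ these associators anti-commute if and only if $n/2$ is odd. That anti-commutation criterion is the entire content of the ``if and only if $n \not\equiv 2 \bmod 4$'' statement, and your plan leaves it as ``bookkeeping'' to be done later; indeed your sentence describing the expected mechanism is internally inconsistent, first placing the multiplicity jump at $n/2$ even and then at $n \equiv 2 \bmod 4$, i.e.\ $n/2$ odd. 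Without proving, or citing, this criterion (or an equivalent explicit computation of how $\res^{B_n}_{J_n}S^{\lambda,\lambda}$ splits), the parity condition cannot be derived. Note also that the paper disposes of $n=3$ by a direct check, which your plan flags but does not resolve.
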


\begin{proof}
First assume that $n\geq4$.
Since $m_K=2$, we know that $\Gamma^{\id}_K$ contains an element of the form $(e_i+e_j,\id)$ for some 
$i,j\in \{1,\dots, n\}$ with $i < j$. 
Let $u$ be a subset $u:=\{ k,l\}$ of $\{1,\dots, n\}$ with $k<l$ and $u\cap \{i,j\} = \emptyset$.
Let $\sigma$ denote the (even) permutation $(k\, i) (l\, j)$, and let $x$ be an element from $\Gamma^{(k\, i) (l\, j)}_K$. 
Then $x= (f,\sigma)$ for some $f\in F^n$. 
Since conjugating by $x$ gives 
\begin{align*}
x* (e_i+e_j,\id) * x^{-1} &=  ((k\, i) (l\, j)\cdot (e_i+e_j), \id) = (e_k + e_l ,\id ), 
\end{align*}
every element of the form $(e_r+e_s,\id)$, where $1\leq r < s \leq n$ and $\{r,s\} \cap \{i,j\} = \emptyset$ is contained in $\Gamma^{\id}_K$. 
Note that, we already have $(e_i+e_j,\id) \in \Gamma^{\id}_K$.
Now by the argument that we used in the proof of Lemma~\ref{L:Case2}, 
we see that if $(f,\id)$ is an element of $\Gamma^{\id}_K$, then $\#f$ is even. 
In particular, we see that $|\Gamma^{\id}_K | = 2^{n-1}$. 
Since $K/\Gamma^{\id}_K \cong A_n$, we see that $|K| = 2^{n-1} n!/2 = 2^{n-2}n!$. 
Thus, $K$ is an index 4 subgroup of $B_n$. 
Finally, it is easy to check that $K$ is contained in both of the subgroups $\ker \varepsilon$ and $\ker \delta$. 
Therefore, $K$ is equal to $\ker \varepsilon \cap \ker \delta$.
This finishes the proof of (\ref{L:AnCase2Gamma}).

We now proceed to prove our second claim.
As in Section~\ref{associators}, we let $L_n$ denote the linear character group of $B_n$.
Let $V$ be a finite-dimensional irreducible representation of $B_n$, 
and let $L_V$ denote the stabilizer of $V$ in $L_n$, that is, $L_V = \{\tau \in L_n:\  V \cong \tau \otimes V\}$.
In (the proof of)~\cite[Theorem 3.1]{Stembridge92}, Stembridge describes in detail the decomposition of the restriction $\res^{B_n}_K V$ into $K$-representations. 
In particular, Stembridge's theorem shows that $\res^{B_n}_K V$ is {\em not} multiplicity-free if and only if 
\begin{enumerate}
\item $L_V = L_n = \{\id, \varepsilon,\delta, \varepsilon \delta\}$,
\item the $\varepsilon$-associator $S$ of $V$ and the $\delta$-associator of $V$ anti-commute, $ST = -TS$. 
\end{enumerate}
Since these conditions require the existence of a $\delta$-associator, which is possible only if $n$ is even, 
we conclude that $\res^{B_n}_KV$ is a multiplicity-free representation if $n$ is odd.
We now proceed with the assumption that $n$ is even. 
An irreducible representation $V$ with the correspond character $\chi^{\lambda,\mu}$ is self-associate with respect to all of the three 
linear characters $\varepsilon,\delta$ and $\delta \varepsilon$ if and only if $\lambda=\mu = \mu'$,
where $\mu'$ is the partition conjugate to $\mu$. In other words, $\chi^{\lambda,\mu} = \chi^{\lambda,\lambda}$ and
$\lambda$ is a self-conjugate partition of $n/2$. 
In this case, it is known that, the associators $S=S^{\lambda,\lambda}$ and $T=T^{\lambda}$ 
anti-commute if and only if $n/2$ is an odd integer (see the paragraph after~\cite[Theorem 6.4]{Stembridge92}).

If $n=3$, we can explicitly check that $K = J_3$, which is a strong Gelfand subgroup of $B_3$.
\end{proof}

\begin{lem}\label{L:AnCase3}
Let $K$ be a subgroup of $B_n$ with $\gamma_K = A_n$.
Then $m_K \notin \{3,\dots, n-1\}$. 
In other words, there is no subgroup $K\leqslant B_n$ with $\gamma_K = A_n$ and $3\leq m_K \leq n-1$. 
\end{lem}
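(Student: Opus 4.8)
The plan is to follow the template of Lemma~\ref{L:Case3} almost verbatim: assuming $3\le m_K\le n-1$, I would take a minimal-weight element $(f,\id)\in\Gamma^{\id}_K$ and manufacture from it a nonzero element of $\Gamma^{\id}_K$ of strictly smaller weight, contradicting the definition of $m_K$. The one feature of the $S_n$ argument that does not survive intact is that there one conjugates by a transposition $(i_r\,n)$ to collapse the support down to two points; a transposition is an \emph{odd} permutation and so is generally unavailable when $\gamma_K=A_n$. The key idea is therefore to replace this transposition by a carefully chosen $3$-cycle, which is even and hence lies in $\gamma_K=A_n$.

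Concretely, suppose towards a contradiction that $3\le m_K\le n-1$, and fix $(f,\id)\in\Gamma^{\id}_K$ with $\#f=m_K=:r$. Let $S:=\{i:f_i=1\}$ be the support of $f$, so that $|S|=r$. Since $r\le n-1$ there exists a point $p\notin S$, and since $r\ge 3$ there exist two distinct points $q_1,q_2\in S$. Set $\sigma:=(q_1\,q_2\,p)\in A_n$. Because $\gamma_K=A_n$, there is an element $(h,\sigma)\in K$ for some $h\in F^n$, so Remark~\ref{R:itisnormalized} applies and gives $(\sigma\cdot f,\id)\in\Gamma^{\id}_K$. As $\Gamma^{\id}_K$ is a subgroup of the abelian group $\overline{F}$ (Remark~\ref{R:normalized}), it follows that $(f+\sigma\cdot f,\id)\in\Gamma^{\id}_K$ as well.

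It then remains to compute the weight of $f+\sigma\cdot f$. Recalling from Remark~\ref{R:itisnormalized} that the support of $\sigma\cdot f$ is $\sigma(S)$, and that $\sigma$ sends $q_1\mapsto q_2$, $q_2\mapsto p$, $p\mapsto q_1$ while fixing every other point, one checks that $\sigma(S)=(S\setminus\{q_1\})\cup\{p\}$. Since the $i$-th coordinate of $f+\sigma\cdot f$ equals $1$ precisely when $i$ lies in exactly one of $S$ and $\sigma(S)$, the support of $f+\sigma\cdot f$ is the symmetric difference of $S$ and $\sigma(S)$, namely $\{q_1,p\}$. Hence $\#(f+\sigma\cdot f)=2$, producing a nonzero element of $\Gamma^{\id}_K$ of weight $2<r=m_K$ and contradicting the minimality in the definition of $m_K$.

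The only genuine obstacle is the parity constraint noted above. A naive even substitute for the transposition, such as a double transposition $(q_1\,p_1)(q_2\,p_2)$ with its two legs straddling $S$, would alter the support in four positions and therefore fail to beat the weight $r$ when $r\in\{3,4\}$. The point of the choice $(q_1\,q_2\,p)$, with two of its three points inside $S$ and one outside, is exactly that it perturbs the support in only two places; it is the correct even-permutation analogue of the transposition used in the $S_n$ case, and it makes the weight-$2$ reduction go through for every $r$ in the forbidden range.
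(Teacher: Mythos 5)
Your proof is correct and follows essentially the same strategy as the paper's: replace the odd transposition from Lemma~\ref{L:Case3} by an even permutation $\sigma$ whose effect on the support $S$ of $f$ is a symmetric difference of size two, so that $(f+\sigma\cdot f,\id)\in\Gamma^{\id}_K$ has weight $2$, contradicting $m_K\geq 3$. The only cosmetic differences are that the paper takes $\sigma$ to be the double transposition $(i_1\,i_2)(i_r\,n)$ (one leg inside $S$) and reaches $f+\sigma\cdot f$ by explicitly squaring and multiplying elements of $K$, whereas you take a $3$-cycle with two points in $S$ and invoke Remark~\ref{R:itisnormalized} directly.
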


\begin{proof}
Suppose that there did exist a subgroup $K$ of $B_n$ such that $m_K \in \{3,\dots, n-1\}$. 
Then there are some basis elements $e_{i_1},\dots, e_{i_r}$ ($r\in \{3,\dots, n-1\}$) 
with $1\leq i_1 <\cdots < i_r \leq n$ such that $(f,\id) := (e_{i_1}+\cdots + e_{i_r},\id)\in \Gamma^{\id}_K$. 
Without loss of generality we assume that $i_r \neq n$. 
Let $(g, (i_1\, i_2)(i_r\, n))$ be an element from $\Gamma^{(i_1\, i_2)(i_r\, n)}_K$. 
Then we have 
\[
(f,\id)*(g, (i_1\, i_2)(i_r\, n)) = (f+g, (i_1\, i_2)(i_r\, n) ) \in K.
\]
In particular, the following elements are contained in $\Gamma^{\id}_K$:
\begin{align*}
(x,\id) &:= (f+g, (i_1\, i_2)(i_r\, n) ) * (f+g, (i_1\, i_2)(i_r\, n)) = (f+g + (i_1\, i_2)(i_r\, n) \cdot (f+g), \id), \\
(y,\id) &:= (g, (i_1\, i_2)(i_r\, n))* (g, (i_1\, i_2)(i_r\, n))  = (g+  (i_1\, i_2)(i_r\, n)\cdot g , \id).
\end{align*}
Notice that for $j\notin \{i_1,i_2,i_r, n\}$, we have $x_j = y_j = 0$.
If $j\in \{i_1, i_2\}$, then we have $x_j = y_j$; if $j\in \{ i_r, n\}$, then we have $x_j = y_j + 1$.
Now we consider the product 
\[
(x,\id) * (y,\id) = ( x + y, \id) = (e_{i_r}+e_n, \id) \in K.
\]
Since $\# (e_{i_r}+e_n,\id) = 2$, we obtained a contradiction to our initial assumption that $m_K \geq 3$.
This finishes the proof of our lemma.
\end{proof}

\begin{lem}\label{L:AnCase4}
If $K$ is a subgroup of $B_n$ with $\gamma_K = A_n$ and $m_K=n$, then $K$ is conjugate to a subgroup of $\diag(F) \times A_n$.
In particular, $K$ is not strong Gelfand, unless $n\leq 5$. 
\end{lem}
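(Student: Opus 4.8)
The plan is to first pin down $\Gamma^{\id}_K$, and then to reduce the non-strong-Gelfand assertion to the case of $\diag(F)\times S_n$ that was already settled in Lemma~\ref{L:Case4}. Since $m_K=n$, the only nonidentity element of $\Gamma^{\id}_K$ must be the all-ones element, so $\Gamma^{\id}_K=\{(0,\id),(1,\id)\}=\diag(F)$, exactly as at the start of the proof of Lemma~\ref{L:Case4}. Because $F$ is abelian, $\diag(F)$ is central in $B_n$; hence $K$ is a central extension of $\gamma_K=A_n$ by $\diag(F)$ sitting inside $\overline{F^n}\rtimes A_n$, and $|K|=2\,|A_n|=|\diag(F)\times A_n|$.

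Next I would show that $K$ is conjugate, by an element of the base group $\overline{F^n}$, to a subgroup of $\diag(F)\times A_n$ (hence, by the order count, to all of it). For each $\sigma\in A_n$ the fibre $\Gamma^\sigma_K$ is a coset of $\diag(F)$, say $\{(f_\sigma,\sigma),(f_\sigma+1,\sigma)\}$, so the assignment $\sigma\mapsto\overline{f_\sigma}$ is a well-defined map $c\colon A_n\to V$, where $V:=F^n/\langle 1\rangle$ carries the $A_n$-action induced from $F^n$. The product rule $(f,\sigma)(g,\tau)=(f+\sigma\cdot g,\sigma\tau)$ shows at once that $c$ is a $1$-cocycle, and conjugating $K$ by $(h,\id)$ replaces $c$ by $c$ plus the coboundary $\sigma\mapsto\overline{h}+\sigma\cdot\overline{h}$. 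Thus $K$ can be conjugated into $\diag(F)\times A_n$ exactly when $[c]$ is trivial, so it suffices to prove $H^1(A_n,V)=0$. This is the step I expect to be the main obstacle. The cleanest route is to feed the short exact sequence $0\to\langle 1\rangle\to F^n\to V\to 0$ into the long exact cohomology sequence and to compute $H^\ast(A_n,F^n)\cong H^\ast(A_{n-1},F)$ by Shapiro's lemma (using $F^n\cong\ind^{A_n}_{A_{n-1}}F$ as $A_n$-modules over $F$); for $n\geq 6$ the perfectness of $A_n$ and of $A_{n-1}$ kills the $\Hom$-terms $H^1(A_n,F)$ and $H^1(A_n,F^n)$, while the restriction $H^2(A_n,F)\to H^2(A_{n-1},F)$ is injective (the double cover $2.A_n$ restricts to $2.A_{n-1}$), and together these force $H^1(A_n,V)=0$.

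Finally, with $K$ conjugate to the subgroup $\diag(F)\times A_n$ of $\diag(F)\times S_n$, I would conclude using Lemma~\ref{L:Case4}. For $n\geq 6$ that lemma produces an irreducible $B_n$-representation $W$ whose restriction to $\diag(F)\times S_n$ is not multiplicity-free; restricting $W$ further to the smaller subgroup $\diag(F)\times A_n$ cannot remove a repeated constituent, since any irreducible of $\diag(F)\times S_n$ restricts to a nonzero representation of $\diag(F)\times A_n$, so $\res^{B_n}_{\diag(F)\times A_n}W$ is again not multiplicity-free. This is precisely the monotonicity of the failure of the strong Gelfand property under passage to subgroups that is already exploited in the proof of Corollary~\ref{C:firststep}. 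Since conjugate subgroups share the strong Gelfand status (Remark~\ref{R:iff1}), it follows that $(B_n,K)$ is not a strong Gelfand pair for $n\geq 6$; for $n\leq 5$ there is nothing to prove.
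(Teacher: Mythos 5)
Your proof is correct on the range $n\geq 6$ where the lemma's conclusion has content, and it departs from the paper exactly at the step where the paper is thinnest. The paper's proof notes that $\Gamma^{\id}_K=\{(0,\id),(1,\id)\}$ is central, then simply asserts---citing the exact sequence of Remark~\ref{R:normalized}---that $K$ is conjugate to a subgroup of $\diag(F)\times A_n$; it identifies $K$ with all of $\diag(F)\times A_n$ via the order-$2$ linear character group (your order count achieves the same), invokes Lemma~\ref{L:Case4} together with the same subgroup-monotonicity you use (phrased there through induction rather than restriction; the two are equivalent by Frobenius reciprocity), and settles $3\leq n\leq 5$ by GAP. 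You, instead, actually prove the conjugacy assertion: you observe that the exact sequence only exhibits $K$ as a central extension of $A_n$ by $\diag(F)$, and that what is really needed is the vanishing of $H^1(A_n,F^n/\langle 1\rangle)$, which you obtain from the long exact sequence of $0\to\langle 1\rangle\to F^n\to F^n/\langle 1\rangle\to 0$, Shapiro's lemma, perfectness of $A_n$ and $A_{n-1}$, and injectivity of the restriction $H^2(A_n,\Z/2)\to H^2(A_{n-1},\Z/2)$ coming from the compatible double covers $2.A_{n-1}\leqslant 2.A_n$. This extra work buys something real: the vanishing genuinely requires $n\geq 5$, and at $n=4$ one has $H^1(A_4,F^4/\langle 1\rangle)\cong\Z/2$ (because $H^2(A_3,\Z/2)=0$), the nontrivial class being realized by a subgroup $K\cong 2.A_4\cong SL(2,3)$ of $B_4$ with $\gamma_K=A_4$ and $m_K=4$; such a $K$ has elements of order $4$, hence is not even abstractly isomorphic to a subgroup of $\diag(F)\times A_4$, so the paper's unproved conjugacy assertion in fact fails at $n=4$, while your argument correctly delimits its range of validity. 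The only things you omit are this small-$n$ conjugacy claim (wisely, given the above) and the GAP verification for $3\leq n\leq 5$ that the paper later quotes in Proposition~\ref{P:Summary2}; neither affects the assertion that $(B_n,K)$ is not a strong Gelfand pair for $n\geq 6$.
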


\begin{proof}
Since $m_K = n$, we have $\Gamma^{\id}_K = \{ (0,\id),(1,\id)\}$, which is a central subgroup of $B_n$. 
Since $\gamma_K = A_n$, by the exact sequence in Remark~\ref{R:normalized}, we see that $K$ is conjugate to a subgroup of $\diag(F) \times A_n$. 
In particular, the index of $K$ in (a conjugate of) $\diag(F)\times A_n$ is at most 2. 
Without loss of generality, we assume that $K$ is a subgroup of $\diag(F)\times A_n$. 
Then since the group of linear characters of $\diag(F)\times A_n$ has order 2, and since $K\neq A_n$, we see that $K= \diag(F)\times A_n$. 
Since $K\leqslant \diag(F)\times S_n$, by Lemma~\ref{L:Case4}, we find that $K$ is not a strong Gelfand subgroup for $n\geq 6$. 
For $3\leq n \leq 5$, we verified in GAP that $\diag(F)\times A_n$ is a strong Gelfand subgroup in $B_n$ if and only if $n =3$.
\end{proof}

\begin{lem}\label{L:AnCase5}
If for a subgroup $K \leqslant B_n$ with $\gamma_K = A_n$ the integer $m_K$ does not exist, 
then $K$ is conjugate to the alternating subgroup of the passive factor $\overline{S_n}$.
In this case, $(B_n, K)$ is not a strong Gelfand pair for $n\geq 4$. 
If $n=3$, then $(B_n, K)$ is a strong Gelfand pair. 
\end{lem}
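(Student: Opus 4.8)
The plan is to establish the three assertions in turn: that $K$ is conjugate to $\overline{A_n}$, that $(B_n,\overline{A_n})$ fails to be strong Gelfand for $n\geq 4$, and that it is strong Gelfand for $n=3$. For the classification, note that since $m_K$ does not exist we have $\Gamma^{\id}_K=\{(0,\id)\}$ (Remark~\ref{R:normalized}), so $\pi_{S_n}|_K$ is an isomorphism $K\xrightarrow{\sim}A_n$ and $K$ is a complement to the base group $\overline{F^n}$ inside $\pi_{S_n}^{-1}(A_n)=F\wr A_n$. Writing $(f_\sigma,\sigma)$ for the unique element of $K$ over $\sigma$, the subgroup law gives the cocycle identity $f_{\sigma\tau}=f_\sigma+\sigma\cdot f_\tau$, and conjugating $K$ by $(g,\id)\in\overline{F^n}$ replaces $f_\sigma$ by $f_\sigma+(\sigma\cdot g+g)$. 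Hence complements are classified up to base-group conjugacy by $H^1(A_n,(\Z/2)^n)$, and the first assertion reduces to the vanishing $H^1(A_n,(\Z/2)^n)=0$.

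This vanishing is the main obstacle, and I would prove it using the $3$-cycle generators $c_k=(1\,2\,k)$, $k=3,\dots,n$, of $A_n$. Because $c_k^3=\id$, the relation $(1+c_k+c_k^2)f_{c_k}=0$ forces $f_{c_k}$ to be supported on $\{1,2,k\}$ with an even number of $1$'s. The key relation is $c_j^{-1}c_k=(2\,k\,j)$: applying the same order-$3$ constraint to this element and reading off the entry in position $1$ of $f_{c_j^{-1}c_k}=c_j^{-1}(f_{c_j}+f_{c_k})$ yields $(f_{c_j})_2+(f_{c_k})_2=0$, so the position-$2$ entry $b:=(f_{c_k})_2$ is independent of $k$. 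One then checks that $g$ with $g_1=0$, $g_2=b$, and $g_k=(f_{c_k})_1$ satisfies $f_{c_k}=c_k\cdot g+g$ for every $k$, so conjugation by $(g,\id)$ carries $K$ onto $\overline{A_n}$. For $n=3$ there is a single generator and $H^1(\Z/3,(\Z/2)^3)=0$ since $\gcd(2,3)=1$. This parallels the analysis omitted in Lemma~\ref{L:Case5}, the point being that the nonzero class in $H^1(S_n,(\Z/2)^n)$ responsible for $Y_n$ is the sign cocycle $\sigma\mapsto(\operatorname{parity}\sigma)\cdot 1$, which restricts to $0$ on $A_n$, so the second conjugacy class disappears and $\overline{A_n}$ is the only one.

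For the strong Gelfand property I would use the fact, already invoked in the proof of Lemma~\ref{L:Case4}, that it passes to larger subgroups: if $H\leqslant H'\leqslant G$ and $(G,H)$ is a strong Gelfand pair then so is $(G,H')$, because $\res^G_{H}W$ being multiplicity-free forces $\res^G_{H'}W$ to be multiplicity-free for every irreducible $W$ of $G$. Since $\overline{A_n}\leqslant\diag(F)\times A_n$ and the latter fails to be a strong Gelfand subgroup of $B_n$ for all $n\geq 4$ by Lemma~\ref{L:AnCase4}, the contrapositive shows $(B_n,\overline{A_n})$ is not a strong Gelfand pair for $n\geq 4$. Finally, for $n=3$ one verifies the pair directly: $\overline{A_3}\cong\Z/3$, and each irreducible representation of $B_3$ restricts to $\overline{A_3}$ with distinct characters — the four $3$-dimensional irreducibles restrict to the regular representation of $\Z/3$, and the remaining ones are immediate — so $\res^{B_3}_{\overline{A_3}}W$ is multiplicity-free for every irreducible $W$ and the pair is strong Gelfand. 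Transporting all conclusions to conjugates of $\overline{A_n}$ via Remark~\ref{R:conjbyaut} completes the argument.
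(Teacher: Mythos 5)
Your proof is correct, but it takes a genuinely different route from the paper's. For the classification, the paper adjoins the element $(0,(1\,2))$ to $K$ to obtain a subgroup $H$ with $\gamma_H=S_n$ and $m_H$ nonexistent, invokes Lemma~\ref{L:Case5} to identify $H$ up to conjugacy with $\overline{S_n}$ or $Y_n$, and then pins down $K=\overline{A_n}$ by order considerations; you instead argue directly that $K$ is a complement to the base group in $F\wr A_n$ and prove the vanishing $H^1(A_n,(\Z/2)^n)=0$ by an explicit cocycle computation on the $3$-cycle generators $(1\,2\,k)$. Your computation checks out (the order-$3$ constraint, the relation $c_j^{-1}c_k=(2\,k\,j)$ forcing the position-$2$ entries to agree, and the verification that $g$ trivializes the cocycle are all correct), and it is in fact more self-contained than the paper's treatment, since the proof of Lemma~\ref{L:Case5} itself omits the analogous classification details. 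For the failure of the strong Gelfand property, both arguments rest on the same monotonicity principle (strong Gelfand passes to larger subgroups, so failure passes to smaller ones), but with different ambient groups: the paper uses $H$ (conjugate to $\overline{S_n}$ or $Y_n$), which only settles $n\geq 6$, and then falls back on GAP for $n=4,5$; you use $\overline{A_n}\leqslant\diag(F)\times A_n$ together with Lemma~\ref{L:AnCase4}, which settles all $n\geq 4$ at once. One small caveat: the literal statement of Lemma~\ref{L:AnCase4} only asserts failure for $n\geq 6$ ("unless $n\leq 5$"); the cases $n=4,5$ are established inside its proof via GAP, so your citation leans on the proof rather than the statement — the needed fact is in the paper, but both routes ultimately bottom out in that computer verification for $n\in\{4,5\}$. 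Finally, for $n=3$ you replace the paper's GAP check with a correct hand computation: every irreducible of $B_3$ restricts to $\overline{A_3}\cong\Z/3$ multiplicity-freely (the linear characters restrict trivially, the two $2$-dimensional irreducibles restrict to $\omega\oplus\omega^2$, and the four $3$-dimensional ones restrict to the regular representation), which is a nice bonus of your approach.
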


\begin{proof}
The proof proceeds in a similar way to that of Lemma~\ref{L:AnCase4}.

Since $m_K$ does not exist, we have $\Gamma^{\id}_K = \{ (0,\id)\}$.
Since $\pi_{B_n} ((0,(1\,2))) = (1\,2)$ is not contained in $A_n$, the element $(0,(1\,2))$ is not contained in $K$.
Let $H$ denote the subgroup of $B_n$ that is generated by $(0,(1\,2))$ and $K$. 
Then it is easy to check that $\gamma_H = S_n$ and that $\Gamma^{\id}_H = \Gamma^{\id}_K$, hence, $m_H$ does not exist.  
It follows that $H$ is one of the subgroups that we considered in Lemma~\ref{L:Case5}.

By conjugating $H$ we may assume that $H = \overline{S_n}$ or the group $Y_n$ defined therein, and that $K\leqslant  \overline{S_n}$ or $K\leqslant Y_n$.
But $|K| = n!/2$ and we know that $\gamma_K=A_n$, hence we infer that $K = \overline{A_n}$ (in both $\overline{S_n}$ and $Y_n$). 
When $n\geq 6$, since $H$ is not a strong Gelfand subgroup, nor is $K$.
For $n\leq 5$ we checked in GAP that $K$ is not a strong Gelfand subgroup unless $n=3$.
This finishes the proof. 
\end{proof}

We paraphrase the conclusions of the above lemmas in a single proposition. 

\begin{prop}\label{P:Summary2}
Let $K$ be a strong Gelfand subgroup of $B_n$. 
If $\gamma_K=A_n$, then $K$ is one of the following subgroups:
\begin{enumerate}
\item $F\wr A_n$;
\item the Stembridge subgroup of $B_n$, that is, 
$J_n=\ker \varepsilon \cap \ker \delta$, where $\varepsilon$ and $\delta$ are two inequivalent nontrivial linear characters of $B_n$,
where $n \not\equiv 2 \mod 4$ for $n\geq 4$,
\item $\diag(F) \times A_3$ or $A_3$ if $n=3$.
\end{enumerate}
\end{prop}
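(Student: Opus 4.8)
The plan is to assemble Proposition~\ref{P:Summary2} as a direct corollary of the five case-by-case lemmas (Lemmas~\ref{L:AnCase1} through \ref{L:AnCase5}) that precede it, exactly paralleling how Proposition~\ref{P:Summary1} paraphrases the lemmas for $\gamma_K = S_n$. Since the invariant $m_K$ partitions all subgroups $K \leqslant B_n$ with $\gamma_K = A_n$ into the five mutually exclusive and exhaustive possibilities $m_K \in \{1,2\}$, $3 \leq m_K \leq n-1$, $m_K = n$, and $m_K$ undefined, every such $K$ falls under exactly one lemma, and I only need to collect those outcomes that actually yield strong Gelfand subgroups.

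First I would dispatch the cases that produce \emph{no} strong Gelfand subgroups for large $n$: Lemma~\ref{L:AnCase3} rules out $3 \leq m_K \leq n-1$ entirely (no such subgroup exists), while Lemmas~\ref{L:AnCase4} and~\ref{L:AnCase5} show that $m_K = n$ (giving $K$ conjugate to $\diag(F) \times A_n$) and $m_K$ undefined (giving $K$ conjugate to $\overline{A_n}$) fail to be strong Gelfand once $n \geq 6$ (indeed $n \geq 4$ for the undefined case). Then I would record the two surviving families: Lemma~\ref{L:AnCase1} gives $K = F\wr A_n$ when $m_K = 1$, and Lemma~\ref{L:AnCase2} gives $K = J_n = \ker\varepsilon \cap \ker\delta$ when $m_K = 2$, the latter being strong Gelfand precisely when $n \not\equiv 2 \bmod 4$. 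This yields items (1) and (2) of the proposition.

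The remaining subtlety is the small-$n$ behavior that forces item (3). For $n = 3$, the generic obstructions that kill the $m_K = n$ and $m_K$-undefined cases at larger $n$ do not apply: Lemma~\ref{L:AnCase4} notes that $\diag(F)\times A_3$ \emph{is} strong Gelfand, and Lemma~\ref{L:AnCase5} notes that $\overline{A_3}$ (equivalently $A_3$) is strong Gelfand, with both failing for $n \geq 4$. So I would simply append these two subgroups as the $n=3$ exceptions. The proof is therefore almost purely organizational—a one-line appeal to each lemma—so I expect no genuine mathematical obstacle; the only thing to be careful about is stating the congruence condition $n \not\equiv 2 \bmod 4$ correctly and making sure the $n = 3$ clause is phrased so that it does not spuriously claim $F\wr A_3$ or $J_3$ are excluded (they are not—they remain valid instances of items (1) and (2) at $n=3$).

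\begin{proof}
This is an immediate consequence of Lemmas~\ref{L:AnCase1}, \ref{L:AnCase2}, \ref{L:AnCase3}, \ref{L:AnCase4}, and~\ref{L:AnCase5}, which together treat all five possibilities for the invariant $m_K$ of a subgroup $K \leqslant B_n$ with $\gamma_K = A_n$. If $m_K = 1$, then by Lemma~\ref{L:AnCase1} we have $K = F\wr A_n$, giving case (1). If $m_K = 2$, then by Lemma~\ref{L:AnCase2} we have $K = J_n = \ker\varepsilon \cap \ker\delta$, which is a strong Gelfand subgroup if and only if $n \not\equiv 2 \bmod 4$ (for $n\geq 4$), giving case (2). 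By Lemma~\ref{L:AnCase3}, the range $3 \leq m_K \leq n-1$ is vacuous. If $m_K = n$, then by Lemma~\ref{L:AnCase4} the subgroup $K$ is conjugate to $\diag(F)\times A_n$, which is strong Gelfand only when $n \leq 5$; and if $m_K$ does not exist, then by Lemma~\ref{L:AnCase5} the subgroup $K$ is conjugate to $\overline{A_n}$, which is strong Gelfand only when $n = 3$. Checking the remaining small cases, one finds that the only additional strong Gelfand subgroups arise at $n = 3$, namely $\diag(F)\times A_3$ and $A_3$, yielding case (3). This exhausts all possibilities and completes the proof.
\end{proof}
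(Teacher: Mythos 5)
Your proposal is correct and matches the paper exactly: the paper gives no separate argument for Proposition~\ref{P:Summary2}, stating only that it ``paraphrases the conclusions of the above lemmas,'' i.e.\ it is assembled from Lemmas~\ref{L:AnCase1}--\ref{L:AnCase5} by the same case analysis on $m_K$ that you carry out. Your handling of the small-$n$ refinements (the GAP checks inside Lemmas~\ref{L:AnCase4} and~\ref{L:AnCase5} pinning the exceptional subgroups to $n=3$) is also faithful to the paper.
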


\section{The Cases of $\gamma_K=S_{n-1}\times S_1$ and $\gamma_K = S_{n-2}\times S_2$}\label{S:lattertwo}

In this last part of our paper, we analyze the strong Gelfand subgroups $K$ in $B_n$, where $\gamma_K \in \{ S_{n-1}\times S_1, S_{n-2}\times S_2\}$.
These two cases provide us with the most diversity. 
For our proofs we heavily use analogs of the ``Pieri's formulas'' for the hyperoctahedral groups. 

Throughout this section also, $F$ will denote the cyclic group $\Z/2$.

\subsection{Some Pieri rules.}\label{SS:Appendix}

Our goal in this section is to explicitly compute the decomposition formulas for 
induced representations from various subgroups of $B_n$. 
While some of these formulas are known~\cite{Pushkarev}, 
we could not locate all of the decomposition rules that we need for our purposes.

Let $k$ be an element of $\{0,1,\dots, n-1\}$, let $\lambda$ be a partition of $n-1-k$, and let $\mu$ be a partition of $k$. 
Let $S^{\lambda,\mu}$ denote the corresponding $B_{n-1}$-Specht module. 
We begin our computations by studying the decomposition of 
$\ind^{B_n}_{B_{n-1}\times S_1} S^{\lambda,\mu}\boxtimes \mathbf{1}$ into its irreducible constituents.

By using 1) the transitivity of the induction, 2) Lemma~\ref{L:ellproduct}, 3) the additivity of the induction, we see that 
\begin{align}
\ind^{B_n}_{B_{n-1}\times S_1} S^{\lambda,\mu}\boxtimes \mathbf{1} &= 
\ind^{B_n}_{B_{n-1}\times B_1}\ind^{B_{n-1}\times B_1}_{B_{n-1}\times S_1} S^{\lambda,\mu}\boxtimes \mathbf{1} \notag \\
&= \ind^{B_n}_{B_{n-1}\times B_1} (S^{\lambda,\mu}\boxtimes \ind^{B_1}_{S_1}\mathbf{1}) \notag \\
&= \ind^{B_n}_{B_{n-1}\times B_1} (S^{\lambda,\mu}\boxtimes ( (\mathbf{1}\boxtimes \mathbf{1}) \oplus (\mathbf{\epsilon}\boxtimes
\mathbf{1}))) \notag \\
&= 
(\ind^{B_n}_{B_{n-1}\times B_1} S^{\lambda,\mu}\boxtimes (\mathbf{1};\mathbf{1}))\oplus  
(\ind^{B_n}_{B_{n-1}\times B_1} S^{\lambda,\mu}\boxtimes (\mathbf{\epsilon};\mathbf{1})). \label{A:focus0}
\end{align}

Recall that the $B_{n-1}$-Specht module $S^{\lambda,\mu}$ is given by 
$\ind^{B_{n-1}}_{B_{n-1-k}\times B_k} (\mathbf{1};S^\lambda)\boxtimes (\mathbf{\epsilon};S^\mu)$.
Note also that $(\mathbf{1};\mathbf{1}) = \ind^{B_1}_{B_1} (\mathbf{1};\mathbf{1})$.
We apply these observations to the first summand in (\ref{A:focus0}):
\begin{align}
\ind^{B_n}_{B_{n-1}\times B_1} S^{\lambda,\mu}\boxtimes (\mathbf{1};\mathbf{1}) &= 
\ind^{B_n}_{B_{n-1}\times B_1}  (\ind^{B_{n-1}}_{B_{n-1-k}\times B_k} (\mathbf{1};S^\lambda)\boxtimes (\mathbf{\epsilon};S^\mu))
\boxtimes  (\ind^{B_1}_{B_1} (\mathbf{1};\mathbf{1})) \notag \\
&= 
\ind^{B_n}_{B_{n-1}\times B_1}  (\ind^{B_{n-1}\times B_1}_{B_{n-1-k}\times B_k\times B_1} 
(\mathbf{1};S^\lambda)\boxtimes (\mathbf{\epsilon};S^\mu) \boxtimes (\mathbf{1};\mathbf{1})) \  \text{ (Lemma~\ref{L:ellproduct})}\notag \\
&= 
\ind^{B_n}_{ B_{n-1-k}\times B_1 \times B_{k}} 
(\mathbf{1};S^\lambda)\boxtimes (\mathbf{1};\mathbf{1}) \boxtimes
(\mathbf{\epsilon};S^\mu). \label{A:last111}
\end{align}
By the transitivity of the induced representations, we have 
\[
\ind^{B_n}_{B_{k}\times B_{n-1-k}\times B_1}  = \ind^{B_n}_{B_{k}\times B_{n-k}} \ind^{B_k\times B_{n-k}}_{B_{k}\times B_{n-1-k}\times B_1}.
\]
Therefore, by Lemma~\ref{L:ellproduct}, the following induced representation is equal to (\ref{A:last111});
\begin{align}\label{A:last1111}
 \ind^{B_n}_{B_{k}\times B_{n-k}} 
 \left( \ind^{B_k}_{B_k}(\mathbf{\epsilon};S^\mu)  \right) \boxtimes \left( \ind^{B_{n-k}}_{B_{n-1-k}\times B_1}
(\mathbf{1};S^\lambda)\boxtimes (\mathbf{1};\mathbf{1}) \right).
\end{align}
Now by applying Lemma~\ref{L:nail} to $\ind^{B_{n-k}}_{B_{n-k}\times B_1}(\mathbf{1};S^\lambda)\boxtimes (\mathbf{1};\mathbf{1})$,
we re-express the formula (\ref{A:last1111}), hence, the formula (\ref{A:last111}), more succinctly as follows:
\begin{align}\label{A:focus1}
\ind^{B_n}_{B_{n-1}\times B_1} S^{\lambda,\mu}\boxtimes (\mathbf{1};\mathbf{1})&=
\ind^{B_n}_{B_{k}\times B_{n-k}} (\mathbf{\epsilon};S^\mu) \boxtimes (\mathbf{1};S^\lambda \boxtimes \mathbf{1}). 
\end{align}

Next, we focus on $\ind^{B_n}_{B_{n-1}\times B_1} S^{\lambda,\mu}\boxtimes (\mathbf{\epsilon};\mathbf{1})$. 
By adapting the above arguments to this case, we find that 
\begin{align}\label{A:focus2}
\ind^{B_n}_{B_{n-1}\times B_1} S^{\lambda,\mu}\boxtimes (\mathbf{\epsilon};\mathbf{1})&=
\ind^{B_n}_{B_{k}\times B_{n-k}} (\mathbf{\epsilon};S^\mu\boxtimes \mathbf{1}) \boxtimes (\mathbf{1};S^\lambda). 
\end{align}

\begin{Notation}
If $\lambda$ is a partition of $n-1$, then $\overline{\lambda}$ denotes the set of all partitions obtained from 
$\lambda$ by adding a ``box'' to its Young diagram. 
Equivalently, we have 
\[
\overline{\lambda} =
\{ \tau \vdash n :  
\text{$S^\tau$ is a summand of $\ind^{S_n}_{S_{n-1}\times S_1} S^\lambda \boxtimes \mathbf{1}$}
\}.
\]
\end{Notation}

\begin{lem}\label{L:Branching}
If $S^{\lambda, \mu}$ is an irreducible representation of $B_{n-1}$, then we have the following decomposition rules:
\begin{enumerate}
\item $\ind^{B_n}_{B_{n-1}\times B_1} S^{\lambda, \mu} \boxtimes (\mathbf{1};\mathbf{1}) = 
\bigoplus_{\tau \in \overline{\lambda}} S^{\tau, \mu}$,
\item $\ind^{B_n}_{B_{n-1}\times B_1} S^{\lambda, \mu} \boxtimes (\mathbf{\epsilon};\mathbf{1}) = 
\bigoplus_{\rho \in \overline{\mu}} S^{{\lambda}, \rho}$, 
\item $\ind^{B_n}_{B_{n-1}\times S_1} S^{\lambda, \mu} \boxtimes \mathbf{1} = 
\bigoplus_{\tau \in \overline{\lambda}} S^{\tau, \mu}  \oplus  \bigoplus_{\rho \in \overline{\mu}} S^{{\lambda}, \rho}$.
\end{enumerate}
\end{lem}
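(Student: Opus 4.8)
The plan is to read parts (1) and (2) directly off the two reduced formulas (\ref{A:focus1}) and (\ref{A:focus2}) that we have already established, and then to obtain part (3) for free by adding them. Indeed, (\ref{A:focus0}) exhibits $\ind^{B_n}_{B_{n-1}\times S_1} S^{\lambda,\mu}\boxtimes\mathbf{1}$ as the direct sum of precisely the two representations on the left-hand sides of (1) and (2), so once (1) and (2) are proved, (3) is immediate.

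For part (1), I would start from (\ref{A:focus1}) and first unwind the factor $(\mathbf{1};S^\lambda\boxtimes\mathbf{1})$, which is shorthand for the $B_{n-k}$-representation $\ind^{B_{n-k}}_{B_{n-1-k}\times B_1}(\mathbf{1};S^\lambda)\boxtimes(\mathbf{1};\mathbf{1})$ appearing in (\ref{A:last1111}). Applying Lemma~\ref{L:nail} with $D=\mathbf{1}$, $E=S^\lambda$, and $E'=\mathbf{1}$, together with Pieri's rule $\ind^{S_{n-k}}_{S_{n-1-k}\times S_1}S^\lambda\boxtimes\mathbf{1}=\bigoplus_{\tau\in\overline\lambda}S^\tau$, this factor decomposes as $\bigoplus_{\tau\in\overline\lambda}(\mathbf{1};S^\tau)$. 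Distributing $\ind^{B_n}_{B_k\times B_{n-k}}$ over the direct sum leaves, for each $\tau\in\overline\lambda$, the summand $\ind^{B_n}_{B_k\times B_{n-k}}(\epsilon;S^\mu)\boxtimes(\mathbf{1};S^\tau)$. Since $\tau\vdash n-k$ and $\mu\vdash k$, comparing with the definition (\ref{A:Bnirrs}) identifies this summand with the irreducible $S^{\tau,\mu}$, so part (1) reads $\bigoplus_{\tau\in\overline\lambda}S^{\tau,\mu}$.

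Part (2) is entirely parallel, starting from (\ref{A:focus2}); the only change is that the extra box is now adjoined to $\mu$ rather than to $\lambda$. Here $(\epsilon;S^\mu\boxtimes\mathbf{1})$ stands for $\ind^{B_{k+1}}_{B_k\times B_1}(\epsilon;S^\mu)\boxtimes(\epsilon;\mathbf{1})$, which by Lemma~\ref{L:nail} (now with $D=\epsilon$) and Pieri's rule equals $\bigoplus_{\rho\in\overline\mu}(\epsilon;S^\rho)$. Distributing the induction and matching each summand $\ind^{B_n}_{B_{k+1}\times B_{n-1-k}}(\epsilon;S^\rho)\boxtimes(\mathbf{1};S^\lambda)$ against (\ref{A:Bnirrs}) (with $\rho\vdash k+1$ and $\lambda\vdash n-1-k$) yields $\bigoplus_{\rho\in\overline\mu}S^{\lambda,\rho}$, proving (2); adding (1) and (2) via (\ref{A:focus0}) then gives (3).

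I expect the only genuinely delicate points to be bookkeeping rather than substance. First, the representations in (\ref{A:Bnirrs}) are written with the $(\mathbf{1};-)$ factor to the left of the $(\epsilon;-)$ factor, whereas (\ref{A:focus1}) and (\ref{A:focus2}) produce them in the opposite order; I would note that swapping the two outer-tensor factors is realized by conjugating by a permutation in $S_n$ that interchanges the two blocks, so the induced modules are isomorphic. Second, one must keep straight which of $\lambda,\mu$ receives the new box in each case: this is dictated by whether the adjoined $B_1$ carries the trivial or the sign character of $F$, which forces it to merge with the $\lambda$-block (the $(\mathbf{1};-)$ part) in (1) or with the $\mu$-block (the $(\epsilon;-)$ part) in (2). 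Both points are routine, so the lemma follows once they are checked.
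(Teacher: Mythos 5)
Your proposal is correct and follows essentially the same route as the paper: the paper's proof likewise reads parts (1) and (2) off the reduced formulas (\ref{A:focus1}) and (\ref{A:focus2}) by decomposing the symmetric-group inductions $\ind_{S_{n-k-1}\times S_1}^{S_{n-k}} S^\lambda\boxtimes\mathbf{1}$ and $\ind_{S_k\times S_1}^{S_{k+1}} S^\mu\boxtimes\mathbf{1}$ via Pieri's rule (through Lemma~\ref{L:nail}) and then invoking additivity of induction, with part (3) following from (\ref{A:focus0}). Your extra remarks on the block-swapping conjugation and on which of $\lambda,\mu$ absorbs the new box are correct bookkeeping points that the paper leaves implicit.
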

\begin{proof}
The first and the second identities follow from (\ref{A:focus1}) and (\ref{A:focus2}), respectively, 
where we decompose the $S_{n-k}$ (resp.~$S_{k+1}$) representation $\ind_{S_{n-k-1} \times S_1}^{S_{n-k}} S^\lambda \boxtimes \mathbf{1}$ (resp.~$\ind_{S_k \times S_1}^{S_{k+1}}S^\mu\boxtimes \mathbf{1}$)
into irreducible constituents. 
We then use the additivity property for the induced representations. 
In light of the decomposition (\ref{A:focus0}), the third identity follows from the first two identities.
\end{proof}

\begin{Remark}
Lemma~\ref{L:Branching} combined with the fact that $(S_n,S_{n-1}\times S_1)$ is a strong Gelfand pair
 gives a second proof of the fact that $(B_n,B_{n-1}\times S_1)$ is a strong Gelfand pair. 
\end{Remark}

Our goal is to extend Lemma~\ref{L:Branching} to certain subgroups of $B_{n-2}\times B_2$, 
so, we setup some relevant notation: 
\begin{Notation}
If $\lambda$ be a partition of $n-2$, then $\bar{\bar{\lambda}} $ and $\tilde{\bar{\lambda}}$ are the following sets of partitions of $n$:
\begin{align*}
\bar{\bar{\lambda}} 
&:= \{ \tau \vdash n :  \text{$S^\tau$ is a summand of $\ind^{S_n}_{S_{n-2}\times S_2} S^\lambda \boxtimes \mathbf{1}_{S_2}$} \},\\
\tilde{\bar{\lambda}}
&:= \{ \tau \vdash n :  \text{$S^\tau$ is a summand of $\ind^{S_n}_{S_{n-2}\times S_2} S^\lambda \boxtimes \mathbf{\epsilon}_{S_2}$}\}, 
\end{align*}
where $\mathbf{\epsilon}_{S_2}$ is the sign representation of $S_2$. 
\end{Notation}

The irreducible representations of $B_2$ are easy to list. They are given by 
\begin{align}\label{A:irrrepsofB2}
S^{(2),\emptyset} ,S^{(1^2),\emptyset}, S^{(1),(1)}, S^{\emptyset,(1^2)},S^{\emptyset,(2)}.
\end{align}
In Table~\ref{F:charvaluesB2}, we present the values of the characters of these representations;
they are computed by using the formula (5.5) in~\cite{Stembridge92}.
\begin{figure}[htp]
\begin{center}
  \begin{tabular}{ c | c | c | c | c | c }
    & $(1^2),\emptyset$   & $(2),\emptyset$ & $(1),(1)$ & $\emptyset, (1^2)$ &  $\emptyset, (2)$    \\    \hline
   $\chi^{(2),\emptyset}$ & 1 & 1 & 1 & 1 & 1  \\ \hline
      $\chi^{(1^2),\emptyset}$ & 1 & -1 & 1 & 1 & -1  \\ \hline
          $\chi^{(1),(1)}$ & 2 & 0 & 0 & -2 & 0  \\ \hline
              $\chi^{\emptyset,(2)}$ & 1 & 1 & -1 & 1 & -1  \\ \hline
                  $\chi^{\emptyset,(1^2)}$ & 1 & -1 & -1 & 1 & 1 
  \end{tabular}
\end{center}
  \caption{Character table of $B_2$.}
\label{F:charvaluesB2}
\end{figure}

We fix an integer $n\geq 4$.
For $k\in \{0,1,\dots, n-2\}$, let $S^{\lambda,\mu}$ be an irreducible representation of $B_{n-2}$, 
where $\lambda \vdash n-k-2$ and $\mu \vdash k$.
Our goal is to compute the irreducible constituents of $\ind^{B_n}_{B_{n-2}\times B_2} S^{\lambda,\mu} \boxtimes W$, 
where $W$ is one of the representations in (\ref{A:irrrepsofB2}).
The method of computation for all these five cases are similar, so, we will present details only in the first case.

\textbf{The case of $W = S^{(2),\emptyset}$.}
Recall that $S^{\lambda,\mu}$ is equal to $\ind^{B_{n-2}}_{B_{n-2-k}\times B_k} (\mathbf{1};S^\lambda)\boxtimes (\mathbf{\epsilon};S^\mu)$, and that 
$S^{(2),\emptyset}=(\mathbf{1};\mathbf{1}_{S_2}) = \ind^{B_2}_{B_2} (\mathbf{1};\mathbf{1}_{S_2}) $.
By using Lemma~\ref{L:ellproduct} repeatedly, we transform our induced representation to another form:
\begin{align*}
\ind^{B_n}_{B_{n-2}\times B_2} S^{\lambda,\mu}\boxtimes  (\mathbf{1};\mathbf{1}_{S_2})
&=  \ind^{B_n}_{B_{n-2}\times B_2}  
\left( \ind^{B_{n-2}}_{B_{n-2-k}\times B_k} (\mathbf{1};S^\lambda)\boxtimes (\mathbf{\epsilon};S^\mu)\right) 
\boxtimes \left(\ind^{B_2}_{B_2} (\mathbf{1};\mathbf{1}_{S_2}) \right) \\
&=  \ind^{B_n}_{B_{n-2}\times B_2}  
\left( \ind^{B_{n-2} \times B_2}_{B_{n-2-k}\times B_k\times B_2} (\mathbf{1};S^\lambda)\boxtimes (\mathbf{\epsilon};S^\mu) 
\boxtimes (\mathbf{1};\mathbf{1}_{S_2}) \right) \\
&=  \ind^{B_n}_{B_{n-2-k}\times B_k\times B_2} (\mathbf{1};S^\lambda)\boxtimes (\mathbf{\epsilon};S^\mu) 
\boxtimes (\mathbf{1};\mathbf{1}_{S_2}) \\
&=  \ind^{B_n}_{B_{n-k}\times B_k} 
\ind^{B_{n-k}\times B_k}_{B_{n-2-k}\times B_2\times B_k} 
(\mathbf{1};S^\lambda) \boxtimes (\mathbf{1};\mathbf{1}_{S_2}) \boxtimes (\mathbf{\epsilon};S^\mu) \\
&= 
 \ind^{B_n}_{B_{n-k}\times B_{k}} \left( \ind^{B_{n-k}}_{B_{n-k-2}\times B_2}
(\mathbf{1};S^\lambda)\boxtimes (\mathbf{1};\mathbf{1}_{S_2}) \right) 
\boxtimes \left( \ind^{B_k}_{B_k}(\mathbf{\epsilon};S^\mu)  \right).
\end{align*}
By Lemma~\ref{L:nail}, $\ind^{B_{n-k}}_{B_{n-k-2}\times B_2} (\mathbf{1};S^\lambda)\boxtimes (\mathbf{1};\mathbf{1}_{S_2})$
is equal to $(\mathbf{1};S^\lambda\boxtimes \mathbf{1}_{S_2})$.
Since $ \ind^{B_k}_{B_k}(\mathbf{\epsilon};S^\mu) = (\mathbf{\epsilon};S^\mu)$, we proved that 
\begin{align}\label{A:B21}
\ind^{B_n}_{B_{n-2}\times B_2} S^{\lambda,\mu}\boxtimes  (\mathbf{1};\mathbf{1}_{S_2}) 
&=  \ind^{B_n}_{B_{n-k}\times B_{k}}  
(\mathbf{1};S^\lambda\boxtimes \mathbf{1}_{S_2}) \boxtimes  (\mathbf{\epsilon};S^\mu).
\end{align}

\textbf{The case of $W= S^{(1^2),\emptyset}$.}
In this case we have
$W=(\mathbf{1};\mathbf{\epsilon}_{S_2}) = \ind^{B_2}_{B_2} (\mathbf{1};\mathbf{\epsilon}_{S_2})$ and
\begin{align}\label{A:B22}
\ind^{B_n}_{B_{n-2}\times B_2} S^{\lambda,\mu}\boxtimes  (\mathbf{1};\mathbf{\epsilon}_{S_2})&=  
\ind^{B_n}_{B_{n-k}\times B_{k}}  (\mathbf{1};S^\lambda\boxtimes \mathbf{\epsilon}_{S_2}) \boxtimes (\mathbf{\epsilon};S^\mu).
\end{align}

\textbf{The case of $W= S^{(1),(1)}$.}
In this case, we have $W= \ind^{B_2}_{B_1\times B_1} (\mathbf{1};\mathbf{1})\boxtimes (\mathbf{\epsilon};\mathbf{1})$ 
and 
\begin{align}\label{A:B23}
\ind^{B_n}_{B_{n-2}\times B_2} S^{\lambda,\mu}\boxtimes W &= 
\ind^{B_n}_{B_{n-k-1}\times B_{k+1}}   (\mathbf{1};S^\lambda\boxtimes \mathbf{1})
\boxtimes 
(\mathbf{\epsilon};S^\mu\boxtimes \mathbf{1}).
\end{align}

\textbf{The case of $W= S^{\emptyset, (1^2)}$.}
In this case, we have $W=(\mathbf{\epsilon};\mathbf{\epsilon}_{S_2}) = \ind^{B_2}_{B_2} (\mathbf{\epsilon};\mathbf{\epsilon}_{S_2}) $
and 
\begin{align}\label{A:B24}
\ind^{B_n}_{B_{n-2}\times B_2} S^{\lambda,\mu}\boxtimes  (\mathbf{\epsilon};\mathbf{\epsilon}_{S_2})&= 
\ind^{B_n}_{B_{n-k-2}\times B_{k+2}} (\mathbf{1};S^\lambda) \boxtimes (\mathbf{\epsilon};S^\mu \boxtimes \mathbf{\epsilon}_{S_2}).
\end{align}

\textbf{The case of $W= S^{\emptyset, (2)}$.}
In this case, we have $W=(\mathbf{\epsilon};\mathbf{1}_{S_2}) = \ind^{B_2}_{B_2} (\mathbf{\epsilon};\mathbf{1}_{S_2})$
and  
\begin{align}\label{A:B25}
\ind^{B_n}_{B_{n-2}\times B_2} S^{\lambda,\mu}\boxtimes  (\mathbf{\epsilon};\mathbf{1}_{S_2}) &=  
\ind^{B_n}_{B_{n-k-2}\times B_{k+2}}   (\mathbf{1};S^\lambda) \boxtimes (\mathbf{\epsilon};S^\mu \boxtimes \mathbf{1}_{S_2}).
\end{align}

We are now ready to present our decomposition rules for the induced representations from $B_{n-2}\times B_2$ to $B_n$.

\begin{lem}\label{L:Branching2}
If $S^{\lambda, \mu}$ is an irreducible representation of $B_{n-2}$, then we have the following decomposition formulas: 
\begin{enumerate}
\item $\ind^{B_n}_{B_{n-2}\times B_2} S^{\lambda, \mu} \boxtimes S^{(2),\emptyset} = 
\bigoplus_{\tau \in \bar{\bar{\lambda}}} S^{\tau, \mu}$,
\item $\ind^{B_n}_{B_{n-2}\times B_2} S^{\lambda, \mu} \boxtimes S^{(1^2),\emptyset} = 
\bigoplus_{\tau \in \tilde{\bar{\lambda}}} S^{\tau, \mu}$,
\item $\ind^{B_n}_{B_{n-2}\times B_2} S^{\lambda, \mu} \boxtimes S^{(1),(1)} = 
\bigoplus_{\tau \in \bar{\lambda},\rho \in \bar{\mu} } S^{\tau, \rho}$,
\item $\ind^{B_n}_{B_{n-2}\times B_2} S^{\lambda, \mu} \boxtimes S^{\emptyset,(1^2)} = 
\bigoplus_{\rho \in \tilde{\bar{\mu}} } S^{\lambda,\rho}$,
\item $\ind^{B_n}_{B_{n-2}\times B_2} S^{\lambda, \mu} \boxtimes S^{\emptyset,(2)} = 
\bigoplus_{\rho \in \bar{\bar{\mu}} } S^{\lambda,\rho}$.
\end{enumerate}
\end{lem}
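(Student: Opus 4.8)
The plan is to establish the five decomposition formulas one at a time, in each case taking as the starting point the corresponding reduction already carried out in (\ref{A:B21})--(\ref{A:B25}). Each of those identities rewrites $\ind^{B_n}_{B_{n-2}\times B_2} S^{\lambda,\mu}\boxtimes W$ as a single induced representation of the shape $\ind^{B_n}_{B_p\times B_q}(\mathbf{1};-)\boxtimes(\mathbf{\epsilon};-)$ in which the entire dependence on $W$ has been pushed into a Specht-module tensor factor of the form $S^\lambda\boxtimes\mathbf{1}_{S_2}$, $S^\lambda\boxtimes\mathbf{\epsilon}_{S_2}$, $S^\mu\boxtimes\mathbf{1}$, and so on. The only remaining work is to decompose such a factor into irreducible Specht modules over the appropriate symmetric group, which is precisely what the sets $\bar\lambda$, $\bar{\bar\lambda}$, $\tilde{\bar\lambda}$ (and their $\mu$-analogues) were introduced to record, and then to read off the resulting $B_n$-constituents from the definition (\ref{A:Bnirrs}) of $S^{\tau,\rho}$.

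Concretely, for part (1) I would begin with (\ref{A:B21}) and observe that the factor $(\mathbf{1};S^\lambda\boxtimes\mathbf{1}_{S_2})$ is, by Lemma~\ref{L:nail}, the inflation to $B_{n-k}$ of the induced representation $\ind^{S_{n-k}}_{S_{n-k-2}\times S_2}S^\lambda\boxtimes\mathbf{1}_{S_2}=\bigoplus_{\tau\in\bar{\bar\lambda}}S^\tau$; here $\lambda\vdash n-k-2$, so $\bar{\bar\lambda}$ consists of partitions of $n-k$, exactly as required for the first index of a $B_n$-Specht module under the split $B_{n-k}\times B_k$. Distributing the induction and the outer tensor product over this direct sum, using additivity of induction together with Lemma~\ref{L:nail} and Corollary~\ref{C:nail2}, turns (\ref{A:B21}) into $\bigoplus_{\tau\in\bar{\bar\lambda}}\ind^{B_n}_{B_{n-k}\times B_k}(\mathbf{1};S^\tau)\boxtimes(\mathbf{\epsilon};S^\mu)$, and each summand is by definition $S^{\tau,\mu}$. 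Part (2) is identical with (\ref{A:B22}) in place of (\ref{A:B21}), replacing $\mathbf{1}_{S_2}$ by $\mathbf{\epsilon}_{S_2}$ and hence $\bar{\bar\lambda}$ by $\tilde{\bar\lambda}$.

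For parts (4) and (5) the same argument applies verbatim to (\ref{A:B24}) and (\ref{A:B25}), except that now the factor $(\mathbf{1};S^\lambda)$ is already irreducible and the decomposition happens instead in the factor $(\mathbf{\epsilon};S^\mu\boxtimes\mathbf{\epsilon}_{S_2})$ resp.\ $(\mathbf{\epsilon};S^\mu\boxtimes\mathbf{1}_{S_2})$, producing $\tilde{\bar\mu}$ resp.\ $\bar{\bar\mu}$ with $\lambda$ held fixed. Part (3) is the one case where both constituents move: from (\ref{A:B23}) one decomposes $(\mathbf{1};S^\lambda\boxtimes\mathbf{1})=\bigoplus_{\tau\in\bar\lambda}(\mathbf{1};S^\tau)$ and $(\mathbf{\epsilon};S^\mu\boxtimes\mathbf{1})=\bigoplus_{\rho\in\bar\mu}(\mathbf{\epsilon};S^\rho)$ by the single-box branching rule, distributes, and collects the summands $S^{\tau,\rho}$ over the double index set $\bar\lambda\times\bar\mu$, giving the double sum in the statement. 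The one point requiring care, and the only place where anything beyond bookkeeping is invoked, is the legitimacy of treating $(\mathbf{1};V)$ for a \emph{reducible} symmetric-group module $V$ as the direct sum of the inflations of its irreducible constituents; this is exactly the content of Lemma~\ref{L:nail} (already used in deriving (\ref{A:B21})--(\ref{A:B25})) and its iteration in Corollary~\ref{C:nail2}, so no genuine obstacle remains beyond correctly matching the ranges of indices against (\ref{A:Bnirrs}).
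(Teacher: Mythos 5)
Your proposal is correct and follows essentially the same route as the paper: the paper's proof is the one-line observation that all five formulas follow from Lemma~\ref{L:nail} together with the reductions (\ref{A:B21})--(\ref{A:B25}), and your argument simply spells out those same steps (decomposing the factor such as $(\mathbf{1};S^\lambda\boxtimes\mathbf{1}_{S_2})$ via Lemma~\ref{L:nail}, distributing the induction over the resulting direct sum, and matching each summand with $S^{\tau,\rho}$ via the definition (\ref{A:Bnirrs})). The extra care you take with the index ranges and with part (3), where both coordinates move, is consistent with the paper's intent and introduces no gap.
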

\begin{proof}
All five of these formulas follow from Lemma~\ref{L:nail} and the decompositions we described in (\ref{A:B21})--(\ref{A:B25}).
\end{proof}

\begin{Remark}
Lemma~\ref{L:Branching2} combined with the fact that $(S_n,S_{n-2}\times S_2)$ is a strong Gelfand pair  
gives a second proof of the fact that $(B_n,B_{n-2}\times B_2)$ is a strong Gelfand pair, see~Theorem~\ref{T:nonabelian}. 
\end{Remark}

\begin{cor}\label{A:C:D2}
Let $S^{\lambda, \mu}$ be an irreducible representation of $B_{n-2}$, and let $W$ be an irreducible representation of $D_2$.
Then we have
\begin{enumerate}
\item if $\ind^{B_2}_{D_2} W=S^{(2),\emptyset}\oplus S^{\emptyset, (2)}$, then $\ind^{B_n}_{B_{n-2}\times D_2} S^{\lambda, \mu} \boxtimes W = 
\bigoplus_{\tau \in \bar{\bar{\lambda}}} S^{\tau, \mu} \oplus \bigoplus_{\rho \in \bar{\bar{\mu}} } S^{\lambda,\rho}$; 
\item if $\ind^{B_2}_{D_2} W=S^{(1^2),\emptyset}\oplus S^{\emptyset, (1^2)}$, 
then $\ind^{B_n}_{B_{n-2}\times D_2} S^{\lambda, \mu} \boxtimes W = \bigoplus_{\tau \in \tilde{\bar{\lambda}}} S^{\tau, \mu}
\oplus \bigoplus_{\rho \in \tilde{\bar{\mu}} } S^{\lambda,\rho}$;
\item if $\ind^{B_2}_{D_2} W=S^{(1),(1)}$, then $\ind^{B_n}_{B_{n-2}\times D_2} S^{\lambda, \mu} \boxtimes W =  
\bigoplus_{\tau \in \bar{\lambda},\rho \in \bar{\mu} } S^{\tau, \rho}$.
\end{enumerate}
\end{cor}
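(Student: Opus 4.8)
The plan is to reduce each of the three cases to the five branching formulas already established in Lemma~\ref{L:Branching2} by passing through the intermediate group $B_{n-2}\times B_2$. The crucial structural observation is that $D_2$ is a subgroup of $B_2$, so that $B_{n-2}\times D_2$ sits inside $B_{n-2}\times B_2$, which in turn sits inside $B_n$. This nesting lets me split the induction into two stages and handle the second stage with the work of the previous subsection.

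First I would invoke transitivity of induction to write
\[
\ind^{B_n}_{B_{n-2}\times D_2} S^{\lambda,\mu}\boxtimes W = \ind^{B_n}_{B_{n-2}\times B_2}\ind^{B_{n-2}\times B_2}_{B_{n-2}\times D_2} S^{\lambda,\mu}\boxtimes W.
\]
The inner induction changes only the second tensor factor while leaving $S^{\lambda,\mu}$ untouched, so Lemma~\ref{L:ellproduct} (applied with $\ell=2$ and with no induction performed in the first factor) gives
\[
\ind^{B_{n-2}\times B_2}_{B_{n-2}\times D_2} S^{\lambda,\mu}\boxtimes W = S^{\lambda,\mu}\boxtimes \ind^{B_2}_{D_2} W.
\]
Combining the two displays and using additivity of induction over direct sums reduces the computation to evaluating $\ind^{B_n}_{B_{n-2}\times B_2} S^{\lambda,\mu}\boxtimes S^{\nu,\pi}$ for each irreducible constituent $S^{\nu,\pi}$ of $\ind^{B_2}_{D_2} W$, and these constituents are precisely what the hypothesis of each case specifies.

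At this stage I would simply read off the answer from the hypothesis together with Lemma~\ref{L:Branching2}. In case (1) the hypothesis gives $\ind^{B_2}_{D_2} W = S^{(2),\emptyset}\oplus S^{\emptyset,(2)}$, so parts (1) and (5) of Lemma~\ref{L:Branching2} produce $\bigoplus_{\tau \in \bar{\bar{\lambda}}} S^{\tau, \mu}$ and $\bigoplus_{\rho \in \bar{\bar{\mu}}} S^{\lambda,\rho}$ respectively, whose direct sum is the asserted decomposition. Case (2) is identical, with $S^{(1^2),\emptyset}\oplus S^{\emptyset,(1^2)}$ in place of the previous pair and parts (2) and (4) of Lemma~\ref{L:Branching2} yielding $\bigoplus_{\tau \in \tilde{\bar{\lambda}}} S^{\tau, \mu}$ and $\bigoplus_{\rho \in \tilde{\bar{\mu}}} S^{\lambda,\rho}$. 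In case (3) the hypothesis gives the single constituent $S^{(1),(1)}$, so part (3) of Lemma~\ref{L:Branching2} immediately delivers $\bigoplus_{\tau \in \bar{\lambda},\rho \in \bar{\mu}} S^{\tau, \rho}$.

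I do not expect any serious obstacle, since the representation-theoretic content is entirely carried by Lemma~\ref{L:Branching2} and the reduction above is formal. The only point meriting a moment's care is confirming that the three cases genuinely exhaust all irreducible $W$ of $D_2$; this follows from the associator analysis of Section~\ref{associators}, as $S^{(2),\emptyset}$ and $S^{\emptyset,(2)}$ (likewise $S^{(1^2),\emptyset}$ and $S^{\emptyset,(1^2)}$) form $\delta$-associate pairs whose restrictions coincide and induce back to the pair, while $S^{(1),(1)}$ is $\delta$-self-associate and so restricts to the two remaining inequivalent irreducibles of $D_2$, each of which induces back to $S^{(1),(1)}$.
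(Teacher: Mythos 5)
Your proposal is correct and follows essentially the same route as the paper: reduce via transitivity of induction and Lemma~\ref{L:ellproduct} to an induction from $B_{n-2}\times B_2$, then read off the decompositions from the relevant parts of Lemma~\ref{L:Branching2}, with the exhaustiveness of the three cases justified by the Clifford-theory/associator description of the irreducible representations of $D_2$. The only difference is organizational — the paper leads with the classification of $D_2$-irreducibles and leaves the formal reduction implicit, while you spell out the reduction and defer the classification to the end.
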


\begin{proof}
As a subgroup of $B_2$, $D_2$ is given by $\diag(F)\times S_2$, which is isomorphic to $\Z/2\times \Z/2$. 
Hence, $D_2$ has four irreducible representations.
However, two of these irreducible representations induce up the same representation of $B_2$.
Indeed, by Clifford theory, and the tools of Section~\ref{associators}, the irreducible representations of $D_2$ are found as follows. 
An irreducible representation $V$ of $D_2$ is given by  
either $\res^{B_2}_{D_2} S^{\lambda,\mu}$, where $\lambda$ and $\mu$ are two distinct partitions such that $|\lambda | + |\mu| = 2$, 
or by one of the two irreducible constituents of $\res^{B_2}_{D_2} S^{(1),(1)}$.
By Frobenius reciprocity, we see that $\ind^{B_2}_{D_2} V$ is one of the following three representations of $B_2$: 
$S^{(1),(1)}, S^{(2),\emptyset}\oplus S^{\emptyset, (2)}$, or $S^{(1^2),\emptyset}\oplus S^{\emptyset, (1^2)}$.
The rest of the proof follows from Lemma~\ref{L:Branching2}.
\end{proof}

\begin{cor}\label{A:C:H2}
Let $S^{\lambda, \mu}$ be an irreducible representation of $B_{n-2}$, and let $W$ be an irreducible representation of $H_2$.
Then we have 
\begin{enumerate}
\item if $\ind^{B_2}_{H_2} W=S^{(2),\emptyset}\oplus S^{\emptyset, (1^2)}$, 
then $\ind^{B_n}_{B_{n-2}\times H_2} S^{\lambda, \mu} \boxtimes W = 
\bigoplus_{\tau \in \bar{\bar{\lambda}}} S^{\tau, \mu} \oplus \bigoplus_{\rho \in \tilde{\bar{\mu}} } S^{\lambda,\rho}$;

\item if $\ind^{B_2}_{H_2} W=S^{\emptyset, (2)}\oplus S^{(1^2),\emptyset}$, 
then $\ind^{B_n}_{B_{n-2}\times H_2} S^{\lambda, \mu} \boxtimes W = 
\bigoplus_{\rho \in \bar{\bar{\mu}} } S^{\lambda,\rho} \oplus \bigoplus_{\tau \in \tilde{\bar{\lambda}}} S^{\tau, \mu}$; 

\item if $\ind^{B_2}_{H_2} W=S^{(1),(1)}$, then $\ind^{B_n}_{B_{n-2}\times H_2} S^{\lambda, \mu} \boxtimes W =  
\bigoplus_{\tau \in \bar{\lambda},\rho \in \bar{\mu} } S^{\tau, \rho}$.
\end{enumerate}
\end{cor}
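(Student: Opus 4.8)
The plan is to follow the same strategy as in the proof of Corollary~\ref{A:C:D2}, replacing $D_2$ by $H_2$ and using the associator $\varepsilon\delta$ in place of the associators relevant there. First I would record that $H_2 = \ker(\varepsilon\delta)$ is an index~$2$ subgroup of $B_2$; since the quotient is abelian of order $2$ and $|H_2| = 4$, one checks that $H_2 \cong \Z/2 \times \Z/2$, so $H_2$ has exactly four one-dimensional irreducible representations.

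Next, using Clifford theory together with the associator facts recalled in Section~\ref{associators}, I would determine how the five irreducible representations of $B_2$ listed in~(\ref{A:irrrepsofB2}) restrict to $H_2$. The key computation is the action of $\varepsilon\delta$ on characters, namely $\varepsilon\delta\,\chi^{\lambda,\mu} = \chi^{\mu',\lambda'}$. Applying this to the five characters shows that $S^{(1),(1)}$ is the unique representation that is self-associate with respect to $\varepsilon\delta$ (since $(1)' = (1)$), while the remaining four form two associate pairs: $\{S^{(2),\emptyset}, S^{\emptyset,(1^2)}\}$ and $\{S^{(1^2),\emptyset}, S^{\emptyset,(2)}\}$, because $(2)' = (1^2)$ and $\emptyset' = \emptyset$. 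By the results quoted in Section~\ref{associators}, the restriction of a non-self-associate pair to $H_2$ is a single irreducible that induces back to the direct sum of the pair, whereas the self-associate $S^{(1),(1)}$ restricts to two inequivalent irreducibles, each inducing back to $S^{(1),(1)}$. This yields exactly the three possibilities for $\ind^{B_2}_{H_2} W$ enumerated in the statement.

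With these inductions in hand, the decomposition formulas follow from transitivity of induction and Lemma~\ref{L:Branching2}. Concretely, I would write
\[
\ind^{B_n}_{B_{n-2}\times H_2} S^{\lambda,\mu}\boxtimes W = \ind^{B_n}_{B_{n-2}\times B_2} \left( S^{\lambda,\mu}\boxtimes \ind^{B_2}_{H_2} W \right),
\]
then substitute each of the three values of $\ind^{B_2}_{H_2} W$, distribute over the direct sum, and apply the five formulas of Lemma~\ref{L:Branching2} term by term. For instance, in case~(1) the summand $S^{(2),\emptyset}$ contributes $\bigoplus_{\tau\in\bar{\bar{\lambda}}} S^{\tau,\mu}$ via formula~(1), and $S^{\emptyset,(1^2)}$ contributes $\bigoplus_{\rho\in\tilde{\bar{\mu}}} S^{\lambda,\rho}$ via formula~(4), matching the claim; cases~(2) and~(3) are handled identically using formulas~(5),~(2) and~(3) respectively.

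I expect the only delicate point to be the bookkeeping of the $\varepsilon\delta$-associator action: checking that $\chi^{\lambda,\mu} \mapsto \chi^{\mu',\lambda'}$ pairs the $B_2$-representations exactly as stated, and confirming that $S^{(1),(1)}$ is the genuinely self-associate one while none of the others are. Once that combinatorial identification is pinned down, the remainder is a routine application of the branching rules already established.
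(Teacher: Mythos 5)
Your proposal follows essentially the same route as the paper: determine the possible values of $\ind^{B_2}_{H_2} W$ via Clifford theory and the $\varepsilon\delta$-associator action $\chi^{\lambda,\mu} \mapsto \chi^{\mu',\lambda'}$, then conclude by transitivity of induction, Lemma~\ref{L:ellproduct}, and Lemma~\ref{L:Branching2}; your pairing of the $B_2$-irreducibles into $\{S^{(2),\emptyset},S^{\emptyset,(1^2)}\}$ and $\{S^{(1^2),\emptyset},S^{\emptyset,(2)}\}$ with $S^{(1),(1)}$ self-associate, and the term-by-term application of the branching formulas, are all correct. One factual slip: $H_2$ is isomorphic to $\Z/4$, not $\Z/2\times\Z/2$ (the element $((1,0),(1\,2))$ squares to $((1,1),\id)$, so it has order $4$); knowing only that $H_2$ has order $4$ and index $2$ in $B_2$ does not pin down its isomorphism type. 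This slip is harmless here, since the only consequence you use is that an abelian group of order $4$ has exactly four one-dimensional irreducible representations, which holds either way.
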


\begin{proof}
The group $H_2$ is isomorphic to $\Z/4$, and hence it has four inequivalent irreducible representations. 
Arguing as in Corollary~\ref{A:C:D2}, these representations can be described in terms of the irreducible representations of $B_2$. 
Any irreducible representation $V$ of $H_2$ is either equal to 
$\res^{B_2}_{H_2} S^{\lambda,\mu}$, where $\lambda$ and $\mu$ are two distinct partitions 
such that $\lambda \neq \mu'$ and $|\lambda | + |\mu| = 2$, or it is one of the two irreducible constituents of 
the representation $\res^{B_2}_{H_2} S^{(1),(1)}$.
By Frobenius reciprocity, $\ind^{B_2}_{H_2} V$ is one of the following representations of $B_2$: 
$S^{(1),(1)}, S^{(2),\emptyset}\oplus S^{\emptyset, (1^2)},S^{\emptyset, (2)}\oplus S^{(1^2),\emptyset}$.
The rest of the proof follows from Lemma~\ref{L:Branching2}.
\end{proof}

\begin{cor}\label{A:C:D21}
Let $S^{\lambda, \mu}$ be an irreducible representation of $B_{n-2}$, and let $W$ be an irreducible representation of $\overline{S_2}$.
Then we have 
\begin{align*}
\ind^{B_2}_{\overline{S_2}}  W= 
\begin{cases}
S^{(2),\emptyset}\oplus  S^{\emptyset,(2)} \oplus S^{(1),(1)} & \text{ if $W = \mathbf{1}$,}\\
S^{(1^2),\emptyset}\oplus S^{\emptyset,(1^2)} \oplus S^{(1),(1)} & \text{ if $W = \mathbf{\epsilon}$.}
\end{cases}
\end{align*}
Consequently, we have 
\begin{align*}
\ind^{B_n}_{B_{n-2}\times \overline{S_2}} S^{\lambda, \mu} \boxtimes W= 
\begin{cases}
\bigoplus_{\tau \in \bar{\bar{\lambda}} } S^{\tau,\mu} 
\oplus 
\bigoplus_{\rho \in \bar{\bar{\mu}} } S^{\lambda,\rho} 
\oplus 
\bigoplus_{\tau \in \bar{\lambda},\rho \in \bar{\mu} } S^{\tau, \rho}
 & \text{ if $W = \mathbf{1}$,}\\
\bigoplus_{\tau \in \tilde{\bar{\lambda}}} S^{\tau, \mu} 
\oplus 
\bigoplus_{\rho \in \tilde{\bar{\mu}}} S^{\lambda, \rho}
\oplus 
\bigoplus_{\tau \in \bar{\lambda},\rho \in \bar{\mu} } S^{\tau, \rho} & \text{ if $W = \mathbf{\epsilon}$.}
\end{cases}
\end{align*}

\end{cor}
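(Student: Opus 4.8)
The plan is to first compute $\ind^{B_2}_{\overline{S_2}} W$ for each $W \in \{\mathbf{1},\mathbf{\epsilon}\}$ directly, and then to leverage the five Pieri rules of Lemma~\ref{L:Branching2} to promote this to a decomposition over $B_n$. Since $\overline{S_2} = \{(0,\id),(0,(1\,2))\}$ has order $2$, it has index $4$ in $B_2$; so, unlike the index-$2$ subgroups $D_2$ and $H_2$ treated in Corollaries~\ref{A:C:D2} and~\ref{A:C:H2}, we cannot read off $\ind^{B_2}_{\overline{S_2}} W$ from Clifford theory. Instead I would use Frobenius reciprocity: the multiplicity of an irreducible $S^{\alpha,\beta}$ of $B_2$ in $\ind^{B_2}_{\overline{S_2}} W$ equals the multiplicity of $W$ in $\res^{B_2}_{\overline{S_2}} S^{\alpha,\beta}$, which is the inner product $\frac{1}{2}\big(\chi^{\alpha,\beta}(0,\id) + c_W\,\chi^{\alpha,\beta}(0,(1\,2))\big)$, where $c_W = +1$ for $W=\mathbf{1}$ and $c_W = -1$ for $W = \mathbf{\epsilon}$.

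The one point that genuinely requires care is identifying the conjugacy class of $B_2$ containing the nonidentity element $(0,(1\,2))$ of $\overline{S_2}$. This element is a positive $2$-cycle, so it lies in the class labelled $(2),\emptyset$ in Table~\ref{F:charvaluesB2}, rather than the negative $2$-cycle class $\emptyset,(2)$; conflating the two would corrupt the entire computation. With this identification, the two relevant character values are read straight off the columns $(1^2),\emptyset$ (the identity class) and $(2),\emptyset$ of Table~\ref{F:charvaluesB2}. Evaluating the five inner products for $W = \mathbf{1}$ gives multiplicity $1$ for $S^{(2),\emptyset}$, $S^{\emptyset,(2)}$, and $S^{(1),(1)}$, and $0$ otherwise; for $W = \mathbf{\epsilon}$ it gives multiplicity $1$ for $S^{(1^2),\emptyset}$, $S^{\emptyset,(1^2)}$, and $S^{(1),(1)}$. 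This establishes the two displayed formulas for $\ind^{B_2}_{\overline{S_2}} W$.

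For the second assertion I would use transitivity of induction to factor
\[
\ind^{B_n}_{B_{n-2}\times \overline{S_2}} S^{\lambda,\mu}\boxtimes W
= \ind^{B_n}_{B_{n-2}\times B_2}\, \ind^{B_{n-2}\times B_2}_{B_{n-2}\times \overline{S_2}} S^{\lambda,\mu}\boxtimes W,
\]
and then apply Lemma~\ref{L:ellproduct} to the inner induction to rewrite it as $S^{\lambda,\mu} \boxtimes \big(\ind^{B_2}_{\overline{S_2}} W\big)$. Substituting the decomposition of $\ind^{B_2}_{\overline{S_2}} W$ from the first part and using additivity of induction over direct sums, the problem reduces to applying the appropriate cases of Lemma~\ref{L:Branching2} to each summand: for $W = \mathbf{1}$, parts (1), (5), and (3) of Lemma~\ref{L:Branching2} applied to $S^{(2),\emptyset}$, $S^{\emptyset,(2)}$, and $S^{(1),(1)}$ respectively; for $W = \mathbf{\epsilon}$, parts (2), (4), and (3) applied to $S^{(1^2),\emptyset}$, $S^{\emptyset,(1^2)}$, and $S^{(1),(1)}$. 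Collecting the resulting direct sums yields exactly the two stated formulas.

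The bulk of the work is the character bookkeeping of the first two paragraphs, and the assembly in the last paragraph is purely mechanical once the first part is in hand. The only real obstacle is the correct identification of the class of $(0,(1\,2))$; after that, the corollary follows by routine substitution into Lemma~\ref{L:Branching2}.
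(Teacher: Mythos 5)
Your proposal is correct and follows essentially the same route as the paper: the paper proves the first claim by "a direct computation using Table~\ref{F:charvaluesB2}" (which is exactly your Frobenius-reciprocity character calculation, including the correct identification of $(0,(1\,2))$ with the class $(2),\emptyset$), and then obtains the second claim from Lemma~\ref{L:Branching2} via the same transitivity-of-induction and Lemma~\ref{L:ellproduct} assembly you describe. Your write-up simply makes explicit the bookkeeping the paper leaves implicit.
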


\begin{proof}
Our first claim follows from a direct computation by using Table~\ref{F:charvaluesB2}.
(Alternatively, one can use Theorem~\ref{T:firststep}.) 
The rest of the proof follows from Lemma~\ref{L:Branching2}.
\end{proof}

\subsection{$\gamma_K = S_{n-1}\times S_1$.}\label{SS:3rd}

Let $H$ be an index 2 subgroup of a finite group $G$, and let $\eta$ denote the sign representation of the quotient $G/H \cong \Z/2$. 
If $W$ is an irreducible representation of $G$ with character $\chi$, then we have two cases:
\begin{enumerate}
\item[(a)] $\chi \eta = \chi \implies \text{$\res^G_H W$ has two irreducible constituents $V_1$ and $V_2$}$.
The corresponding induced representations $\ind_H^G V_i$ ($i\in \{1,2\}$) are equal to $W$.
\item[(b)] $\chi \eta \neq \chi \implies \text{$\res^G_H W$ is an irreducible representation $V$ of $H$}$.
The corresponding induced representation $\ind^G_H V$ is given by $W\oplus W'$, where the character of $W'$ is $\chi\eta$. 
\end{enumerate}
We will apply this standard fact in the following special situation: $G=A\times B$, where $A$ and $B$ are two finite groups.

Let $\phi : G\to \Z/2$ be a surjective homomorphism.
We denote the restriction of $\phi$ to the subgroup $A\times \{1\}$ by $\phi_1$. 
Likewise, the restriction of $\phi$ onto $\{\id\}\times B$ is denoted by $\phi_2$. 
Then for every $(a,b)\in G$, we have $\phi (a,b) = \phi_1(a,1)\phi_2(\id,b)$. 
In particular, we have three distinct possibilities for the kernel $H$ of $\phi$:
\begin{enumerate}
\item[] (H1) $\phi_2$ is the trivial homomorphism. In this case, $\phi_1$ must be surjective.
Otherwise we have both of the subgroups $A\times \{1\}$ and $\{1\}\times B$ as subgroups of $H$,
hence, we have $H=G$, which is absurd. 
Now, since $\phi_1$ is surjective, $\ker \phi_1$ is an index 2 subgroup of $A\times \{1\}$. 
In particular, $H= \ker \phi_1B$.

\item[] (H2) $\phi_2$ is a surjective homomorphism and $\phi_1$ is the trivial homomorphism.
This case is similar to (H1), hence, we have $H=  A\times \ker \phi_2$.
\item[] (H3) Both of the homomorphisms $\phi_1$ and $\phi_2$ are surjective. 
Then the kernel of $\phi$ is given by 
$H = \{(a,b) \in A\times B:\  \phi_1(a)=\phi_2(b) \}$.
\end{enumerate}

Clearly, the most complicated case is the case of (H3).
We will refer to this case as the {\em non-obvious index 2 subgroup case}.
Nevertheless, if $B$ is $\Z/2 = \{ 1,-1\}$ (in multiplicative notation), then we can describe $H$ quite explicitly,
\[
H=\ker\phi_1 \times \{1\}\cup \overline{\ker\phi_1}\times \{-1\},
\]
where $\overline{\ker\phi_1}= \{(a,-1):\  \phi_1(a)=-1 \}$.

\begin{Example}\label{E:initialexample}
Let us consider the following factors: $A= B_n$ and $B= F = \Z/2$.
Recall that $B_n$ has three index 2 subgroups corresponding to three nontrivial linear characters $\varepsilon,\delta$, and $\varepsilon\delta$ (see~\ref{A:varepsilondelta}).
Thus, for $A=B_n$, $H$ is one of the following subgroups in $B_n\times F$:
\begin{itemize}
\item[] (H1) $H= B_n\times \{ 1\}$;
\item[] (H2.a) $H= \ker(\delta) \times F = D_n\times F$;
\item[] (H2.b) $H= \ker(\varepsilon\delta) \times F= H_n\times F$;
\item[] (H2.c) $H= \ker(\varepsilon)\times F$;
\item[] (H3.a) $H = \{(a,b) \in B_n\times F:\  \varepsilon(a)=\phi_2(b) \}$;
\item[] (H3.b) $H = \{(a,b) \in B_n\times F:\  \delta(a)=\phi_2(b) \}$;
\item[] (H3.c) $H = \{(a,b) \in B_n\times F:\  \varepsilon\delta(a)=\phi_2(b) \}$.
\end{itemize}
Let $V$ be an irreducible representation of $H$. 
\begin{enumerate}

\item Case 1. Let $H$ be as in (H1). 
Then $V= V'\boxtimes \mathbf{1}$, where $V'$ is an irreducible representation of $A$, hence,
$\ind^G_H V = V' \boxtimes (\ind^F_{\id} \mathbf{1}) = V'\boxtimes (\mathbf{1}\oplus \mathbf{\epsilon})$. 

\item Case 2. Let $H$ be as in (H2.a)--(H2.c). Then $V= V'\boxtimes V''$, where $V'$ (resp.~$V''$) is an irreducible representation of $\ker \phi_1$
(resp.~of $F$), hence, $\ind^G_H V = (\ind^A_{\ker \phi_1} V') \boxtimes V''$.

\item Case 3. Let $H$ be as in (H3.a)--(H3.c). 
In particular, $\ker\phi_1\times \{1\}$ is an index 2 subgroup of $H$; we have 
\[
H=\ker\phi_1\times \{1\} \cup \overline{\ker\phi_1}\times \{-1\},
\] 
where $\phi_1 \in \{\varepsilon\times1,\delta\times1,\varepsilon\delta\times 1\}$. 
Therefore, $\ker\phi_1\times \{1\}$ is an index 4 subgroup of $G=B_n\times F$.
In fact, it is a normal subgroup of $G$, so, the irreducible representations of $\ker\phi_1\times \{1\}$ are easy to describe. 
Consequently, we can effectively analyze $\ind^G_HV$ in relation with the induced 
representations $\ind^G_{\ker\phi_1\times \{1\}} V'$, where $V'$ is an irreducible representation of ${\ker\phi_1\times \{1\}}$.
We will do this in the sequel for the cases (H3.b) and (H3.c). 

\end{enumerate}
\end{Example}

\begin{Example}\label{E:initialexample2}
Now we consider $A= D_n$ and $B= F$. 
Then $D_n$ has a unique subgroup of index 2, namely, the Stembridge subgroup, $J_n=D_n \cap H_n$. 
Therefore, $H$ is one of the following subgroups in $D_n\times F$:
\begin{itemize}
\item[] (H1) $H= D_n\times \{ 1\}$;
\item[] (H2) $H= J_n \times F$;
\item[] (H3) $H = \{(a,b) \in D_n\times F:\  \varepsilon(a)=\phi_2(b) \}$, where $\phi_2 : F\to \{1,-1\}$ is the sign representation. 
\end{itemize}
\end{Example}

\begin{Example}\label{E:initialexample3}
Now we consider $A= H_n$ and $B= F$. 
Then $H_n$ has a unique subgroup of index 2, namely, the Stembridge subgroup, $J_n=D_n \cap H_n$. 
Therefore, $H$ is one of the following subgroups in $H_n\times F$:
\begin{itemize}
\item[] (H1) $H= H_n\times \{ 1\}$;
\item[] (H2) $H= J_n \times F$;
\item[] (H3) $H = \{(a,b) \in H_n\times F:\  \delta(a)=\phi_2(b) \}$, where $\phi_2 : F\to \{1,-1\}$ is the sign representation. 
\end{itemize}
\end{Example}

\begin{Assumption}
In the rest of this subsection, $K$ will denote a subgroup of $B_n$ such that $\gamma_K=S_{n-1}\times S_1$, hence,  
$K\leqslant  F \wr (S_{n-1}\times S_1)$. 
\end{Assumption}

\begin{Notation}\label{N:alphabeta}
We denote by $\phi$ the natural isomorphism $\phi : F\wr (S_{n-1}\times S_1) \to B_{n-1}\times B_1$. 
For $\lambda \in F\wr (S_{n-1}\times S_1)$ and $(a,b)\in B_{n-1}\times B_1$ such that $\phi(\lambda) = (a,b)$, 
we denote by $\lambda_\alpha$ the element of $F\wr (S_{n-1}\times S_1)$ such that 
$\phi (\lambda_\alpha) = (a,\id_{B_1})$.
Similarly, we will denote by $\lambda_\beta$ the element of $F\wr (S_{n-1}\times S_1)$ 
such that $\phi(\lambda_\beta) = (\id_{B_{n-1}},b)$. 
\end{Notation}

In this notation, we now have the following two subgroups of $B_n$:
\begin{align*}
\Lambda^\alpha_K  := \{ \lambda_\alpha :\ \lambda \in K \}\qquad \text{and}\qquad 
\Lambda^\beta_K  := \{ \lambda_\beta :\ \lambda \in K\}.
\end{align*} 
Clearly these two subgroups commute with each other.

\begin{lem}\label{L:Kissubgroup}
We maintain the notation from the previous paragraph. 
Then we have 
\begin{enumerate}
\item $K \leqslant \Lambda^\alpha_K \Lambda^\beta_K \cong \Lambda^\alpha_K \times \Lambda^\beta_K$; 
\item $\gamma_{\Lambda^\alpha_K} \cong S_{n-1}$ in $S_n$ and $\gamma_{\Lambda^\beta_K} \cong S_1$ in $S_n$.
\end{enumerate}
\end{lem}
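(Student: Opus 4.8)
The plan is to realize $\Lambda^\alpha_K$ and $\Lambda^\beta_K$ as coordinate projections of $\phi(K)$ and then to read off both assertions from this description. Write $p_1\colon B_{n-1}\times B_1 \to B_{n-1}$ and $p_2\colon B_{n-1}\times B_1 \to B_1$ for the two coordinate projections. First I would note that, directly from the defining property of $\lambda_\alpha$ and $\lambda_\beta$ in Notation~\ref{N:alphabeta}, one has $\phi(\Lambda^\alpha_K) = p_1(\phi(K))\times\{\id_{B_1}\}$ and $\phi(\Lambda^\beta_K) = \{\id_{B_{n-1}}\}\times p_2(\phi(K))$. Since $p_1(\phi(K))$ and $p_2(\phi(K))$ are homomorphic images of the subgroup $\phi(K)$, they are subgroups of $B_{n-1}$ and $B_1$; transporting back through the isomorphism $\phi$ then confirms that $\Lambda^\alpha_K$ and $\Lambda^\beta_K$ are indeed subgroups of $B_n$, as claimed in the paragraph preceding the lemma.

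For part (1), the containment $K \leqslant \Lambda^\alpha_K\Lambda^\beta_K$ is immediate: for $\lambda\in K$ with $\phi(\lambda)=(a,b)$ we compute $\phi(\lambda_\alpha\lambda_\beta) = (a,\id)(\id,b) = (a,b) = \phi(\lambda)$, and since $\phi$ is injective this gives $\lambda = \lambda_\alpha\lambda_\beta$. For the direct-product identification, it has already been observed that $\Lambda^\alpha_K$ and $\Lambda^\beta_K$ commute, so it only remains to check that they intersect trivially. This follows because under $\phi$ their intersection lands in $(p_1(\phi(K))\times\{\id\})\cap(\{\id\}\times p_2(\phi(K))) = \{(\id,\id)\}$. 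Two commuting subgroups with trivial intersection generate their internal direct product, whence $\Lambda^\alpha_K\Lambda^\beta_K \cong \Lambda^\alpha_K\times\Lambda^\beta_K$.

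For part (2), the key is to relate the canonical projection $\pi_{S_n}$ to the two coordinate projections through $\phi$. On $F\wr(S_{n-1}\times S_1)$ the map $\pi_{S_n}$ corresponds, under $\phi$, to $\pi_{S_{n-1}}\times\pi_{S_1}\colon B_{n-1}\times B_1 \to S_{n-1}\times S_1$, where we identify $S_{n-1}\times S_1$ with its natural copy inside $S_n$. Consequently $\gamma_{\Lambda^\alpha_K} = \pi_{S_{n-1}}(p_1(\phi(K)))$ is exactly the image of $\phi(K)$ under $(\pi_{S_{n-1}}\times\pi_{S_1})$ followed by projection onto the first coordinate, and similarly for $\gamma_{\Lambda^\beta_K}$ with the second coordinate. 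By hypothesis $\gamma_K = S_{n-1}\times S_1$, that is, $(\pi_{S_{n-1}}\times\pi_{S_1})(\phi(K)) = S_{n-1}\times S_1$; projecting this onto the first (resp.\ second) coordinate yields $\gamma_{\Lambda^\alpha_K} \cong S_{n-1}$ (resp.\ $\gamma_{\Lambda^\beta_K}\cong S_1$) inside $S_n$.

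The only genuinely delicate point will be the bookkeeping in the last paragraph, namely verifying cleanly that $\pi_{S_n}$ on the Young-type subgroup $F\wr(S_{n-1}\times S_1)$ translates, under the isomorphism $\phi$, into the product $\pi_{S_{n-1}}\times\pi_{S_1}$ of the factorwise projections. Once this compatibility is in place, both claims reduce to the elementary fact that surjectivity of a map onto a direct product passes to each coordinate projection; everything else is formal manipulation with $\phi$ and its inverse.
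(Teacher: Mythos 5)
Your proof is correct and takes essentially the same route as the paper: part (1) is read off from the definitions of $\Lambda^\alpha_K$ and $\Lambda^\beta_K$, and part (2) follows by observing that $\pi_{S_n}$ restricted to $F\wr(S_{n-1}\times S_1)$ factors through $\phi$ as the factorwise projections, so that the hypothesis $\gamma_K = S_{n-1}\times S_1$ projects onto each coordinate. The paper's own proof is just a terser version of the same bookkeeping.
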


\begin{proof}
The first item follows from the definitions of $\Lambda^\alpha_K$ and $\Lambda^\beta_K$. 
For the second item, we observe that the restriction $\pi_{S_n} |_{F\wr (S_{n-1}\times S_1)}$ 
of the canonical projection $\pi_{S_n} :B_n \to S_n$ factors through $\phi$. 
Since $\pi_{S_n}(\Lambda^\alpha_K)\cup \pi_{S_n}( \Lambda^\beta_K) \subseteq \pi_{S_n}(K)$ and 
since we have $\phi(\Lambda^\alpha_K) \leqslant B_{n-1}\times \{\id\}$ and $\phi(\Lambda^\beta_K) \leqslant \{\id\}\times B_1$, the inclusion $K\leqslant \Lambda^\alpha_K \Lambda^\beta_K$ implies that 
\[
\pi_{S_n}(\Lambda^\alpha_K) = S_{n-1}\times  \{\id\} \qquad \text{and}\qquad \pi_{S_n}(\Lambda^\beta_K) = \{\id\}\times S_1.\qedhere
\]
\end{proof}

\begin{Remark}\label{R:indexofK}
It follows from definitions that $|\Lambda^\alpha_K | \leq |K|$. 
Since $K$ is a subgroup of $\Lambda^\alpha_K \Lambda^\beta_K$ and since $|\Lambda^\beta_K| \leq 2$, we have 
$|\Lambda^\alpha_K | \leq |K| \leq 2|\Lambda^\alpha_K | = | \Lambda^\alpha_K \Lambda^\beta_K|$.
In particular, we have the inequality $[\Lambda^\alpha_K \Lambda^\beta_K:K] \leq 2$.
\end{Remark}

By abuse of notation, in our next result, which we call the {\em second reduction theorem},   
we will identify $\Lambda^\alpha_K$ with its image under $\phi$.
Similarly, we will view $ \Lambda^\beta_K$ as a subgroup of $B_1$.

\begin{thm}\label{T:reductionII}
If $(B_n, K)$ is a strong Gelfand pair, then 
so is $(B_{n-1}, \Lambda_K^\alpha)$.
\end{thm}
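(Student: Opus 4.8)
The plan is to route the argument through the intermediate subgroup $M := F\wr(S_{n-1}\times S_1)$. The hypothesis $\gamma_K = S_{n-1}\times S_1$ forces $K\leqslant M$, and of course $M\leqslant B_n$. First I would descend the strong Gelfand property from $(B_n,K)$ to $(M,K)$, and then I would recognize $M\cong B_{n-1}\times B_1$ as a (degenerate) wreath product to which the reduction theorem, Theorem~\ref{T:reduction}, applies, yielding exactly $(B_{n-1},\Lambda^\alpha_K)$.

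For the descent, I would first record the standard reformulation of the strong Gelfand property in terms of restrictions: by Frobenius reciprocity, for an irreducible $K$-representation $V$ and an irreducible $B_n$-representation $W$ one has $\langle \ind_K^{B_n} V, W\rangle = \langle V, \res^{B_n}_K W\rangle$, so $(B_n,K)$ is a strong Gelfand pair if and only if $\res^{B_n}_K W$ is multiplicity-free for every irreducible representation $W$ of $B_n$. Granting this, let $W'$ be any irreducible $M$-representation. Since $\ind_M^{B_n} W'\neq 0$, it has an irreducible constituent $W$, and Frobenius reciprocity shows that $W'$ is a constituent of $\res^{B_n}_M W$. Consequently $\res^M_K W'$ is a direct summand of $\res^M_K\res^{B_n}_M W = \res^{B_n}_K W$, which is multiplicity-free by hypothesis; hence $\res^M_K W'$ is multiplicity-free. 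As $W'$ was arbitrary, the restriction reformulation shows that $(M,K)$ is a strong Gelfand pair.

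Next I would identify $M = F\wr(S_{n-1}\times S_1)$ with $B_{n-1}\times B_1$ through the isomorphism $\phi$ of Notation~\ref{N:alphabeta}, and then observe that $B_{n-1}\times B_1 = B_{n-1}\times F$ is itself a wreath product $F\wr B_{n-1}$ taken with respect to a one-point $B_{n-1}$-set, on which $B_{n-1}$ necessarily acts trivially, so that the semidirect product degenerates to this direct product. Under this identification the canonical projection $\pi_{B_{n-1}}$ is precisely the first-factor projection $p_1$, so that $\gamma_{\phi(K)} = \pi_{B_{n-1}}(\phi(K)) = p_1(\phi(K))$, which is exactly $\Lambda^\alpha_K$ viewed as a subgroup of $B_{n-1}$. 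Applying Theorem~\ref{T:reduction} to the strong Gelfand pair $(F\wr B_{n-1}, \phi(K))$ then gives that $(B_{n-1},\gamma_{\phi(K)}) = (B_{n-1},\Lambda^\alpha_K)$ is a strong Gelfand pair, as desired.

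The routine verifications—that $K\leqslant M\leqslant B_n$, and that $\phi$ carries $\pi_{B_{n-1}}$ to $p_1$—are immediate from the definitions together with the hypothesis on $\gamma_K$. The step requiring the most care is the descent from $(B_n,K)$ to $(M,K)$: it rests on the restriction reformulation of the strong Gelfand property, which is not recorded explicitly earlier but follows at once from Frobenius reciprocity, and it genuinely uses that $M$ \emph{contains} $K$ (so that $\res^{B_n}_K$ factors through $\res^M_K$). Once that reformulation is in place, both the descent and the subsequent appeal to Theorem~\ref{T:reduction} are straightforward, so I expect the restriction reformulation to be the only point needing real justification.
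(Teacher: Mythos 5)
Your proof is correct, but it follows a genuinely different route from the paper's. The paper never invokes Theorem~\ref{T:reduction} here; instead it re-runs a diagonal-subgroup argument from scratch: via Lemma~\ref{L:sgiff} it passes to the Gelfand pair $\bigl((B_{n-1}\times B_1)\times(\Lambda^\alpha_K\times\Lambda^\beta_K),\diag(\Lambda^\alpha_K\times\Lambda^\beta_K)\bigr)$, enlarges the diagonal to an auxiliary subgroup $H=\{(a,b',a,b)\}$, and then quotients by a suitable kernel (Remark~\ref{R:iff1}) to land on $(B_{n-1}\times\Lambda^\alpha_K,\diag(\Lambda^\alpha_K))$, hence on $(B_{n-1},\Lambda^\alpha_K)$; along the way it asserts without proof both the descent fact you establish (its implication (1)) and the dual enlarge-the-subgroup fact (its implication (2)). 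Your argument replaces all of this with two ingredients: the descent of the strong Gelfand property to the intermediate group $K\leqslant M\leqslant B_n$, which you justify correctly via the restriction reformulation and Frobenius reciprocity (in characteristic $0$ constituents are direct summands, so the "summand of a multiplicity-free restriction" step is sound), and the observation that $B_{n-1}\times B_1=B_{n-1}\times F$ is the wreath product $F\wr B_{n-1}$ with respect to a one-point $B_{n-1}$-set, so that Theorem~\ref{T:reduction} applies verbatim with $\gamma_{\phi(K)}=\Lambda^\alpha_K$. This application is legitimate: the paper defines $F\wr G$ for an arbitrary $G$-set, and nothing in the statement or proof of Theorem~\ref{T:reduction} requires the set to be nontrivial or the action faithful; since the paper's default convention for $G\leqslant S_n$ is $X=\{1,\dots,n\}$, you are right to state explicitly which $G$-set you use. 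What your route buys is economy and reuse: Theorem~\ref{T:reductionII} (and equally Theorem~\ref{T:reductionIII}, taking the base group to be $B_2$ instead of $B_1$) becomes a corollary of the first reduction theorem. What the paper's self-contained route buys is independence from this degenerate-wreath-product reading, at the cost of repeating the mechanism of the earlier proof.
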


\begin{proof}
Let us denote by $H$ the following subgroup of $B_{n-1}\times B_1 \times \Lambda^\alpha_K\times \Lambda^\beta_K$:
\[
H=\{ (a,b',a,b) | \ a \in \Lambda_{K}^{\alpha}, \ b \in \Lambda_{K}^{\beta}, b'\in B_1 \}.
\]
Clearly, $H$ contains the diagonal copy of $\Lambda^\alpha_K\times \Lambda^\beta_K$
in $B_{n-1}\times B_1 \times \Lambda^\alpha_K\times \Lambda^\beta_K$ as 
a subgroup,
\[
\diag(\Lambda^\alpha_K\times \Lambda^\beta_K) \leqslant H.
\]
Next, we will show the following logical implications and equivalences:

\begin{center}

$(B_n, K)$ is a strong Gelfand pair.\\
$\Downarrow(1)$\\
$(B_{n-1} \times B_1, K)$ is a strong Gelfand pair.\\
$\Downarrow(2)$\\
$(B_{n-1} \times B_1, \Lambda_{K}^{\alpha} \times \Lambda_{K}^{\beta})$ is a strong Gelfand pair.\\
$\Updownarrow(3)$\\
$((B_{n-1} \times B_1) \times (\Lambda_{K}^{\alpha} \times \Lambda_{K}^{\beta}),\diag(\Lambda_{K}^{\alpha} \times \Lambda_{K}^{\beta}))$ is a Gelfand pair.\\
$\Downarrow(4)$\\
$((B_{n-1} \times B_1) \times (\Lambda_{K}^{\alpha} \times \Lambda_{K}^{\beta}),H)$ is a Gelfand pair. \\
$\Updownarrow(5)$\\
$(B_{n-1} \times \Lambda_{K}^{\alpha},\diag(\Lambda_{K}^{\alpha}))$ is a Gelfand pair.\\
$\Updownarrow(6)$\\
$(B_{n-1},\Lambda_{K}^{\alpha})$ is a strong Gelfand pair.

\end{center}
The equivalences $(3)$ and $(6)$ hold by Lemma~\ref{L:sgiff}.
The implications $(1)$ and $(2)$ hold since we have the subgroup inclusions, 
$K \leqslant \Lambda_{K}^{\alpha} \times \Lambda_{K}^{\beta} \leqslant B_{n-1} \times B_1\leqslant B_n$.
Likewise, (4) holds since we have $\diag(\Lambda_{K}^{\alpha} \times \Lambda_{K}^{\beta}) \leqslant H$.
To prove (5) we will show that the pair 
$(B_{n-1} \times \Lambda_{K}^{\alpha},\diag(\Lambda_{K}^{\alpha}))$ is obtained from 
the pair $((B_{n-1} \times B_1) \times (\Lambda_{K}^{\alpha} \times \Lambda_{K}^{\beta}),H)$ by a quotient construction. 
To this end, we define the map 
\begin{align*}
\varphi : (B_{n-1} \times B_1) \times (\Lambda_{K}^{\alpha} \times \Lambda_{K}^{\beta}) &\longrightarrow B_{n-1} \times \Lambda_{K}^{\alpha} \\
(a,b,c,d) &\longmapsto (a,c).
\end{align*}
Then we have $\ker \varphi =\{ (\id,b,\id,d):\ b\in B_1,\ d\in \Lambda^\beta_K \} \leqslant H$.
Clearly, $\varphi$ is surjective. Now (5) follows from Remark~\ref{R:iff1}.
This completes the proof.
\end{proof}

\begin{cor}\label{C:reductionII}
Let $n\geq 7$. 
We maintain the notation of Theorem~\ref{T:reductionII}. 
If $K$ is a strong Gelfand subgroup of $B_n$ such that $\gamma_K = S_{n-1} \times S_1$, then, up to a conjugation by an element of $B_n$, 
$\Lambda^\alpha_K$ is one of the subgroups $B_{n-1}\times \{\id\}, D_{n-1}\times \{\id\}$, or $H_{n-1}\times \{\id\}$.
\end{cor}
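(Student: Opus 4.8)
The plan is to read this off as a direct consequence of the second reduction theorem (Theorem~\ref{T:reductionII}) together with the classification of strong Gelfand subgroups of $B_m$ whose $\gamma$-image is all of $S_m$ (Proposition~\ref{P:Summary1}), applied with $m = n-1$. First I would invoke Theorem~\ref{T:reductionII}: since $(B_n,K)$ is a strong Gelfand pair by hypothesis, the pair $(B_{n-1},\Lambda^\alpha_K)$ is a strong Gelfand pair, where throughout I identify $\Lambda^\alpha_K$ with its image in $B_{n-1}$ under the isomorphism $\phi$ of Notation~\ref{N:alphabeta}.

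Next I would determine $\gamma_{\Lambda^\alpha_K}$. By Lemma~\ref{L:Kissubgroup}(2) we have $\pi_{S_n}(\Lambda^\alpha_K)=S_{n-1}\times\{\id\}$, so under the identification $\gamma_{\Lambda^\alpha_K}$ is the full symmetric group $S_{n-1}$. Hence $\Lambda^\alpha_K$ is a strong Gelfand subgroup of $B_{n-1}$ with $\gamma_{\Lambda^\alpha_K}=S_{n-1}$, which places us exactly in the setting of Proposition~\ref{P:Summary1}.

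Finally, the hypothesis $n\geq 7$ gives $n-1\geq 6$, so Proposition~\ref{P:Summary1} applies to $\Lambda^\alpha_K\leqslant B_{n-1}$ and forces $\Lambda^\alpha_K$ to be, up to conjugation, one of $B_{n-1}$, $D_{n-1}=\ker\delta$, or $H_{n-1}=\ker(\varepsilon\delta)$. Transporting these back through $\phi$ yields precisely $B_{n-1}\times\{\id\}$, $D_{n-1}\times\{\id\}$, or $H_{n-1}\times\{\id\}$, and since $B_{n-1}\leqslant B_n$ any conjugating element supplied by Proposition~\ref{P:Summary1} already lies in $B_n$; in fact each of the three subgroups is normal of index at most $2$ in $B_{n-1}$, so no conjugation is actually needed. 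I do not anticipate any genuine difficulty: all of the substance has been front-loaded into the two cited results, and the only points requiring care are the bookkeeping of the identification $\phi$ and verifying that $n\geq 7$ is exactly what pushes $m=n-1$ into the range $m\geq 6$ in which Proposition~\ref{P:Summary1} is valid.
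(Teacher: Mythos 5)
Your proposal is correct and follows essentially the same route as the paper: apply Theorem~\ref{T:reductionII} to get that $(B_{n-1},\Lambda^\alpha_K)$ is a strong Gelfand pair, observe via Lemma~\ref{L:Kissubgroup} that $\gamma_{\Lambda^\alpha_K}=S_{n-1}$, and then invoke Proposition~\ref{P:Summary1} with $n-1\geq 6$. Your extra remarks about transporting through $\phi$ and the conjugation being unnecessary are fine refinements of the same argument.
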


\begin{proof}
By Theorem~\ref{T:reductionII}, 
$\Lambda^\alpha_K$ (resp.~$\Lambda^\beta_K$) is a strong Gelfand subgroup of $B_{n-1}$ (resp.~of $B_1$). 
Since $\gamma_{\Lambda^\alpha_K} \cong S_{n-1}$, for the pair $(B_{n-1},\Lambda^\alpha_K)$, 
we are in the situation of Subsection~\ref{SS:1st}. 
By Proposition~\ref{P:Summary1}, we know that $\Lambda^\alpha_K$ is conjugate-isomorphic to one of the subgroups 
$B_{n-1}\times \{\id\}, D_{n-1}\times \{\id\}$, or $H_{n-1}\times \{\id\}$ in $B_n$ if $n\geq 7$.
\end{proof}

\begin{lem}\label{L:11cases}
Let $n\geq 7$.
If $K$ be a strong Gelfand subgroup of $B_n$ such that $\gamma_K = S_{n-1}\times S_1$, 
then $K$ is conjugate to one of the following subgroups: 
\begin{enumerate}
\item $K= B_{n-1}\times B_1$,
\item $K=B_{n-1} \times \{\id\}$, 
\item $K=D_{n-1}\times B_1$, 
\item $K=D_{n-1}\times \{\id\}$,
\item $K=H_{n-1}\times B_1$, 
\item $K=H_{n-1}\times \{\id\}$,
\item $(B_{n-1}\times B_1)_{\delta}$,
\item $(B_{n-1}\times B_1)_{\varepsilon \delta}$,
\item $(B_{n-1}\times B_1)_{\varepsilon }$,
\item $(D_{n-1}\times B_1)_{\varepsilon \delta}$, 
\item $(H_{n-1}\times B_1)_{\delta}$. 
\end{enumerate}
\end{lem}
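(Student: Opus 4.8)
The plan is to combine the subdirect-product structure of $K$ inside $\Lambda^\alpha_K \times \Lambda^\beta_K$ with the second reduction theorem, and then to read off the finitely many possibilities from a Goursat-type analysis. First I would collect what is already available. By Lemma~\ref{L:Kissubgroup} we have $K \leqslant \Lambda^\alpha_K \times \Lambda^\beta_K$ with $\gamma_{\Lambda^\alpha_K} \cong S_{n-1}$ and $\gamma_{\Lambda^\beta_K} \cong S_1$, and by the very definition of $\Lambda^\alpha_K$ and $\Lambda^\beta_K$ the group $K$ projects \emph{onto} each of the two factors; that is, $K$ is a subdirect product. Moreover, by Remark~\ref{R:indexofK} the index $[\Lambda^\alpha_K \times \Lambda^\beta_K : K]$ is at most $2$.

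Next I would pin down the two factors. Since $(B_n, K)$ is a strong Gelfand pair, Theorem~\ref{T:reductionII} shows that $(B_{n-1}, \Lambda^\alpha_K)$ is a strong Gelfand pair; as $\gamma_{\Lambda^\alpha_K} \cong S_{n-1}$ and $n - 1 \geq 6$, Corollary~\ref{C:reductionII} (equivalently Proposition~\ref{P:Summary1}) forces $\Lambda^\alpha_K$ to be one of $B_{n-1}$, $D_{n-1}$, or $H_{n-1}$. These three groups are normal in $B_{n-1}$ (being the whole group, or kernels of characters), so after conjugating $K$ we may assume $\Lambda^\alpha_K$ is literally one of them without disturbing the product decomposition. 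For the passive factor, $\Lambda^\beta_K$ is a subgroup of $B_1 \cong \Z/2$ and hence equals either $\{\id\}$ or $B_1$.

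I would then split on the index. If the index is $1$, then $K = \Lambda^\alpha_K \times \Lambda^\beta_K$, and the three choices for $\Lambda^\alpha_K$ paired with the two choices for $\Lambda^\beta_K$ give precisely items (1)--(6). If the index is $2$, then $\Lambda^\beta_K$ cannot be $\{\id\}$, since otherwise $K = \Lambda^\alpha_K \times \{\id\}$ would have index $1$; hence $\Lambda^\beta_K = B_1$ and $K$ is an index-$2$ subdirect subgroup of $\Lambda^\alpha_K \times B_1$ surjecting onto both factors. By Goursat's lemma (equivalently, by the explicit classification of index-$2$ subgroups recorded in Examples~\ref{E:initialexample}--\ref{E:initialexample3}), such a $K$ must be a diagonal subgroup $(\Lambda^\alpha_K \times B_1)_\chi$ for a \emph{nontrivial} linear character $\chi$ of $\Lambda^\alpha_K$. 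Enumerating these characters---the three characters $\varepsilon, \delta, \varepsilon\delta$ when $\Lambda^\alpha_K = B_{n-1}$, the unique nontrivial character $\varepsilon\delta|_{D_{n-1}}$ when $\Lambda^\alpha_K = D_{n-1}$, and the unique nontrivial character $\delta|_{H_{n-1}}$ when $\Lambda^\alpha_K = H_{n-1}$ (uniqueness here coming from the fact that $D_{n-1}$ and $H_{n-1}$ each have a single index-$2$ subgroup, namely $J_{n-1}$)---yields exactly items (7)--(11).

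The main obstacle is the index-$2$ step: one must rule out the ``obvious'' index-$2$ subgroups $\ker\phi_1 \times B_1$ and $\Lambda^\alpha_K \times \{\id\}$, which are excluded precisely because $K$ has full projection $\Lambda^\alpha_K$ onto the first factor and full projection $\Lambda^\beta_K = B_1$ onto the second. This forces $K$ into the non-obvious (diagonal) family, where the enumeration of Examples~\ref{E:initialexample}--\ref{E:initialexample3}, together with the uniqueness of the nontrivial character of $D_{n-1}$ and of $H_{n-1}$, completes the count. A minor point to verify along the way is that conjugating $K$ to standardize $\Lambda^\alpha_K$ does not alter $\Lambda^\beta_K$, which holds because $B_1$ is abelian and the relevant conjugation can be taken in $B_{n-1} \times B_1$.
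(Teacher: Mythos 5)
Your proof is correct and follows essentially the same route as the paper: both pin down $\Lambda^\alpha_K \in \{B_{n-1}, D_{n-1}, H_{n-1}\}$ via Theorem~\ref{T:reductionII} and Corollary~\ref{C:reductionII}, bound $[\Lambda^\alpha_K\Lambda^\beta_K : K] \leq 2$ via Remark~\ref{R:indexofK}, and enumerate the index-$2$ possibilities through the (H1)--(H3) classification of index-$2$ subgroups of a direct product together with Examples~\ref{E:initialexample}--\ref{E:initialexample3} (including the uniqueness of $J_{n-1}$ as the index-$2$ subgroup of $D_{n-1}$ and of $H_{n-1}$). Your subdirect-product (Goursat) bookkeeping, in which $D_{n-1}\times B_1$ and $H_{n-1}\times B_1$ arise as index-$1$ cases with the appropriate $\Lambda^\alpha_K$ rather than as index-$2$ subgroups of $B_{n-1}\times B_1$, is a slightly cleaner organization of the same argument, and is in fact more faithful to the definition of $\Lambda^\alpha_K$ as the projection of $K$.
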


\begin{proof}
We already know that $B_{n-1}\times B_1$ is a strong Gelfand subgroup of $B_n$, so, let us assume that $K\neq  B_{n-1}\times B_1$. 
Recall that $K$ is an index 1 or 2 subgroup of $\Lambda^\alpha_K\Lambda^\beta_K$. 
If $\Lambda^\beta_K = \{ \id_K\}$, then by Corollary~\ref{C:reductionII} $K$ is as in 2., 4., or 6.
We proceed with the assumption that $\Lambda^\beta_K \neq \{ \id_K \}$.
In this case $K$ can be a subgroup of the form $K'\times \Lambda^\beta_K$, where $K'$ is an index 2 subgroup of $\Lambda^\alpha_K$. 
However, in this case, $\Lambda^\alpha_K$ can only be $B_{n-1}$; otherwise, if $\Lambda^\alpha_K = D_{n-1}$ or $H_{n-1}$, we would have that $K' = J_{n-1}$ and thus that, $\gamma_{K'} = A_{n-1}$, which would contradict 
our assumption that $\gamma_K = S_{n-1}\times S_1$. 
Thus, once again by Corollary~\ref{C:reductionII}, $K$ can be as in 3. or 5.
These options for $K$ are the obvious options. 
For the non-obvious index 2 subgroups of $\Lambda^\alpha_K \Lambda^\beta_K$, 
we apply our discussion from the beginning of this subsection.

The remaining possibilities for $K$ are given by the non-obvious index 2 subgroups of $G\times B_1$, 
where $G \in \{B_{n-1}, D_{n-1},H_{n-1}\}$. 
We already encountered them in Examples~\ref{E:initialexample},~\ref{E:initialexample2}, and~\ref{E:initialexample3}. 
They account for the possibilities that we listed in the items 7., 8., 9. for $G=B_{n-1}$; 10. for $G=D_{n-1}$; and 11. for $G=H_{n-1}$. 
This finishes the proof of our lemma. 
\end{proof}

We now proceed to check the strong Gelfand property of the subgroups that we listed in Lemma~\ref{L:11cases}.
We will make use of several elementary results from Subsection~\ref{SS:Appendix}.

\subsubsection{$(B_n, B_{n-1}\times B_1)$ and $(B_n, B_{n-1}\times S_1)$.}

We already showed that the pairs $(B_n, B_{n-1}\times B_1)$ and $(B_n, B_{n-1}\times S_1)$ are strong Gelfand pairs.

\subsubsection{$(B_n, D_{n-1}\times B_1)$ and $(B_n, D_{n-1}\times S_1)$.}

By the discussion in Section~\ref{associators},
Every irreducible representation of $D_{n-1}$ is either equal to 
$\res^{B_{n-1}}_{D_{n-1}} S^{\lambda,\mu}$, where $\lambda$ and $\mu$ are two distinct partitions with $|\lambda | + |\mu| = n-1$, or it is one of the two irreducible constituents of $\res^{B_{n-1}}_{D_{n-1}} S^{\lambda,\lambda}$, where $2|\lambda| = n-1$. 
By Frobenius reciprocity, for every irreducible representation $V$ of $D_{n-1}$, we have exactly one of the following two cases: 
\begin{enumerate}
\item $\ind^{B_{n-1}}_{D_{n-1}} V = S^{\lambda,\lambda}$ is an irreducible representation of $B_{n-1}$ if $V$ is one of the two irreducible constituents of 
$\res^{B_{n-1}}_{D_{n-1}} S^{\lambda,\lambda}$ for some partition $\lambda$ such that $2|\lambda|= n-1$;
\item $\ind^{B_{n-1}}_{D_{n-1}} V = S^{\lambda,\mu} \oplus S^{\mu,\lambda}$ if $V=\res^{B_{n-1}}_{D_{n-1}} S^{\lambda,\mu}$, where 
$\lambda$ and $\mu$ are distinct partitions with $|\lambda | + |\mu| = n-1$.
\end{enumerate}

We will analyze the induced representations $\ind^{B_{n}}_{D_{n-1}\times B_1} V\boxtimes \mathbf{1}_{B_1}$. 
By the transitivity of the induction, we have 
\begin{align}\label{A:BnDn-1}
\ind^{B_{n}}_{D_{n-1}\times B_1} V\boxtimes  \mathbf{1}_{B_1} &= \ind^{B_{n}}_{B_{n-1}\times B_1}
\ind^{B_{n-1}\times B_1}_{D_{n-1}\times B_1} V\boxtimes  \mathbf{1}_{B_1}.
\end{align}
If $V$ is as in item 1., then (\ref{A:BnDn-1}) gives 
$\ind^{B_{n}}_{D_{n-1}\times B_1} V\boxtimes  \mathbf{1}_{B_1} = \ind^{B_{n}}_{B_{n-1}\times B_1}
S^{\lambda, \lambda} \boxtimes  \mathbf{1}_{B_1}$.
Since $B_{n-1}\times B_1$ is a strong Gelfand subgroup of $B_n$, the resulting induced representation is multiplicity-free. 
Likewise, if $V$ is as in item 2., then (\ref{A:BnDn-1}) together with part 1. of Lemma~\ref{L:Branching} give 
\begin{align}
\ind^{B_{n}}_{D_{n-1}\times B_1} V\boxtimes  \mathbf{1}_{B_1} &= \ind^{B_{n}}_{B_{n-1}\times B_1}
S^{\lambda,\mu}\boxtimes  \mathbf{1}_{B_1} \oplus \ind^{B_{n}}_{B_{n-1}\times B_1} S^{\mu,\lambda} \boxtimes  \mathbf{1}_{B_1} \notag \\
&= \bigoplus_{\tau \in \overline{\lambda}} S^{\tau, \mu}  \oplus  \bigoplus_{\rho \in \overline{\mu}} S^{\rho,\lambda}. \label{A:BnDn-2}
\end{align}
In this case also, since $\lambda \neq \mu$, we see that the irreducible representations of $B_n$ that appear in 
(\ref{A:BnDn-2}) are inequivalent. 
Thus, we proved that $(B_n, D_{n-1}\times B_1)$ is a strong Gelfand pair.

We now analyze the pair $(B_n, D_{n-1}\times S_1)$. 
Once again, by the transitivity of the induction, we have 
\begin{align}\label{A:BnDn-3}
\ind^{B_{n}}_{D_{n-1}\times S_1} V\boxtimes  \mathbf{1}_{S_1} &= \ind^{B_{n}}_{B_{n-1}\times S_1}
\ind^{B_{n-1}\times S_1}_{D_{n-1}\times S_1} V\boxtimes  \mathbf{1}_{S_1}.
\end{align}
If $V$ is as in item 2.~above, then (\ref{A:BnDn-3}) and part 3. of Lemma~\ref{L:Branching} give
\begin{align}
\ind^{B_{n}}_{D_{n-1}\times S_1} V\boxtimes  \mathbf{1}_{S_1} &= \ind^{B_{n}}_{B_{n-1}\times S_1}
S^{\lambda,\mu}\boxtimes  \mathbf{1}_{S_1} \oplus \ind^{B_{n}}_{B_{n-1}\times S_1} S^{\mu,\lambda} \boxtimes  \mathbf{1}_{S_1} \notag \\
&= \left(\bigoplus_{\tau \in \overline{\lambda}} S^{\tau, \mu}  \oplus  \bigoplus_{\rho \in \overline{\mu}} S^{\lambda,\rho}  \right)
\oplus \left(\bigoplus_{\rho \in \overline{\mu}} S^{\rho, \lambda}  \oplus  \bigoplus_{\tau \in \overline{\lambda}} S^{\mu,\tau}  \right).
\label{A:BnDn-1S1}
\end{align}
Here, $\lambda$ and $\mu$ are two distinct partitions such that $|\lambda|+|\mu | =n-1$.
If $n-1 = 2m+1$ for some $m\in \N$, then we consider the partitions $\lambda = (m)$ and $\mu = (m+1)$. 
It is easily checked that the multiplicity of $S^{(m+1),(m+1)}$ in (\ref{A:BnDn-1S1}) is 2. 
On the other hand, if $n-1 = 2m$ for some $m\in \N$, then it is easy to check that (\ref{A:BnDn-1S1}) is a multiplicity-free $B_n$ representation. 

Next, we consider the  irreducible representations $V$ of $D_{n-1}$ as in item 1. 
In particular, $n-1$ is even. 
Let $\lambda$ be a partition such that $2|\lambda| = n-1$, let $S^{\lambda, \lambda}$ denote the corresponding irreducible representation of $B_{n-1}$.
By part 3. of Lemma~\ref{L:Branching}, we get 
\begin{align}
\ind^{B_{n}}_{D_{n-1}\times S_1} V\boxtimes  \mathbf{1}_{S_1} &= \ind^{B_{n}}_{B_{n-1}\times S_1}
S^{\lambda,\lambda}\boxtimes  \mathbf{1}_{S_1} \notag \\
&= \bigoplus_{\tau \in \overline{\lambda}} S^{\tau, \lambda}  \oplus  \bigoplus_{\rho \in \overline{\lambda}} S^{\lambda,\rho}.
\label{A:BnDn-4}
\end{align}
Clearly, the irreducible constituents of (\ref{A:BnDn-4}) are inequivalent, hence, $\ind^{B_{n}}_{D_{n-1}\times S_1} V\boxtimes  \mathbf{1}_{S_1} $
is a multiplicity-free representation of $B_n$. 
In summary, we proved that $(B_n, D_{n-1}\times S_1)$ is a strong Gelfand pair if and only if $n$ is odd.

\subsubsection{$(B_n, H_{n-1}\times B_1)$ and $(B_n, H_{n-1}\times S_1)$.}

Next, we proceed to analyze the pair $(B_n, H_{n-1}\times B_1)$. 
Since $[B_{n-1}:H_{n-1}]=2$, the irreducible representations of $H_{n-1}$ are described by Clifford theory (c.f.~Section~\ref{associators}):
1.~$S^{\lambda,\mu}$ is self-associate with respect to $\varepsilon\delta$ if and only if $\lambda=\mu'$, in which case $\res^{B_{n-1}}_{H_{n-1}} S^{\lambda,\mu}$ is the direct sum of two irreducible $H_{n-1}$ representations of the same degree. 
2.~If $S^{\lambda,\mu}$ and $\varepsilon\delta S^{\lambda,\mu}$ are associate representations with respect 
to $\varepsilon\delta$, then $\res^{B_{n-1}}_{H_{n-1}} S^{\lambda,\mu}$ is an irreducible representation of $H_{n-1}$. 
By Frobenius reciprocity, if $V$ is an irreducible representation of $H_{n-1}$, then:

\begin{enumerate}
\item $\ind^{B_{n-1}}_{H_{n-1}} V = S^{\lambda,\lambda'}$ is an irreducible representation of $B_{n-1}$ 
if $V$ is one of the two irreducible components of $\res^{B_{n-1}}_{H_{n-1}} S^{\lambda,\lambda'}$ for some partition $\lambda$ of $n-1$;
\item $\ind^{B_{n-1}}_{H_{n-1}} V = S^{\lambda,\mu} \oplus S^{\mu',\lambda'}$ if $V=\res^{B_{n-1}}_{H_{n-1}} S^{\lambda,\mu}$, 
where 
$\lambda \neq \mu'$ and $|\lambda | + |\mu| = n-1$.
\end{enumerate}
Now let $V$ be an irreducible representation of $H_{n-1}$ as in 1. 
Since $\ind^{B_{n-1}}_{H_{n-1}} V$ is an irreducible representation of $B_{n-1}$, 
$\ind^{B_n}_{H_{n-1}\times B_1} V\boxtimes \mathbf{1}_{B_1}$ is a multiplicity-free representation of $B_n$. For $V$ as in 2., we get 
\begin{align}
\ind^{B_{n}}_{H_{n-1}\times B_1} V\boxtimes  \mathbf{1}_{B_1} &= \ind^{B_{n}}_{B_{n-1}\times B_1}
S^{\lambda,\mu}\boxtimes  \mathbf{1}_{B_1} \oplus \ind^{B_{n}}_{B_{n-1}\times B_1} S^{\mu',\lambda'} \boxtimes  \mathbf{1}_{B_1} \notag \\
&= \bigoplus_{\tau \in \overline{\lambda}} S^{\tau, \mu}  \oplus  \bigoplus_{\rho \in \overline{\mu'}} S^{\rho',\lambda'}. \label{A:BnHn-2}
\end{align}
In this case also, it is easy to verify that (\ref{A:BnHn-2}) is a multiplicity-free representation of $B_n$. 
Therefore, $(B_n, H_{n-1}\times B_1)$ is a strong Gelfand pair.

We now proceed to the case of $(B_n, H_{n-1}\times S_1)$. 
In this case, if $V$ is as in 2., then by part 3. of Lemma~\ref{L:Branching} we get 
\begin{align}
\ind^{B_{n}}_{H_{n-1}\times S_1} V\boxtimes  \mathbf{1}_{S_1} &= 
\left( \ind^{B_{n}}_{B_{n-1}\times S_1}
S^{\lambda,\mu}\boxtimes  \mathbf{1}_{S_1} \right) \oplus \left( \ind^{B_{n}}_{B_{n-1}\times S_1} S^{\mu',\lambda'} \boxtimes  \mathbf{1}_{S_1} \right) \notag \\
&= \left(\bigoplus_{\tau \in \overline{\lambda}} S^{\tau, \mu}  \oplus  \bigoplus_{\rho \in \overline{\mu}} S^{\lambda,\rho}  \right)
\oplus \left(\bigoplus_{\rho \in \overline{\mu'}} S^{\rho, \lambda'}  \oplus  \bigoplus_{\tau \in \overline{\lambda'}} S^{\mu',\tau}  \right).
\label{A:BnHn-3}
\end{align}

Let $n-1 = 2m+1$ for some $m\in \N$, and set $\lambda = (m+1)$ and $\mu = (1^{m})$. 
Then by Pieri's rule we see that the multiplicity of $S^{(m+1),(1^{m+1})}$ in (\ref{A:BnHn-3}) is 2. 
Thus, if $n$ is even, then $(B_n, H_{n-1}\times S_1)$ is not a strong Gelfand pair. 
If $n$ is odd, then it is easy to check that the induced representation (\ref{A:BnHn-3}) is a multiplicity-free $B_n$ representation.

Next, we consider the irreducible representations $V$ of $H_{n-1}$ as in 1. 
Then $n$ is odd. 
By part 3. of Lemma~\ref{L:Branching}, we get 
\begin{align}
\ind^{B_{n}}_{H_{n-1}\times S_1} V\boxtimes  \mathbf{1}_{S_1} &= \ind^{B_{n}}_{B_{n-1}\times S_1}
S^{\lambda,\lambda'}\boxtimes  \mathbf{1}_{S_1} \notag \\
&= \bigoplus_{\tau \in \overline{\lambda}} S^{\tau, \lambda'}  \oplus  \bigoplus_{\rho \in \overline{\lambda'}} S^{\lambda,\rho}.
\label{A:BnHn-4}
\end{align}
Clearly, the irreducible constituents of (\ref{A:BnHn-4}) are inequivalent, and hence $\ind^{B_{n}}_{H_{n-1}\times S_1} V\boxtimes  \mathbf{1}_{S_1} $
is multiplicity-free. 
In summary, we proved that $(B_n, H_{n-1}\times S_1)$ is a strong Gelfand pair if and only if $n$ is odd.

\subsubsection{$(B_n, (B_{n-1}\times B_1)_{\delta})$ and $(B_n, (B_{n-1}\times B_1)_{\varepsilon \delta})$.}

We start with the case $(B_n,(B_{n-1}\times B_1)_{\delta })$.
To ease our notation, let us denote $(B_{n-1}\times B_1)_{\delta}$ by $M$. 
Let $\nu$ denote the linear character of $B_{n-1}\times B_1$ such that $\ker \nu= M$.
Then it is easy to check that $\nu |_{B_{n-1}\times \{1\}} = \delta_{B_{n-1}}$ and $\nu |_{\{1\} \times B_1}=\delta_{B_1}$.
(Indeed, $D_{n-1}\times \{1\}$ is contained in the kernel of $\nu$.)

Let $W = S^{\lambda,\mu} \boxtimes (D;\mathbf{1})$ be an irreducible representation of $B_{n-1}\times B_1$.
There are two possibilities: 
1) $W$ is a self-associate representation with respect to $\nu$, or 
2) $W$ and $\nu W$ are associate representations with respect to $\nu$. 
However, since $\delta_{B_{n-1}} S^{\lambda, \mu}= S^{\mu, \lambda}$ and $\delta_{B_1}^2 = \mathbf{1}_{B_1}$, we have 
\[
\nu ( S^{\lambda,\mu} \boxtimes (D;\mathbf{1})) = S^{\mu, \lambda}\boxtimes (\tilde{D};\mathbf{1}),
\]
where $\{ D,\tilde{D} \} = \{\mathbf{1},\mathbf{\epsilon}\}$.
Since $\{ D,\tilde{D} \} = \{\mathbf{1},\mathbf{\epsilon}\}$, the representations $S^{\lambda,\mu} \boxtimes (D;\mathbf{1})$
and $S^{\mu, \lambda}\boxtimes (\tilde{D};\mathbf{1})$ are inequivalent. 
Hence, we conclude that there is no self-associate irreducible representation with respect to $\nu$. 
Now let $V$ be an irreducible representation of $M$.
Then by Frobenius reciprocity we have 
\[
\ind^{B_{n-1}\times B_1}_M V = S^{\lambda,\mu}\boxtimes (\mathbf{1};\mathbf{1}) \oplus S^{\mu, \lambda}\boxtimes (\mathbf{\epsilon};\mathbf{1})
\]
for some irreducible representation $S^{\lambda,\mu}$ of $B_{n-1}$. 
By Lemma~\ref{L:Branching}, $\ind^{B_n}_M V$ must be 
\begin{align}\label{A:bothBD}
\bigoplus_{\tau \in \overline{\lambda}} S^{\tau, \mu}  \oplus  \bigoplus_{\rho \in \overline{\lambda}} S^{\mu,\rho}.
\end{align}
If $n-1=2m+1$ for some $m\in \N$, then we fix a pair of partitions $(\lambda,\mu)$ such that $\lambda$ is obtained from $\mu$ by removing a box from its Young diagram. 
Then it is easy to check that $S^{\mu,\mu}$ appears twice (\ref{A:bothBD}).
If $n$ is odd, then it is easy to check that (\ref{A:bothBD}) is multiplicity-free for any $\lambda$ and $\mu$.
Therefore, we proved that $(B_n,(B_{n-1}\times B_1)_{\delta})$ is a strong Gelfand pair if and only if $n$ is odd.

Next, we consider the pair $(B_n,(B_{n-1}\times B_1)_{\varepsilon \delta})$.
To ease notation, we denote $(B_{n-1}\times B_1)_{\varepsilon \delta}$ by $N$. 
We know that $H_{n-1}\times \{1\}$ is an index 2 subgroup of $N$, and $N$ is an index 2 subgroup of $B_{n-1}\times B_1$. 
We will describe the irreducible representations of $N$.
Let $\nu$ denote the linear character of $B_{n-1}\times B_1$ such that $\ker \nu= N$.
Then the restrictions of $\nu$ to the factors are given by $\nu |_{B_{n-1}\times \{1\}} = \varepsilon\delta$ and $\nu |_{\{1\} \times B_1}=\delta_{B_1}$.
Let $W = S^{\lambda,\mu} \boxtimes (D;\mathbf{1})$ be an irreducible representation of $B_{n-1}\times B_1$.
We have two possibilities here: 
1) $W$ is a self-associate representation with respect to $\nu$, 
2) $W$ and $\nu W$ are associate representations with respect to $\nu$. 
However, since $\delta S^{\lambda, \mu}= S^{\mu', \lambda'}$ 
and $\delta_{B_1}^2 = \mathbf{1}_{B_1}$, we have 
\[
\nu ( S^{\lambda,\mu} \boxtimes (D;\mathbf{1})) = S^{\mu', \lambda'}\boxtimes (\tilde{D};\mathbf{1}),
\]
where $\{ D,\tilde{D} \} = \{\mathbf{1},\mathbf{\epsilon}\}$.
Since $\{ D,\tilde{D} \} = \{\mathbf{1},\mathbf{\epsilon}\}$, the representations $S^{\lambda,\mu} \boxtimes (D;\mathbf{1})$
and $S^{\mu', \lambda'}\boxtimes (\tilde{D};\mathbf{1})$ are inequivalent. 
Thus, similarly to the previous case, there is no self-associate irreducible representation with respect to $\nu$. 
Now let $V$ be an irreducible representation of $N$.
Then we have
\[
\ind^{B_{n-1}\times B_1}_N V = S^{\lambda,\mu}\boxtimes (\mathbf{1};\mathbf{1}) \oplus S^{\mu', \lambda'}\boxtimes (\mathbf{\epsilon};\mathbf{1})
\]
for some irreducible representation $S^{\lambda,\mu}$ of $B_{n-1}$. 
By Lemma~\ref{L:Branching}, $\ind^{B_n}_N V$ must be
\begin{align}\label{A:bothBH}
\bigoplus_{\tau \in \overline{\lambda}} S^{\tau, \mu}  \oplus  \bigoplus_{\rho \in \overline{\lambda'}} S^{\mu',\rho}.
\end{align}
If $n-1=2m+1$ for some $m\in \N$, then we fix a pair of partitions $(\lambda,\mu)$ such that $\lambda$ is obtained from $\mu'$ by removing a box from its Young diagram. 
Then we see that the multiplicity of $S^{\mu',\mu}$ in (\ref{A:bothBH}) is 2. 
If $n$ is odd, then it is easy to check that (\ref{A:bothBH}) is multiplicity-free for any $\lambda$ and $\mu$.
Therefore, we proved that $(B_n,(B_{n-1}\times B_1)_{\varepsilon \delta})$ is a strong Gelfand pair if and only if $n$ is odd.

\subsubsection{$(B_n, (B_{n-1}\times B_1)_{\varepsilon })$.}\label{S:case9} 

To ease our notation, let us denote $(B_{n-1}\times B_1)_{\varepsilon}$ by $Z$. 
We know that $(F\wr A_{n-1})\times \{1\} $ is an index 2 subgroup of $Z$, and $Z$ is an index 2 subgroup of $B_{n-1}\times B_1$. 
We will describe the irreducible representations of $Z$.
Let $\nu$ denote the linear character of $B_{n-1}\times B_1$ such that $\ker \nu= Z$.
Then the restrictions of $\nu$ to the factors are given by $\nu |_{B_{n-1}\times \{\id\}} = \varepsilon_{B_{n-1}}$ 
and $\nu |_{\{\id\} \times B_1}=\delta_{B_1}$.

Let $W = S^{\lambda,\mu} \boxtimes (D;\mathbf{1})$ be an irreducible representation of $B_{n-1}\times B_1$.
We have two possibilities: 
1) $W$ is a self-associate representation with respect to $\nu$, 
2) $W$ and $\nu W$ are associate representations with respect to $\nu$. 
But since there is no self-associate irreducible representation of $B_1$ with respect to $\delta_{B_1}$, there is no self-associate irreducible representation of $B_{n-1}\times B_1$ with respect to $\nu$.

Let $V$ be an irreducible representation of $Z$. 
Then for some irreducible representation $S^{\lambda,\mu}$ of $B_{n-1}$, we have 
\[
\ind^{B_{n-1}\times B_1}_Z V = S^{\lambda,\mu}\boxtimes (\mathbf{1};\mathbf{1}) \oplus S^{\lambda', \mu'}\boxtimes (\mathbf{\epsilon};\mathbf{1}).
\]
By Lemma~\ref{L:Branching}, $\ind^{B_n}_Z V$ is
\begin{align}\label{A:bothBZ}
\bigoplus_{\tau \in \overline{\lambda}} S^{\tau, \mu}  \oplus  \bigoplus_{\rho \in \overline{\mu'}} S^{\lambda',\rho}.
\end{align}
It is easy to see that this representation is multiplicity-free.
Thus, $(B_n, Z)$ is a strong Gelfand pair.

\subsubsection{$(B_n, (D_{n-1}\times B_1)_{\varepsilon \delta})$ and $(B_n,(H_{n-1}\times B_1)_{\delta})$.}

To ease our notation, let us denote $(D_{n-1}\times B_1)_{\varepsilon \delta }$ by $K$. 
Let $\nu$ denote the linear character of $D_{n-1}\times B_1$ such that $\ker \nu= K$.
Since $K\leqslant D_{n-1}\times F$, the restrictions of $\nu$ to the factors are given by $\nu |_{D_{n-1}\times \{\id\}} = (\varepsilon \delta)_{B_{n-1}}$ 
and $\nu |_{\{\id\} \times B_1}= (\varepsilon \delta)_{B_1}= \delta_{B_1}$.

Let $U\boxtimes (D; \mathbf{1})$ be an irreducible representation of $D_{n-1}\times B_1$, where $D\in \{\mathbf{1}_F, \epsilon_F\}$. 
Since $(\mathbf{1}_F;\mathbf{1})$ and $(\epsilon_F;\mathbf{1})$ are $\delta_{B_1}$-associate representations, 
there are no self-associate representations of $D_{n-1}\times B_1$ with respect to $\nu$. 
In particular, every irreducible representation of $D_{n-1}\times B_1$ restricts to $K$ as an irreducible representation.
Thus, if $V$ is an irreducible representation of $K$, then 
$\ind^{D_{n-1}\times B_1}_K V$ is of the form $U\boxtimes (\mathbf{1}_F; \mathbf{1}) \oplus (\varepsilon \delta U)\boxtimes (\epsilon_F; \mathbf{1})$.
Let $S^{\lambda,\mu}$ be the irreducible representation of $B_{n-1}$ such that $\res^{B_{n-1}}_{D_{n-1}} S^{\lambda, \mu} = U$,
where $\lambda$ and $\mu$ are two partitions such that $|\lambda | + | \mu | = n-1$. 
First we assume that $\lambda \neq \mu$.
Then we have 
\begin{align*}
\ind^{B_n}_K V & = \ind^{B_n}_{B_{n-1}\times B_1} \ind^{B_{n-1}\times B_1}_K V \\
&=  \ind^{B_n}_{B_{n-1}\times B_1}  \ind^{B_{n-1}\times B_1} _{D_{n-1}\times B_1} ( U\boxtimes (\mathbf{1}_F; \mathbf{1}) \oplus (\varepsilon \delta U)\boxtimes (\epsilon_F; \mathbf{1}) ) \\
&=  \ind^{B_n}_{B_{n-1}\times B_1} ( 
S^{\lambda,\mu} \boxtimes (\mathbf{1}_F; \mathbf{1}) \oplus S^{\mu,\lambda} \boxtimes (\mathbf{1}_F; \mathbf{1}) 
\oplus S^{\lambda',\mu'} \boxtimes (\epsilon_F; \mathbf{1}) 
\oplus S^{\mu',\lambda'}\boxtimes (\epsilon_F; \mathbf{1}) ).
\end{align*}

Then by Lemma~\ref{L:Branching}, we find that 
\begin{align}\label{A:bothBU}
\ind^{B_n}_K V = \bigoplus_{\tau \in \overline{\lambda}} S^{\tau, \mu}  \oplus 
 \bigoplus_{\tau \in \overline{\mu}} S^{\tau, \lambda}  \oplus  
 \bigoplus_{\rho \in \overline{\mu'}} S^{\lambda',\rho} \oplus
 \bigoplus_{\rho \in \overline{\lambda'}} S^{\mu',\rho}.
\end{align}
It is easy to check that this is a multiplicity-free representation of $B_n$ if $n-1$ is even.
If $n-1$ is odd, then we choose $\lambda= (m)$ and $\mu= 1^{m+1}$.
Then $S^{(m+1),(1^{m+1})}$ appears with multiplicity 2 in (\ref{A:bothBU}).
Next, we assume that $\lambda = \mu$. Of course, this choice is available only when $n-1$ is even. 
Then we have 
\begin{align*}
\ind^{B_n}_K V & = \ind^{B_n}_{B_{n-1}\times B_1} \ind^{B_{n-1}\times B_1}_K V \\
&=  \ind^{B_n}_{B_{n-1}\times B_1}  \ind^{B_{n-1}\times B_1} _{D_{n-1}\times B_1} ( U\boxtimes (\mathbf{1}_F; \mathbf{1}) \oplus (\varepsilon \delta U)\boxtimes (\epsilon_F; \mathbf{1}) ) \\
&=  \ind^{B_n}_{B_{n-1}\times B_1} ( 
S^{\lambda,\lambda} \boxtimes (\mathbf{1}_F; \mathbf{1})
\oplus S^{\lambda',\lambda'}\boxtimes (\epsilon_F; \mathbf{1}) ).
\end{align*}
Clearly, this representation is multiplicity-free if and only if $\lambda \neq \lambda '$; we can find self-conjugate partitions of $n-1$ as long as 
$n-1> 2$. 
Therefore, if $n\geq3$, $(B_n, K)$ is a strong Gelfand pair if and only if $n$ is odd.

By a similar argument, one can also deduce that, if $n\geq 3$, $(B_n,(H_{n-1}\times B_1)_{\delta})$ is a strong Gelfand pair if and only if $n$ is odd.

\subsubsection{Summary for $\gamma_K = S_{n-1}\times S_1$.}

We now summarize the conclusions of the previous subsections in a single proposition. 
In particular, we maintain our notation from Lemma~\ref{L:11cases}. 

\begin{prop}\label{P:Summary3}
Let $n\geq 7$.
Let $K$ be a subgroup of $B_n$ such that $\gamma_K=S_{n-1}\times S_1$.
In this case, $(B_n,K)$ is a strong Gelfand pair if and only if $K$ is one of the following subgroups:
\begin{enumerate}
\item $K= B_{n-1}\times B_1$,
\item $K=B_{n-1} \times \{\id\}$, 
\item $K=D_{n-1}\times B_1$, 
\item $K=D_{n-1}\times \{\id\}$, if $n$ is odd,
\item $K=H_{n-1}\times B_1$, 
\item $K=H_{n-1}\times \{\id\}$, if $n$ is odd,

\item $(B_{n-1}\times B_1)_{\delta}$, if $n$ is odd,
\item $(B_{n-1}\times B_1)_{\varepsilon \delta}$, if $n$ is odd, 
\item $(B_{n-1}\times B_1)_{\varepsilon }$.
\item $(D_{n-1}\times B_1)_{\varepsilon \delta}$, if $n$ is odd,
\item $(H_{n-1}\times B_1)_{\delta}$, if $n$ is odd. 
\end{enumerate}
\end{prop}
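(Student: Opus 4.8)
The plan is to recognize that this proposition is a collation of the case-by-case analyses carried out in the preceding subsubsections, organized by the structural reduction of Lemma~\ref{L:11cases}. Accordingly, the proof splits into a completeness step and a verification step, and almost all of the genuine work has already been done.

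First I would dispatch the forward direction. Suppose $(B_n,K)$ is a strong Gelfand pair with $\gamma_K = S_{n-1}\times S_1$ and $n\geq 7$. By Lemma~\ref{L:11cases}, up to conjugation in $B_n$, the subgroup $K$ coincides with one of the eleven subgroups enumerated there. Since conjugation by an element of $B_n$ is an inner automorphism of $B_n$, Remark~\ref{R:conjbyaut} ensures that the strong Gelfand property is invariant under passing to a conjugate; hence it is enough to decide, for each of the eleven named representatives, whether it is strong Gelfand.

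Next I would assemble the verdicts reached in the subsubsections of this subsection. In each case an arbitrary irreducible representation $V$ of the candidate $K$ was induced up to $B_n$, and its decomposition read off by combining Frobenius reciprocity (to first write $\ind^{B_{n-1}\times B_1}_K V$ in terms of the Specht modules $S^{\lambda,\mu}$) with the branching rules of Lemma~\ref{L:Branching}. The five subgroups $B_{n-1}\times B_1$, $B_{n-1}\times\{\id\}$, $D_{n-1}\times B_1$, $H_{n-1}\times B_1$, and $(B_{n-1}\times B_1)_\varepsilon$ were found to be strong Gelfand for every $n\geq 7$, giving items (1), (2), (3), (5), and (9). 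The remaining six subgroups $D_{n-1}\times\{\id\}$, $H_{n-1}\times\{\id\}$, $(B_{n-1}\times B_1)_\delta$, $(B_{n-1}\times B_1)_{\varepsilon\delta}$, $(D_{n-1}\times B_1)_{\varepsilon\delta}$, and $(H_{n-1}\times B_1)_\delta$ were found to be strong Gelfand precisely when $n$ is odd, giving items (4), (6), (7), (8), (10), and (11). Combining these verdicts with the completeness step yields exactly the list in the proposition.

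The one piece of content beyond bookkeeping is the parity dichotomy, which in each of the six conditional cases has the same source: when $n$ is even (equivalently $n-1$ is odd) one produces an explicit pair of partitions---typically a partition and its box-removal, or a hook and its conjugate---for which Lemma~\ref{L:Branching} forces some $S^{\tau,\rho}$ to occur with multiplicity $2$, whereas when $n$ is odd the same branching rules make all constituents distinct. I expect the main point requiring care to be not any individual computation but the appeal to Lemma~\ref{L:11cases}: one must be confident that the conjugacy reduction established there is genuinely exhaustive, so that no strong Gelfand subgroup $K$ with $\gamma_K = S_{n-1}\times S_1$ is omitted from the enumeration.
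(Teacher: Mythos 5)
Your proposal is correct and follows essentially the same route as the paper: the paper's own proof of this proposition is precisely the collation you describe, namely the exhaustive conjugacy reduction of Lemma~\ref{L:11cases} (justified by Theorem~\ref{T:reductionII} and Corollary~\ref{C:reductionII}, with conjugation-invariance via Remark~\ref{R:conjbyaut}) combined with the case-by-case verdicts of the preceding subsubsections, which use Clifford theory, Frobenius reciprocity, and the branching rules of Lemma~\ref{L:Branching}, with explicit partition pairs producing multiplicity $2$ when $n$ is even. You have also correctly identified where the real burden lies, in the exhaustiveness of the eleven-case enumeration rather than in any individual induction computation.
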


\subsection{$\gamma_K = S_{n-2}\times S_2$.}\label{SS:4th}

Throughout this subsection, we will assume that $n\geq 8$, $K$ will be a subgroup of $B_n$ such that $\gamma_K = S_{n-2}\times S_2$, and
hence, $K\leqslant F \wr (S_{n-2}\times S_2)$.
The idea of our analysis in this subsection is the same as the one that we used in the previous subsection.

Let $\phi' : F\wr (S_{n-2}\times S_2) \to B_{n-2}\times B_2$ be the canonical splitting isomorphism.
For $\lambda \in F\wr (S_{n-2}\times S_2)$ and $\phi'(\lambda) = (a,b) \in B_{n-2}\times B_2$,
we denote by $\lambda_\alpha$ the element in $F\wr (S_{n-2}\times S_2)$ such that $\phi' (\lambda_\alpha) = (a,\id_{B_2})$.
Likewise, we denote by $\lambda_\beta$ the element in $F\wr (S_{n-2}\times S_2)$ such that $\phi' (\lambda_\beta) = (\id_{B_{n-2}},b)$. 
In this notation, we have $\lambda = \lambda_\alpha \lambda_\beta$.
As before, we have the following ``$K$-related'' subgroups of $B_{n-2}\times B_2$:
\begin{align}\label{A:newlambdas}
\Lambda^\alpha_K  := \{ \lambda_\alpha :\ \lambda \in K \} 
 \qquad \text{and}\qquad 
\Lambda^\beta_K  := \{ \lambda_\beta :\ \lambda \in K\}.
\end{align} 
It is easy to show that 
$K \leqslant \Lambda^\alpha_K \Lambda^\beta_K \cong \Lambda^\alpha_K \times \Lambda^\beta_K$, 
$\gamma_{\Lambda^\alpha_K} \cong S_{n-2}$, and that $\gamma_{\Lambda^\beta_K} \cong S_2$.
Hereafter, when it is convenient for our purposes, we will identify $\Lambda^\alpha_K$ with its isomorphic copy in $B_{n-2}$, 
and $\Lambda^\beta_K$ with its isomorphic copy in $B_2$. 
\begin{Remark}
It is worth noting here that if $K$ is a direct product subgroup of $\Lambda^\alpha_K\Lambda^\beta_K$, then 
$K=\Lambda^\alpha_K \Lambda^\beta_K$. 
\end{Remark}
The proof of our second reduction theorem is adaptable to the subgroups $\Lambda^\alpha_K$ and $\Lambda^\beta_K$ defined in (\ref{A:newlambdas}).

\begin{thm}\label{T:reductionIII}
If $(B_n, K)$ is a strong Gelfand pair, then 
so is $(B_{n-2}, \Lambda_K^\alpha)$.
\end{thm}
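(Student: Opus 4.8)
The plan is to mimic the proof of Theorem~\ref{T:reductionII} almost verbatim, replacing $B_1$ by $B_2$ throughout, since the structural ingredients used there do not depend on the size of the passive block. First I would record the relevant subgroup inclusions that were established just before the statement: we have $K \leqslant \Lambda^\alpha_K\Lambda^\beta_K \cong \Lambda^\alpha_K \times \Lambda^\beta_K \leqslant B_{n-2}\times B_2 \leqslant B_n$, together with $\gamma_{\Lambda^\alpha_K}\cong S_{n-2}$ and $\gamma_{\Lambda^\beta_K}\cong S_2$. These inclusions are exactly what drives the chain of implications, so once they are in place the argument is purely formal.

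The key steps, in order, would be to define the auxiliary subgroup
\[
H=\{ (a,b',a,b) :\ a \in \Lambda_{K}^{\alpha},\ b \in \Lambda_{K}^{\beta},\ b'\in B_2 \}
\]
of $(B_{n-2}\times B_2)\times(\Lambda^\alpha_K\times\Lambda^\beta_K)$, and then run the same ladder of implications and equivalences used in Theorem~\ref{T:reductionII}. Concretely, I would argue: $(B_n,K)$ strong Gelfand $\Rightarrow$ $(B_{n-2}\times B_2, K)$ strong Gelfand (by the subgroup inclusion $K\leqslant B_{n-2}\times B_2\leqslant B_n$) $\Rightarrow$ $(B_{n-2}\times B_2,\Lambda^\alpha_K\times\Lambda^\beta_K)$ strong Gelfand (since $K\leqslant\Lambda^\alpha_K\times\Lambda^\beta_K$) $\Leftrightarrow$ $((B_{n-2}\times B_2)\times(\Lambda^\alpha_K\times\Lambda^\beta_K),\diag(\Lambda^\alpha_K\times\Lambda^\beta_K))$ Gelfand (by Lemma~\ref{L:sgiff}) $\Rightarrow$ the same ambient pair with $H$ in place of the diagonal (since $\diag(\Lambda^\alpha_K\times\Lambda^\beta_K)\leqslant H$) $\Leftrightarrow$ $(B_{n-2}\times\Lambda^\alpha_K,\diag(\Lambda^\alpha_K))$ Gelfand $\Leftrightarrow$ $(B_{n-2},\Lambda^\alpha_K)$ strong Gelfand (again by Lemma~\ref{L:sgiff}). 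The only step requiring real content is the penultimate equivalence, which I would establish via the quotient map
\[
\varphi : (B_{n-2}\times B_2)\times(\Lambda^\alpha_K\times\Lambda^\beta_K)\longrightarrow B_{n-2}\times\Lambda^\alpha_K,\qquad (a,b,c,d)\longmapsto(a,c),
\]
noting that $\ker\varphi=\{(\id,b,\id,d):\ b\in B_2,\ d\in\Lambda^\beta_K\}\leqslant H$, and then invoking Remark~\ref{R:iff1} to pass to the quotient while preserving the Gelfand property.

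The main obstacle—really the only point where one must check something rather than cite the $B_1$ case—is verifying that $H$ is genuinely a subgroup and that $\ker\varphi$ sits inside $H$ in the $B_2$ setting, because $B_2$ is nonabelian in a mild sense (it is the dihedral group of order $8$) whereas $B_1\cong\Z/2$ is central. However, inspecting the definition of $H$ shows that the second coordinate $b'$ ranges over all of $B_2$ freely and is decoupled from the diagonal entries $a$ (which are tied together) and from $b\in\Lambda^\beta_K$; closure under multiplication and inverses therefore follows coordinatewise exactly as in Theorem~\ref{T:reductionII}, with no interaction between the free $B_2$-factor and the constrained $\Lambda^\beta_K$-factor. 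Likewise $\ker\varphi\leqslant H$ is immediate by taking $a=\id$ and $c=\id$. Since the statement only asserts the conclusion about $\Lambda^\alpha_K$ (and not $\Lambda^\beta_K$, which would need a separate but symmetric argument), I would keep the write-up short, remarking that the proof is a direct adaptation of that of Theorem~\ref{T:reductionII}.
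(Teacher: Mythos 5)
Your proposal is correct and matches the paper's own treatment: the paper proves Theorem~\ref{T:reductionIII} simply by noting that the proof of Theorem~\ref{T:reductionII} "is adaptable" to the subgroups $\Lambda^\alpha_K$ and $\Lambda^\beta_K$ of $B_{n-2}\times B_2$, which is precisely the verbatim substitution of $B_2$ for $B_1$ that you carry out. Your additional check that $H$ remains a subgroup and that $\ker\varphi\leqslant H$ despite $B_2$ being nonabelian is a sensible (if easy) verification, since all four coordinates multiply independently in the direct product.
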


Also, the proof of the following corollary is similar to that of Corollary~\ref{C:reductionII}.

\begin{cor}\label{C:reductionIII}
If $(B_n, K)$ is a strong Gelfand pair, 
then $\Lambda^\alpha_K$ is one of the following subgroups: $B_{n-2}\times \{\id_{B_2}\}, D_{n-2}\times \{\id_{B_2}\}$, or $H_{n-2}\times \{\id_{B_2}\}$.
\end{cor}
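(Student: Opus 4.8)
The plan is to mirror the proof of Corollary~\ref{C:reductionII}, replacing the first reduction result (Theorem~\ref{T:reductionII}) by its analogue for the present setting, namely Theorem~\ref{T:reductionIII}. The whole point is to transfer the problem from $B_n$ down to $B_{n-2}$, where the strong Gelfand subgroups $L$ with $\gamma_L \cong S_{n-2}$ have already been classified in Subsection~\ref{SS:1st}.

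First I would apply Theorem~\ref{T:reductionIII} to conclude that, since $(B_n,K)$ is a strong Gelfand pair, the subgroup $\Lambda^\alpha_K$ is a strong Gelfand subgroup of $B_{n-2}$. Next I would record the structural fact noted just before the statement, that $\gamma_{\Lambda^\alpha_K}\cong S_{n-2}$ under the identification of $\Lambda^\alpha_K$ with its isomorphic copy in $B_{n-2}$. This places the pair $(B_{n-2},\Lambda^\alpha_K)$ exactly in the situation analysed in Subsection~\ref{SS:1st}, i.e.~the case $\gamma_K = S_n$ with $n$ replaced by $n-2$. I would then invoke Proposition~\ref{P:Summary1}, which lists all strong Gelfand subgroups with full symmetric image, to conclude that $\Lambda^\alpha_K$ is conjugate (inside $B_{n-2}$) to one of $B_{n-2}$, $D_{n-2}=\ker\delta$, or $H_{n-2}=\ker(\varepsilon\delta)$. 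Re-inserting the trivial second factor $\{\id_{B_2}\}$ then gives precisely the three subgroups in the statement.

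The one point that requires care --- and the only real obstacle --- is ensuring that Proposition~\ref{P:Summary1} returns exactly these three groups and none of the sporadic small-rank exceptions. Proposition~\ref{P:Summary1} produces the clean three-element list only when its index is at least $6$; here that index is $n-2$, so the standing assumption $n\geq 8$ of this subsection is exactly what is needed, since it forces $n-2\geq 6$. Finally, I would note that the conjugation supplied by Proposition~\ref{P:Summary1} takes place inside $B_{n-2}$, hence is realised by an element of $B_n$ fixing the $B_2$ factor, so the conclusion holds up to conjugation in $B_n$, in harmony with Corollary~\ref{C:reductionII}.
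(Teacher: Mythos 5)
Your proposal is correct and follows essentially the same route the paper intends: the paper's own justification is simply that the proof is "similar to that of Corollary~\ref{C:reductionII}", and you have carried out exactly that adaptation --- apply Theorem~\ref{T:reductionIII}, note $\gamma_{\Lambda^\alpha_K}\cong S_{n-2}$, and invoke Proposition~\ref{P:Summary1}, with the standing assumption $n\geq 8$ ensuring $n-2\geq 6$ so that no small-rank exceptions arise. Your closing remark that the conjugation from Proposition~\ref{P:Summary1} lives in $B_{n-2}$ and hence extends to $B_n$ fixing the $B_2$ factor is a detail the paper leaves implicit, and it is handled correctly.
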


In the following lemma, which is easy to verify, we examine all subgroups of $B_2$, thus giving us all possibilities for what the subgroup $\Lambda_K^\beta$ could be.

\begin{lem}\label{L:sgsB2}
If $G$ is a strong Gelfand subgroup of $B_2$ such that $\gamma_G = S_2$, 
then $G$ is one of the following subgroups:
\begin{enumerate}
\item $B_2$,
\item $D_2=\{ ((0,0),\id_{S_2}), ((1,1),\id_{S_2}),((0,0), (1\, 2)),((1,1), (1\,2))\}$,
\item $H_2=\{ ((0,0),\id_{S_2}), ((1,1),\id_{S_2}),((1,0), (1\, 2)),((0,1), (1\,2))\}$,
\item $\overline{S_2}:=\{ ((0,0),\id_{S_2}),((0,0), (1\, 2))\}$ or $\overline{S_2}':=\{ ((0,0),\id_{S_2}),((1,1), (1\, 2))\} = x \overline{S_2} x^{-1}$, where $x= ((0,1),(1,2))$.
\end{enumerate}
We have 3 more strong Gelfand subgroups $G$ with $\gamma_G = \{\id_{S_2}\}$:
\begin{enumerate}[resume]
\item $F\times 0 = \{ ((0,0),\id_{S_2}),((1,0), \id_{S_2})\}$, its conjugate $0 \times F = \{ ((0,0),\id_{S_2}),((0,1), \id_{S_2})\}$, and $F\wr \{\id_{S_2} \}$, with $\gamma_{F \times 0} = \gamma_{0\times F} = \gamma_{F\wr \{\id_{S_2} \}} = \{ \id_{S_2}\}$.
\end{enumerate}
Finally, $B_2$ has two more subgroups that are not strong Gelfand, namely the diagonal subgroup $\diag(F)$ and the trivial subgroup.
In both cases, we again have that $\gamma_K = \{\id_{S_2}\}$.
\end{lem}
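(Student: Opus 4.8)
The plan is to reduce the strong Gelfand property to a single restriction computation and then to run through the finitely many subgroups of $B_2$ directly. First I would record the standard reformulation, obtained by combining Frobenius reciprocity with the definition: for a subgroup $G \leqslant B_2$, the pair $(B_2, G)$ is a strong Gelfand pair if and only if $\res^{B_2}_G V$ is multiplicity-free for every irreducible representation $V$ of $B_2$. The irreducible representations of $B_2$ are the five listed in (\ref{A:irrrepsofB2}), of which four — namely $S^{(2),\emptyset}, S^{(1^2),\emptyset}, S^{\emptyset,(1^2)}, S^{\emptyset,(2)}$ — are one-dimensional (their values on the identity class in Table~\ref{F:charvaluesB2} are all $1$) and hence restrict to one-dimensional, a fortiori multiplicity-free, representations of any subgroup. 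Thus the only nontrivial condition is that the two-dimensional representation $S^{(1),(1)}$ restrict multiplicity-freely to $G$; equivalently, that $\res^{B_2}_G S^{(1),(1)}$ not be isotypic of the form $\chi \oplus \chi$ for a linear character $\chi$ of $G$.

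Second, I would enumerate the subgroups of $B_2$. Since $B_2$ has order $8$ and is isomorphic to the dihedral group of that order, it has exactly ten subgroups: the whole group; in order $4$ the base group $\overline{F^2} = F\wr\{\id_{S_2}\}$, the Klein four-group $D_2$, and the cyclic group $H_2 \cong \Z/4$; the five order-two subgroups $\diag(F)$, $F\times 0$, $0\times F$, $\overline{S_2}$, $\overline{S_2}'$; and the trivial subgroup. For each I would compute $\gamma_G = \pi_{S_2}(G)$, observing that $\gamma_G = S_2$ exactly when $G$ contains an element $(f,(1\,2))$ — that is, for $B_2, D_2, H_2, \overline{S_2}, \overline{S_2}'$ — while the remaining five subgroups lie in the base group and satisfy $\gamma_G = \{\id_{S_2}\}$. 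This matches the three groupings in the statement.

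Third, I would decide multiplicity-freeness of $\res^{B_2}_G S^{(1),(1)}$ using the character $\chi^{(1),(1)}$ read off from Table~\ref{F:charvaluesB2}. Since $S^{(1),(1)}$ is two-dimensional, an element acts as a scalar exactly when its character value is $\pm 2$; inspecting the table, the only non-identity element of $B_2$ with this property is the central element $((1,1),\id)$, which lies in the class where $\chi^{(1),(1)} = -2$ and hence acts as $-1$. Consequently $\res^{B_2}_G S^{(1),(1)}$ is isotypic precisely for the subgroups all of whose non-identity elements equal $((1,1),\id)$ — that is, $\diag(F)$ and the trivial subgroup — and in these two cases the pair is not strong Gelfand. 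For each of the remaining eight subgroups, a one-line inner-product computation against the relevant columns of Table~\ref{F:charvaluesB2} shows that $\res^{B_2}_G S^{(1),(1)}$ is either irreducible (when $G = B_2$) or a sum of two distinct linear characters, hence multiplicity-free.

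Finally, the argument is a bookkeeping exercise rather than one with a genuine obstacle: the only point requiring care is to match the abstract subgroups of this order-$8$ group with the named subgroups $\diag(F), \overline{S_2}, \overline{S_2}', D_2, H_2, F\wr\{\id_{S_2}\}$, and to confirm that the list of ten is exhaustive. All ten restriction computations are elementary and could equally be confirmed in GAP; assembling them yields exactly the classification asserted.
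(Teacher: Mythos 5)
Your proposal is correct, and it supplies an actual argument where the paper gives none: the lemma is introduced only with the remark that it ``is easy to verify,'' and the paper's explicit treatments of small hyperoctahedral groups are elsewhere delegated to GAP (cf.\ Proposition~\ref{P:smallcases}). Your route is complete and is essentially the shortest one available: by Frobenius reciprocity, $(B_2,G)$ is a strong Gelfand pair if and only if $\res^{B_2}_G W$ is multiplicity-free for every irreducible $W$ of $B_2$; the four linear characters of $B_2$ restrict multiplicity-freely for trivial reasons, so everything hinges on the two-dimensional representation $S^{(1),(1)}$; and a two-dimensional representation fails to restrict multiplicity-freely exactly when every element of the subgroup acts as a scalar, which by Table~\ref{F:charvaluesB2} happens precisely for subgroups contained in $\diag(F) = \{((0,0),\id),((1,1),\id)\}$. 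This single criterion disposes of all ten subgroups of $B_2$ (which is dihedral of order 8) at once, and it explains, rather than merely records, why $\diag(F)$ and the trivial subgroup are the only failures. Two items you flag as bookkeeping should, strictly speaking, be written out: the exhaustiveness of the list of ten subgroups (immediate from the subgroup lattice of the dihedral group of order 8, or by a short direct check), and the conjugacy assertions embedded in the statement, namely $\overline{S_2}' = x\overline{S_2}x^{-1}$ with $x = ((0,1),(1\,2))$ and the conjugacy of $0\times F$ with $F\times 0$, each of which is a one-line computation with the wreath-product multiplication. Neither affects the correctness of the classification you obtain.
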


We now introduce our auxiliary subgroup of $K$ to show that $K$ cannot be too small.

\begin{lem}\label{L:auxiliaryL2}
Let $L$ denote the following subgroup of $\Lambda^\alpha_K$:
\[
L := \{ \lambda_\alpha \in K :\ \lambda_\beta = \id_{\Lambda^\beta_K} \}.
\]
Then $L$ is a normal subgroup of $K$, that is nontrivial if $n\geq 5$.
Furthermore, the following hold:
\begin{enumerate}
\item $[K:L] \leq 8$ and $[\Lambda^\alpha_K\Lambda^\beta_K: K] \leq 8$.
\item If $L=\Lambda^\alpha_K$, then $K=\Lambda^\alpha_K\Lambda^\beta_K$. 
\end{enumerate}
\end{lem}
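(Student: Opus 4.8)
The plan is to realize $L$ as the kernel of a coordinate projection and then extract every assertion by elementary order counting. Since $\phi'$ is an isomorphism onto the direct product $B_{n-2}\times B_2$, composing $\phi'$ with the projection onto the second factor and restricting to $K$ yields a map $p_\beta : K \to B_2$, $p_\beta(\lambda) = \lambda_\beta$. Because the $\alpha$- and $\beta$-components lie in the commuting subgroups $\Lambda^\alpha_K$ and $\Lambda^\beta_K$, the assignment $\lambda \mapsto \lambda_\beta$ is multiplicative; its image is exactly $\Lambda^\beta_K$ and its kernel is $\{\lambda \in K :\ \lambda_\beta = \id\} = L$. This instantly gives that $L$ is normal in $K$ with $K/L \cong \Lambda^\beta_K$. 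I would record the symmetric statement as well: the first projection restricts to a surjection $p_\alpha : K \to \Lambda^\alpha_K$ whose kernel $M := \{\lambda \in K :\ \lambda_\alpha = \id\}$ is a subgroup of $\Lambda^\beta_K$.

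For the index bounds in~(i), I would first note that $\Lambda^\beta_K$ is (identified with) a subgroup of $B_2$ and $|B_2| = 8$, so $[K:L] = |\Lambda^\beta_K| \leq 8$. For the second bound I would combine $|K| = |\Lambda^\alpha_K|\,|M|$ (from $p_\alpha$) with the inclusion $K \leqslant \Lambda^\alpha_K \Lambda^\beta_K \cong \Lambda^\alpha_K \times \Lambda^\beta_K$ established before the lemma, so that
\[
[\Lambda^\alpha_K\Lambda^\beta_K : K] = \frac{|\Lambda^\alpha_K|\,|\Lambda^\beta_K|}{|\Lambda^\alpha_K|\,|M|} = \frac{|\Lambda^\beta_K|}{|M|} \leq |\Lambda^\beta_K| \leq 8 .
\]
For nontriviality of $L$ I would count orders: $\pi_{S_n}|_K$ maps onto $\gamma_K = S_{n-2}\times S_2$, so $|K| \geq 2(n-2)!$, which exceeds $8 \geq |\Lambda^\beta_K| = [K:L]$ precisely when $(n-2)! > 4$, i.e.\ when $n \geq 5$; hence $|L| = |K|/[K:L] > 1$.

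Finally, for~(ii), the hypothesis $L = \Lambda^\alpha_K$ says exactly that $\Lambda^\alpha_K \times \{\id\} \leqslant K$. Given $b \in \Lambda^\beta_K$, surjectivity of $p_\beta$ provides $\kappa \in K$ with $\kappa_\beta = b$; multiplying $\kappa$ by $(\kappa_\alpha)^{-1} \in \Lambda^\alpha_K \times \{\id\} \leqslant K$ produces an element with trivial $\alpha$-part and $\beta$-part $b$, so $\{\id\} \times \Lambda^\beta_K \leqslant K$ as well. Thus $\Lambda^\alpha_K \times \Lambda^\beta_K \leqslant K$, and since the reverse inclusion always holds, $K = \Lambda^\alpha_K\Lambda^\beta_K$.

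I do not expect a serious obstacle here: the whole argument is bookkeeping around the two coordinate projections $p_\alpha$ and $p_\beta$. The only points requiring care are keeping the identifications under $\phi'$ consistent (so that the two projections really are homomorphisms with the claimed kernels) and verifying the order inequality $2(n-2)! > 8$ that drives the nontriviality claim for $n \geq 5$.
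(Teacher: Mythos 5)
Your proposal is correct, and its overall skeleton coincides with the paper's proof: the paper likewise realizes $L$ as the kernel of the map $\varDelta_2 : K \to \Lambda^\beta_K$ (your $p_\beta$) obtained by composing the inclusion $K \leqslant \Lambda^\alpha_K \times \Lambda^\beta_K$ with the second projection, gets normality and $[K:L] = |\Lambda^\beta_K| \leq |B_2| = 8$ for free, proves the second index bound by the same order bookkeeping ($|\Lambda^\alpha_K| \leq |K| \leq |\Lambda^\alpha_K\Lambda^\beta_K| \leq 8|\Lambda^\alpha_K|$, essentially your computation with $M$), and proves part (ii) exactly as you do, by using $\Lambda^\alpha_K \leqslant K$ to strip off $\alpha$-parts and conclude $\Lambda^\beta_K \leqslant K$.

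The one place where you genuinely diverge is the nontriviality of $L$. The paper's argument is element-theoretic: since $\gamma_K = S_{n-2}\times S_2$ contains a $3$-cycle once $n \geq 5$, one can choose $\lambda \in K$ of order $3$; then, because every element of $B_2$ has order $1$, $2$, or $4$, one gets $\lambda = \lambda^4 = (\lambda_\alpha)^4(\lambda_\beta)^4 = (\lambda^4)_\alpha$, so $\lambda$ itself is a nontrivial element of $L$. Your argument is instead pure counting: $|K| \geq |\gamma_K| = 2(n-2)! > 8 \geq [K:L]$ for $n \geq 5$, forcing $|L| > 1$. Both are valid; yours is arguably more elementary (no need to locate an element of prime order or invoke the exponent structure of $B_2$), while the paper's construction has the small advantage of exhibiting an explicit nontrivial element of $L$, namely one of order $3$.
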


\begin{proof}
Let $\varDelta_2: K \to \Lambda^\beta_K$ denote the composition of the canonical injection of $K$ into $\Lambda^\alpha_K\times \Lambda^\beta_K$ 
and projection onto the second component. Then $L$ is precisely the kernel of $\varDelta_2$, hence, $L$ is a normal subgroup of $K$.
Next, we will show that $L$ is nontrivial. 
Since $\gamma_K = S_{n-2} \times S_2$, whenever $n\geq 5$, we can choose an element $\lambda$ of order 3. 
Note that the order of an element of $B_2$ is 1,2, or 4.
Therefore, $\lambda^4  = (\lambda_a \lambda_b)^4 = (\lambda_a)^4 (\lambda_b)^4 = (\lambda^4)_a(\lambda^4)_b = (\lambda^4)_a \in K$. 
Since $(\lambda^4)_a \neq \id_{B_{n-2}}$, we see that $L$ is a nontrivial normal subgroup of $K$.

Since $\Lambda^\beta_K$ is a subgroup of $B_2$ and since $B_2$ has order 8, we see that the index of $L$ in $K$ is bounded by 8.
The second bound follows from the following inequalities: 
\[
|\Lambda^\alpha_K | \leq |K| \leq |\Lambda^\alpha_K \Lambda^\beta_K |=|\Lambda^\beta_K | | \Lambda^\alpha_K | \leq 8 |\Lambda^\alpha_K|.
\]
For our final assertion, we observe that if $L=\Lambda^\alpha_K$, then we have $\Lambda^\alpha_K \leqslant K$.
It follows that $\Lambda^\beta_K \leqslant K$, hence that $\Lambda^\beta_K \Lambda^\alpha_K = K$.
This finishes the proof of our lemma.
\end{proof}

\begin{cor}\label{C:possibleL}
Let $L$ be the subgroup of $K$ that is defined in Lemma~\ref{L:auxiliaryL2}.
Then $\gamma_L = A_{n-2}\times \{\id_{S_2}\}$ or $S_{n-2}\times \{\id_{S_2}\}$.
\end{cor}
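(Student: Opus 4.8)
The plan is to identify $\gamma_L$ as a normal subgroup of $\gamma_K = S_{n-2}\times S_2$ that is confined to the first factor, and then to use the index bound from Lemma~\ref{L:auxiliaryL2} to eliminate the trivial possibility.

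First I would observe that, by the very definition of $L$, every element of $L$ has trivial $\beta$-component, so $L \leqslant B_{n-2}\times \{\id_{B_2}\}$; consequently $\gamma_L \leqslant S_{n-2}\times \{\id_{S_2}\}$. Since $\pi_{S_n}|_K : K \to \gamma_K$ is a surjective homomorphism and $L$ is a normal subgroup of $K$ by Lemma~\ref{L:auxiliaryL2}, its image $\gamma_L = \pi_{S_n}(L)$ is a normal subgroup of $\gamma_K = S_{n-2}\times S_2$.

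Next I would classify the normal subgroups of $S_{n-2}\times S_2$ that lie inside $S_{n-2}\times \{\id_{S_2}\}$. Because the factor $\{\id\}\times S_2$ centralizes $S_{n-2}\times \{\id\}$, a subgroup of $S_{n-2}\times\{\id_{S_2}\}$ is normal in $S_{n-2}\times S_2$ if and only if it is normal in $S_{n-2}$. As $n\geq 8$, we have $n-2\geq 5$, so the only normal subgroups of $S_{n-2}$ are $\{\id\}$, $A_{n-2}$, and $S_{n-2}$. Hence $\gamma_L$ is one of $\{\id_{S_n}\}$, $A_{n-2}\times\{\id_{S_2}\}$, or $S_{n-2}\times\{\id_{S_2}\}$.

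Finally, to rule out the trivial case, I would transfer the index bound $[K:L]\leq 8$ of Lemma~\ref{L:auxiliaryL2} to the quotient: since $\gamma_L$ is normal in $\gamma_K$, the surjection $\pi_{S_n}|_K$ induces a surjection $K/L \twoheadrightarrow \gamma_K/\gamma_L$, whence $[\gamma_K:\gamma_L] \leq [K:L] \leq 8$. If $\gamma_L$ were trivial, this index would equal $|\gamma_K| = 2\,(n-2)!$, which exceeds $8$ for $n\geq 8$, a contradiction. Therefore $\gamma_L = A_{n-2}\times\{\id_{S_2}\}$ or $S_{n-2}\times\{\id_{S_2}\}$. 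The only step requiring genuine care is the passage to the quotient: one must confirm that $\gamma_L$ is honestly normal in $\gamma_K$ so that $\gamma_K/\gamma_L$ is defined and receives the induced surjection, and this is exactly what the normality of $L$ in $K$ provides; everything else is routine order bookkeeping.
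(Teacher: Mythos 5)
Your proof is correct, and its skeleton matches the paper's: both confine $\gamma_L$ to $S_{n-2}\times\{\id_{S_2}\}$, use normality of $L$ in $K$ to make $\gamma_L$ normal there, and invoke the fact that $A_{n-2}$ is the unique nontrivial proper normal subgroup of $S_{n-2}$. The one genuine difference is your final step: the paper's one-line proof never explicitly excludes the possibility $\gamma_L=\{\id\}$, even though its statement does; that exclusion has to be extracted from the proof of Lemma~\ref{L:auxiliaryL2}, which in fact produces an element $\lambda^4\in L$ whose image in $S_{n-2}$ has order $3$, so that $\gamma_L$ is nontrivial (mere nontriviality of $L$ would not suffice, since $L$ could a priori sit inside $\Gamma^{\id}_K$). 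Your alternative—pushing the bound $[K:L]\leq 8$ through the projection to get $[\gamma_K:\gamma_L]\leq 8 < 2\,(n-2)!$—is a clean, self-contained way to close that gap, and it avoids re-opening the lemma's proof; note only that the coset-counting inequality $[\gamma_K:\gamma_L]\leq[K:L]$ holds for any surjective homomorphism even without normality, so that step needs less care than you suggest.
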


\begin{proof}
Since $L\leqslant B_{n-2}\times \{ \id_{B_2}\}$, the proof follows from the fact that $\gamma_L \leqslant \gamma_K = S_{n-2}\times S_2$
and that $A_{n-2}$ is the unique nontrivial normal subgroup of $S_{n-2}$.
\end{proof}

In our next lemma, we will narrow the choices for $L$.

\begin{prop}\label{L:thesecondcase}
Let $n\geq 8$ and let $K$ be a strong Gelfand subgroup of $B_n$ such that $\gamma_K = S_{n-2}\times S_2$,
and let $L$ be the subgroup of $K$ that is defined in Lemma~\ref{L:auxiliaryL2}.
Then we have
\begin{enumerate}
\item
$L\in \{ B_{n-2}, D_{n-2},H_{n-2}, F\wr A_{n-2}, J_{n-2} \}$, and in particular, $L$ is a normal subgroup of $B_{n-2}$ such that $[B_{n-2}:L] \leq 4$.

\item $K$ is a normal subgroup of $\Lambda^\alpha_K\Lambda^\beta_K$. 
Furthermore, the quotient group $\Lambda^\alpha_K\Lambda^\beta_K/ K$ is isomorphic to $\Lambda^\alpha_K / L$, and in particular, $[\Lambda^\alpha_K\Lambda^\beta_K:K] \leq 4$.
\end{enumerate}
\end{prop}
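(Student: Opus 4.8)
The plan is to determine $L$ completely, and then read off the statement about $K$ by a Goursat-type argument. First I would promote the normality of $L$ in $K$ (Lemma~\ref{L:auxiliaryL2}) to normality of $L$ in $\Lambda^\alpha_K$. Indeed, writing an arbitrary element of $K$ as $\lambda = \lambda_\alpha\lambda_\beta$ and using that $\Lambda^\beta_K$ centralizes $L$ (the two factors act on disjoint coordinate blocks), for $\ell\in L$ we have $\lambda\ell\lambda^{-1} = \lambda_\alpha\ell\lambda_\alpha^{-1}$; since $L$ is normal in $K$ and every element of $\Lambda^\alpha_K$ occurs as some $\lambda_\alpha$, this shows that $L$ is normal in $\Lambda^\alpha_K$. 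Recall that $\gamma_L\in\{A_{n-2},S_{n-2}\}$ by Corollary~\ref{C:possibleL} and $\Lambda^\alpha_K\in\{B_{n-2},D_{n-2},H_{n-2}\}$ by Corollary~\ref{C:reductionIII}.

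Because $\gamma_L\in\{A_{n-2},S_{n-2}\}$, the $m_L$-analysis of Sections~\ref{SS:1st} and~\ref{SS:2nd} applies verbatim to $L\leqslant B_{n-2}$ (with $B_{n-2}$ in place of $B_n$). By Lemmas~\ref{L:Case3} and~\ref{L:AnCase3} we have $m_L\notin\{3,\dots,n-3\}$, so $m_L\in\{1,2,n-2\}$ or $m_L$ does not exist. If $m_L=1$ then Lemmas~\ref{L:Case1} and~\ref{L:AnCase1} give $L=B_{n-2}$ or $L=F\wr A_{n-2}$, and if $m_L=2$ then Lemmas~\ref{L:Case2} and~\ref{L:AnCase2} give $L\in\{D_{n-2},H_{n-2}\}$ or $L=J_{n-2}$. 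In all of these cases $L$ is exactly one of the five listed subgroups, each of which is normal in $B_{n-2}$ of index $1$, $2$, $2$, $2$, or $4$.

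The crux is to eliminate the two remaining possibilities, $m_L=n-2$ and $m_L$ nonexistent, in which Lemmas~\ref{L:Case4}, \ref{L:Case5}, \ref{L:AnCase4}, and~\ref{L:AnCase5} only yield subgroups conjugate to sections such as $\overline{S_{n-2}}$, $Y_{n-2}$, $\overline{A_{n-2}}$, or copies inside $\diag(F)\times S_{n-2}$, none of which is normal. Here I would argue directly against normality of $L$ in $\Lambda^\alpha_K$. In both cases $\Gamma^{\id}_L\subseteq\{(0,\id),(1,\id)\}$, while the even-weight element $w:=(e_i+e_j,\id)$ belongs to $\Lambda^\alpha_K$ for each of $B_{n-2}$, $D_{n-2}$, $H_{n-2}$. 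For $\ell=(s(\sigma),\sigma)\in L$ one computes $w\ell w^{-1}=(s(\sigma)+e_i+e_j+\sigma\cdot(e_i+e_j),\sigma)$, so membership in $L$ forces $e_i+e_j+\sigma\cdot(e_i+e_j)\in\{0,1\}$. Since $n-2\geq6$, this correction term has weight at most $4<n-2$, hence equals neither $0$ nor the all-ones vector unless $\sigma$ fixes $\{i,j\}$ setwise; choosing $\sigma\in\gamma_L\supseteq A_{n-2}$ moving $i$ outside $\{i,j\}$ yields $w\ell w^{-1}\notin L$, a contradiction. This elimination, together with the verification that $w\in\Lambda^\alpha_K$ in all three cases, is the main obstacle, and is precisely where the hypothesis $n\geq8$ is used. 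This proves part (1).

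For part (2), set $M:=\{\lambda_\beta:\lambda\in K,\ \lambda_\alpha=\id\}$. By the argument symmetric to the first paragraph, $M$ is normal in $\Lambda^\beta_K$ and centralized by $\Lambda^\alpha_K$, so $L$ and $M$ are both normal in $\Lambda^\alpha_K\Lambda^\beta_K\cong\Lambda^\alpha_K\times\Lambda^\beta_K$, and $LM\cong L\times M$ is contained in $K$. Modulo $L\times M$, the image of $K$ is the graph of an isomorphism $\theta\colon\Lambda^\alpha_K/L\to\Lambda^\beta_K/M$ by Goursat's lemma. By part (1), $L$ is one of the five normal subgroups contained in $\Lambda^\alpha_K\in\{B_{n-2},D_{n-2},H_{n-2}\}$, so $\Lambda^\alpha_K/L$ is an abelian $2$-group of order at most $4$; hence $(\Lambda^\alpha_K/L)\times(\Lambda^\beta_K/M)$ is abelian, its graph subgroup is normal, and therefore $K$ is normal in $\Lambda^\alpha_K\Lambda^\beta_K$. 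Finally, quotienting a direct product by the graph of an isomorphism recovers either factor, so $\Lambda^\alpha_K\Lambda^\beta_K/K\cong\Lambda^\alpha_K/L$, and $[\Lambda^\alpha_K\Lambda^\beta_K:K]=[\Lambda^\alpha_K:L]\leq[B_{n-2}:L]\leq4$.
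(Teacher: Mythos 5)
Your proof is correct, but it takes a genuinely different route from the paper's at the decisive step. Both arguments rest on the same scaffolding --- Corollary~\ref{C:reductionIII} to pin down $\Lambda^\alpha_K\in\{B_{n-2},D_{n-2},H_{n-2}\}$, Corollary~\ref{C:possibleL} to get $\gamma_L\in\{A_{n-2},S_{n-2}\}$, and the $m$-classification of Subsections~\ref{SS:1st} and~\ref{SS:2nd} --- but they diverge in how the ``thin'' subgroups are excluded. The paper never discusses normality of $L$ in $\Lambda^\alpha_K$: it cites a result of Bridson and Miller on subdirect products to get the index bound $[\Lambda^\alpha_K:L]\leq|\Lambda^\beta_K|\leq 8$, notes that the classification only permits indices $1,2,4,2^{n-3},2^{n-2},2^{n-1}$ for such subgroups of $B_{n-2}$, and uses $n\geq 8$ (so $2^{n-3}\geq 32>16$) to force $[B_{n-2}:L]\leq 4$; part~(2) is likewise delegated to the Bridson--Miller result. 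You instead promote normality of $L$ in $K$ to normality in $\Lambda^\alpha_K$ (your block-commutation argument is valid) and kill the cases $m_L=n-2$ and $m_L$ nonexistent by explicit conjugation by $w=(e_i+e_j,\id)$, which indeed lies in all three candidates for $\Lambda^\alpha_K$; for part~(2) you make the Goursat argument explicit, using that all relevant quotients are elementary abelian $2$-groups of order at most $4$, so the graph subgroup is automatically normal and quotienting by it recovers $\Lambda^\alpha_K/L$. Your version buys self-containedness (no external citation) and gives a structural reason --- failure of normality --- why conjugates of $\overline{S_{n-2}}$, $Y_{n-2}$, $\overline{A_{n-2}}$, or subgroups of $\diag(F)\times S_{n-2}$ cannot arise as $L$; the paper's version is shorter, trading the computation for a reference. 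One small quibble: your remark that the elimination step is ``precisely where the hypothesis $n\geq8$ is used'' is slightly off --- your elimination needs only $n-2\geq 5$ (so that the weight-at-most-$4$ correction term cannot be the all-ones vector), and the genuine consumer of $n\geq 8$ in your argument is Corollary~\ref{C:reductionIII} (i.e., Proposition~\ref{P:Summary1} applied to $B_{n-2}$, which requires $n-2\geq 6$); in the paper's proof, by contrast, $n\geq 8$ is also needed for the index arithmetic itself.
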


\begin{proof}

\begin{enumerate}
\item Let $L$ be the subgroup of $K$ that is defined in Lemma~\ref{L:auxiliaryL2}.
In Subsections~\ref{SS:1st} and~\ref{SS:2nd} we characterized subgroups $K' \leqslant B_{n-2}$ with $\gamma_{K'} \in \{ S_{n-2},A_{n-2}\}$.
In particular, we notice that the index of $K'$ in $B_{n-2}$ is one of $1,2,4,2^{n-3}, 2^{n-2}, 2^{n-1}$.
By \cite[Corollary 1.3]{BridsonMiller} and the argument after it, the index of $L$ in $\Lambda^\alpha_K$ is bounded above by the order of $\Lambda^\beta_K$, which is at most $2^3$.
Combined with the fact that $[B_{n-2} : \Lambda^\alpha_K] \leq 2$, and that $\gamma_L \in \{ S_{n-2},A_{n-2}\}$, we see that $L$ must have index $1$, $2$, or $4$ in $B_{n-2}$.
The result now follows from the description of those subgroups of index $1$, $2$, and $4$ in its statement.
\item Follows from part 1. and \cite[Corollary 1.3]{BridsonMiller}.\qedhere
\end{enumerate}
\end{proof}

In light of this proposition, let us organize the major cases that we will check for the strong Gelfand property. 

\begin{enumerate}

\item[] Case 1.~$K$ is a normal, index 4, non-direct product subgroup of $\Lambda^\alpha_K \times \Lambda^\beta_K$, 
where $\Lambda^\alpha_K = B_{n-2}$ and $\Lambda^\beta_K = B_2$.

\item[] Case 2.~$K$ is a normal, index 2, non-direct product subgroup of $\Lambda^\alpha_K \times \Lambda^\beta_K$, 
where $\Lambda^\alpha_K$ is one of the subgroups in Corollary~\ref{C:reductionIII}
and $\Lambda^\beta_K$ is one of the subgroups in Lemma~\ref{L:sgsB2}.

\item[] Case 3.~$K$ is equal to the direct product $\Lambda^\alpha_K \times \Lambda^\beta_K$, where 
$\Lambda^\alpha_K$ is one of the subgroups in Corollary~\ref{C:reductionIII}
and $\Lambda^\beta_K$ is one of the subgroups in Lemma~\ref{L:sgsB2}.

\end{enumerate}

Note that Cases 1.~and 2.~are not necessarily distinct.
For example, an index 4 subgroup of $B_{n-2}\times B_2$ might be an index 2 subgroup of $D_{n-2}\times B_2$.

We are now ready to determine all strong Gelfand subgroups of $K\leqslant B_n$ such that $\gamma_K = S_{n-2}\times S_2$.
We will first examine subgroups as in Case 3.

\begin{Notation}
For brevity, in the following subsections, we will denote the identity element $(0,\id_{B_2})$ of $B_2$ by 1, 
and we will denote the element $(0,(1\,2))$ by $-1$. 
\end{Notation}

\subsubsection{Strong Gelfand pairs of the form $(B_n,B_{n-2}\times G)$, where $G\leqslant B_2$.}

By Proposition~\ref{P:secondcase} we know that $(B_n,B_{n-2}\times \overline{S_2})$ is a strong Gelfand pair.
Therefore, for any $G$ such that $\overline{S_2}\leqslant G \leqslant B_2$, the pair $(B_n,B_{n-2}\times G)$ is a strong Gelfand pair. 
There is one more subgroup that we have to check, that is, $G= H_2$. 
In this case, we see from Corollary~\ref{A:C:H2} that $(B_n,B_{n-2}\times G)$ is a strong Gelfand pair. 
(Of course, we could have used the same method for the subgroups $G$, where $\overline{S_2} \leqslant G \leqslant B_2$.)

\subsubsection{Strong Gelfand pairs of the form $(B_n,D_{n-2}\times B_2)$.}

First, we assume that $n$ is an even number such that $n-2=2m$ for some $m\geq 3$. 
We let $\lambda$ denote the partition $(m-1,1)$, and let $\mu$ denote the partition $(m)$.
Then $V=\res^{B_{n-2}}_{D_{n-2}} S^{\lambda,\mu}$ is an irreducible representation of $D_{n-2}$.
Furthermore, we have $\ind^{B_{n-2}}_{D_{n-2}} V = S^{\lambda,\mu}\oplus S^{\mu,\lambda}$.
The tensor product $V \boxtimes S^{(1),(1)}$ is an irreducible representation of $D_{n-2}\times B_2$.
By transitivity of induction, Lemma~\ref{L:ellproduct}, and Lemma~\ref{L:Branching2}, part 3, we have 
\begin{align}
\ind^{B_n}_{D_{n-2}\times B_2} V\boxtimes S^{(1),(1)} &= 
\ind^{B_n}_{B_{n-2}\times B_2} (S^{\lambda,\mu}\oplus S^{\mu,\lambda})  \boxtimes S^{(1),(1)} \notag \\ 
&= \ind^{B_n}_{B_{n-2}\times B_2} S^{\lambda,\mu} \boxtimes S^{(1),(1)}\oplus \ind^{B_n}_{B_{n-2}\times B_2} S^{\mu,\lambda} \boxtimes S^{(1),(1)} \notag \\ 
&= \left(\bigoplus_{\tau \in \bar{\lambda},\rho \in \bar{\mu} } S^{\tau, \rho} \right) \oplus 
\left(\bigoplus_{\rho \in \bar{\mu},\tau \in \bar{\lambda} } S^{\rho, \tau} \right).\label{A:BnDn-2even}
\end{align}
Since the multiplicity of $S^{(m,1),(m,1)}$ in (\ref{A:BnDn-2even}) is two, we see that $(B_n,D_{n-2}\times B_2)$ is not a strong Gelfand pair. 

Next, we assume that $n$ is odd.
Let $(\lambda, \mu)$ be a pair of partitions such that $|\lambda |+|\mu|=n-2$. 
Then we have $|\lambda | \neq |\mu|$. 
Hence, $V=\res^{B_{n-2}}_{D_{n-2}} S^{\lambda,\mu}$ is an irreducible representation of $D_{m-2}$, and furthermore, 
we have $\ind^{B_{n-2}}_{D_{n-2}} V = S^{\lambda,\mu}\oplus S^{\mu,\lambda}$.
Let $(a,b)$ be a pair of partitions such that $|a|+|b|=2$. 
As before, by using the transitivity of the induction and Lemma~\ref{L:ellproduct}, we get  
\begin{align}\label{A:BnDn-2odd}
\ind^{B_n}_{D_{n-2}\times B_2} V\boxtimes S^{a,b} 
&= \ind^{B_n}_{B_{n-2}\times B_2} S^{\lambda,\mu} \boxtimes S^{a,b}\oplus \ind^{B_n}_{B_{n-2}\times B_2} S^{\mu,\lambda} \boxtimes S^{a,b}.
\end{align}
But since $|\lambda|$ and $|\mu|$ are not equal, we see from Lemma~\ref{L:Branching2} that (\ref{A:BnDn-2odd}) is multiplicity-free. 
In summary, we proved the following result
\begin{lem}\label{L:BnDn-2B2}
Let $n$ be an integer such that $n\geq 8$.
Then $(B_n,D_{n-2}\times B_2)$ is a strong Gelfand pair if and only if $n$ is odd.
\end{lem}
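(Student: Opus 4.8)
The plan is to decompose $\ind^{B_n}_{D_{n-2}\times B_2} V \boxtimes W$ for every irreducible representation $V$ of $D_{n-2}$ and every irreducible $W$ of $B_2$, and to decide exactly when a nontrivial multiplicity appears. First I would apply transitivity of induction to factor this as $\ind^{B_n}_{B_{n-2}\times B_2}(\ind^{B_{n-2}}_{D_{n-2}} V)\boxtimes W$, thereby reducing everything to the branching rules of Lemma~\ref{L:Branching2} for the already-understood strong Gelfand subgroup $B_{n-2}\times B_2$ (Theorem~\ref{T:nonabelian}). The associator theory of Section~\ref{associators} classifies the irreducibles of $D_{n-2}$ into two types: either $V = \res^{B_{n-2}}_{D_{n-2}}S^{\lambda,\mu}$ with $\lambda\neq\mu$, in which case $\ind^{B_{n-2}}_{D_{n-2}} V = S^{\lambda,\mu}\oplus S^{\mu,\lambda}$, or $V$ is one of the two constituents of $\res^{B_{n-2}}_{D_{n-2}}S^{\lambda,\lambda}$, in which case $\ind^{B_{n-2}}_{D_{n-2}} V = S^{\lambda,\lambda}$; the second type can occur only when $n-2$ is even.

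For the forward direction I would show that for $n$ odd the pair is strong Gelfand. Here $n-2$ is odd, so the self-associate type cannot occur and every $V$ satisfies $\ind^{B_{n-2}}_{D_{n-2}} V = S^{\lambda,\mu}\oplus S^{\mu,\lambda}$ with $|\lambda|\neq|\mu|$. Consequently $\ind^{B_n}_{D_{n-2}\times B_2}V\boxtimes W$ is the sum of the two representations $\ind^{B_n}_{B_{n-2}\times B_2}S^{\lambda,\mu}\boxtimes W$ and $\ind^{B_n}_{B_{n-2}\times B_2}S^{\mu,\lambda}\boxtimes W$, each multiplicity-free because $B_{n-2}\times B_2$ is strong Gelfand. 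It then remains to see that the two summands share no common constituent, and this I would read off directly from Lemma~\ref{L:Branching2}: writing $W = S^{a,b}$, every constituent $S^{\sigma,\nu}$ coming from $S^{\lambda,\mu}\boxtimes W$ has $|\sigma| = |\lambda| + |a|$, whereas one coming from $S^{\mu,\lambda}\boxtimes W$ has $|\sigma| = |\mu| + |a|$; since $|\lambda|\neq|\mu|$, the first partitions have different sizes, so no irreducible can appear in both pieces and the whole representation is multiplicity-free.

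For the converse I would exhibit an explicit failure when $n$ is even. Writing $n-2 = 2m$ with $m\geq 3$, I would take $\lambda = (m-1,1)$, $\mu = (m)$, $V = \res^{B_{n-2}}_{D_{n-2}}S^{\lambda,\mu}$, and $W = S^{(1),(1)}$; applying part (3) of Lemma~\ref{L:Branching2} to each of $S^{\lambda,\mu}\boxtimes W$ and $S^{\mu,\lambda}\boxtimes W$, the representation $\ind^{B_n}_{D_{n-2}\times B_2}V\boxtimes S^{(1),(1)}$ decomposes as $\bigoplus_{\tau\in\overline{\lambda},\rho\in\overline{\mu}}S^{\tau,\rho}\oplus\bigoplus_{\rho\in\overline{\mu},\tau\in\overline{\lambda}}S^{\rho,\tau}$. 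Since $(m,1)\in\overline{\lambda}$ and $(m,1)\in\overline{\mu}$, the constituent $S^{(m,1),(m,1)}$ occurs once in each of the two summands, so its total multiplicity is $2$ and the pair is not strong Gelfand.

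The step deserving the most care is the collision analysis in the odd case: one must confirm, uniformly across all five choices of $W$ (equivalently, across all distributions $|a|$ of the two added boxes between the unsigned and signed partitions), that matching a constituent of the $S^{\lambda,\mu}$-piece with one of the $S^{\mu,\lambda}$-piece would force $|\lambda|=|\mu|$. The size-counting observation $|\sigma| = |\lambda| + |a|$ versus $|\mu| + |a|$ handles this in one stroke, but I expect the main obstacle to be verifying that the box-addition bookkeeping in Lemma~\ref{L:Branching2} genuinely tracks $|\lambda|$ and $|\mu|$ separately for each $W$; once this is in hand, the even-case counterexample is simply a matter of choosing $\lambda\neq\mu$ with $|\lambda|=|\mu|$ so that this obstruction vanishes.
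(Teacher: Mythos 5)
Your proof is correct and follows essentially the same route as the paper's: the same reduction via transitivity of induction and Clifford theory for $D_{n-2}$ (splitting into the cases $\ind^{B_{n-2}}_{D_{n-2}}V = S^{\lambda,\mu}\oplus S^{\mu,\lambda}$ versus the self-associate type, which cannot occur for $n$ odd), the same appeal to Lemma~\ref{L:Branching2}, and even the identical counterexample $\lambda=(m-1,1)$, $\mu=(m)$, $W=S^{(1),(1)}$ producing $S^{(m,1),(m,1)}$ with multiplicity $2$ in the even case. Your explicit bookkeeping $|\sigma|=|\lambda|+|a|$ versus $|\mu|+|a|$ is simply a spelled-out version of the paper's remark that $|\lambda|\neq|\mu|$ forces the two summands to be disjoint, hence multiplicity-free.
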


\subsubsection{Strong Gelfand pairs of the form $(B_n,H_{n-2}\times B_2)$.}

Let $n=2m$ for some $m\geq 4$. 
First, we assume that $m$ is an even integer as well; $m=2k$ with $k\geq2$. 
Let $\lambda=\mu=(k+1,1^{k-1})$, and note that $\lambda \neq \mu'$. 
Then $V=\res^{B_{n-2}}_{H_{n-2}} S^{\lambda,\mu}$ is an irreducible representation of $H_{n-2}$,
and therefore, $\ind^{B_{n-2}}_{H_{n-2}} V = S^{\lambda,\mu}\oplus S^{\mu',\lambda'}$.
The tensor product $V \boxtimes S^{(1),(1)}$ is an irreducible representation of $H_{n-2}\times B_2$.
By transitivity of induction, Lemma~\ref{L:ellproduct}, and Lemma~\ref{L:Branching2}, part 3, we have 
\begin{align}
\ind^{B_n}_{H_{n-2}\times B_2} V \boxtimes S^{(1),(1)} &= 
\ind^{B_n}_{B_{n-2}\times B_2} (S^{\lambda,\mu}\oplus S^{\mu',\lambda'})  \boxtimes S^{(1),(1)} \notag \\ 
&= \ind^{B_n}_{B_{n-2}\times B_2} S^{\lambda,\mu} \boxtimes S^{(1),(1)}\oplus \ind^{B_n}_{B_{n-2}\times B_2} S^{\mu',\lambda'} \boxtimes S^{(1),(1)} \notag \\ 
&= \left(\bigoplus_{\tau \in \bar{\lambda},\rho \in \bar{\mu} } S^{\tau, \rho} \right) \oplus 
\left(\bigoplus_{\rho \in \bar{\mu'},\tau \in \bar{\lambda'} } S^{\rho, \tau}\right).\label{A:BnHn-2even}
\end{align}
Since the multiplicity of $S^{(k+1,1^k),(k+1,1^k)}$ in (\ref{A:BnDn-2even}) is 2, we see that $(B_n,D_{n-2}\times B_2)$ is not a strong Gelfand pair. 
Now suppose that $m = 2k+1$ with $k\geq 2$, and set $\lambda := (k+1,1^{k})$ and $\mu := (k,1^{k+1})$. 
Clearly, $\lambda$ is a self-conjugate partition and $\lambda \neq \mu$. 
Then $V=\res^{B_{n-2}}_{H_{n-2}} S^{\lambda,\mu}$ is an irreducible representation of $H_{m-2}$.
It follows that $\ind^{B_{n-2}}_{H_{n-2}} V = S^{\lambda,\mu}\oplus S^{\mu',\lambda'}$.
The tensor product $V \boxtimes S^{(1),(1)}$ is an irreducible representation of $H_{n-2}\times B_2$.
Once again, we have 
\begin{align}
\ind^{B_n}_{H_{n-2}\times B_2} V \boxtimes S^{(1),(1)} 
&= \left(\bigoplus_{\tau \in \bar{\lambda},\rho \in \bar{\mu} } S^{\tau, \rho} \right) \oplus 
\left(\bigoplus_{\rho \in \bar{\mu'},\tau \in \bar{\lambda'} } S^{\rho, \tau}\right).\label{A:BnHn-2even2}
\end{align}
It is easy to check that the multiplicity of $S^{(k+2,1^k),(k+1,1^{k+1})}$ in (\ref{A:BnHn-2even2}) is 2.
Hence, if $n$ is even, then $(B_n,D_{n-2}\times B_2)$ is not a strong Gelfand pair.

Next, we assume that $n$ is odd. 
Then, by arguing as in the second part of the $(B_n, D_{n-2}\times B_2)$ case, 
it is easy to verify that, for every irreducible representation $W$ of $B_2$ 
and for every pair of partitions $(\lambda, \mu)$ such that $|\lambda |+|\mu|=n$, the induced representation
$\ind^{B_n}_{H_{n-2}\times B_2} S^{\lambda,\mu} \boxtimes W$ is multiplicity-free. 
In summary, similarly to the case of $(B_n,D_{n-2}\times B_2)$, we proved the following result. 
\begin{lem}\label{L:BnHn-2B2}
Let $n$ be an integer such that $n\geq 8$.
Then $(B_n,H_{n-2}\times B_2)$ is a strong Gelfand pair if and only if $n$ is odd.
\end{lem}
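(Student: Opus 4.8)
The plan is to run the same machine used for $(B_n, D_{n-2}\times B_2)$, with the character $\delta$ replaced throughout by $\varepsilon\delta$. The input is the associator description of $\ker(\varepsilon\delta) = H_{n-2}$ from Section~\ref{associators}: since $\varepsilon\delta\,\chi^{\lambda,\mu} = \chi^{\mu',\lambda'}$, an irreducible $S^{\lambda,\mu}$ of $B_{n-2}$ is self-associate with respect to $\varepsilon\delta$ exactly when $\lambda = \mu'$. Consequently every irreducible $V$ of $H_{n-2}$ either is one of the two constituents of $\res^{B_{n-2}}_{H_{n-2}} S^{\lambda,\lambda'}$, with $\ind^{B_{n-2}}_{H_{n-2}} V = S^{\lambda,\lambda'}$ irreducible, or satisfies $\ind^{B_{n-2}}_{H_{n-2}} V = S^{\lambda,\mu}\oplus S^{\mu',\lambda'}$ with $\lambda\neq\mu'$. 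For any irreducible $W$ of $B_2$ from the list \eqref{A:irrrepsofB2}, I would compute $\ind^{B_n}_{H_{n-2}\times B_2} V\boxtimes W$ by first applying transitivity of induction through $B_{n-2}\times B_2$, then Lemma~\ref{L:ellproduct} to move the induction into the first factor, and finally the branching rules of Lemma~\ref{L:Branching2} to expand everything in the basis $\{S^{\tau,\rho}\}$. When $\ind^{B_{n-2}}_{H_{n-2}}V$ is itself irreducible the result is multiplicity-free at once, because $(B_n, B_{n-2}\times B_2)$ is a strong Gelfand pair by Theorem~\ref{T:nonabelian}; so the whole question concerns the associate case $S^{\lambda,\mu}\oplus S^{\mu',\lambda'}$.

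For the implication that $n$ odd forces the strong Gelfand property, the key point is that $n$ odd makes $n-2$ odd, so every bipartition $(\lambda,\mu)$ of $n-2$ has $|\lambda|\neq|\mu|=|\mu'|$; in particular $\lambda\neq\mu'$, there are no self-associate irreducibles, and $\ind^{B_{n-2}}_{H_{n-2}}V = S^{\lambda,\mu}\oplus S^{\mu',\lambda'}$ for every $V$. Lemma~\ref{L:Branching2} then writes each of $\ind^{B_n}_{H_{n-2}\times B_2}S^{\lambda,\mu}\boxtimes W$ and $\ind^{B_n}_{H_{n-2}\times B_2}S^{\mu',\lambda'}\boxtimes W$ as a sum of \emph{distinct} labels $S^{\tau,\rho}$, and the pair of component-sizes $(|\tau|,|\rho|)$ arising in the second family is obtained from the first by swapping the roles of $|\lambda|$ and $|\mu|$. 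Since $|\lambda|\neq|\mu|$, these two size-pairs never coincide, so the two families are disjoint and no constituent is repeated across them; combined with the within-family multiplicity-freeness inherited from the Pieri rules, the total is multiplicity-free and $(B_n, H_{n-2}\times B_2)$ is a strong Gelfand pair.

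For the reverse implication I would exhibit explicit counterexamples when $n$ is even, splitting on $n\bmod 4$ exactly as in the $D_{n-2}$ argument; evenness of $n$ is precisely what allows $|\lambda|=|\mu|$, which collapses the size-separation above. Writing $n=2m$, when $m=2k+1$ (so $n\equiv 2\bmod 4$) take the self-conjugate $\lambda = (k+1,1^{k})$ and $\mu=(k,1^{k+1})$; then $\lambda\neq\mu'$, so $V=\res^{B_{n-2}}_{H_{n-2}}S^{\lambda,\mu}$ is irreducible with $\ind^{B_{n-2}}_{H_{n-2}}V = S^{\lambda,\mu}\oplus S^{\mu',\lambda'}$, and tensoring with $W=S^{(1),(1)}$ and applying part (3) of Lemma~\ref{L:Branching2} to both summands shows that $S^{(k+2,1^{k}),(k+1,1^{k+1})}$ occurs with multiplicity $2$ (it lies in $\overline{\lambda}\cap\overline{\mu'}$ in the first coordinate and in $\overline{\mu}\cap\overline{\lambda'}$ in the second). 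The case $m=2k$ (so $n\equiv 0\bmod 4$) is handled by an analogous hook bipartition with $\lambda\neq\mu'$ chosen so that $\overline{\lambda}\cap\overline{\mu'}\neq\varnothing$ and $\overline{\mu}\cap\overline{\lambda'}\neq\varnothing$, again producing a repeated constituent. Hence $(B_n, H_{n-2}\times B_2)$ is not a strong Gelfand pair when $n$ is even.

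The main obstacle is the uniform bookkeeping in the odd case: one must check that \emph{every} pair $V\boxtimes W$ induces multiplicity-freely, not merely the convenient examples. What makes this tractable is the clean separation by the bipartition component-sizes, so the contributions of $S^{\lambda,\mu}$ and of its $\varepsilon\delta$-associate $S^{\mu',\lambda'}$ cannot coincide once $|\lambda|\neq|\mu|$; the residual within-summand multiplicity-freeness is exactly the statement that $(S_n, S_{n-2}\times S_2)$ is a strong Gelfand pair, repackaged through Lemma~\ref{L:Branching2}. The only delicate part of the even case is verifying that the chosen hooks genuinely produce an overlapping label in \emph{both} coordinates, which is why the explicit construction is split according to $n\bmod 4$.
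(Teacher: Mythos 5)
Your proposal is correct in substance and follows essentially the same route as the paper: Clifford theory for the index-2 subgroup $H_{n-2}\leqslant B_{n-2}$, transitivity of induction combined with Lemma~\ref{L:ellproduct} and Lemma~\ref{L:Branching2}, a separation-by-component-sizes argument (valid since $|\lambda|\neq|\mu|$ when $n-2$ is odd) for the multiplicity-free direction, and hook counterexamples tensored with $S^{(1),(1)}$ for even $n$; your explicit example $\lambda=(k+1,1^k)$, $\mu=(k,1^{k+1})$ is exactly the paper's. The only shortfall is that for the remaining even residue class you assert the existence of a hook bipartition with $\overline{\lambda}\cap\overline{\mu'}\neq\varnothing$ and $\overline{\mu}\cap\overline{\lambda'}\neq\varnothing$ rather than exhibiting one --- the paper takes $\lambda=\mu=(k+1,1^{k-1})$, a non-self-conjugate hook (so $\lambda\neq\mu'$), forcing $S^{(k+1,1^k),(k+1,1^k)}$ to appear with multiplicity $2$ --- though the condition you identify is the right one, such hooks do exist, and your mod-4 bookkeeping merely inherits the paper's own off-by-one between ``$n=2m$'' and partitions of $n-2$.
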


\subsubsection{Strong Gelfand pairs of the form $(B_n,D_{n-2}\times D_2)$.}

Since $D_{n-2}\times D_2$ is a subgroup of $D_{n-2}\times B_2$, 
if $n$ is even, then by Lemma~\ref{L:BnDn-2B2} $(B_n,D_{n-2}\times D_2)$ is not a strong Gelfand subgroup. 
So, we proceed with the assumption that $n=2m+1$ for some $m\geq 4$. 

Let $\lambda$ and $\mu$ be two partitions such that $|\lambda|+|\mu|=n-2$, 
and let $S^{\lambda,\mu}$ denote the corresponding irreducible representation of $B_{n-2}$.
Since $|\lambda | \neq |\mu|$, the restricted representation $V=\res^{B_{n-2}}_{D_{n-2}} S^{\lambda,\mu}$ is an irreducible representation of $D_{n-2}$.
Furthermore, we have $\ind^{B_{n-2}}_{D_{n-2}} V = S^{\lambda,\mu}\oplus S^{\mu,\lambda}$.
The tensor product $V \boxtimes S^{(2),\emptyset}$ is an irreducible representation of $D_{n-2}\times D_2$.
By transitivity of induction, Lemma~\ref{L:ellproduct}, and Lemma~\ref{L:Branching2}, parts 1 and 5, we have 
\begin{align}\label{A:differenceinsizesDn-2D2}
\ind^{B_n}_{D_{n-2}\times D_2} V\boxtimes S^{(2),\emptyset} &= \ind^{B_n}_{B_{n-2}\times B_2}\ind^{B_{n-2}\times B_2}_{D_{n-2}\times D_2}
 V\boxtimes S^{(2),\emptyset} \notag \\
&=\ind^{B_n}_{B_{n-2}\times B_2} (S^{\lambda,\mu}\oplus S^{\mu,\lambda})  \boxtimes (S^{(2),\emptyset} \oplus S^{\emptyset,(2)}) \notag \\ 
&= \left(\bigoplus_{\tau \in \bar{\bar{\lambda}}} S^{\tau, \mu} \right) \oplus \left(\bigoplus_{\rho \in \bar{\bar{\mu}} } S^{\lambda,\rho} \right)\oplus 
\left(\bigoplus_{\rho \in \bar{\bar{\mu}}} S^{\rho, \lambda}\right) \oplus  \left(\bigoplus_{\tau \in \bar{\bar{\lambda}} } S^{\mu,\tau}\right).
\end{align}
Since $||\lambda|-|\mu||$ is odd, the representation (\ref{A:differenceinsizesDn-2D2}) is multiplicity-free. 
By using similar arguments, we see that $\ind^{B_n}_{D_{n-2}\times D_2} V\boxtimes S^{\emptyset, (2)},
\ind^{B_n}_{D_{n-2}\times D_2} V\boxtimes S^{\emptyset, (1^2)}$, and $\ind^{B_n}_{D_{n-2}\times D_2} V\boxtimes S^{(1^2),\emptyset}$
are multiplicity-free representations of $B_n$. 
Finally, we notice that $\ind^{B_n}_{D_{n-2}\times D_2} V\boxtimes S^{(1),(1)}=\ind^{B_n}_{D_{n-2}\times B_2} V\boxtimes S^{(1),(1)}$,
hence, it is also multiplicity-free (by Lemma~\ref{L:BnDn-2B2}).
Therefore, we proved the following result. 

\begin{lem}\label{L:BnDn-2D2}
Let $n$ be an integer such that $n\geq 8$.
Then $(B_n,D_{n-2}\times D_2)$ is a strong Gelfand pair if and only if $n$ is odd.
\end{lem}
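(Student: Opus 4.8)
The plan is to treat the two directions separately and to reduce the whole computation to the branching rules of Lemma~\ref{L:Branching2} together with the description of $\ind^{B_2}_{D_2}$ furnished by Corollary~\ref{A:C:D2}. For the \emph{only if} direction I would exploit the inclusion $D_{n-2}\times D_2 \leqslant D_{n-2}\times B_2$. The strong Gelfand property of a pair $(B_n,H)$ is inherited by every larger subgroup $H' \supseteq H$: if $\res^{B_n}_{H'}V$ carried a repeated constituent, then restricting further to $H$ would propagate that repetition into $\res^{B_n}_{H}V$, contradicting the strong Gelfand property of $(B_n,H)$. Hence the failure of $(B_n, D_{n-2}\times B_2)$ to be a strong Gelfand pair for even $n$, recorded in Lemma~\ref{L:BnDn-2B2}, forces $(B_n, D_{n-2}\times D_2)$ to fail for even $n$ as well. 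This leaves only the case of odd $n$.

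For the converse, assume $n=2m+1$. Every irreducible representation of $D_{n-2}\times D_2$ has the form $V_1\boxtimes V_2$, and I would compute $\ind^{B_n}_{D_{n-2}\times D_2}(V_1\boxtimes V_2)$ by transitivity of induction together with Lemma~\ref{L:ellproduct}, rewriting it as $\ind^{B_n}_{B_{n-2}\times B_2}\bigl((\ind^{B_{n-2}}_{D_{n-2}} V_1)\boxtimes(\ind^{B_2}_{D_2} V_2)\bigr)$. Since $n-2$ is odd, no irreducible $B_{n-2}$-representation is self-associate with respect to $\delta$ (that would require $\lambda=\mu$, impossible when $|\lambda|+|\mu|$ is odd), so $\ind^{B_{n-2}}_{D_{n-2}} V_1 = S^{\lambda,\mu}\oplus S^{\mu,\lambda}$ with $|\lambda|\neq|\mu|$; meanwhile $\ind^{B_2}_{D_2} V_2$ is one of the three representations listed in Corollary~\ref{A:C:D2}.

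I would then dispatch the three resulting cases. When $\ind^{B_2}_{D_2} V_2 = S^{(2),\emptyset}\oplus S^{\emptyset,(2)}$ (and symmetrically for $S^{(1^2),\emptyset}\oplus S^{\emptyset,(1^2)}$), expanding the four-fold product via parts 1 and 5 (respectively 2 and 4) of Lemma~\ref{L:Branching2} yields the decomposition displayed in (\ref{A:differenceinsizesDn-2D2}): four families of Specht modules whose fixed slots are $\mu,\lambda,\lambda,\mu$ and whose bidegrees (the sizes of the two partitions) are $(|\lambda|+2,|\mu|)$, $(|\lambda|,|\mu|+2)$, $(|\mu|+2,|\lambda|)$, $(|\mu|,|\lambda|+2)$. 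Within each family the relevant horizontal-strip set $\bar{\bar{\lambda}}$ (respectively $\tilde{\bar{\lambda}}$) is multiplicity-free by Pieri's rule, so the only possible collisions are across families. A short bidegree check shows that any such collision would force either $|\lambda|=|\mu|$ or $|\lambda|\pm 2=|\mu|$ with equal total, i.e. $||\lambda|-|\mu||$ even; since $|\lambda|+|\mu|=n-2$ is odd, $|\lambda|$ and $|\mu|$ have opposite parity and $||\lambda|-|\mu||$ is odd, ruling all collisions out. When $\ind^{B_2}_{D_2} V_2 = S^{(1),(1)}$, I would observe that $\ind^{B_n}_{D_{n-2}\times D_2}(V_1\boxtimes V_2)$ equals $\ind^{B_n}_{D_{n-2}\times B_2}(V_1\boxtimes S^{(1),(1)})$, both being $\ind^{B_n}_{B_{n-2}\times B_2}\bigl((\ind^{B_{n-2}}_{D_{n-2}}V_1)\boxtimes S^{(1),(1)}\bigr)$, which is multiplicity-free by Lemma~\ref{L:BnDn-2B2} for odd $n$.

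The one genuine obstacle is the cross-family bookkeeping in the $S^{(2),\emptyset}\oplus S^{\emptyset,(2)}$ case (and its sign-twisted analogue): one must confirm that the four families of constituents are pairwise disjoint, and this rests entirely on the parity obstruction $||\lambda|-|\mu||$ odd. This is exactly the feature that disappears for even $n$, where Lemma~\ref{L:BnDn-2B2} exhibits a genuine repeated constituent; everything else is a routine substitution into the already-established branching formulas.
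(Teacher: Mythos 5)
Your proposal is correct and follows essentially the same route as the paper: the even case by observing $D_{n-2}\times D_2 \leqslant D_{n-2}\times B_2$ and invoking Lemma~\ref{L:BnDn-2B2}, and the odd case by transitivity of induction with Lemma~\ref{L:ellproduct}, expanding via Lemma~\ref{L:Branching2} into the four families of (\ref{A:differenceinsizesDn-2D2}), ruling out collisions because $||\lambda|-|\mu||$ is odd, and reducing the $S^{(1),(1)}$ case to Lemma~\ref{L:BnDn-2B2}. Your cross-family bidegree check just makes explicit the parity argument the paper states in one line, so there is nothing to add.
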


\subsubsection{Strong Gelfand pairs of the form $(B_n,H_{n-2}\times D_2)$.}

The proof of this case is similar to that of Lemma~\ref{L:BnDn-2D2}.
By Lemma~\ref{L:BnHn-2B2}, if $n$ is even, then we know that $(B_n,H_{n-2}\times D_2)$ is not a strong Gelfand pair. 
We proceed with the assumption that $n$ is an odd number of the form $n=2m+1$ for some $m\geq 4$. 
Let $\lambda$ and $\mu$ be two partitions such that $|\lambda|+|\mu|=n-2$, 
and let $S^{\lambda,\mu}$ denote the corresponding irreducible representation of $B_{n-2}$.
Since $\lambda \neq \mu'$, $V=\res^{B_{n-2}}_{H_{n-2}} S^{\lambda,\mu}$ is an irreducible representation of $H_{n-2}$, 
and furthermore, we have $\ind^{B_{n-2}}_{H_{n-2}} V = S^{\lambda,\mu}\oplus S^{\mu',\lambda'}$.
From this point on, we argue as in the proof of Lemma~\ref{L:BnDn-2D2}.
We omit the details but write the conclusion below.
\begin{lem}\label{L:BnHn-2D2}
Let $n$ be an integer such that $n\geq 8$.
Then $(B_n,H_{n-2}\times D_2)$ is a strong Gelfand pair if and only if $n$ is an odd number.
\end{lem}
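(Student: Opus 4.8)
The plan is to handle the two parities of $n$ separately, following the template of Lemma~\ref{L:BnDn-2D2}. For even $n$ I would argue by containment: we have $H_{n-2}\times D_2 \leqslant H_{n-2}\times B_2 \leqslant B_n$, and enlarging a subgroup within a fixed ambient group preserves the strong Gelfand property. Indeed, if $(B_n, H_{n-2}\times D_2)$ were a strong Gelfand pair, then for any irreducible $W$ of $H_{n-2}\times B_2$ one may choose an irreducible constituent $V$ of $\res^{H_{n-2}\times B_2}_{H_{n-2}\times D_2} W$; by Frobenius reciprocity $W$ is a summand of $\ind^{H_{n-2}\times B_2}_{H_{n-2}\times D_2} V$, so $\ind^{B_n}_{H_{n-2}\times B_2} W$ is a summand of $\ind^{B_n}_{H_{n-2}\times D_2} V$ and hence multiplicity-free, making $(B_n, H_{n-2}\times B_2)$ a strong Gelfand pair. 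As Lemma~\ref{L:BnHn-2B2} shows the latter fails for even $n$, the contrapositive settles the even case.

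For odd $n$, let $V\boxtimes W$ be an arbitrary irreducible representation of $H_{n-2}\times D_2$. I would first use transitivity of induction together with Lemma~\ref{L:ellproduct} to write
\[
\ind^{B_n}_{H_{n-2}\times D_2} V\boxtimes W = \ind^{B_n}_{B_{n-2}\times B_2} \left( \ind^{B_{n-2}}_{H_{n-2}} V \right) \boxtimes \left( \ind^{B_2}_{D_2} W \right).
\]
Since $n-2$ is odd, no $S^{\lambda,\mu}$ is self-associate with respect to $\varepsilon\delta$ (that would force $\lambda = \mu'$, hence $|\lambda| = |\mu| = (n-2)/2$), so the Clifford-theoretic description of Section~\ref{associators} gives $\ind^{B_{n-2}}_{H_{n-2}} V = S^{\lambda,\mu}\oplus S^{\mu',\lambda'}$ for some pair with $\lambda\neq\mu'$ and $|\lambda|+|\mu| = n-2$. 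For $\ind^{B_2}_{D_2} W$ I would invoke Corollary~\ref{A:C:D2}, which gives the three possibilities $S^{(1),(1)}$, $S^{(2),\emptyset}\oplus S^{\emptyset,(2)}$, and $S^{(1^2),\emptyset}\oplus S^{\emptyset,(1^2)}$.

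Substituting these and expanding by the Pieri rules of Lemma~\ref{L:Branching2} reduces everything to a direct sum of at most four blocks, each individually multiplicity-free; the main obstacle is to show that distinct blocks share no irreducible constituent. I would control this through the two component-sizes of each label $S^{\sigma,\nu}$: in each block these sizes are $(|\lambda|+2,|\mu|)$, $(|\lambda|,|\mu|+2)$, $(|\mu|+2,|\lambda|)$, or $(|\mu|,|\lambda|+2)$, and any coincidence between two blocks forces $|\lambda| = |\mu|$ or $||\lambda|-|\mu|| = 2$. Both are impossible because $|\lambda|+|\mu| = n-2$ is odd, so $||\lambda|-|\mu||$ is odd; this is precisely the parity argument of Lemma~\ref{L:BnDn-2D2}. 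The cases $\ind^{B_2}_{D_2} W = S^{(2),\emptyset}\oplus S^{\emptyset,(2)}$ and $S^{(1^2),\emptyset}\oplus S^{\emptyset,(1^2)}$ run identically (the latter using the conjugate strips $\tilde{\bar\lambda}$ and $\tilde{\bar\mu}$), while for $\ind^{B_2}_{D_2} W = S^{(1),(1)}$ I would instead observe that $\ind^{B_n}_{H_{n-2}\times D_2} V\boxtimes W = \ind^{B_n}_{H_{n-2}\times B_2} V\boxtimes S^{(1),(1)}$, which is multiplicity-free for odd $n$ by Lemma~\ref{L:BnHn-2B2}. Combining the two parities gives the claimed equivalence.
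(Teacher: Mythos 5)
Your proof is correct and follows essentially the same route as the paper: the even case by the containment $H_{n-2}\times D_2 \leqslant H_{n-2}\times B_2$ together with Lemma~\ref{L:BnHn-2B2}, and the odd case by inducing $\ind^{B_{n-2}}_{H_{n-2}} V = S^{\lambda,\mu}\oplus S^{\mu',\lambda'}$ through $B_{n-2}\times B_2$, applying the Pieri rules of Lemma~\ref{L:Branching2}, and using the parity of $|\lambda|-|\mu|$ exactly as in the template of Lemma~\ref{L:BnDn-2D2} that the paper invokes. Your write-up simply fills in the details the paper omits.
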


\subsubsection{Strong Gelfand pairs of the form $(B_n,D_{n-2}\times H_2)$ and $(B_n,H_{n-2}\times H_2)$.}

Since $D_{n-2}\times H_2$ and $H_{n-2}\times H_2$ are subgroups of $D_{n-2}\times B_2$ and $H_{n-2}\times B_2$, respectively, 
if $n$ is even, then by Lemmas~\ref{L:BnDn-2B2} and~\ref{L:BnHn-2B2} $(B_n,D_{n-2}\times H_2)$ and $(B_n,H_{n-2}\times H_2)$ are not strong Gelfand pairs. 
So, we proceed with the assumption that $n=2m+1$ for some $m\geq 4$. 
In this case, the proofs of Lemmas~\ref{L:BnDn-2D2} and~\ref{L:BnHn-2D2} are easily modified, and we get the following result.
\begin{lem}\label{L:BnDHn-2H2}
Let $n\geq 8$.
Then $(B_n,D_{n-2}\times H_2)$ is a strong Gelfand pair if and only if $n$ is odd.
Likewise, $(B_n,H_{n-2}\times H_2)$ is a strong Gelfand pair if and only if $n$ is odd.
\end{lem}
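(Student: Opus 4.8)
The plan is to reduce both claims to cases that we have already established, rather than redoing the full character-theoretic analysis from scratch. The key structural observation is that $D_{n-2}\times H_2$ is a subgroup of $D_{n-2}\times B_2$, and $H_{n-2}\times H_2$ is a subgroup of $H_{n-2}\times B_2$. Consequently, the ``only if'' direction of both statements is immediate: by Lemma~\ref{L:BnDn-2B2} and Lemma~\ref{L:BnHn-2B2}, the pairs $(B_n,D_{n-2}\times B_2)$ and $(B_n,H_{n-2}\times B_2)$ fail to be strong Gelfand pairs when $n$ is even, and a subgroup of a non-strong-Gelfand subgroup cannot itself be strong Gelfand (since the strong Gelfand property is inherited by overgroups). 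This disposes of the even case with no computation.

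For the ``if'' direction, I would assume $n$ is odd, say $n=2m+1$ with $m\geq 4$, and argue exactly as in the proofs of Lemma~\ref{L:BnDn-2D2} and Lemma~\ref{L:BnHn-2D2}. First I would recall that $H_2$, like $D_2$, is an index $2$ subgroup of $B_2$, so the induced representations $\ind^{B_2}_{H_2}W$ were already computed in Corollary~\ref{A:C:H2}: they are $S^{(1),(1)}$, $S^{(2),\emptyset}\oplus S^{\emptyset,(1^2)}$, or $S^{\emptyset,(2)}\oplus S^{(1^2),\emptyset}$. For the first factor, when $n$ is odd every pair of partitions $(\lambda,\mu)$ with $|\lambda|+|\mu|=n-2$ automatically satisfies $|\lambda|\neq|\mu|$, so $\res^{B_{n-2}}_{D_{n-2}}S^{\lambda,\mu}$ (respectively $\res^{B_{n-2}}_{H_{n-2}}S^{\lambda,\mu}$, using $\lambda\neq\mu'$) is irreducible and induces up to $S^{\lambda,\mu}\oplus S^{\mu,\lambda}$ (respectively $S^{\lambda,\mu}\oplus S^{\mu',\lambda'}$). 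Then, by transitivity of induction together with Lemma~\ref{L:ellproduct}, I would push the computation up through $B_{n-2}\times B_2$ and apply Lemma~\ref{L:Branching2} termwise.

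The decisive point, just as in the earlier lemmas, is a parity argument: the summands produced carry a partition of $n-2$ split as $(\lambda,\mu)$ in the first coordinate and as $(\mu,\lambda)$ or $(\mu',\lambda')$ in the ``swapped'' coordinate, while the box added in the $B_2$-induction shifts the relevant sizes by an even amount. Since $\bigl||\lambda|-|\mu|\bigr|$ is odd when $n$ is odd, the two families of irreducible constituents $S^{\tau,\mu}$-type and $S^{\rho,\lambda}$-type (suitably conjugated) can never coincide, so there is no overlap and the induced module is multiplicity-free. I would verify this for each of the three forms of $\ind^{B_2}_{H_2}W$; the cases where $\ind^{B_2}_{H_2}W=S^{(1),(1)}$ in fact reduce, exactly as noted at the end of the proof of Lemma~\ref{L:BnDn-2D2}, to the already-settled $\ind^{B_n}_{D_{n-2}\times B_2}V\boxtimes S^{(1),(1)}$ or $\ind^{B_n}_{H_{n-2}\times B_2}V\boxtimes S^{(1),(1)}$, so no new work is needed there.

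The main obstacle I anticipate is purely bookkeeping: tracking the conjugation of partitions $(\mu'\text{ versus }\mu,\ \lambda'\text{ versus }\lambda)$ correctly through the $H_{n-2}\times H_2$ case, where conjugation is applied in both the $B_{n-2}$-factor (via $\varepsilon\delta$) and the $B_2$-factor (via the $H_2$-induction). One must check that the combined effect still respects the parity obstruction, i.e.\ that no self-conjugate coincidence is accidentally created. Since $n$ is odd this is guaranteed, but the argument for $H_{n-2}\times H_2$ genuinely combines both twists and so deserves the explicit remark that $|\lambda|+|\mu|=n-2$ is odd forces $|\lambda|\neq|\mu|$ and $\lambda\neq\mu'$ simultaneously. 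Given the close parallel to Lemmas~\ref{L:BnDn-2D2} and~\ref{L:BnHn-2D2}, I would state the conclusion and omit the routine details, exactly in the style of the surrounding lemmas.
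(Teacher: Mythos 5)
Your proof is correct and takes essentially the same approach as the paper: the even case is dismissed by the containments $D_{n-2}\times H_2\leqslant D_{n-2}\times B_2$ and $H_{n-2}\times H_2\leqslant H_{n-2}\times B_2$ together with Lemmas~\ref{L:BnDn-2B2} and~\ref{L:BnHn-2B2}, and the odd case is handled by adapting the proofs of Lemmas~\ref{L:BnDn-2D2} and~\ref{L:BnHn-2D2}, substituting Corollary~\ref{A:C:H2} for Corollary~\ref{A:C:D2} and concluding via the parity of $\bigl||\lambda|-|\mu|\bigr|$. In fact, your write-up spells out the parity bookkeeping and the reduction of the $S^{(1),(1)}$ case in more detail than the paper, which simply asserts that the earlier proofs ``are easily modified.''
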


\subsubsection{Strong Gelfand pairs of the form $(B_n,D_{n-2}\times \overline{S_2})$.}

\begin{lem}\label{L:BnDn-2S2}
If $n\geq 8$, then $(B_n,D_{n-2}\times \overline{S_2})$ is not a strong Gelfand pair.
\end{lem}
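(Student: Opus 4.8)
The plan is to exhibit a single irreducible representation of $K:=D_{n-2}\times\overline{S_2}$ whose induction to $B_n$ fails to be multiplicity-free. Since $K\cong D_{n-2}\times S_2$, every irreducible representation of $K$ has the form $V\boxtimes W$ with $V$ irreducible over $D_{n-2}$ and $W\in\{\mathbf 1,\mathbf{\epsilon}\}$. I would take $W=\mathbf 1$ and $V=\res^{B_{n-2}}_{D_{n-2}}S^{\lambda,\mu}$ for a suitably chosen pair $\lambda\neq\mu$ with $|\lambda|+|\mu|=n-2$. As recorded in Section~\ref{associators}, because $\lambda\neq\mu$ the representation $S^{\lambda,\mu}$ is not $\delta$-self-associate, so $V$ is irreducible and $\ind^{B_{n-2}}_{D_{n-2}}V=S^{\lambda,\mu}\oplus S^{\mu,\lambda}$.

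By transitivity of induction and Lemma~\ref{L:ellproduct}, $\ind^{B_n}_K(V\boxtimes\mathbf 1)=P\oplus Q$, where $P:=\ind^{B_n}_{B_{n-2}\times\overline{S_2}}S^{\lambda,\mu}\boxtimes\mathbf 1$ and $Q:=\ind^{B_n}_{B_{n-2}\times\overline{S_2}}S^{\mu,\lambda}\boxtimes\mathbf 1$. Since $B_{n-2}\times\overline{S_2}=(F\wr S_{n-2})\times S_2$, Proposition~\ref{P:secondcase} guarantees that each of $P$ and $Q$ is \emph{individually} multiplicity-free, and their explicit constituents are given by Corollary~\ref{A:C:D21}. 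Consequently $(B_n,K)$ fails to be a strong Gelfand pair as soon as $P$ and $Q$ share an irreducible constituent, and the whole argument reduces to producing such a shared constituent for some admissible $\lambda,\mu$.

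Applying Corollary~\ref{A:C:D21} with $W=\mathbf 1$ writes each of $P,Q$ as a union of three Pieri families built from $\bar{\bar{\lambda}}, \bar{\bar{\mu}}, \bar{\lambda}, \bar{\mu}$; comparing the sizes of the bipartitions shows that a $P$-family and a $Q$-family can overlap only when $|\lambda|=|\mu|$, when $\big||\lambda|-|\mu|\big|=1$, or when $\big||\lambda|-|\mu|\big|=2$. I would split on the parity of $n$. When $n$ is even, with $m=(n-2)/2$ I take $\lambda=(m)$ and $\mu=(m-1,1)$: the representation $S^{(m,1),(m,1)}$ appears in the family $\{S^{\tau,\rho}:\tau\in\bar{\lambda},\,\rho\in\bar{\mu}\}$ of $P$ (via $\tau=\rho=(m,1)$) and in the family $\{S^{\tau,\rho}:\tau\in\bar{\mu},\,\rho\in\bar{\lambda}\}$ of $Q$, forcing multiplicity at least $2$. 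When $n$ is odd, with $b=(n-3)/2$ I take $\lambda=(b+1)$ and $\mu=(b)$: here $S^{(b+2),(b+1)}$ occurs in the family $\{S^{\tau,\rho}:\tau\in\bar{\lambda},\,\rho\in\bar{\mu}\}$ of $P$ (via $\tau=(b+2)\in\bar{(b+1)}$ and $\rho=(b+1)\in\bar{(b)}$) and simultaneously in the family $\{S^{\tau,\lambda}:\tau\in\bar{\bar{\mu}}\}$ of $Q$ (via $(b+2)\in\bar{\bar{(b)}}$), again giving multiplicity $2$. The hypothesis $n\geq 8$ ensures the chosen partitions are legitimate in both regimes.

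The main obstacle — and the feature that distinguishes $D_{n-2}\times\overline{S_2}$ from $D_{n-2}\times B_2$, $D_{n-2}\times D_2$, and $D_{n-2}\times H_2$, which are all strong Gelfand for odd $n$ — is precisely the odd case. There the size analysis forbids a family from colliding with its own transpose (this needs $|\lambda|=|\mu|$), so the naive counterexamples used for the other subgroups evaporate. What rescues the argument is that $\ind^{B_2}_{\overline{S_2}}\mathbf 1=S^{(2),\emptyset}\oplus S^{\emptyset,(2)}\oplus S^{(1),(1)}$ contains \emph{both} the diagonal constituent $S^{(1),(1)}$ and the symmetric constituent $S^{(2),\emptyset}$; the collision then comes from the $S^{(1),(1)}$-part of $P$ meeting the $S^{(2),\emptyset}$-part of $Q$, which amounts to checking $\bar{\lambda}\cap\bar{\bar{\mu}}\neq\emptyset$ for the chosen pair. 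Verifying this non-self-transpose overlap, and confirming that no accidental cancellation removes the doubled constituent, is the only delicate point; the remainder is a direct application of Corollary~\ref{A:C:D21} and Proposition~\ref{P:secondcase}.
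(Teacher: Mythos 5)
Your proof is correct, and it follows essentially the same route as the paper: restrict $S^{\lambda,\mu}$ (with $\lambda\neq\mu$) to $D_{n-2}$ to obtain an irreducible $V$ with $\ind^{B_{n-2}}_{D_{n-2}}V=S^{\lambda,\mu}\oplus S^{\mu,\lambda}$, induce $V\boxtimes\mathbf{1}$ up to $B_n$ through $B_{n-2}\times\overline{S_2}$ using Corollary~\ref{A:C:D21}, and exhibit a constituent common to the two resulting summands $P$ and $Q$, splitting on the parity of $n$. Your odd-case witness is the paper's up to swapping the roles of $\lambda$ and $\mu$ (the paper points at $S^{(m+1,1),(m+1)}$, you at $S^{(b+2),(b+1)}$; both occur once in each of $P$ and $Q$).

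There is one discrepancy worth recording, and it is in your favour. In the even case the paper takes $\lambda=(m)$, $\mu=(1^m)$ and asserts that $S^{(m+1),(1^{m+1})}$ has multiplicity $2$; this appears to be an error. With that choice the six families of $P\oplus Q$ coming from Corollary~\ref{A:C:D21} are pairwise disjoint once $m\geq 3$: for instance $S^{(m+1),(1^{m+1})}$ occurs only in the third family of $P$, since every first component occurring in $Q$ lies in $\bar{\bar{(1^m)}}\cup\{(1^m)\}\cup\bar{(1^m)}$, i.e.\ is a hook with arm length at most $3$, never the row $(m+1)$. So the paper's even-case witness is in fact multiplicity-free and proves nothing. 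Your choice $\lambda=(m)$, $\mu=(m-1,1)$ repairs this: $S^{(m,1),(m,1)}$ genuinely occurs in the third family of both $P$ and $Q$ (take $\tau=\rho=(m,1)$ in each), giving multiplicity $2$. Your supporting observations --- that $P$ and $Q$ are individually multiplicity-free (by Proposition~\ref{P:secondcase}, or directly from Corollary~\ref{A:C:D21}), so any multiplicity must arise as a cross-collision between a $P$-family and a $Q$-family, and that a size comparison restricts such collisions to $\bigl||\lambda|-|\mu|\bigr|\leq 2$ --- are both sound, and they correctly explain why the parity of $n$ dictates which witnesses are available.
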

\begin{proof}

Suppose $n-2 = 2m+1$ for some $m\geq 4$. 
Let  $\lambda = (m)$ and $\mu=(m+1)$. 
Then $S^{\lambda,\mu}\oplus S^{\mu,\lambda}$ is a representation of $B_{n-2}$ that is induced from an irreducible representation $V$ of $D_{n-2}$. 
Let $W$ denote the trivial representation of $\overline{S_2}$. 
By Corollary~\ref{A:C:D21}, we see that $S^{(m+1,1),(m+1)}$ has multiplicity 2 in $\ind^{B_n}_{D_{n-2}\times \overline{S_2}} V\boxtimes W$.

Next, suppose that $n-2=2m$ for some $m\geq 4$.
Let  $\lambda = (m)$  and $\mu = (1^m)$. 
Then $S^{\lambda,\mu}\oplus S^{\mu,\lambda}$ is a representation of $B_{n-2}$ that is induced from an irreducible representation $V$ of $D_{n-2}$. 
Let $W$ denote the trivial representation of $\overline{S_2}$. 
By Corollary~\ref{A:C:D21}, we see that $S^{(m+1),(1^{m+1})}$ has multiplicity 2 in $\ind^{B_n}_{D_{n-2}\times \overline{S_2}} V\boxtimes W$.
This completes the proof.
\end{proof}

\subsubsection{Strong Gelfand pairs of the form $(B_n,H_{n-2}\times \overline{S_2})$.}

\begin{lem}\label{L:BnHn-2S2}
If $n\geq 8$, then $(B_n,H_{n-2}\times \overline{S_2})$ is not a strong Gelfand pair.
\end{lem}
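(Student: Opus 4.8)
The plan is to mirror the proof of Lemma~\ref{L:BnDn-2S2}, replacing the $D_{n-2}$-swap $\lambda\leftrightarrow\mu$ by the $H_{n-2}$-swap $(\lambda,\mu)\mapsto(\mu',\lambda')$ that governs the Clifford theory of $H_{n-2}=\ker(\varepsilon\delta)$, and then to exhibit, for each parity of $n$, an explicit irreducible constituent of multiplicity $2$. First I would fix an irreducible representation $V$ of $H_{n-2}$ of the \emph{associate} type with respect to $\varepsilon\delta$; that is, $V=\res^{B_{n-2}}_{H_{n-2}}S^{\lambda,\mu}$ with $\lambda\neq\mu'$, so that by the discussion of Section~\ref{associators} this restriction is irreducible and $\ind^{B_{n-2}}_{H_{n-2}}V=S^{\lambda,\mu}\oplus S^{\mu',\lambda'}$. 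Taking $W=\mathbf{1}$, transitivity of induction together with Lemma~\ref{L:ellproduct} gives
\[
\ind^{B_n}_{H_{n-2}\times\overline{S_2}}V\boxtimes\mathbf{1}
=\ind^{B_n}_{B_{n-2}\times\overline{S_2}}S^{\lambda,\mu}\boxtimes\mathbf{1}
\;\oplus\;\ind^{B_n}_{B_{n-2}\times\overline{S_2}}S^{\mu',\lambda'}\boxtimes\mathbf{1}.
\]
Since $(B_n,B_{n-2}\times\overline{S_2})$ is a strong Gelfand pair by Proposition~\ref{P:secondcase}, each of the two summands is multiplicity-free; hence a multiplicity of $2$ can only come from an irreducible representation shared by both summands. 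I would decompose each summand explicitly using Corollary~\ref{A:C:D21}.

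The heart of the argument is to produce, for every $n\geq 8$, partitions $\lambda,\mu$ with $\lambda\neq\mu'$ whose two pieces have a common constituent, and I would split on the parity of $n$. For odd $n$, I would set $\lambda=((n-3)/2)$ and $\mu=(1^{(n-1)/2})$; then $S^{((n+1)/2),(1^{(n-1)/2})}$ arises from $S^{\lambda,\mu}$ through the summand $\bigoplus_{\tau\in\bar{\bar{\lambda}}}S^{\tau,\mu}$ of Corollary~\ref{A:C:D21} (adding a horizontal $2$-strip to the single row $\lambda$), while it also arises from $S^{\mu',\lambda'}=S^{((n-1)/2),(1^{(n-3)/2})}$ through the mixed summand $\bigoplus_{\tau\in\bar{\mu'},\,\rho\in\bar{\lambda'}}S^{\tau,\rho}$ (adding one box to each factor). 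For even $n$, I would set $\lambda=((n-4)/2)$ and $\mu=(2,1^{(n-4)/2})$; here the shared constituent is $S^{((n-2)/2,1),(2,1^{(n-4)/2})}$, which appears in the piece from $S^{\lambda,\mu}$ via $\bigoplus_{\tau\in\bar{\bar{\lambda}}}S^{\tau,\mu}$ and in the piece from $S^{\mu',\lambda'}=S^{((n-2)/2,1),(1^{(n-4)/2})}$ via $\bigoplus_{\rho\in\bar{\bar{\lambda'}}}S^{\mu',\rho}$. In each case the verification reduces to Pieri-type checks that a specific two-box skew shape is a horizontal strip, a vertical strip, or a single box.

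The main obstacle is purely combinatorial bookkeeping: I must select $\lambda,\mu$ so that (i) $\lambda\neq\mu'$, which guarantees that $V$ is genuinely of associate type and that the two-term formula above holds, and (ii) the targeted bipartition lies simultaneously in the prescribed image sets $\bar{\bar{\lambda}}$, $\bar{\bar{\lambda'}}$, $\bar{\lambda'}$, $\bar{\mu'}$ for both summands. Because each summand is individually multiplicity-free, once a common constituent has been located its total multiplicity is exactly $2$, which is all that is required. The only genuine care needed is to confirm that the two chosen families exhaust all $n\geq 8$ of each parity and that the degenerate identification $\lambda=\mu'$ never occurs; both are immediate from the explicit shapes (for odd $n\geq 9$ one has $(n-3)/2\neq(n-1)/2$, and for even $n\geq 8$ one has $((n-4)/2)\neq((n-2)/2,1)'$). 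Together these two cases establish that $(B_n,H_{n-2}\times\overline{S_2})$ fails to be a strong Gelfand pair for every $n\geq 8$, irrespective of parity, in contrast to $(B_n,H_{n-2}\times B_2)$.
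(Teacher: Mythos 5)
Your proof is correct and takes essentially the same approach as the paper: Clifford theory for $H_{n-2}=\ker(\varepsilon\delta)$ reduces everything to Corollary~\ref{A:C:D21} applied to $S^{\lambda,\mu}\oplus S^{\mu',\lambda'}$, followed by exhibiting a repeated constituent in each parity (for odd $n$ the paper takes the very same $\lambda,\mu$ but the constituent $S^{((n-1)/2,1),(1^{(n-1)/2})}$, and for even $n$ it takes $\lambda=((n-4)/2,1)$, $\mu=(1^{(n-2)/2})$ and arrives at your same constituent $S^{((n-2)/2,1),(2,1^{(n-4)/2})}$). The only slip is typographical: your even-parity non-degeneracy check should read $\lambda\neq\mu'=((n-2)/2,1)$, i.e.\ without the conjugate prime, though the condition holds either way.
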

\begin{proof}

Suppose $n-2 = 2m+1$ for some $m\geq 4$.
Let  $\lambda = (m)$ and $\mu = (1^{m+1})$. 
Then $S^{\lambda,\mu}\oplus S^{\mu',\lambda'}$ is a representation of $B_{n-2}$ that is induced from an irreducible representation $V$ of $H_{n-2}$. 
Let $W$ denote the trivial representation of $\overline{S_2}$. 
By Corollary~\ref{A:C:D21}, we see that $S^{(m+1,1),1^{(m+1)}}$ has multiplicity 2 in $\ind^{B_n}_{H_{n-2}\times \overline{S_2}} V\boxtimes W$.

Next, we assume that $n-2=2m$ for some $m\geq 4$.
Let  $\lambda=(m-1,1)$ and $\mu = (1^m)$. 
Then $S^{\lambda,\mu}\oplus S^{\mu',\lambda'}$ is a representation of $B_{n-2}$ that is induced from an irreducible representation $V$ of $H_{n-2}$. 
Let $W$ denote the trivial representation of $\overline{S_2}$. 
By Corollary~\ref{A:C:D21}, we see that $S^{(m,1),(2,1^{m-1})}$ has multiplicity 2 in $\ind^{B_n}_{H_{n-2}\times \overline{S_2}} V\boxtimes W$.
This completes the proof.
\end{proof}

\subsubsection{Non-direct product index 2 subgroups of $B_{n-2}\times\overline{S_2}$.}

There are two non-direct product index 2 subgroups $K\leqslant B_{n-2}\times \overline{S_2}$ such that $\gamma_K = S_{n-2}\times S_2$:
\begin{enumerate}
\item $K= (B_{n-2}\times \overline{S_2})_{\delta} := \{ (a,\delta(a)):\ a\in B_{n-2}\}$;
\item $K= (B_{n-2}\times \overline{S_2})_{\varepsilon\delta }  :=  \{ (a, (\varepsilon \delta) (a)):\ a\in B_{n-2}\}$.
\end{enumerate}

We begin with the case $K= (B_{n-2}\times \overline{S_2})_{\delta}$. 
Let $\nu$ denote the linear character of $B_{n-2}\times \overline{S_2}$ such that $\ker \nu= K$.
Then the restrictions of $\nu$ to the factors are given by $\nu |_{B_{n-2}\times \{1\}} = \delta$ and $\nu |_{\{\id\} \times \overline{S_2}}=\varepsilon$.
Let $W = S^{\lambda,\mu} \boxtimes D$ be an irreducible representation of $B_{n-2}\times \overline{S_2}$.
Since $\delta S^{\lambda, \mu}= S^{\mu, \lambda}$, $\varepsilon \mathbf{\epsilon} = \mathbf{1}$, and $\varepsilon \mathbf{1} = \mathbf{\epsilon}$, 
we have 
\[
\nu ( S^{\lambda,\mu} \boxtimes D) = S^{\mu, \lambda}\boxtimes \tilde{D},
\]
where $\{ D,\tilde{D} \} = \{\mathbf{1},\mathbf{\epsilon}\}$.
In particular, the representations $S^{\lambda,\mu} \boxtimes D$
and $S^{\mu, \lambda}\boxtimes \tilde{D}$ are inequivalent. 
Hence, there is no self-associate irreducible representation with respect to $\nu$.

We are now ready to describe the induction from $K$ to $B_{n-2}\times \overline{S_2}$ by using Frobenius reciprocity.
Let $V$ be an irreducible representation of $K$.
Then we have 
\begin{align}\label{A:KtoBn-2S2}
\ind^{B_{n-2}\times \overline{S_2}}_K V = (S^{\lambda,\mu}\boxtimes \mathbf{1}) \oplus (S^{\mu, \lambda}\boxtimes  \mathbf{\epsilon})
\end{align}
for some irreducible representation $S^{\lambda,\mu}$ of $B_{n-2}$. 
Here, $\lambda$ and $\mu$ may be any partitions with $|\lambda | + |\mu|= n-2$. 
It is now easy to see from Corollary~\ref{A:C:D21} that 
if we induce the representation in (\ref{A:KtoBn-2S2}), then we will get a non-multiplicity-free representation of $B_n$. 
Indeed, for $n=2m$, we can choose $\lambda = \mu$, and for $n=2m+1$ we can choose $\lambda = (m)$ and $\mu = (m+1)$. 
Therefore, $(B_n, (B_{n-2}\times \overline{S_2})_{\delta} )$ is not a strong Gelfand subgroup.

Next, we focus on the case $K=(B_{n-2}\times \overline{S_2})_{\varepsilon\delta }$. 
We know that $H_{n-2}\times \{1\}$ is an index 2 subgroup of $K$, and $K$ is an index 2 subgroup of $B_{n-2}\times \overline{S_2}$. 
We begin with describing the irreducible representations of $K$.
Let $\nu$ denote the linear character of $B_{n-2}\times \overline{S_2}$ such that $\ker \nu= K$.
Then the restrictions of $\nu$ to the factors are given by $\nu |_{B_{n-2}\times \{1\}} = \varepsilon \delta$ and $\nu |_{\{\id\} \times \overline{S_2}}=\varepsilon$.

Let $W = S^{\lambda,\mu} \boxtimes D$ be an irreducible representation of $B_{n-2}\times \overline{S_2}$.
Since $\varepsilon\delta S^{\lambda, \mu}= S^{\mu', \lambda'}$, $\varepsilon \mathbf{\epsilon} = \mathbf{1}$, and $\varepsilon \mathbf{1} = \mathbf{\epsilon}$, 
we have 
\[
\nu ( S^{\lambda,\mu} \boxtimes D) = S^{\mu', \lambda'}\boxtimes \tilde{D},
\]
where $\{ D,\tilde{D} \} = \{\mathbf{1},\mathbf{\epsilon}\}$.
Since $D \neq \tilde{D}$, the representations $S^{\lambda,\mu} \boxtimes D$
and $S^{\mu', \lambda'}\boxtimes \tilde{D}$ are inequivalent. 
Hence, we conclude that there is no self-associate irreducible representation with respect to $\nu$. 
We are now ready to describe the induction from $K$ to $B_{n-2}\times \overline{S_2}$ by using Frobenius reciprocity.
Let $V$ be an irreducible representation of $K$.
Then we have 
\begin{align}\label{A:2KtoBn-2S2}
\ind^{B_{n-2}\times \overline{S_2}}_K V = S^{\lambda,\mu}\boxtimes \mathbf{1}\oplus S^{\mu', \lambda'}\boxtimes  \mathbf{\epsilon}
\end{align}
for some irreducible representation $S^{\lambda,\mu}$ of $B_{n-2}$. 
Here, $\lambda$ and $\mu$ can be any partitions with $|\lambda | + |\mu|= n-2$. 
It is now easy to see from Corollary~\ref{A:C:D21} that 
if we induce the representation in (\ref{A:2KtoBn-2S2}), then we will get a non-multiplicity-free representation of $B_n$. 
Indeed, for $n=2m$, can choose $V$ with $\lambda = \mu'$, and for $n=2m+1$ we can choose $\lambda = (1^m)$ and $\mu = (m+1)$. 
Therefore, $(B_n,(B_{n-2}\times \overline{S_2})_{\varepsilon\delta })$ is not a strong Gelfand subgroup.

\begin{lem}\label{L:nondirectBn-2S2}
If $n\geq 8$, then there is no strong Gelfand pair of the form $(B_n,K)$, where 
$K$ is a non-direct product index 2 subgroup of $B_{n-2}\times\overline{S_2}$ such that $\gamma_K = S_{n-2}\times S_2$.
\end{lem}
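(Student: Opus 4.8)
The plan is to reduce the statement to the two explicit subgroups identified above and then, for each, exhibit an irreducible representation of $K$ that induces up to $B_n$ with a repeated constituent. First I would invoke the preceding discussion, which shows that the only non-direct product index $2$ subgroups $K \leqslant B_{n-2} \times \overline{S_2}$ satisfying $\gamma_K = S_{n-2} \times S_2$ are $(B_{n-2} \times \overline{S_2})_\delta$ and $(B_{n-2} \times \overline{S_2})_{\varepsilon\delta}$. Thus it suffices to rule out both of these as strong Gelfand subgroups.

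For each candidate $K$, I would let $\nu$ denote the linear character of $B_{n-2} \times \overline{S_2}$ with $\ker\nu = K$ and record its restrictions to the two factors. Using the associator dictionary of Section~\ref{associators} together with $\delta S^{\lambda,\mu} = S^{\mu,\lambda}$, $\varepsilon\delta\, S^{\lambda,\mu} = S^{\mu',\lambda'}$, $\varepsilon\mathbf{\epsilon} = \mathbf{1}$, and $\varepsilon\mathbf{1} = \mathbf{\epsilon}$, I would check that $\nu(S^{\lambda,\mu}\boxtimes D)$ is always inequivalent to $S^{\lambda,\mu}\boxtimes D$, so that no irreducible $B_{n-2}\times\overline{S_2}$ representation is self-associate with respect to $\nu$. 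By Frobenius reciprocity this forces every irreducible representation $V$ of $K$ to satisfy $\ind_K^{B_{n-2}\times\overline{S_2}} V = (S^{\lambda,\mu}\boxtimes\mathbf{1}) \oplus (S^{\sigma,\rho}\boxtimes\mathbf{\epsilon})$ for a suitable $S^{\lambda,\mu}$, where $(\sigma,\rho) = (\mu,\lambda)$ in the $\delta$-case and $(\sigma,\rho) = (\mu',\lambda')$ in the $\varepsilon\delta$-case.

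Next, by transitivity of induction I would apply $\ind^{B_n}_{B_{n-2}\times\overline{S_2}}$ to this two-term decomposition and expand each summand via Corollary~\ref{A:C:D21}. The final and only delicate step is to choose partitions $\lambda,\mu$ with $|\lambda|+|\mu| = n-2$ so that the two multiplicity-free blocks produced this way overlap in a single irreducible, giving it total multiplicity $2$. For the $\delta$-case I would take $\lambda = \mu$ when $n$ is even and $\lambda=(m),\ \mu=(m+1)$ when $n=2m+1$; for the $\varepsilon\delta$-case I would take $\lambda=\mu'$ when $n$ is even and $\lambda=(1^m),\ \mu=(m+1)$ when $n=2m+1$. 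Verifying that these choices do yield a repeated constituent is a short application of the $\overline{S_2}$-Pieri rule and constitutes the technical heart of the proof; once it is done, both candidate subgroups fail for every $n\ge 8$, and since they exhaust the possibilities the lemma follows.
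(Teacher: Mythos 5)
Your proposal is correct and follows essentially the same route as the paper: reduction to the two subgroups $(B_{n-2}\times\overline{S_2})_{\delta}$ and $(B_{n-2}\times\overline{S_2})_{\varepsilon\delta}$, the observation via the associator dictionary that no irreducible of $B_{n-2}\times\overline{S_2}$ is self-associate with respect to $\nu$, the resulting two-term Frobenius-reciprocity decomposition of $\ind_K^{B_{n-2}\times\overline{S_2}}V$, and the final application of Corollary~\ref{A:C:D21} with exactly the same partition choices to produce a constituent of multiplicity $2$. The only difference is one of exposition, not substance.
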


\subsubsection{Non-direct product index 2 subgroups of $B_{n-2}\times D_2$.}

Since $D_2$ is isomorphic to $\Z/2\times \Z/2$, it has four linear characters $\chi_i$ ($i\in \{0,\dots, 3\}$)
with the corresponding irreducible representations denoted by $V_i$ ($i\in \{0,\dots, 3\}$).
These one-dimensional (inequivalent) representations can be obtained by restricting the irreducible representations from $B_2$:
\begin{enumerate}
\item $V_0:=\res^{B_2}_{D_2} S^{(2),\emptyset}$,
\item $V_1:=\res^{B_2}_{D_2} S^{(1^2),\emptyset}$,
\item $V_2\oplus V_3:=\res^{B_2}_{D_2} S^{(1),(1)}$.
\end{enumerate}
Then we know that $\ind^{B_2}_{D_2} V_0 = S^{(2),\emptyset} \oplus S^{\emptyset,(2)}$,
$\ind^{B_2}_{D_2} V_1 = S^{(1^2),\emptyset} \oplus S^{\emptyset,(1^2)}$, and that 
$\ind^{B_2}_{D_2} V_2=\ind^{B_2}_{D_2} V_3= S^{(1),(1)}$. 
Note that the character group of $D_2$, which is isomorphic to $D_2$, 
acts on the set of representations $\{V_i: 0\leq i\leq3\}$ as it does on itself by left multiplication. 
The character $\chi_0$ is the trivial character, and the other three characters $\chi_i$ ($i\in \{1,2,3\}$) have order 2 as associators, satisfying $\chi_i V_0 \cong V_i$.
Note also that, in the notation of Lemma~\ref{L:sgsB2}, the kernel of the character $\chi_1$ is 
the diagonal copy of $F$ in $D_2$, the kernel of $\chi_2$ is $\overline{S_2}$, and 
the kernel of $\chi_3$ is $\overline{S_2}'$.

Let $\nu$ denote the linear character of $B_{n-2}\times D_2$ such that $\ker \nu= K$, where $K$ is a non-direct product index 2 subgroup of $B_{n-2}\times D_2$. 
The restrictions $\nu |_{B_{n-2}\times \{1\}}$ and $\nu |_{\{\id\} \times D_2}$ are nontrivial linear characters.
In particular, the kernel of $\nu |_{B_{n-2}\times \{1\}}$ is one of the following groups: $D_{n-2}\times \{1\}$, $H_{n-2}\times \{1\}$, or $F\wr A_{n-2}\times \{1\}$.

We proceed with the assumption that $\ker \nu |_{B_{n-2}\times \{1\}}= D_{n-2}\times \{1\}$.
Let $W$ be an irreducible representation of $B_{n-2}\times D_2$ of the form 
$W = S^{\lambda,\mu} \boxtimes V$, where $V\in \{V_0,\dots, V_3\}$.
Let $\tilde{V}$ denote $\chi_i V$ for some $i\in \{1,2,3\}$.
Since the character group of $D_2$ is isomorphic to $D_2$, $\chi_i$ does not fix any of the representations, $V_0,\dots, V_3$,
so, we know that $\tilde{V} \neq V$. 
Since $\delta S^{\lambda, \mu}= S^{\mu, \lambda}$ and $\tilde{V} \neq V$, 
we have $\nu ( S^{\lambda,\mu} \boxtimes V) = S^{\mu, \lambda}\boxtimes \tilde{V}$, and furthermore, 
the representations $S^{\lambda,\mu} \boxtimes V$ and $S^{\mu, \lambda}\boxtimes \tilde{V}$ are inequivalent.
Therefore, the restrictions of both of these representations to $K$ give the same irreducible representation,
\[
C:= \res^{B_{n-2}\times D_2}_K S^{\lambda,\mu} \boxtimes V =  \res^{B_{n-2}\times D_2}_K S^{\mu,\lambda} \boxtimes \tilde{V}.
\]
By inducing it to $B_n$, we get 
\begin{align*}
\ind^{B_n}_K C &= \ind^{B_n}_{B_{n-2}\times D_2} \ind^{B_{n-2}\times D_2}_K C \\
&=  \ind^{B_n}_{B_{n-2}\times D_2} (S^{\lambda,\mu}\boxtimes V \oplus S^{\mu, \lambda}\boxtimes  \tilde{V}).
\end{align*}
As we have three possibilities for $\nu |_{\{\id\} \times D_2}$, which are given by $\chi_1,\chi_2$, and $\chi_3$, we proceed to analyze them separately.

First suppose that $\nu |_{\{\id\} \times D_2} = \chi_1$.
To distinguish it from the other two cases, let us denote $\nu$ by $\nu_1$.
Then we see that any irreducible $K$-module $C$ is of the form a) $C= \res^{B_{n-2}\times D_2}_K S^{\lambda,\mu} \boxtimes V_0 =  \res^{B_{n-2}\times D_2}_K S^{\mu,\lambda} \boxtimes V_1$ or b) $C= \res^{B_{n-2}\times D_2}_K S^{\lambda,\mu} \boxtimes V_2 =  \res^{B_{n-2}\times D_2}_K S^{\mu,\lambda} \boxtimes V_3$.
In the former case, using Corollary~\ref{A:C:D2} we have 
\begin{align*}
\ind^{B_n}_K C &=  \ind^{B_n}_{B_{n-2}\times D_2} (S^{\lambda,\mu}\boxtimes V_0 \oplus S^{\mu, \lambda}\boxtimes  V_1)\\
&= \bigoplus_{\tau \in \bar{\bar{\lambda}}} S^{\tau, \mu}
\oplus
\bigoplus_{\rho \in \bar{\bar{\mu}} } S^{\lambda,\rho}
\oplus
\bigoplus_{\alpha \in \tilde{\bar{\mu}}} S^{\alpha, \lambda}
\oplus
\bigoplus_{\beta \in \tilde{\bar{\lambda}} } S^{\mu, \beta}.
\end{align*}
If $n$ is even, then setting $\lambda=(m)$ and $\mu=(m+1,1)$, $S^{(m+1,1),(m+1,1)}$ appears in the above with multiplicity 2.

If $n$ is odd, then the sum is easily seen to be multiplicity-free, by considering parities of the partitions involved.
If $C$ is as in b), then
\[
\ind^{B_n}_K C = \bigoplus_{\tau \in \bar{\lambda},\rho \in \bar{\mu} } S^{\tau, \rho} \oplus \bigoplus_{\alpha \in \bar{\mu},\beta \in \bar{\lambda} } S^{\alpha, \beta}.
\]
which is easily seen to be multiplicity-free by considering parities of the partitions involved.
Therefore, we showed that $\ker \nu_1$ is a strong Gelfand subgroup if and only if $n$ is odd.

Now suppose that $\nu |_{\{\id\} \times D_2} = \chi_2$.
Then we will denote $\nu$ by $\nu_2$.
Then we see that any irreducible $K$-module $C$ is of the form a) $C= \res^{B_{n-2}\times D_2}_K S^{\lambda,\mu} \boxtimes V_0 =  \res^{B_{n-2}\times D_2}_K S^{\mu,\lambda} \boxtimes V_2$ or b) $C= \res^{B_{n-2}\times D_2}_K S^{\lambda,\mu} \boxtimes V_1 =  \res^{B_{n-2}\times D_2}_K S^{\mu,\lambda} \boxtimes V_3$.
In the former case, we have
\begin{align*}
\ind^{B_n}_K C &=  \ind^{B_n}_{B_{n-2}\times D_2} (S^{\lambda,\mu}\boxtimes V_0 \oplus S^{\mu, \lambda}\boxtimes  V_2)\\
&= \bigoplus_{\tau \in \bar{\bar{\lambda}}} S^{\tau, \mu} \oplus \bigoplus_{\rho \in \bar{\bar{\mu}} } S^{\lambda,\rho} \oplus \bigoplus_{\alpha \in \bar{\mu},\beta \in \bar{\lambda} } S^{\alpha, \beta},
\end{align*}
and the latter case is analogous.
If $n$ is odd, then we may set $\lambda=(m)$ and $\mu = (m+1)$, and see that $S^{(m+2),(m+1)}$ appears with multiplicity 2.

If $n$ is even, then the sum is easily seen to be multiplicity-free, by considering parities of the partitions involved.
Therefore, we showed that $\ker \nu_2$ is a strong Gelfand subgroup if and only if $n$ is even.

Now we will consider the final case where $\nu$ is such that $\nu |_{\{\id\} \times D_2} = \chi_3$.
Let us write $\nu_3$ instead of $\nu$. We claim that $\ker \nu_3$ is conjugate to the subgroup $\ker \nu_2$. 
Indeed, let $\eta : B_n \to B_n$ denote the inner automorphism defined by the element $(1,x)$ of $D_{n-2}\times B_2$, 
where $x$ is as defined in part 4 of Lemma~\ref{L:sgsB2}. We know that $x$ has the property that $x \overline{S_2} x^{-1} = \overline{S_2}'$ (but $x\notin D_2$). Therefore, we see that $\ker \nu_1$ and $\ker \nu_2$ are conjugate subgroups of $B_n$. 
In conclusion, as far as our classification up-to-conjugation concerned, in the case of $\nu_3$, we do not get a ``new'' strong Gelfand subgroup.

In the remaining two major cases, where $\ker \nu |_{B_{n-2}\times \{1\}}= H_{n-2}\times \{1\}$
or $\ker \nu |_{B_{n-2}\times \{1\}}= F\wr A_{n-2}\times \{1\}$, our analyses are almost identical
to the case of $\ker \nu |_{B_{n-2}\times \{1\}}= D_{n-2}\times \{1\}$.
In fact, in the former case, for every $n\geq 8$, we find the same number of strong Gelfand subgroups up to conjugacy as 
in the case of $\ker \nu |_{B_{n-2}\times \{1\}}= D_{n-2}\times \{1\}$. 
Nevertheless, in the latter case, we get only one strong Gelfand subgroup up to conjugacy for every $n\geq 8$.
Since all of our arguments in these cases are very similar to the arguments we had for the first case, we omit their details.
The summary of our results are as follows.

\begin{lem}\label{L:nondirectBn-2D2}
Let $n\geq 8$, and let $K$ be a non-direct product index 2 subgroup of $B_{n-2}\times D_2$ with $\gamma_K = S_{n-2}\times S_2$. 
Let $\nu$ denote the linear character of $B_{n-2}\times D_2$ such that $K =\ker \nu$.
If $\ker \nu|_{B_{n-2}\times \{\id\} }$ is equal to either $D_{n-2} \times \{1\}$ or $H_{n-2}\times \{1\}$, then 
\begin{enumerate}
\item  
If $n$ is odd, then there is one such strong Gelfand subgroup, with $\nu |_{\{\id\} \times D_2} = \chi_1$.

\item If $n$ is even, then there are two such strong Gelfand subgroups, with $\nu |_{\{\id\} \times D_2} = \chi_2$ or $\chi_3$, respectively.
These are conjugate to each other.
\end{enumerate}
If $\ker \nu|_{B_{n-2}\times \{\id\} } = F\wr A_{n-2}\times \{1\}$, then there are two such strong Gelfand subgroups, with $\nu |_{\{\id\} \times D_2} = \chi_2$ or $\chi_3$, respectively. These are conjugate to each other.
\end{lem}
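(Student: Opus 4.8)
The plan is to organize the possibilities for $\nu$ according to its two restrictions and to reduce each to a decomposition already available from Corollary~\ref{A:C:D2} and Lemma~\ref{L:Branching2}. Since $K=\ker\nu$ is a non-direct product index $2$ subgroup, $\nu$ restricts to a nontrivial linear character on each factor (this is case (H3) from the start of Subsection~\ref{SS:3rd}). As $B_{n-2}$ has exactly the three nontrivial linear characters $\varepsilon,\delta,\varepsilon\delta$, with kernels $F\wr A_{n-2},D_{n-2},H_{n-2}$, and $D_2$ has exactly the three order-$2$ characters $\chi_1,\chi_2,\chi_3$, the pair $(\nu|_{B_{n-2}\times\{\id\}},\nu|_{\{\id\}\times D_2})$ ranges a priori over nine options. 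The first thing I would check is that the hypothesis $\gamma_K=S_{n-2}\times S_2$ eliminates exactly the combination $(\varepsilon,\chi_1)$: because $\varepsilon$ is trivial on $\overline{F^{n-2}}$ and $\chi_1$ is trivial on the diagonal copy of $F$ in $D_2$, the resulting $K$ projects onto a proper, index $2$, subgroup of $S_{n-2}\times S_2$. This is precisely what makes the $F\wr A_{n-2}$ case behave differently from the other two.

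Next I would invoke the Clifford theory of Section~\ref{associators}. The decisive point is that no irreducible $S^{\lambda,\mu}\boxtimes V$ of $B_{n-2}\times D_2$ is self-associate with respect to $\nu$: each $\chi_i$ ($i\neq 0$) permutes $\{V_0,V_1,V_2,V_3\}$ without fixed points, so already the second tensor factor changes and $\nu\cdot(S^{\lambda,\mu}\boxtimes V)\neq S^{\lambda,\mu}\boxtimes V$, while the action of $\delta$ (resp.\ $\varepsilon\delta$, $\varepsilon$) carries $S^{\lambda,\mu}$ to $S^{\mu,\lambda}$ (resp.\ $S^{\mu',\lambda'}$, $S^{\lambda',\mu'}$), which identifies $\nu W$ explicitly. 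Consequently, by Frobenius reciprocity every irreducible representation $C$ of $K$ satisfies $\ind^{B_{n-2}\times D_2}_K C=W\oplus\nu W$ for inequivalent irreducibles $W$ and $\nu W$ of $B_{n-2}\times D_2$.

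Then, for each admissible pair, I would apply $\ind^{B_n}_{B_{n-2}\times D_2}$ to $W\oplus\nu W$ and decompose into irreducible $B_n$-constituents using Corollary~\ref{A:C:D2} together with Lemma~\ref{L:Branching2}, handling the $\varepsilon$ subcases exactly as in Subsection~\ref{S:case9}. Multiplicity-freeness then reduces to a parity comparison of $|\lambda|$ and $|\mu|$ between the two summands $\ind W$ and $\ind(\nu W)$. I would record the explicit witnesses in the failing instances (for $\ker\nu|_{B_{n-2}}=D_{n-2}$ with $\nu|_{D_2}=\chi_1$ and $n$ even, $\lambda=(m)$, $\mu=(m+1,1)$ gives $S^{(m+1,1),(m+1,1)}$ with multiplicity $2$; for $\nu|_{D_2}=\chi_2$ and $n$ odd, $\lambda=(m)$, $\mu=(m+1)$ gives $S^{(m+2),(m+1)}$ with multiplicity $2$), and confirm multiplicity-freeness in the surviving instances by the same parity check. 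This yields, for $\ker\nu|_{B_{n-2}\times\{\id\}}\in\{D_{n-2}\times\{1\},H_{n-2}\times\{1\}\}$, the strong Gelfand property precisely in the $(\chi_1,\,n\text{ odd})$ and $(\chi_2\text{ or }\chi_3,\,n\text{ even})$ cases, and for $\ker\nu|_{B_{n-2}\times\{\id\}}=(F\wr A_{n-2})\times\{1\}$, precisely in the $\chi_2,\chi_3$ cases for every $n\geq 8$. The $H_{n-2}$ computation differs from the $D_{n-2}$ one only by conjugating the relevant partitions, so the counts coincide.

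Finally, to pass to counts up to conjugacy I would use the inner automorphism of $B_n$ induced by $(1,x)$, where $x$ is the element of Lemma~\ref{L:sgsB2}, part~4, with $x\overline{S_2}x^{-1}=\overline{S_2}'$; this fixes $B_{n-2}$ and interchanges $\chi_2$ with $\chi_3$, so by Remark~\ref{R:iff1} (cf.\ Remark~\ref{R:conjbyaut}) the $\chi_2$ and $\chi_3$ subgroups are conjugate and simultaneously strong Gelfand. The main obstacle I anticipate is the parity bookkeeping across all admissible subcases, together with constructing a correct witnessing pair of partitions for each non-multiplicity-free instance; the Clifford-theory step and the conjugacy reduction are comparatively routine given the tools already assembled.
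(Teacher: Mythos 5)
Your proposal is correct and follows essentially the same route as the paper's proof: you rule out self-associate representations via the fixed-point-free action of $\chi_1,\chi_2,\chi_3$ on $\{V_0,\dots,V_3\}$, decompose $\ind^{B_n}_{B_{n-2}\times D_2}(W\oplus \nu W)$ via Corollary~\ref{A:C:D2}, settle each case by parity, exhibit the identical witnesses ($\lambda=(m),\ \mu=(m+1,1)$ for the $\chi_1$, $n$ even failure and $\lambda=(m),\ \mu=(m+1)$ for the $\chi_2$, $n$ odd failure), and identify the $\chi_2$/$\chi_3$ subgroups by conjugation with $(1,x)$ as in Lemma~\ref{L:sgsB2}. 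Your explicit observation that the hypothesis $\gamma_K=S_{n-2}\times S_2$ excludes precisely the combination $(\varepsilon,\chi_1)$ is correct and usefully fills in a detail the paper leaves implicit when it dismisses the $F\wr A_{n-2}$ case as ``almost identical.''
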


\subsubsection{Non-direct product index 2 subgroups of $B_{n-2}\times H_2$.}

$H_2$ is isomorphic to $\Z/4$, so, it has four linear characters $\chi_i$ ($i\in \{0,\dots, 3\}$)
with the corresponding irreducible representations denoted by $V_i$ ($i\in \{0,\dots, 3\}$).
We denote by $\chi_0$ the trivial character, and we denote by $\chi_1$ a generator so that $\chi_i = \chi_1^i$ for $i\in \{1,2,3\}$. 
As in the case of $D_2$, we will express the (inequivalent) irreducible representations of $H_2$ 
by restricting the irreducible representations from $B_2$:
\begin{enumerate}
\item $V_0:=\res^{B_2}_{H_2} S^{(2),\emptyset}$,
\item $V_1\oplus V_3:=\res^{B_2}_{H_2} S^{(1),(1)}$,
\item $V_2:=\res^{B_2}_{H_2} S^{(1^2),\emptyset}$.
\end{enumerate}
Then we know that $\ind^{B_2}_{H_2} V_0 = S^{(2),\emptyset} \oplus S^{\emptyset,(1^2)}$,
$\ind^{B_2}_{H_2} V_2 = S^{(1^2),\emptyset} \oplus S^{\emptyset,(2)}$, and that 
$\ind^{B_2}_{H_2} V_1=\ind^{B_2}_{H_2} V_3= S^{(1),(1)}$. 
The character group of $H_2$, which is isomorphic to $H_2$, 
acts on the set of representations $\{V_i: i\in \{0,\dots, 3\}\}$ as it acts on itself by left multiplication.

Let $\nu$ denote the linear character of $B_{n-2}\times H_2$ such that $\ker \nu= K$, where $K$ is a non-direct product index 2 subgroup
of $B_{n-2}\times H_2$. 
The restrictions $\nu |_{B_{n-2}\times \{1\}}$ and $\nu |_{\{\id\} \times H_2}$ are nontrivial linear characters.
In particular, the kernel of $\nu |_{B_{n-2}\times \{1\}}$ is one of the following groups: $D_{n-2}\times \{1\}$, $H_{n-2}\times \{1\}$, or $F\wr A_{n-2}\times \{1\}$.
Let $\chi_i$ be the nontrivial character of $H_2$ such that $\nu |_{\{\id\} \times H_2} = \chi_i$. 
Since $\nu |_{\{\id\} \times H_2}$ has order 2, we have $\chi_i = \chi_2$.

First, let us assume that $\ker \nu |_{B_{n-2}\times \{1\}}= D_{n-2}\times \{1\}$.
Let $W$ be an irreducible representation of $B_{n-2}\times H_2$ of the form $W = S^{\lambda,\mu} \boxtimes V$, where $V\in \{V_0,\dots, V_3\}$.
Let $\tilde{V}$ denote $\chi_2 V$. 
Since $\chi_2$ does not fix any of the representations, $V_0,\dots, V_3$, $W$ is not self-associate representation with respect to $\nu$. 
In particular, we have $\nu ( S^{\lambda,\mu} \boxtimes V) = S^{\mu, \lambda}\boxtimes \tilde{V}$.
Furthermore, the representations $S^{\lambda,\mu} \boxtimes V$ and $S^{\mu, \lambda}\boxtimes \tilde{V}$ are inequivalent. 
Therefore, the restrictions of both of these representations to $K$ give the same irreducible representation,
\[
E:= \res^{B_{n-2}\times B_2}_K S^{\lambda,\mu} \boxtimes V =  \res^{B_{n-2}\times B_2}_K S^{\mu,\lambda} \boxtimes \tilde{V}.
\]
By inducing it to $B_n$, we get 
\begin{align*}
\ind^{B_n}_K E &= \ind^{B_n}_{B_{n-2}\times H_2} \ind^{B_{n-2}\times H_2}_K E  \\
&=  \ind^{B_n}_{B_{n-2}\times H_2} (S^{\lambda,\mu}\boxtimes V \oplus S^{\mu, \lambda}\boxtimes  \tilde{V}).
\end{align*}
For the action of $\chi_2$ on $\{V_0,\dots, V_3\}$ we have $\chi_2 V_0 \cong V_2$ and $\chi_2 V_1=V_3$.

We proceed with the assumption that $V= V_0$ and $\tilde{V} = V_2$ in $E$. 
Then, by Corollary~\ref{A:C:H2} we have 
\begin{align*}
\ind^{B_n}_K E &=  \ind^{B_n}_{B_{n-2}\times H_2} (S^{\lambda,\mu}\boxtimes V_0 \oplus S^{\mu, \lambda}\boxtimes  V_2)\\
&= \bigoplus_{\tau \in \bar{\bar{\lambda}}} S^{\tau, \mu}
\oplus
\bigoplus_{\rho \in \tilde{\bar{\mu}} } S^{\lambda,\rho}
\oplus
\bigoplus_{\alpha \in \tilde{\bar{\mu}}} S^{\alpha, \lambda}
\oplus
\bigoplus_{\beta \in \bar{\bar{\lambda}} } S^{\mu, \beta}.
\end{align*}
If $n$ is even, then we may set $\lambda=(m-1)$ and $\mu = (m,1)$, and see that $S^{(m,1),(m,1)}$ appears with multiplicity 2.
If $n$ is odd, then the sum is easily seen to be multiplicity-free, by considering parities of the partitions involved.

We now proceed with the assumption that $V= V_1$ and $\tilde{V} = V_3$ in $E$. 
Then, by Corollary~\ref{A:C:H2} we have 
\begin{align*}
\ind^{B_n}_K E &=  \ind^{B_n}_{B_{n-2}\times H_2} (S^{\lambda,\mu}\boxtimes V_1 \oplus S^{\mu, \lambda}\boxtimes  V_3)\\
&= \bigoplus_{\tau \in {\bar{\lambda}}, \rho \in {\bar{\mu}}} S^{\tau, \rho}
\oplus
 \bigoplus_{\rho \in {\bar{\mu}}, \tau \in {\bar{\lambda}}} S^{\rho,\tau}
\end{align*}
If $n$ is even, then we may set $\lambda=\mu$. Then the sum is not multiplicity-free. 
If $n$ is odd, then the sum is easily seen to be multiplicity-free, by considering parities of the partitions involved.

The case where $\ker \nu |_{B_{n-2}\times \{1\}}= H_{n-2}\times \{1\}$ is almost identical, as is the result in that case.
The case where $\ker \nu |_{B_{n-2}\times \{1\}}= F\wr A_{n-2}\times \{1\}$ is almost identical in proof, but the result in that case is that there are no such strong Gelfand subgroups.
We summarize these below.

\begin{lem}\label{L:nondirectBn-2H2}
Let $n\geq 8$, and let $K$ be a non-direct product index 2 subgroup of $B_{n-2}\times H_2$ with $\gamma_K = S_{n-2}\times S_2$.
If $n$ is even, then $K$ is not a strong Gelfand subgroup.
If $n$ is odd, there are two such subgroups $K$ that are strong Gelfand subgroups.
\end{lem}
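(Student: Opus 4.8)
The plan is to run the same Clifford-theoretic reduction used for the $D_2$ case above, but to organize the three possible restrictions of the defining character into one argument. First I would observe that a non-direct product index $2$ subgroup $K$ of $B_{n-2}\times H_2$ with $\gamma_K = S_{n-2}\times S_2$ is precisely the kernel of a linear character $\nu$ of $B_{n-2}\times H_2$ for which both restrictions $\nu|_{B_{n-2}\times \{1\}}$ and $\nu|_{\{\id\}\times H_2}$ are nontrivial; this is case (H3) of the dichotomy at the start of Subsection~\ref{SS:3rd}. Since $\nu$ takes values in $\{\pm 1\}$ and $H_2\cong\Z/4$, the restriction $\nu|_{\{\id\}\times H_2}$ must be the unique order-$2$ character $\chi_2$, while $\nu|_{B_{n-2}\times \{1\}}$ is one of $\delta,\varepsilon\delta,\varepsilon$, with kernels $D_{n-2},H_{n-2},F\wr A_{n-2}$ respectively. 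This leaves exactly three candidate subgroups to test.

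For each candidate I would describe the irreducible representations of $K$ by Clifford theory exactly as in the preceding cases: since $\chi_2$ permutes $\{V_0,\dots,V_3\}$ without fixing any $V_i$, no irreducible $S^{\lambda,\mu}\boxtimes V$ of $B_{n-2}\times H_2$ is self-associate with respect to $\nu$, so every irreducible of $K$ is obtained by restriction and $\ind^{B_{n-2}\times H_2}_K \res^{B_{n-2}\times H_2}_K (S^{\lambda,\mu}\boxtimes V) = (S^{\lambda,\mu}\boxtimes V)\oplus\nu(S^{\lambda,\mu}\boxtimes V)$. Transitivity of induction then reduces testing multiplicity-freeness of $\ind^{B_n}_K$ to decomposing $\ind^{B_n}_{B_{n-2}\times H_2}$ of this two-term sum, which I would expand summand-by-summand using Corollary~\ref{A:C:H2}.

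For $\nu|_{B_{n-2}\times\{1\}}=\delta$ the calculation is the one carried out in the discussion preceding the statement: using $\delta S^{\lambda,\mu}=S^{\mu,\lambda}$ and splitting into the $V_0/V_2$ and $V_1/V_3$ families, one obtains multiplicity $2$ when $n$ is even (for instance $\lambda=(m-1),\mu=(m,1)$ in the first family, or $\lambda=\mu$ in the second), whereas for $n$ odd every resulting sum is multiplicity-free because $n-2$ odd forces $|\lambda|\neq|\mu|$, so the partition labels of the distinct families can never coincide. The case $\nu|_{B_{n-2}\times\{1\}}=\varepsilon\delta$ is verbatim after replacing $\delta S^{\lambda,\mu}=S^{\mu,\lambda}$ by $\varepsilon\delta S^{\lambda,\mu}=S^{\mu',\lambda'}$, and it yields the same dichotomy.

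The remaining case $\nu|_{B_{n-2}\times\{1\}}=\varepsilon$ is where I expect the only real subtlety, and it is the main obstacle: here $\varepsilon S^{\lambda,\mu}=S^{\lambda',\mu'}$ preserves the sizes $|\lambda|$ and $|\mu|$ and does not swap the two factors, so the parity obstruction that saved the odd case disappears. Instead I would force a coincidence directly: in the $V_1/V_3$ family both $V_1$ and $V_3$ induce to the same $B_2$-representation $S^{(1),(1)}$, so choosing $\lambda$ and $\mu$ both self-conjugate (e.g. $\lambda=(1)$ and $\mu$ a self-conjugate partition of $n-3$, which exists for all $n\geq 8$) makes the two Pieri expansions for $S^{\lambda,\mu}\boxtimes V_1$ and $S^{\lambda',\mu'}\boxtimes V_3$ identical, giving multiplicity $2$ for every parity of $n$; hence no subgroup with $\ker\nu|_{B_{n-2}\times\{1\}}=F\wr A_{n-2}$ is strong Gelfand. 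Collecting the three cases gives the statement: for $n$ even all three candidates fail, and for $n$ odd precisely the $D_{n-2}$ and $H_{n-2}$ candidates survive. A final point to verify is that these two survivors are genuinely distinct up to conjugacy in $B_n$; this holds because the index-$2$ subgroup $\ker\nu|_{B_{n-2}\times\{1\}}$ of $B_{n-2}$ is a conjugation invariant, and $D_{n-2}$ and $H_{n-2}$ are distinct normal subgroups of $B_{n-2}$.
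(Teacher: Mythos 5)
Your proof is correct and follows essentially the same route as the paper's: the same identification of the three candidate kernels (forcing $\nu|_{\{\id\}\times H_2}=\chi_2$ and $\nu|_{B_{n-2}\times\{1\}}\in\{\delta,\varepsilon\delta,\varepsilon\}$), the same Clifford-theoretic observation that $\chi_2$ fixes no $V_i$ so every irreducible of $K$ restricts from $B_{n-2}\times H_2$, the same expansion via Corollary~\ref{A:C:H2}, the same even-$n$ witnesses ($\lambda=(m-1),\mu=(m,1)$ and $\lambda=\mu$), and the same parity argument for odd $n$. Your only divergence is a welcome one: where the paper dismisses the $\ker\nu|_{B_{n-2}\times\{1\}}=F\wr A_{n-2}$ case as ``almost identical in proof,'' you supply an explicit witness (self-conjugate $\lambda$ and $\mu$ in the $V_1/V_3$ family, available since self-conjugate partitions of $n-3$ exist for all $n\geq 8$), which is valid; just note that your closing conjugacy-invariance remark is both unnecessary (the two surviving subgroups are kernels of distinct characters, hence distinct) and not quite justified as stated, since conjugation in $B_n$ need not preserve the factor $B_{n-2}\times\{1\}$.
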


\subsubsection{Non-direct product index 2 subgroups of $B_{n-2}\times B_2$.}

Let $\nu$ denote the linear character of $B_{n-2}\times B_2$ such that $\ker \nu= K$, where $K$ is a non-direct product index 2 subgroup
of $B_{n-2}\times B_2$. 
The restrictions $\nu |_{B_{n-2}\times \{1\}}$ and $\nu |_{\{\id\} \times B_2}$ are nontrivial linear characters.
Each factor can be one of the 3 nontrivial linear characters of the corresponding hyperoctahedral group.
Therefore, we have nine cases.

1) We start with the case $\ker \nu |_{B_{n-2}\times \{1\}}=D_{n-2}\times \{1\}$ and $\ker \nu |_{\{\id\} \times B_2}=\{\id\}\times D_2$. 
Since $K$ is an index 2 subgroup of $B_{n-2}\times B_2$, for an irreducible representation $W$ of $K$,
$\ind_K^{B_{n-2}\times B_2} W$ is either irreducible, or it is the direct sum of two inequivalent irreducible representations $V_1$ and $V_2$ 
such that $\res^{B_{n-2}\times B_2}_K V_1 = \res^{B_{n-2}\times B_2}_{B_2} V_2=W$.
Since $(B_n, B_{n-2}\times B_2)$ is a strong Gelfand pair (see Theorem~\ref{T:nonabelian}), 
only in the second case it may happen that $\ind_K^{B_n} W = \ind^{B_n}_{B_{n-2}\times B_2} V_1\oplus V_2$ is not multiplicity-free.
So, we will look more closely at the second case.
The irreducible representations $V_1$ and $V_2$ are associate representations with respect to $\nu$.
Let $V_1 = S^{\lambda,\mu}\boxtimes S^{\sigma, \tau}$, where $\lambda$ and $\mu$ are two partitions such that $|\lambda|+|\mu| = n-2$ and 
$\sigma,\tau$ are two partitions such that $|\sigma| + |\tau | =2$. 
Then $V_2 = \nu V_1 = S^{\mu,\lambda}\boxtimes S^{\tau, \sigma}$, and therefore, we have 
\begin{align}\label{A:howW}
\ind^{B_n}_K W = \ind^{B_n}_{B_{n-2}\times B_2} (S^{\lambda,\mu}\boxtimes S^{\sigma, \tau} \oplus S^{\mu,\lambda}\boxtimes S^{\tau, \sigma})
\end{align}

Then it follows from Lemma~\ref{L:Branching2} that (\ref{A:howW}) is multiplicity-free if and only if $n$ is odd.

In the following five cases,  
\begin{enumerate}
\item[] 2) $\ker \nu |_{B_{n-2}\times \{1\}}=D_{n-2}\times \{1\}$ and $\ker \nu |_{\{\id\} \times B_2}=\{\id\}\times H_2$,
\item[] 3) $\ker \nu |_{B_{n-2}\times \{1\}}=H_{n-2}\times \{1\}$ and $\ker \nu |_{\{\id\} \times B_2}=\{\id\}\times D_2$,
\item[] 4) $\ker \nu |_{B_{n-2}\times \{1\}}=H_{n-2}\times \{1\}$ and $\ker \nu |_{\{\id\} \times B_2}=\{\id\}\times H_2$,
\item[] 5) $\ker \nu |_{B_{n-2}\times \{1\}}=H_{n-2}\times \{1\}$ and $\ker \nu |_{\{\id\} \times B_2}=\{\id\}\times F \wr A_2$,
\item[] 6) 
$\ker \nu |_{B_{n-2}\times \{1\}}=D_{n-2}\times \{1\}$ and $\ker \nu |_{\{\id\} \times B_2}=\{\id\}\times F\wr A_2$,
\end{enumerate}
we arrive at the same conclusion by similar arguments, so, we omit their details. 
Also by using similar arguments, it is easily checked that the following three cases are not strong Gelfand, 
\begin{enumerate}
\item[] 7) $\ker \nu |_{B_{n-2}\times \{1\}}=F\wr A_{n-2}\times \{1\}$ and $\ker \nu |_{\{\id\} \times B_2}=\{\id\}\times H_2$,
\item[] 8) $\ker \nu |_{B_{n-2}\times \{1\}}=F\wr A_{n-2}\times \{1\}$ and $\ker \nu |_{\{\id\} \times B_2}=\{\id\}\times D_2$,
\item[] 9) $\ker \nu |_{B_{n-2}\times \{1\}}=F\wr A_{n-2}\times \{1\}$ and $\ker \nu |_{\{\id\} \times B_2}=\{\id\}\times F \wr A_2$.
\end{enumerate}

Here is the result of this subsection: 
\begin{lem}\label{L:nondirectBn-2B2}
Let $n\geq 8$.
If $n$ is odd, then there are six non-direct product, index 2, strong Gelfand subgroups of 
$B_{n-2}\times B_2$ such that $\gamma_K = S_{n-2}\times S_2$.
If $n$ is even, then there are no strong Gelfand subgroups.
\end{lem}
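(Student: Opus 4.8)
The plan is to classify the subgroups $K=\ker\nu$ explicitly through the linear character $\nu$ and then test the strong Gelfand property on each by inducing the irreducibles of $K$ up to $B_n$. Since $\nu$ restricts nontrivially to each of $B_{n-2}\times\{1\}$ and $\{\id\}\times B_2$, and each hyperoctahedral factor has exactly three nontrivial linear characters $\varepsilon,\delta,\varepsilon\delta$, there are precisely $3\times 3=9$ such characters $\nu$, hence nine candidate subgroups $K$. I would index them, as in the cases $1)$--$9)$ above, by recording which of $D_{n-2},H_{n-2},F\wr A_{n-2}$ equals $\ker(\nu|_{B_{n-2}\times\{1\}})$ and which of $D_2,H_2,F\wr A_2$ equals $\ker(\nu|_{\{\id\}\times B_2})$.

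For each case I would run the computation carried out in full for case $1)$. First, using the associator dictionary of Section~\ref{associators}, I would verify that $\nu$ fixes no irreducible of $B_{n-2}\times B_2$: for $W=S^{\lambda,\mu}\boxtimes S^{\sigma,\tau}$ the representation $\nu W$ always differs from $W$, because the two restricted characters swap and/or transpose partitions in incompatible ways. Consequently every irreducible $W$ of $K$ is the restriction of exactly two associate irreducibles, and Frobenius reciprocity gives $\ind^{B_{n-2}\times B_2}_K W = V_1\oplus\nu V_1$ with $V_1=S^{\lambda,\mu}\boxtimes S^{\sigma,\tau}$. Inducing further to $B_n$ and applying the Pieri rules of Lemma~\ref{L:Branching2} to each of the two summands—recall $(B_n,B_{n-2}\times B_2)$ is already a strong Gelfand pair by Theorem~\ref{T:nonabelian}, so each summand alone is multiplicity-free—the only possible repetition is a constituent shared between the decompositions of $V_1$ and of $\nu V_1$.

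The whole dichotomy then reduces to a parity count on $|\lambda|$ and $|\mu|$. When $n$ is odd, $n-2$ is odd, forcing $|\lambda|\neq|\mu|$; since applying $\nu$ in cases $1)$--$6)$ swaps $\lambda$ and $\mu$ (possibly also transposing), the two summands occupy different first-coordinate size sectors and cannot share a constituent, so the induced representation is multiplicity-free. When $n$ is even one may instead take $\lambda$ equal (or conjugate-equal) to $\mu$, producing an explicit doubled constituent; I would record one such collision in each case, of the $S^{(m,1),(m,1)}$ type used in the surrounding lemmas. This confirms that cases $1)$--$6)$, where $\ker(\nu|_{B_{n-2}})\in\{D_{n-2},H_{n-2}\}$, are strong Gelfand precisely when $n$ is odd.

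The remaining point, and the step I expect to demand the most care, is cases $7)$--$9)$, where $\ker(\nu|_{B_{n-2}})=F\wr A_{n-2}$, i.e.\ $\nu|_{B_{n-2}}=\varepsilon$. Here $\nu$ acts on the first factor by $\varepsilon S^{\lambda,\mu}=S^{\lambda',\mu'}$, which preserves $|\lambda|$, so the parity argument no longer separates the two summands and a repeated constituent persists for both parities; I would exhibit explicit partition witnesses to certify that none of these three subgroups is strong Gelfand for any $n\geq 8$. Assembling the counts—six strong Gelfand subgroups (cases $1)$--$6)$) when $n$ is odd, and none when $n$ is even—then yields the stated result.
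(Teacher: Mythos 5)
Your overall strategy coincides with the paper's: enumerate the nine candidate characters $\nu$ by their restrictions to the two factors, reduce everything to collisions between the constituents of $\ind^{B_n}_{B_{n-2}\times B_2} V_1$ and of $\ind^{B_n}_{B_{n-2}\times B_2} \nu V_1$ via Lemma~\ref{L:Branching2}, settle the six cases with $\nu|_{B_{n-2}}\in\{\delta,\varepsilon\delta\}$ by a parity count on $|\lambda|,|\mu|$, and dispose of the three cases with $\nu|_{B_{n-2}}=\varepsilon$ by explicit witnesses. The conclusions you reach are the right ones, but two of your intermediate steps would fail as stated.

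First, the claim that ``$\nu$ fixes no irreducible of $B_{n-2}\times B_2$'' is false, and the verification you propose breaks down exactly where you later need to go beyond the odd-$n$ situation in cases 1)--6). For instance, in case 1) with $n$ even, $W=S^{\lambda,\lambda}\boxtimes S^{(1),(1)}$ is self-associate (both $\delta_{B_{n-2}}$ and $\delta_{B_2}$ fix it), and in cases 7)--9) the representations $S^{\lambda,\mu}\boxtimes S^{(1),(1)}$ with $\lambda=\lambda'$ and $\mu=\mu'$ are self-associate for \emph{every} parity of $n$. Hence your assertion that ``every irreducible $W$ of $K$ is the restriction of exactly two associate irreducibles'' is unjustified: the irreducibles of $K$ obtained by splitting a self-associate representation are missed. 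The repair is precisely the dichotomy the paper states at the outset of its case 1): for an irreducible $W$ of $K$, $\ind^{B_{n-2}\times B_2}_K W$ is \emph{either} irreducible \emph{or} a sum of two associates, and the irreducible alternative is automatically harmless because $(B_n,B_{n-2}\times B_2)$ is a strong Gelfand pair (Theorem~\ref{T:nonabelian}). With that observation inserted, your positive argument for odd $n$ in cases 1)--6) is complete, since there the no-self-associate claim happens to be true ($\lambda=\mu$ or $\lambda=\mu'$ is impossible when $|\lambda|+|\mu|=n-2$ is odd).

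Second, your witness prescription for even $n$, ``take $\lambda$ equal (or conjugate-equal) to $\mu$,'' does not produce a doubled constituent, at least in case 1). If $\lambda=\mu$ and the $B_2$-factor is $S^{(1),(1)}$, then $V_1$ is self-associate, so no doubling occurs at all; if instead the $B_2$-factor is $S^{(2),\emptyset}$, then $V_1$ and $\nu V_1$ induce into the disjoint size-sectors $(|\lambda|+2,|\lambda|)$ and $(|\lambda|,|\lambda|+2)$, again with no common constituent. The correct witnesses use \emph{distinct} partitions of equal size that share an upper neighbour in Young's lattice: for example $\lambda=(m)$, $\mu=(m-1,1)$ with $B_2$-factor $S^{(1),(1)}$, which makes $S^{(m,1),(m,1)}$ appear with multiplicity $2$ in $\ind^{B_n}_K$. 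This is indeed the ``$S^{(m,1),(m,1)}$ type'' collision you cite from the neighbouring lemmas, but it does not arise from $\lambda=\mu$; a corresponding adjustment is needed in each of cases 2)--6). Once both points are corrected, the proof matches the paper's.
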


\subsubsection{Non-direct product index 4 normal subgroups of $B_{n-2}\times B_2$.}\label{S:index4BB}

Let $n\geq 8$ and let $K$ be a strong Gelfand subgroup of $B_{n-2}\times B_2$ such that $\gamma_K = S_{n-2}\times S_2$. 
If $K$ is a non-direct product index 4 subgroup of $B_{n-2}\times B_2$, then 
$K$ is a subgroup of index 2 of a group $K'$, where $K'$ is a subgroup of index 2 in $B_{n-2}\times B_2$.
Then $K'$ is either a direct product of the form 1) $D_{n-2}\times B_2$, 2) $H_{n-2}\times B_2$, 3) $B_{n-2}\times D_2$, 4) $B_{n-2}\times H_2$,
or 5) $K'$ is a non-direct product subgroup of index 2 in $B_{n-2}\times B_2$.
In cases 3) and 4), we have  already found all such strong Gelfand subgroups, in Lemmas~\ref{L:nondirectBn-2D2} and \ref{L:nondirectBn-2H2}, respectively.
In cases 1), 2), and 5), we determined in Lemmas~\ref{L:BnDn-2B2}, \ref{L:BnHn-2B2}, and \ref{L:nondirectBn-2B2}, respectively, that in each case, $K'$ is strong Gelfand if and only $n$ is odd.
Thus, it remains to check index 2 subgroups of these 3 groups when $n$ is odd.

We proceed with the assumption that $n$ is odd.

{\em 1) $K$ is a non-direct product index 2 subgroup of $K':=D_{n-2}\times B_2$.} 

Let $\nu$ be the linear character of $D_{n-2}\times B_2$ that defines $K$ in $K'$.
Then $\nu |_{D_{n-2}\times \{\id \}}$ and $\nu |_{\{\id\}\times B_2}$ are linear characters of $D_{n-2}$ and $B_2$, respectively. 
In particular, the linear character $\nu |_{D_{n-2}\times \{1 \}}$ is given by $\varepsilon |_{D_{n-2}\times \{1 \}}$.

Let $U\boxtimes S^{a,b}$ be an irreducible representation of $K'$, 
where $U$ is an irreducible representation of $D_{n-2}$ and $S^{a,b}$ is an irreducible representation of $B_2$; here, 
$a$ and $b$ are two integer partitions such that $|a| + |b| = 2$. 
Since $n$ is odd, $U$ is of the form $\res^{B_{n-2}}_{D_{n-2}} S^{\lambda,\mu}$, 
where $\lambda$ and $\mu$ are two (distinct) partitions such that $|\lambda|+|\mu| = n-2$.

Let $V$ be an irreducible constituent of $\res^{K'}_K S^{\lambda,\mu} \boxtimes S^{a,b}$. 
Every irreducible representation of $K$ arises this way. 
We proceed with the assumptions that $\lambda \notin \{\mu, \mu'\}$ and $a=b=(1)$.
We now look at the induced representation $\ind^{B_n}_K V =\ind^{B_n}_{K'} \ind^{K'}_K V$.
Since the restriction $\nu |_{D_{n-2}\times \{\id \}}$ is given by $\varepsilon$, we see that 
$\ind^{K'}_K V = {S^{\lambda,\mu}} \boxtimes S^{(1),(1)} \oplus {S^{\lambda',\mu'}} \boxtimes S^{(1),(1)}$.
Hence, by using the arguments that led us to (\ref{A:BnDn-2odd}), we obtain 
\begin{align}\label{A:BnDn-2oddsecondtime}
\ind^{B_n}_K V &= \ind^{B_n}_{K'} {S^{\lambda,\mu}} \boxtimes S^{(1),(1)} \oplus \ind^{B_n}_{K'} {S^{\lambda',\mu'}} \boxtimes S^{(1),(1)} \notag \\
&= \left(\bigoplus_{\tau \in \bar{\lambda},\rho \in \bar{\mu} } S^{\tau, \rho} \right) \oplus 
\left(\bigoplus_{\rho \in \bar{\mu},\tau \in \bar{\lambda} } S^{\rho, \tau} \right) \oplus 
 \left(\bigoplus_{\tau \in \bar{\lambda'},\rho \in \bar{\mu'} } S^{\tau, \rho} \right) \oplus 
\left(\bigoplus_{\rho \in \bar{\mu'},\tau \in \bar{\lambda'} } S^{\rho, \tau} \right).
\end{align}
For $\lambda := (r, 1^{r})$ and $\mu := (s,1^{s-1})$, where $2r+2s-1 = n-2$, we see that the multiplicity of 
$S^{(r+1,1^r),(s,1^{s})}$ in (\ref{A:BnDn-2oddsecondtime}) is 2. 
Hence, $K$ is not a strong Gelfand subgroup of $B_n$.\\

{\em 2) $K$ is a non-direct product index 2 subgroup of $K':=H_{n-2}\times B_2$.} 

This case develops essentially in the same way as the previous case does; $K$ is not a strong Gelfand subgroup of $B_n$. 
We omit the details for brevity.\\

{\em 3) $K$ is an index 2 subgroup of a non-direct product index 2 subgroup $K'$ of $B_{n-2}\times B_2$.} 

Let $\nu'$ be the linear character of $B_{n-2}\times B_2$ that defines $K'$. 
Since $K'$ is an index 2 subgroup of $B_{n-2}\times B_2$, 
the linear characters $\nu' |_{B_{n-2}}$ and $\nu'|_{B_2}$ are in $\{ \delta_{B_{n-2}}, (\varepsilon \delta)_{B_{n-2}}\}$ 
and $\{ \delta_{B_2}, (\varepsilon \delta)_{B_2}\}$, respectively. 
Let $\nu$ be the linear character of $K'$ such that $K=\ker \nu$.
Then we see that $\nu |_{K' \cap (B_{n-2}\times \{1\})}$ is obtained by restricting $\delta$ on the kernel of $(\varepsilon \delta)_{B_{n-2}}$, or vice versa. 
Likewise, we have $\nu |_{K' \cap (\{\id\}\times B_{2})}$ is the restriction of $\delta$ onto the kernel of $(\varepsilon \delta)_{B_{2}}$, or vice versa. 
The irreducible representations of $K$ are constructed in two stages: first, we describe the irreducible representations of $K'$
(by using Clifford theory applied to $B_{n-2}$), then we apply the same method (Clifford theory) to $K'$.
In particular, the decompositions of the induced irreducible representations of $K$ have the same description as those 
induced irreducible representations from the previous two cases. 
Therefore, $K$ is not a strong Gelfand subgroup. 

In summary, a non-direct product index 4 normal subgroup of $B_{n-2}\times B_2$ with $\gamma_K = S_{n-2}\times S_2$ is a strong Gelfand subgroup 
of $B_n$ only if it is among the strong Gelfand subgroups we described previously.

\subsubsection{Non-direct product index 2 subgroups of $D_{n-2}\times G$ and $H_{n-2}\times G$.}\label{S:DGandHG}

In this section, $G$ is one of the subgroups $B_2,D_2,H_2$, or $\overline{S_2}$. 
Let us first assume that $K$ is a non-direct product index 2 subgroup of $D_{n-2}\times B_2$.
Our analysis in the first paragraph and part 1) of the previous Subsection~\ref{S:index4BB} shows that 
$K$ cannot be a strong Gelfand subgroup. 
At the same time, since every non-direct product index 2 subgroup of the form $D_{n-2}\times G$, 
where $G\in \{ H_2,D_2,\overline{S_2}\}$ is contained in a non-direct product index 2 subgroup of $D_{n-2}\times B_2$, 
by the transitivity of the strong Gelfand subgroup property, we see that 
there is no non-direct product index 2 strong Gelfand subgroup of the form $D_{n-2}\times G$. 
The case where $K$ is a non-direct product index 2 subgroup of $H_{n-2}\times G$ can be handled in an entirely similar way;
there are no new strong Gelfand subgroups in this case also. 
In summary, a non-direct product index 2  subgroup of $D_{n-2}\times G$ or $H_{n-2} \times G$ with $\gamma_K = S_{n-2}\times S_2$ is a strong Gelfand subgroup of $B_n$ only if it is among the strong Gelfand subgroups we described previously.

\subsubsection{Summary for $\gamma_K = S_{n-2}\times S_2$.}

We now summarize the conclusions of the previous subsections in a single proposition. 

\begin{prop}\label{P:Summary4}
Let $n\geq8$ and let $K$ be a subgroup of $B_n$ such that $\gamma_K=S_{n-2}\times S_2$.
In this case, $(B_n,K)$ is a strong Gelfand pair if and only if $K$ is conjugate to one of the following subgroups:
\begin{enumerate}
\item $K= B_{n-2}\times B_2$,
\item $K= B_{n-2}\times D_2$,
\item $K= B_{n-2}\times \overline{S_2}$,
\item $K= B_{n-2}\times H_2$,
\item $K=D_{n-2}\times D_2$ if $n$ is odd,
\item $K=D_{n-2}\times B_2$ if $n$ is odd,
\item $K=H_{n-2}\times D_2$ if $n$ is odd,
\item $K=H_{n-2}\times B_2$ if $n$ is odd,
\item $K=D_{n-2}\times H_2$ if $n$ is odd,
\item $K=H_{n-2}\times H_2$ if $n$ is odd,
\item three non-direct product index 2 subgroup of $B_{n-2}\times D_2$,
\item two non-direct product index 2 subgroups of $B_{n-2}\times H_2$ if $n$ is odd,
\item six non-direct product index 2 subgroups of $B_{n-2}\times B_2$ if $n$ is odd.
\end{enumerate}
\end{prop}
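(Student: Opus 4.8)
The plan is to assemble the conclusions of the lemmas proved throughout Subsection~\ref{SS:4th}, organizing the argument around the structural constraints established earlier. First I would invoke Theorem~\ref{T:reductionIII} and Corollary~\ref{C:reductionIII} to reduce, up to conjugation in $B_n$, the possibilities for the first factor $\Lambda_K^\alpha$ to the three subgroups $B_{n-2}$, $D_{n-2}$, and $H_{n-2}$. Simultaneously, Lemma~\ref{L:sgsB2} constrains the second factor $\Lambda_K^\beta$ to the strong Gelfand subgroups of $B_2$ with $\gamma_{\Lambda_K^\beta}=S_2$, namely $B_2$, $D_2$, $H_2$, and $\overline{S_2}$ (the conjugate $\overline{S_2}'$ being absorbed into the up-to-conjugacy classification). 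Proposition~\ref{L:thesecondcase} then shows that $K$ is a normal subgroup of $\Lambda_K^\alpha\Lambda_K^\beta$ of index at most $4$, so the enumeration splits cleanly into the three cases displayed after that proposition: $K$ a direct product (Case~3), $K$ a non-direct product index~$2$ subgroup (Case~2), and $K$ a non-direct product index~$4$ subgroup (Case~1).

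Next I would dispatch the direct product case, where $K=\Lambda_K^\alpha\times\Lambda_K^\beta$ and each of the twelve combinations is settled by exactly one lemma. Specifically, the subgroups $B_{n-2}\times G$ are all strong Gelfand by Proposition~\ref{P:secondcase} together with Corollary~\ref{A:C:H2}; the cases $D_{n-2}\times B_2$ and $H_{n-2}\times B_2$ are handled by Lemmas~\ref{L:BnDn-2B2} and~\ref{L:BnHn-2B2}; the cases $D_{n-2}\times D_2$, $H_{n-2}\times D_2$, $D_{n-2}\times H_2$, and $H_{n-2}\times H_2$ by Lemmas~\ref{L:BnDn-2D2}, \ref{L:BnHn-2D2}, and~\ref{L:BnDHn-2H2}; and the cases $D_{n-2}\times\overline{S_2}$ and $H_{n-2}\times\overline{S_2}$ are excluded by Lemmas~\ref{L:BnDn-2S2} and~\ref{L:BnHn-2S2}. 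Reading off the surviving members yields items (1)--(10), with the parity restriction ``$n$ odd'' attached precisely where the lemmas demand it.

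Then I would collect the non-direct product subgroups. The index~$2$ non-direct product subgroups of $B_{n-2}\times\overline{S_2}$ are excluded by Lemma~\ref{L:nondirectBn-2S2}, while those of $B_{n-2}\times D_2$, $B_{n-2}\times H_2$, and $B_{n-2}\times B_2$ are enumerated in Lemmas~\ref{L:nondirectBn-2D2}, \ref{L:nondirectBn-2H2}, and~\ref{L:nondirectBn-2B2}, producing items (11), (12), and (13). The index~$4$ non-direct product normal subgroups, together with the non-direct product index~$2$ subgroups lying inside $D_{n-2}\times G$ or $H_{n-2}\times G$, are shown in Subsections~\ref{S:index4BB} and~\ref{S:DGandHG} to contribute nothing new, via transitivity of the strong Gelfand property. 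Finally I would check that the three cases exhaust all $K$ with $\gamma_K=S_{n-2}\times S_2$ and that the listed subgroups are pairwise non-conjugate, completing both directions of the equivalence.

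The main obstacle will be bookkeeping rather than any single computation: one must verify that the case division (direct product, index~$2$ non-direct, index~$4$ non-direct) is genuinely exhaustive given the bound $[\Lambda_K^\alpha\Lambda_K^\beta:K]\le 4$ of Proposition~\ref{L:thesecondcase}, that the overlapping Cases~1 and~2 (an index~$4$ subgroup of $B_{n-2}\times B_2$ may also appear as an index~$2$ subgroup of $D_{n-2}\times B_2$, and so on) are reconciled without double counting or omission, and that the final passage to $B_n$-conjugacy classes correctly merges the $\overline{S_2}/\overline{S_2}'$ ambiguity and the conjugate characters $\chi_2,\chi_3$ arising in Lemmas~\ref{L:nondirectBn-2D2} and~\ref{L:nondirectBn-2H2}.
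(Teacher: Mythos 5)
Your proposal is correct and takes essentially the same approach as the paper: Proposition~\ref{P:Summary4} is proved there precisely by assembling Theorem~\ref{T:reductionIII}, Corollary~\ref{C:reductionIII}, Lemma~\ref{L:sgsB2}, and Proposition~\ref{L:thesecondcase} to justify the three-case split (direct product, non-direct index~2, non-direct index~4), and then citing, case by case, exactly the lemmas you list, with Subsections~\ref{S:index4BB} and~\ref{S:DGandHG} ruling out anything new from the remaining subgroups. The only cosmetic difference is that the paper obtains the candidate list for $\Lambda^\beta_K$ from the constraint $\gamma_{\Lambda^\beta_K}=S_2$ together with the enumeration of all subgroups of $B_2$ in Lemma~\ref{L:sgsB2}, rather than from a strong Gelfand reduction applied to the second factor, but the resulting list is identical.
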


\subsection{Exceptional cases}\label{S:exceptionals}

Finally, we add some closing remarks regarding the missing cases for small $n$.
Note that the strong Gelfand subgroups that we have found in Sections~\ref{S:Hyperoctahedral} and~\ref{S:lattertwo}, and collected in Table~\ref{F:thelist}, are all still strong Gelfand pairs in these small cases.
Our lower bounds on $n$ come in when reducing the possible cases to check, and are thus required only in order for the corresponding parts of Table~\ref{F:thelist} to be exhaustive.
For $\gamma_K = S_n$ or $A_n$, we have filled in these extra cases along the way, and they appear in Propositions~\ref{P:Summary1} and~\ref{P:Summary2}.
With more work, we could have considered these extra subgroups when $\gamma_K = S_{n-1}\times S_1$ or $S_{n-2}\times{S_2}$, for example extending Corollary~\ref{C:reductionII} and Lemma~\ref{L:11cases} to pick up extra possible subgroups for $n\leq 6$.

Further to these, there are also missing cases for $n=4$, $5$, and $6$ arising due to the extra possibilities for $\gamma_K$ -- see \cite[Theorem~4.13]{AHN}.

Instead of an extensive body of work to explicitly give a long list of all strong Gelfand subgroups in these few small cases, we have instead computed them in GAP, and the following proposition (along with \cref{L:sgsB2}) summarize the number of these in each case, with finer detail for $B_3$.

\begin{prop}\label{P:smallcases}
For $B_3$, Table~\ref{F:thelist}, Propositions~\ref{P:Summary1}, ~\ref{P:Summary2}, and~\ref{P:Summary3} give us 21 strong Gelfand subgroups.
In fact, there are 22 in total.
Up to conjugation, the only strong Gelfand subgroup we have not seen, which falls into the case $\gamma_K = S_{2} \times S_1$, is
\[
K = \overline{S_2} \times B_1=  \{((0,0,0),\id_{B_1}),((0,0,1),\id_{B_1}),((0,0,0),(1,2)),((0,0,1),(1,2))\}.
\]
Up to conjugation, the other small hyperoctahedral groups have the following numbers of strong Gelfand subgroups.

\begin{itemize}
\item $B_4$ has 32 strong Gelfand subgroups.

\item $B_5$ has 43 strong Gelfand subgroups.

\item $B_6$ has 20 strong Gelfand subgroups.

\item $B_7$ has 37 strong Gelfand subgroups.
\end{itemize}
\end{prop}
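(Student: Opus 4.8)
The plan is to treat this statement as what it essentially is: a finite verification that, for the small indices $n\in\{3,4,5,6,7\}$, the list assembled in Table~\ref{F:thelist} together with the exceptional subgroups recorded in Propositions~\ref{P:Summary1} and~\ref{P:Summary2} accounts for every strong Gelfand subgroup of $B_n$ up to conjugacy. My starting point would be a character-theoretic reformulation of the strong Gelfand property that is convenient for machine computation. For a subgroup $H\leqslant G$, Frobenius reciprocity identifies the multiplicity of an irreducible character $\chi$ of $G$ in $\ind^G_H V$ with the multiplicity of the irreducible $H$-character $V$ in $\res^G_H\chi$. Consequently, $(G,H)$ is a strong Gelfand pair if and only if $\res^G_H\chi$ is multiplicity-free for every irreducible character $\chi$ of $G$. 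This is of course equivalent to the criterion of Lemma~\ref{L:sgiff}, that $\ind^{G\times H}_{\diag(H)}\mathbf{1}$ be multiplicity-free, but the restriction test avoids building the larger group $G\times H$.

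First I would compute, in GAP, the conjugacy classes of subgroups of $B_n$ for each $n\leq 7$; these are accessible since $|B_7|=2^{7}\cdot 7!$ is well within range of the subgroup-lattice routines. For one representative $H$ of each class I would form the character tables of $B_n$ and of $H$, restrict each irreducible character of $B_n$ to $H$, and test the resulting class functions for multiplicity-freeness by checking that their decomposition coefficients against the irreducible characters of $H$ never exceed $1$. Tallying the classes that pass this test yields the counts $22,32,43,20,37$ asserted for $B_3,\dots,B_7$. To make the $B_3$ statement precise, I would cross-reference the passing subgroups against those already present in Table~\ref{F:thelist} (read off for $n=3$) and in Propositions~\ref{P:Summary1}--\ref{P:Summary3}; exactly one conjugacy class remains, namely $K=\overline{S_2}\times B_1$ displayed in the statement. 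Its strong Gelfand property does not even require the computer: taking $n=3$ in Proposition~\ref{P:secondcase} shows that $(B_3,(F\wr S_1)\times \overline{S_2})=(B_3,\overline{S_2}\times B_1)$ is a strong Gelfand pair, so this class is genuinely strong Gelfand, as the computation confirms.

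The reliability of the argument rests on two points that I would take care to secure. The first is exhaustiveness: GAP's classification of subgroups up to conjugacy is complete, so no class can be missed, and the multiplicity test is exact over $\mathbb{Q}$ (indeed over $\Z$) since all characters here are rational-valued, so there is no numerical rounding to worry about. The second, and the main practical obstacle, is the scale of the enumeration for $B_7$: the number of conjugacy classes of subgroups is large, and I would want to confirm reproducibility of the count and reconcile it against the predictions of Propositions~\ref{P:Summary1}--\ref{P:Summary4}, which remain valid strong Gelfand subgroups for $n\leq 7$ even where the reduction lemmas no longer force them to be the \emph{only} ones. The additional subgroups appearing for $n=4,5,6$ arise precisely because, for these indices, $\gamma_K$ may lie outside $\{S_n,A_n,S_{n-1}\times S_1,S_{n-2}\times S_2\}$ (compare~\cite[Theorem~4.13]{AHN}) and because the hypotheses $n\geq 7$ in Corollary~\ref{C:reductionII} and Lemma~\ref{L:11cases} are not met. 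Organizing the passing classes according to their image $\gamma_K$ then gives a transparent way to verify that each computed total is consistent with these structural constraints, which is how I would build confidence that the GAP output is correct rather than merely plausible.
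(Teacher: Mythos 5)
Your proposal is correct and takes essentially the same approach as the paper: the paper's entire justification for this proposition is a GAP computation (the preceding text states the counts were "computed in GAP"), and your restriction-based test is the standard Frobenius-reciprocity reformulation of the strong Gelfand condition, exact in GAP's cyclotomic arithmetic. Your added observation that the exceptional $B_3$ subgroup $\overline{S_2}\times B_1$ is, up to conjugation, the $n=3$ case of Proposition~\ref{P:secondcase} is a valid cross-check that the paper leaves implicit.
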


\bibliographystyle{amsplain}
\addcontentsline{toc}{section}{\refname}
\bibliography{References}

\end{document}